\DeclareMathAlphabet{\mathpzc}{OT1}{pzc}{m}{it}
\def\co{\colon\thinspace}
\newcommand{\Z}{\mathbb{Z}}
\newcommand{\Q}{\mathbb{Q}}
\newcommand{\R}{\mathbb{R}}
\newcommand{\F}{\mathbb{F}}
\newcommand{\sA}{\mathcal{A}}
\newcommand{\sB}{\mathcal{B}}
\newcommand{\sC}{\mathcal{C}}
\newcommand{\sI}{\mathcal{I}}
\newcommand{\sL}{\mathcal{L}}
\newcommand{\sT}{\mathcal{T}}
\newcommand{\sM}{\mathcal{M}}
\DeclareMathOperator{\Hom}{Hom}
\newcommand{\bu}{\bullet}
\newcommand{\ci}{\circ}
\newcommand{\Tp}{T} 
\newcommand{\tildeT}{\widetilde{T}}
\newcommand{\barT}{\overline{T}}
\newcommand{\spinc}{\operatorname{Spin}^c}
\newcommand{\Ztwo}{\Z/2\Z}
\newcommand{\gr}{\operatorname{gr}}
\newcommand{\hol}{\operatorname{hol}}
\newcommand{\cu}{\boldsymbol{c}_{\alpha,\beta}}
\newcommand{\bg}[1]{\boldsymbol{\gamma_{#1}}}
\newcommand{\bargamma}{\overline{\gamma}}
\newcommand{\tracks}{{\boldsymbol\vartheta}}
\newcommand{\im}{\operatorname{im}}
\newcommand{\HFhat}{\widehat{\mathit{HF}}}
\newcommand{\HFplus}{\mathit{HF}^{+}}
\newcommand{\HFminus}{\mathit{HF}^{-}}
\newcommand{\CFminus}{\mathit{CF}^-}
\newcommand{\CFplus}{\mathit{CF}^+}
\newcommand{\CFhat}{\widehat{\mathit{CF}}}
\newcommand{\CFKminus}{\mathit{CFK}^-}
\newcommand{\HFKminus}{\mathit{HFK}^-}
\newcommand{\gCFKminus}{\mathit{g}\mathit{CFK}^-}
\newcommand{\CFKhat}{\widehat{\mathit{CFK}}}
\newcommand{\gCFKhat}{\mathit{g}\widehat{\mathit{CFK}}}
\newcommand{\HFKhat}{\widehat{\mathit{HFK}}}
\newcommand{\KH}{\mathit{KH}}
\newcommand{\CFD}{\widehat{\mathit{CFD}}}
\newcommand{\CFA}{\widehat{\mathit{CFA}}}
\newcommand{\CFAA}{\widehat{\mathit{CFAA}}}
\newcommand{\CFDA}{\widehat{\mathit{CFDA}}}
\newcommand{\halfid}{\widehat{\mathit{CFDA}}(\frac{\mathbb{I}}{2})}
\newcommand{\HFK}{\widehat{\mathit{HFK}}}
\newcommand{\SFH}{\mathit{SFH}}
\newcommand{\lp}{\boldsymbol\ell}
\newcommand{\HFa}{\mathit{HF}}
\newcommand{\Alg}{\mathcal{A}}
\newcommand{\Aop}{\mathcal{A}^{\operatorname{op}}}
\newcommand{\spin}{\mathfrak{s}}
\newcommand{\spinbar}{\overline{\spin}}
\newcommand{\spinhat}{\hat{\spin}}
\newcommand{\curves}[1]{\HFhat({#1})}
\newcommand{\liftcurves}[1]{\overline{\mathit{HF}}({#1})}
\newcommand{\tildecurves}[1]{\widetilde{ \boldsymbol \gamma}({#1})}
\newcommand{\barcurves}[1]{\HFhat({#1})}
\newcommand{\id}{\operatorname{id}}
\newcommand{\wtp}{\check{w}}
\newcommand{\wtm}{\hat{w}}
\newcommand{\grDmodtwo}{\gr^D_{\Z_2}}
\newcommand{\grAmodtwo}{\gr^A_{\Z_2}}
\newcommand{\grCFD}{\gr}
\newcommand{\grbox}{\gr^\boxtimes}
\newcommand{\x}{{\bf x}}
\newcommand{\bbA}{\mathbf{A}}
\newcommand{\bbE}{\mathbf{E}}
\newcommand{\FA}{F_\mathbf{A}}
\newcommand{\FE}{F_\mathbf{E}}
\newcommand{\bbI}{\mathbf{I}}
\newcommand{\FI}{F_\mathbf{I}}
\newcommand{\hor}
	{\raisebox{-6pt}{\includegraphics[scale=0.6]{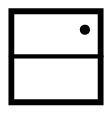}}}
\newcommand{\ver}
	{\raisebox{-6pt}{\includegraphics[scale=0.6]{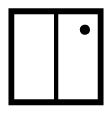}}}
\newtheorem{theorem}{Theorem}
\newtheorem{corollary}[theorem]{Corollary}
\newtheorem{proposition}[theorem]{Proposition}
\newtheorem{lemma}[theorem]{Lemma}
\newtheorem*{namedtheorem}{\theoremname}
\newcommand{\theoremname}{testing}
\theoremstyle{definition}
\newtheorem{definition}[theorem]{Definition}
\newtheorem{question}[theorem]{Question}
\newtheorem{remark}[theorem]{Remark}
\newtheorem{example}[theorem]{Example} 
\title[Heegaard Floer homology for manifolds with torus
boundary]{Heegaard Floer homology for manifolds with torus
boundary: properties and examples.}
\date{\today}
\author[Jonathan Hanselman]{Jonathan Hanselman}
\address {Department of Mathematics, Princeton University.\newline \it{E-mail address:} \tt{jh66@math.princeton.edu}}
\author[Jacob Rasmussen]{Jacob Rasmussen}
\address {Department of Pure Mathematics and Mathematical Statistics, University of Cambridge.\newline \it{E-mail address:} \tt{J.Rasmussen@dpmms.cam.ac.uk}}
\author[Liam Watson]{Liam Watson}
\thanks{JH was partially supported by NSF RTG grant DMS-1148490; JR was partially supported by EPSRC grant EP/M000648/1; LW was partially supported by a Marie Curie career integration grant, by a Canada Research Chair, and by an NSERC discovery/accelerator grant; JR and LW were Isaac Newton Institute program participants while part of this work was completed and acknowledge partial support from EPSRC grant EP/K032208/1; additionally, LW was partially supported by a grant from the Simons Foundation while at the Isaac Newton Institute}
\address {Department of Mathematics, University of British Columbia.\newline \it{E-mail address:} \tt{liam@math.ubc.ca}}
\begin{document}
\maketitle

\begin{abstract} 
This is a companion paper to earlier work of the authors \cite{HRW}, which interprets the Heegaard Floer homology for a manifold with torus boundary in terms of immersed curves in a punctured torus. We prove a variety of properties of this invariant, paying particular attention to its relation to knot Floer homology, the Thurston norm, and the Turaev torsion. We also give a geometric description of the gradings package from bordered Heegaard Floer homology and establish a symmetry under \(\spinc\) conjugation; this symmetry gives rise to genus one mutation invariance in Heegaard Floer homology for closed three-manifolds. Finally, we include more speculative discussions on relationships with Seiberg-Witten theory, Khovanov homology, and $\mathit{HF}^\pm$. Many examples are included. 
\end{abstract}


Bordered Heegaard Floer homology provides a toolkit for studying the Heegaard Floer homology of a three-manifold $Y$ decomposed along an essential surface. This theory was introduced and developed by Lipshitz, Ozsv\'ath, and Thurston \cite{LOT}, and has been studied in some detail in the case or essential tori as these are relevant to questions related to the JSJ decomposition of $Y$. In the authors' previous work \cite{HRW}, a geometric interpretation of the bordered Heegaard Floer homology of a three-manifold with torus boundary $M$ is established. In particular, we proposed:

\begin{definition} Let $M$ be a compact oriented three-manifold with torus boundary; fix a base point $z\in \partial M$. 
The invariant $\HFhat(M)$ is a collection of immersed curves in $\partial M\setminus z$ decorated with local systems, up to regular homotopy of the curves and isomorphism of the local systems.  
\end{definition}  
From now on, the phrase `manifold with torus boundary' will be used to refer to a manifold as in the definition; such manifolds will generally be denoted by \(M\), while closed three-manifolds will be denoted by \(Y\). 

We emphasize that $\HFhat(M)$ both determines and is determined by the bordered Floer homology of $M$; its existence is a consequence of a structure theorem for type D structures \cite[Theorem 5]{HRW}. This structure theorem is constructive, and a computer implementation of the algorithm has been given by Thouin \cite{Thouin}. The utility of this interpretation is illustrated by the following:

\begin{theorem}[{\cite[Theorem 2]{HRW}}]\label{thm:pairing}
Supose that $Y=M_0\cup_h M_1$ where the $M_i$ are manifolds with torus boundary and $h\co \partial M_1\to \partial M_0$ is an orientation reversing homeomorphism for which $h(z_1)=z_0$. Then \[\HFhat(Y) \cong \mathit{HF}(\boldsymbol\gamma_0,\boldsymbol\gamma_1)\]
where  $\mathit{HF}(\cdot,\cdot)$ is the (immersed) Lagrangian intersection Floer homology of $\boldsymbol\gamma_0=\HFhat(M_0)$ and $\boldsymbol\gamma_1=h\big(\HFhat(M_1)\big)$ computed in $\partial M_0\setminus z_0$. 
\end{theorem}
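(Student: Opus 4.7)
The plan is to deduce the geometric pairing from the algebraic pairing theorem of Lipshitz-Ozsv{\'a}th-Thurston, which asserts that
\[
\HFhat(Y) \cong H_*\bigl(\CFA(M_0) \boxtimes \CFD(M_1)\bigr),
\]
by translating the right-hand side into a count of intersections and bigons between $\boldsymbol\gamma_0$ and $h(\boldsymbol\gamma_1)$ in $\partial M_0 \setminus z_0$.

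First I would invoke the structure theorem \cite[Theorem 5]{HRW} to pass from the algebra back to geometry: placing the curves $\boldsymbol\gamma = \HFhat(M)$ transversely to a fixed oriented arc $\mu \subset \partial M \setminus z$ produces a model of $\CFD(M)$ whose generators are the intersection points $\boldsymbol\gamma \cap \mu$ and whose operations are read off from the sequences of Reeb chord regions crossed by arc segments of $\boldsymbol\gamma$. A symmetric discussion, with $\mu$ replaced by an arc of the complementary class, yields a parallel model of $\CFA(M)$. After applying $h$ so that both $\boldsymbol\gamma_0$ and $h(\boldsymbol\gamma_1)$ are in general position in $\partial M_0 \setminus z_0$, the idempotent matching in $\CFA(M_0)\boxtimes \CFD(M_1)$ requires each paired generator to lie on a common reference arc; a small isotopy then identifies this set bijectively with $\boldsymbol\gamma_0 \cap h(\boldsymbol\gamma_1)$, which by definition is the generating set of $\mathit{CF}(\boldsymbol\gamma_0, \boldsymbol\gamma_1)$.

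The heart of the proof, and the step I expect to be the main obstacle, is the differential. A summand in the box tensor differential is a sequence of outgoing Reeb chords on the $A$-side paired with a matching incoming sequence on the $D$-side; concatenating the corresponding arc segments of $\boldsymbol\gamma_0$ and $h(\boldsymbol\gamma_1)$ together with the chord regions they label traces out an immersed bigon in $\partial M_0 \setminus z_0$ joining two intersection points.  Conversely, any such bigon decomposes canonically into Reeb-chord rectangles whose boundary sequence is precisely the data of an $A_\infty$-pairing term.  I would organize the argument as a bijection between bigons and pairs of chord sequences, induct on bigon complexity to rule out extraneous terms (using that the chord regions exhaust the complement of the two arc systems in the punctured torus), and in particular verify that the signs and idempotent restrictions agree.

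Finally, the local systems decorating $\boldsymbol\gamma_0$ and $\boldsymbol\gamma_1$ are absorbed by observing that the algebraic generators of $\CFA(M_0)\boxtimes\CFD(M_1)$ naturally tensor the vector spaces attached to the underlying curves, and parallel transport around each bigon matches the induced contribution in the box tensor differential, so the bijection of the preceding paragraph upgrades to an isomorphism of chain complexes decorated with local systems.
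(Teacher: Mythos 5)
Your plan follows the same broad route as the proof in \cite{HRW}: both invoke the Lipshitz--Ozsv\'ath--Thurston algebraic pairing theorem and then geometrically identify the box tensor differential with immersed bigons. The organizational difference is where the identification happens. The proof in \cite{HRW} first builds explicit train-track representatives $A(\vartheta_0)$ and $D(\vartheta_1)$ of the bordered modules, confined to specific quadrants of the square (and extended horizontally/vertically through adjacent quadrants), so that their intersection points and bigons literally enumerate the generators and differentials of $\CFA(M_0)\boxtimes\CFD(M_1)$; it then shows separately that the simplification algorithm turning these train tracks into $\HFhat(M_0)$ and $h(\HFhat(M_1))$ preserves Floer homology. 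You collapse these two steps by working directly with the simplified curves, appealing to homotopy invariance of the box tensor product to bridge between the Heegaard-diagram complexes and the curve models. That is legitimate, but it pushes all the weight onto the claim that bigons between $\boldsymbol\gamma_0$ and $h(\boldsymbol\gamma_1)$ ``decompose canonically into Reeb-chord rectangles.'' That decomposition does not hold for an arbitrary transverse arrangement: it requires precisely the quadrant-based normal form that \cite{HRW} builds into $A(\vartheta_0)$ and $D(\vartheta_1)$, with generators near the midpoints of the parametrizing arcs and the two curve systems separated into horizontal/vertical strands. Your reference to a ``small isotopy'' near ``a common reference arc'' gestures at this but doesn't pin it down; in a full proof you would need to reproduce that normal form. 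The handling of local systems via parallel transport around bigons is consistent with what the paper does, though stated at the level of a sketch. So: same essential approach, a shorter route that trades the explicit train-track simplification lemma for homotopy invariance, but the positioning needed to make the bigon-to-chord-sequence bijection watertight is under-specified.
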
 

Consistent with bordered theory, throughout this paper we will work with coefficients in the two-element field $\F$. 

\begin{figure}[t]
\labellist
  \pinlabel {$\mu_0$} at 220 321
  \pinlabel {$\lambda_0$} at 280 248
   \pinlabel {$\mu_0$} at 133 198
  \pinlabel {$\lambda_0$} at 223 101
   \pinlabel {$ \color{gray}{\mu_1}$} at 248 101
  \pinlabel {$\color{gray}{\mu_1+\lambda_1}$} at 341 198
         \endlabellist
\includegraphics[scale=.8]{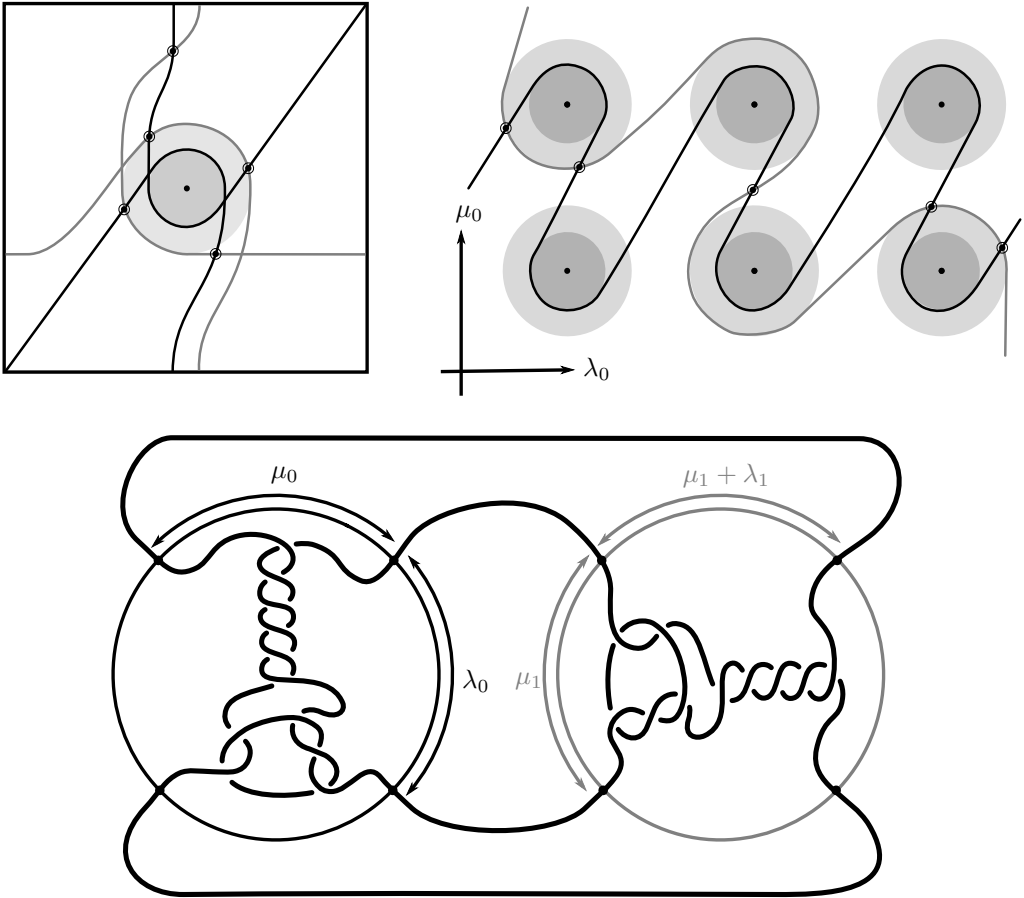}
\caption{Intersecting the curves $\HFhat(M_0)$ and $\color{gray}{h(\HFhat(M_1))}$ shown in $\partial M_0\setminus z_0$ (above, left) and in the cover $\R^2\setminus\Z^2$ determined by cutting along  $\mu_0$ and $\lambda_0$. The resulting $Y=M_0\cup_h M_1$ is the two fold branched cover of a link; the torus decomposition determines a tangle decomposition. By abuse of notation, the images of the slopes identified by the homeomorphism $h$ have been labeled by the corresponding slopes in the cover. For an explicit construction of this tangle see, for example, \cite{Montesinos1976,Watson2012}. Note that the Seifert structure (over the disk with cone points of orders 2 and 3) on the $M_i$ is manifested in each tangle.}
\label{fig:intro-figure}
\end{figure}

\subsection*{Executive summary by example: splicing trefoils} In practice, Theorem \ref{thm:pairing} reduces the computation of $\dim\HFhat(Y)$ to minimal intersection counts; various applications of this principle follow \cite{HRW}. To illustrate, we briefly review the setup with an example.

Let $M_i$ denote the complement of the right hand trefoil for $i=0,1$, with $(\mu_i,\lambda_i)$ the standard meridian-longitude pair. The closed three-manifold $Y=M_0\cup_h M_1$ obtained via the homeomorphism $h$ determined by $\lambda_0=h(\mu_1)$ and $\mu_0=h(\mu_1+\lambda_1)$ is an integer homology sphere. For readers familiar with bordered Floer homology, this setup is compatible with \[\HFhat(Y)\cong H_*\big(\CFA(M_0,\mu_0,\lambda_0)\boxtimes\CFD(M_1,\mu_1+\lambda_1,\mu_1)\big)\] where the triples $(M_0,\mu_0,\lambda_0)$ and $(M_1,\mu_1+\lambda_1,\mu_1)$ are bordered three-manifolds (or, trefoil exteriors with fixed bordered structures) \cite{LOT}. Following Theorem \ref{thm:pairing}, the dimension of the vector space $\HFhat(Y)$ can be found by the minimal intersection between $\HFhat(M_0)$ and $h(\HFhat(M_1))$, see Figure \ref{fig:intro-figure}, hence $\dim\HFhat(Y)=5$. 

This is actually as small as possible: in \cite[Theorem 8]{HRW} we show that if a three-manifolds $Y$ contains an essential separating torus then $\dim\HFhat(Y)\ge5$. In fact, it follows from our proof that up to orientation reversal, there is a unique prime toroidal integer homology sphere $Y$ with $\dim\HFhat(Y)=5$.
As a consequence, due to the spectral sequence from Khovanov homology to $\HFhat$ of the branched double cover, any link $L$ for which $\dim\widetilde{\mathit{Kh}}(L)<5$ cannot contain an essential Conway sphere \cite[Corollary 11]{HRW}. It would be interesting to know the smallest possible value of  $\dim\widetilde{\mathit{Kh}}(L)$ for links \(L\) containing an essential Conway sphere. The example  above can be realized as the two-fold branched cover of the knot $K=T_0\cup T_1$ shown in Figure \ref{fig:intro-figure}, for which we compute that  $\dim\widetilde{\mathit{Kh}}(K)=63$, but this is far from optimal; the Conway knot, for example, has   $\dim\widetilde{\mathit{Kh}}(K)=33.$

This companion paper has three basic goals. The first is to give an overview of the invariant together with some interesting examples. The second is to describe a range of its basic properties, some of which were briefly mentioned in \cite{HRW}; we give a more careful discussion here. The third is to discuss some more speculative connections between \(\HFhat(M)\) and other invariants, including Seiberg-Witten theory, Khovanov homology, and  \(\mathit{HF}^\pm\). Below, we give a more detailed outline of the contents of individual sections.

\subsection*{Section \ref{sec:loop}: A survey} We begin with a broad overview of the invariant $\HFhat(M)$ and review the setup for Theorem \ref{thm:pairing}. With the aim of providing an accessible survey of the material in \cite{HRW}, we largely focus on the special case where the local systems present are one dimensional, which (following \cite{HW}) we refer to as {\it loop type}.  In this case, studying $\curves M$ amounts to simply studying immersed curves in the punctured torus. In particular, in Section \ref{sub:loop-type} we give a greatly simplified construction of the curves $\curves M$ from $\CFD(M,\alpha,\beta)$, provided the latter is given in terms of a sufficiently nice basis. 

While the loop type condition may seem like a strong restriction, it is enjoyed by a wide range of examples and  is quite useful in practice. For instance, any $M$ admitting more than one L-space Dehn filling is loop type. In fact, the authors are currently unable to construct a single manifold \(M\) for which $\curves M$ is verifiably {\em not} loop type. While this is most likely due to a lack of sufficiently complicated examples, it seems that one does not loose much conceptually by restricting to this special case.

In this vein, the remainder of Section \ref{sec:loop} discusses some interesting examples of loop type manifolds. In Section \ref{sub:loop-calculus} we review some machinery for constructing manifolds with this property, including large classes of graph manifolds, which was first introduced by the first and last author in \cite{HW}. In \ref{sub:bundles} we explicitly compute the invariant $\curves{M_g}$, where $M_g$ is the product of $S^1$ and an orientable surface of genus $g$ with one boundary component. Combined with Theorem \ref{thm:pairing}, we recover a formula for $\dim\HFhat(S^1\times \Sigma_g)$ first proved by Ozsv\'ath-Szab\'o \cite[Theorem 9.3]{OSz2004-knot} and Jabuka-Mark \cite[Theorem 4.2]{JM2008}.

\begin{theorem}\label{crl:surface-bundle}
For $g\ge 0$, the total dimension of $\HFhat(S^1 \times \Sigma_g)$ is $2^g + \binom{2g}{g} + 2\sum_{i=1}^g (2i-1)\binom{2g}{g+i}$.
\end{theorem}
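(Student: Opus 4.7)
The plan is to apply Theorem \ref{thm:pairing} to the decomposition
\[
S^1\times\Sigma_g \;=\; M_g \,\cup_h\, (S^1\times D^2),
\]
where $M_g = S^1\times\Sigma_{g,1}$ with $\Sigma_{g,1} = \Sigma_g\setminus\operatorname{Int} D^2$, and $h$ identifies the meridian of the filling solid torus with the slope $\{pt\}\times\partial\Sigma_{g,1}$. Since $\curves{S^1\times D^2}$ is a single embedded curve carrying the trivial local system and realizing the meridional slope, Theorem \ref{thm:pairing} gives
\[
\dim\HFhat(S^1\times\Sigma_g) \;=\; |\curves{M_g}\cap c|,
\]
where $c\subset\partial M_g\setminus z$ is the image of the meridian under $h$, i.e., the curve dual to the $S^1$-fibers of $M_g$. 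The theorem thus reduces to an explicit minimal intersection count in the once-punctured torus.

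The second step is to use the explicit description of $\curves{M_g}$ obtained in Section \ref{sub:bundles}. Since $M_g$ is a trivial circle bundle over $\Sigma_{g,1}$, its bordered invariant is expected to be loop-type, and the associated multicurve decomposes into components indexed by a natural combinatorial set closely tied to the exterior algebra $\Lambda^\ast H_1(\Sigma_g;\F)$ — the same exterior algebra that governs $\HFhat$ of surface bundles in the work of \ons{} and Jabuka-Mark. The plan is to read off the components of $\curves{M_g}$ from a loop-type presentation of $\CFD(M_g,\mu,\lambda)$ (using the simplified construction recalled in Section \ref{sub:loop-type}), organize them by the $\spinc$-grading coming from the Euler class/$H_1$ data, and identify each component as a specific immersed curve in $\partial M_g\setminus z$.

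With the components enumerated, the intersection with $c$ is counted componentwise by passing to the cover $\R^2\setminus\Z^2$ and using that minimal geometric intersection equals algebraic intersection once the curves are pulled tight. The contribution from the $i$-th graded piece should account for a term of the form $(2i-1)\binom{2g}{g+i}$, with the multiplicity $2i-1$ reflecting the winding of the corresponding component and the binomial coefficient counting the generators in that piece; the isolated contributions $2^g$ and $\binom{2g}{g}$ arise from the $H_*(T^g)$-factor of the fiber and from the central ($i=0$) piece, respectively. Summing these recovers
\[
2^g + \binom{2g}{g} + 2\sum_{i=1}^{g}(2i-1)\binom{2g}{g+i}.
\]

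The principal difficulty is the first step: producing a sufficiently explicit form of $\curves{M_g}$. The count with $c$ is then essentially combinatorial, and as a sanity check it must agree with the formula of Ozsv\'ath-Szab\'o \cite[Theorem 9.3]{OSz2004-knot} and Jabuka-Mark \cite[Theorem 4.2]{JM2008}, which is what the statement asserts.
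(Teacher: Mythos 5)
Your outline matches the paper's proof: decompose $S^1\times\Sigma_g = M_g\cup_h(S^1\times D^2)$, invoke the pairing theorem, and count intersections of $\curves{M_g}$ with a horizontal line, component by component, using the explicit description of $\curves{M_g}$ from Theorem~\ref{thm:surface-bundle}. You correctly identify the real work—producing that explicit description (which the paper does by induction on $g$ via a genus-adding type DA bimodule $\mathcal{G}$, yielding $2^g$ components of type $(d_0)$, $\tfrac{1}{2}\binom{2g}{g}-2^{g-1}$ of type $(d_0 d_0)$, and $\binom{2g}{g+i}$ of type $(d_{2i}d_{-2i})$)—and once that is granted, the componentwise count is the short computation the paper gives; your heuristic attribution of the $2^g$ and $\binom{2g}{g}$ terms to ``an $H_*(T^g)$-factor'' and the ``central piece'' is a bit loose compared to the paper's bookkeeping (both $(d_0)$ and $(d_0d_0)$ components contribute two intersection points each, and it is the combination $2\cdot 2^g + 2\cdot(\tfrac{1}{2}\binom{2g}{g}-2^{g-1})$ that simplifies to $2^g+\binom{2g}{g}$), but this does not change the outcome.
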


Finally, in Section \ref{sub:HFtori} we discuss the class of {\it Heegaard Floer solid tori}, whose definition was introduced by the third author (see \cite{solid-tori}, for example). In particular, we will show 
\begin{theorem}
If $M$ is a manifold with torus boundary which admits an L-space filling, then the following conditions are equivalent:  $\HFhat(M)$ is invariant under Dehn twists along the rational longitude; and the Dehn filling $M(\alpha)$ is an L-space for all slopes $\alpha$ other than the rational longitude.  
\end{theorem}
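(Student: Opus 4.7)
The plan is to translate both conditions into geometric statements about the immersed multicurve $\HFhat(M) \subset \partial M \setminus z$ and compare them via the pairing theorem. The key algebraic input is the elementary fact that there exists a constant $c = c(M) \in \Z_{>0}$, determined by the torsion of $H_1(M)$ and the order of the rational longitude $\lambda$ in $H_1(M)$, such that
\[|H_1(M(\alpha))| = c \cdot \Delta(\alpha, \lambda)\]
for every slope $\alpha \neq \lambda$. Consequently, $M(\alpha)$ is an L-space if and only if the weighted minimal intersection of $\HFhat(M)$ with a slope-$\alpha$ curve equals $c \cdot \Delta(\alpha, \lambda)$, by Theorem~\ref{thm:pairing}.

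For $(2) \Longrightarrow (1)$, suppose every filling $M(\alpha)$ with $\alpha \neq \lambda$ is an L-space, and argue by contradiction that every essential component of $\HFhat(M)$ is homologically a multiple of $\lambda$. Fix a basis $(\mu, \lambda)$ of $H_1(\partial M)$. If some component has homology class $a\mu + b\lambda$ with $a \neq 0$, then for slopes $\alpha = \mu + q\lambda$ (all with $\Delta(\alpha, \lambda) = 1$) its algebraic intersection with the $\alpha$-curve is $|aq - b|$, which exceeds $c = c \cdot \Delta(\alpha, \lambda)$ once $|q|$ is sufficiently large. This contradicts the L-space hypothesis at such $\alpha$. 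Hence every essential component of $\HFhat(M)$ is $\lambda$-parallel, and the multicurve is fixed (up to regular homotopy) by Dehn twists along $\lambda$.

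For $(1) \Longrightarrow (2)$, suppose $\HFhat(M)$ is invariant under the Dehn twist along $\lambda$ up to regular homotopy. The twist acts on $H_1(\partial M)$ by $(a, b) \mapsto (a, b+a)$ in the basis $(\mu, \lambda)$, so any invariant essential immersed closed curve in $\partial M \setminus z$ must have $a = 0$, i.e., be homologically a multiple of $\lambda$; lifting to the universal cover of $\partial M \setminus z$, where $\tau_\lambda$ lifts to a shear fixing exactly the $\lambda$-parallel foliation, then promotes this to $\lambda$-parallel as a regular-homotopy class. Let $n$ denote the total local-system-weighted number of such components. Theorem~\ref{thm:pairing} gives $\dim_\F \HFhat(M(\alpha)) = n \cdot \Delta(\alpha, \lambda)$ for every $\alpha \neq \lambda$. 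The hypothesis that $M$ admits at least one L-space filling $M(\alpha_0)$ then forces $n = c$, so that every $M(\alpha)$ with $\alpha \neq \lambda$ satisfies $\dim \HFhat(M(\alpha)) = c \cdot \Delta(\alpha, \lambda) = |H_1(M(\alpha))|$ and is an L-space.

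The main technical obstacle is the careful treatment of inessential or non-simple immersed components of $\HFhat(M)$. In $(2) \Rightarrow (1)$ one must rule out exotic components whose excess intersections with nearby slopes would equally spoil the L-space count; in $(1) \Rightarrow (2)$ one must check that any such components present are indeed preserved by $\tau_\lambda$, typically by isotoping them into a neighborhood of $z$ where $\tau_\lambda$ is compactly supported away from them. Once these subtleties are isolated, the intersection-count arguments above close both directions.
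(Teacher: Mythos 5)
There is a genuine gap, and it is precisely the one you flag and then wave away in your closing paragraph. In both directions your argument only controls the \emph{homology class} of each component of $\HFhat(M)$, whereas the equivalence actually hinges on a much stronger geometric fact: every component must be regular-homotopic into a small neighbourhood of the rational longitude, with trivial local systems. A nullhomologous immersed component (a figure-eight linking two adjacent lifts of $z$, say) has homology class $0\cdot\lambda$, so it survives your $(2)\Rightarrow(1)$ argument unscathed; yet it meets every slope at least twice and therefore already violates the L-space count. Similarly, for $(1)\Rightarrow(2)$ the claim that ``$\tau_\lambda$-invariance forces $\lambda$-parallel'' is false at the level you argue: $\tau_\lambda$ also fixes, up to homotopy, nullhomotopic curves and small circles linking the basepoint, and your appeal to a ``shear on the universal cover'' does not close this (the universal cover of $\partial M\setminus z$ is hyperbolic, and in the intermediate cover $\R^2\setminus\Z^2$ the lift of $\tau_\lambda$ is not a linear shear). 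You also never address local systems, which is not cosmetic: a nontrivial local system makes $\sL_M$ empty, so it must be ruled out before the intersection count you use can even begin.

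The paper avoids all of this by routing both conditions through a third characterization, namely that $\HFhat(M)$ carries trivial local systems and can be regular-homotoped into a neighbourhood of $\lambda$. Condition (1) is equated with this via the identification of the Dehn-twist bimodule with a geometric Dehn twist on $\HFhat(M)$ (so invariance under $\tau_\lambda$ really does pin down the homotopy class, not just the homology class, once one invokes the separately proved fact that no component is a small circle linking the basepoint). Condition (2) is equated with it via the prior characterization of $\sL_M$ as the set of slopes meeting each component exactly once, with the observation that $\sL_M=\emptyset$ whenever the local system is nontrivial. Your skeleton — pairing theorem plus the identity $|H_1(M(\alpha))|=c\cdot\Delta(\alpha,\lambda)$ — is a sensible starting point and does handle the ``slope of a component'' obstruction, but without the two cited inputs the central step from homology to regular homotopy is missing, and the proof does not close.
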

The proof of the theorem passes through a third characterization in terms of the immersed curves $\HFhat(M)$; see Theorem \ref{thm:HFST}. Manifolds satisfying the conditions are called {Heegaard Floer solid tori}.
 The solid torus is an obvious example; a more interesting example to keep in mind is the twisted $I$-bundle over the Klein bottle \cite{BGW2013} (see also \cite{HW,LW2014,solid-tori}).



\subsection*{Section \ref{sec:grad}: The grading package} Bordered Floer homology has a somewhat idiosyncratic grading by a quotient of a non-commutative group, which includes relative versions of the $\spinc(M)$ grading, the Maslov grading, and the simpler $\Ztwo$ grading. We show that this grading information can be encoded with some mild additional decorations on the curve invariant $\curves M$. This was set up previously for the spin$^c$ grading and $\Ztwo$ grading \cite{HRW} to the extent that it was required for the applications in our earlier work; our aim here is to review the complete grading package, and interpret this grading geometrically for $\HFhat(M)$. In particular, we give a geometric interpretation of the Maslov grading which seems interesting in its own right.

No decorations are required to encode grading information if $\curves{M}$ has a single component for each $\spinc$ structure $\spin$; in general, the decoration takes the form of arrows connecting different components of $\curves{M}$ associated with the same $\spinc$ structure. Given a set of parametrizing curves $(\alpha,\beta)$ for $\partial M$, the gradings on $\CFD(M,\alpha,\beta;\spin)$ can be extracted from geometric information on the corresponding decorated curves. The $\spinc$ grading of an intersection point of the curves with $\alpha$ or $\beta$, which corresponds to a generator of $\CFD(M,\alpha,\beta;\spin)$, is given by the position of the point in a chosen lift of to a cover of $\partial M\setminus z$ by $\R^2\setminus\Z^2$. The $\Ztwo$ grading is given by a choice of orientation on the curves, while the Maslov grading measures areas bounded by paths in a certain representative of $\curves{M}$.

Given two sets of decorated curves, we can endow their Floer homology with relative $\spinc$, Maslov, and $\Ztwo$ gradings; these gradings will be defined in Section \ref{sec:gradings-overview}. We will show that these gradings recover the corresponding gradings on the box tensor product of the corresponding type A and type D structures, thus proving the following grading refined version of the pairing theorem:

\begin{theorem}\label{thm:gradings}
The isomorphism in Theorem~\ref{thm:pairing} is an isomorphism of relatively graded vector spaces. More precisely, $\HFa(\boldsymbol\gamma_0, \boldsymbol\gamma_1)$ decomposes over spin$^c$ structures and carries a relative Maslov grading on each spin$^c$ structure, and these agree with the spin$^c$ decomposition and relative Maslov grading for $\HFhat(Y)$.
\end{theorem}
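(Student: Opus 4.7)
The plan is to refine the pairing theorem (Theorem \ref{thm:pairing}) by tracking gradings through each side of the correspondence. Recall the identification $\HFhat(Y) \cong H_*(\CFA(M_0) \boxtimes \CFD(M_1)) \cong \HFa(\boldsymbol\gamma_0, \boldsymbol\gamma_1)$. Both sides decompose over $\spinc(Y)$ and carry a relative Maslov grading and a $\Ztwo$ grading; the task is to verify that these structures agree under the pairing theorem.

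First, I would review the grading story on the bordered side. By \cite{LOT}, $\CFA(M_0) \boxtimes \CFD(M_1)$ inherits a $\spinc$-decomposition and relative Maslov grading compatible with $\HFhat(Y)$. Each $\spin \in \spinc(Y)$ restricts to a pair $(\spin_0, \spin_1) \in \spinc(M_0) \times \spinc(M_1)$ whose common restrictions to $\partial M_i$ are identified under $h$, and the Maslov grading difference between two generators in a fixed $\spinc$ summand of the tensor product is determined by the relative Maslov gradings on $\CFA(M_0, \spin_0)$ and $\CFD(M_1, \spin_1)$ via the standard grading refinement.

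Second, on the geometric side, I would invoke the geometric description of the gradings developed in Section \ref{sec:grad}. The $\spinc$ summand containing an intersection point $p \in \boldsymbol\gamma_0 \cap \boldsymbol\gamma_1 \subset \partial M_0 \setminus z_0$ is determined by its position in a fixed lift to the cover $\R^2 \setminus \Z^2$, together with the chosen lifts of the component curves. The relative Maslov grading between two intersection points $p, q$ in the same $\spinc$ summand is defined by choosing arcs $\eta_0 \subset \boldsymbol\gamma_0$ and $\eta_1 \subset \boldsymbol\gamma_1$ from $p$ to $q$ and taking a signed sum of the areas of the regions these arcs enclose (with appropriate contributions from the decoration arrows connecting distinct components). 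The $\Ztwo$ grading comes from comparing the orientations of $\boldsymbol\gamma_0$ and $\boldsymbol\gamma_1$ at $p$.

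Third, I would reconcile the two sides by fixing parametrizing curves $(\alpha, \beta)$ for $\partial M_0$ and perturbing $\boldsymbol\gamma_0, \boldsymbol\gamma_1$ so that their intersection points in $\partial M_0 \setminus z_0$ correspond bijectively to generators of $\CFA(M_0, \alpha, \beta) \boxtimes \CFD(M_1, \alpha, \beta)$, as in the proof of the pairing theorem. Using the geometric description of the gradings on each $\CFD(M_i)$ established in Section \ref{sec:grad}, the relative Maslov grading of two generators of the tensor product splits as a sum of two area-type contributions, one from each of $\boldsymbol\gamma_0, \boldsymbol\gamma_1$, plus a term accounting for the change in $\spinc$-refinement. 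A careful bookkeeping shows that when the generators lie in the same $\spinc$ summand on $Y$, this sum is precisely the total signed area enclosed by the pair of arcs $\eta_0, \eta_1$ that defines the geometric Maslov grading on $\HFa(\boldsymbol\gamma_0, \boldsymbol\gamma_1)$.

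The main obstacle is the Maslov piece: while the $\spinc$ decomposition is essentially a homology-class calculation (already worked out in the unrefined pairing theorem) and the $\Ztwo$ grading reduces to a check of orientation conventions, the Maslov grading combines chain-level data on each side with the $\spinc$-dependent refinement, and one must show that additivity of areas across the gluing exactly reproduces the box-tensor-product grading. I expect this to reduce, via the concatenation-of-paths argument, to an explicit check on a generating collection of elementary pieces (e.g.\ the standard model curves from \cite{HRW} and their elementary modifications), after which additivity upgrades the agreement to all pairs of generators.
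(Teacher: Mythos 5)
Your proposal follows essentially the same strategy as the paper: geometrize the bordered gradings via train tracks and lifts to $\R^2 \setminus \Z^2$, read the $\spinc$ component from the position of the lift and read the Maslov component from a quantity attached to a pair of paths in $\boldsymbol\gamma_0$ and $\boldsymbol\gamma_1$, and then reconcile with the box tensor product grading by tracking contributions from each factor. This is the right route.

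However, your description of the geometric Maslov grading has a gap that would make the middle step fail. You define the Maslov grading difference as ``a signed sum of the areas of the regions these arcs enclose (with appropriate contributions from the decoration arrows),'' and similarly say the box-tensor Maslov grading ``splits as a sum of two area-type contributions.'' That is not correct as stated: the Maslov grading difference is \emph{not} purely an area (or puncture-count) quantity. It has two independent pieces: an area contribution (twice the number of lattice points enclosed, counted with winding-number multiplicity) \emph{and} a rotation/corner contribution ($\frac{1}{\pi}$ times the net clockwise rotation along the smooth segments of the concatenated path $p_0 - p_1$). On the bordered side this corresponds to the fact that $\gr(\rho_I)^{-1}$ introduces both a determinant term (which geometrizes to area) and a constant shift of $-\frac{1}{2}$ per edge (which geometrizes to a corner turn). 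If you drop the rotation term, your formula does not even give the right answer for a single bigon (a disk-like bigon with smooth sides meeting at right angles covering no punctures has $\Delta m = -1$, which comes entirely from rotation). In the paper the reconciliation argument depends on a specific perturbation of $A(\tracks_0)$ and $D(\tracks_1)$ to neighborhoods of the lines $x=\frac{1}{2}$, $y=\frac{1}{2}$, which trades the raw triangle areas for lattice-point counts and trades the raw $\pm\frac{1}{2}$ edge contributions for the rotation term; without the rotation piece in the target formula that bookkeeping cannot close.

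Your final step — ``reduce, via the concatenation-of-paths argument, to an explicit check on a generating collection of elementary pieces, after which additivity upgrades the agreement'' — is also looser than what the paper does, but it is compatible with it: the paper effectively establishes the grading-shift formula edge-by-edge (the eight $\rho_I$-labelled edges) and then sums along paths, so your ``elementary pieces'' and ``additivity'' are a fair gloss, provided the per-edge formula includes the $-\frac{1}{2}$ corner term. Once you add the rotation contribution to your intrinsic definition, the remaining work is exactly the perturbation-and-bookkeeping argument you gesture at.
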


\begin{remark}
We used an alternate way of keeping track of \(\spinc\) structures in \cite{HRW}. This relies on the fact that there is a natural covering space \(\barT_{M}\) of \(\partial M \setminus z\)  with the property that for each \(\spin \in \spinc(M)\), the part of \(\HFhat(M)\)  associated to \(\spin\) lifts to \(\barT\). We denote this lift by  \(\HFhat(M,\spin)\). 
The decorations mentioned above uniquely determine it.
 \end{remark}


\subsection*{Section \ref{sec:bim}: Symmetries} In this section, we discuss two symmetries of the invariant. The first describes the behavior of the invariant under orientation reversal.
\parpic[r]{
 \begin{minipage}{50mm}
 \centering
 \includegraphics[scale=0.7]{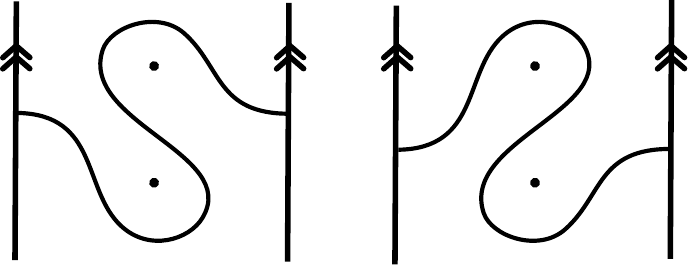}
 \captionof{figure}{\(\HFhat\) for the complements of the left and right-handed trefoils}
 \label{fig:left-right-trefoil}
  \end{minipage}%
  }
 This is a direct geometric translation of known properties of bordered Floer invariants. In short: $\curves{-M} = \curves{M}$ as curves, but we must remember that the orientation of $\partial M$ is different on the two sides of the equation. Thus when we identify $\partial M$ with a square or draw the curves in $\R^2 \setminus \Z^2$, as we usually do, the orientation reversal corresponds to a reflection across the homological longitude. For example, the curves shown in Figure~\ref{fig:left-right-trefoil} represent the invariants of the left and right-handed trefoil.

The second theorem in this section describes the behavior of the invariant under conjugation symmetry. 

\begin{theorem}\label{thm:spin-c-sym}
The invariant $\HFhat(M)$ is symmetric under the elliptic involution of \(\partial M \setminus z\).  Here, the involution is chosen so that \(z\) is a fixed point.  \end{theorem}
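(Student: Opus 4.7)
The plan is to deduce the statement from the conjugation symmetry for bordered Floer homology and transport this algebraic symmetry across the dictionary between type D structures and immersed curves developed in \cite{HRW}.

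First, I would invoke the bordered analogue of \(\spinc\)-conjugation symmetry established by Lipshitz-Ozsv\'ath-Thurston in their bimodule paper: there is a DA-bimodule associated to (the mapping cylinder of) the elliptic involution on \(\partial M\) fixing \(z\) whose tensor product with \(\CFD(M)\) exchanges the \(\spin\) and \(\spinbar\) summands. Equivalently, the total invariant \(\CFD(M)=\bigoplus_\spin \CFD(M,\spin)\) admits a homotopy self-equivalence realizing \(\spinc\)-conjugation.

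Next, I would translate this to the curve side using the grading package of Section \ref{sec:grad}. Fix a pair of parametrizing curves \((\alpha,\beta)\). The generators of \(\CFD(M,\alpha,\beta;\spin)\) correspond to intersection points of \(\HFhat(M,\spin)\) with \(\alpha\cup\beta\), whose positions in the cover \(\R^2\setminus\Z^2\) of \(\partial M\setminus z\) record the \(\spinc\) grading. Under \(\spinc\)-conjugation this grading negates: a generator lifting to position \(p\) in the cover for \(\spin\) is sent by the conjugation bimodule to one at position \(-p\) for \(\spinbar\). Since this conjugation is an isomorphism at the type D level, and the reconstruction procedure from type D structures to curves (Section \ref{sub:loop-type}) is natural, the resulting decorated curves must coincide with the image of \(\HFhat(M)\) under the involution \((x,y)\mapsto(-x,-y)\) of \(\R^2\setminus\Z^2\). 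This involution descends to the elliptic involution of \(\partial M\setminus z\) fixing \(z\), yielding the claim.

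The main obstacle is verifying the second step: namely, that the bordered bimodule realizing \(\spinc\)-conjugation corresponds geometrically to the central involution in the cover. Concretely, one must show the bimodule swaps the two idempotents of the torus algebra (matching the elliptic involution's interchange of the two distinguished arcs of \(\alpha\cup\beta\)) and sends each operation labelled by a Reeb chord \(\rho_I\) to the operation labelled by the reflected chord. This amounts to inspecting Lipshitz-Ozsv\'ath-Thurston's explicit conjugation bimodule in tandem with the correspondence of \cite{HRW}, and keeping careful track of the baseline lift used to set up the \(\spinc\) decomposition. A secondary point concerns local systems: the image of each local system must be isomorphic to the original on its image curve, which is automatic because conjugation is an isomorphism of type D structures. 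Thus the decorated collection \(\HFhat(M)\) is invariant as required.
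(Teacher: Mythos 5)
Your opening move is correct and matches the paper: invoke \cite[Theorem 3]{LOT2011} to identify $\spinc$-conjugation with box-tensoring by the DA-bimodule $\bbA$ structure on the torus algebra itself. However, your second step contains a claim that is actually false, and the paper's proof is largely devoted to getting around the resulting difficulty.

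You assert that ``the bordered bimodule realizing $\spinc$-conjugation corresponds geometrically to the central involution in the cover,'' and defer the verification to ``inspecting Lipshitz--Ozsv\'ath--Thurston's explicit conjugation bimodule.'' If you carry out that inspection, you will discover that the conjugation bimodule $\bbA$ is \emph{not} homotopy equivalent to the bimodule $\bbE$ of the elliptic involution. The two differ by exactly one $\delta^1$ operation (a $\rho_{123}$-labelled edge), and this difference is detectable: if $\sC$ denotes the small embedded circle linking the basepoint $z$, then $\bbE\boxtimes\sC\cong\sC$ (as it must, since $\sC$ is invariant under the elliptic involution), whereas $\bbA\boxtimes\sC$ simplifies to a noncompact train track, hence is a genuinely different type D structure. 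So the functors $\FA$ and $\FE$ on the Fukaya category are distinct, and the naturality you invoke does not, by itself, give the theorem.

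The missing idea is that $\FA$ and $\FE$ agree not on all curves, but on the subcategory of curves that actually arise as $\HFhat(M)$ for a three-manifold $M$. Establishing this requires two ingredients. First, an arrow-sliding argument (passing through the half-identity bimodule $\bbI$ with $\bbE=\bbI\boxtimes\bbA$ and $\bbI\boxtimes\bbI=\id$) shows that the extra operation introduced by a ``coil'' can always be removed by a change of basis \emph{unless} the curve is a copy of $\sC$, possibly carrying a nontrivial local system. Second, one must prove that no summand of $\HFhat(M)$ is homotopic to $\sC$ with any local system. This is not at all automatic; the paper establishes it by appealing to the extension of $\CFD$ to a type D structure over $\Alg^{\mathbf{U}}$ (a truncation of the bordered $\HFminus$ theory), which provides the relation $\sum_i D_i'\circ D_{i+1}'\circ D_{i+2}'\circ D_{i+3}'=0$; a summand $\sC$ with local system $\varphi$ would force this expression to equal $\varphi$, contradicting invertibility. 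Proving that relation requires a nontrivial holomorphic-curve analysis (degenerations of moduli spaces of maps with an interior puncture going to the closed Reeb orbit). None of this is visible in your sketch, and without it the argument would prove a false statement about arbitrary curve invariants.
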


This corresponds to the fact that the curves in  Figure~\ref{fig:left-right-trefoil} are symmetric under reflection through the origin. (This is the midpoint of the segment which joins the two lifts of \(z\) shown in the figure.) 

\begin{remark} The original statement of Theorem \ref{thm:pairing} in  \cite{HRW}
said that \(\HFhat(Y) = \mathit{HF}(\boldsymbol\gamma_0,\boldsymbol\gamma_1')\), where $\boldsymbol\gamma_1'=\bar h(\HFhat(M_1))$. Here, $\bar h$ denotes the composition of  $h$ with the elliptic involution. By combining this with Theorem~\ref{thm:spin-c-sym} we are able to derive the version of Theorem \ref{thm:pairing} stated at the beginning of the paper. 
\end{remark}

This symmetry of the bordered Floer invariants had long been suspected and was already known for certain classes of manifolds. For example, for graph manifold rational homology tori, the symmetry holds because the elliptic involution actually extends to a diffeomorphism of the whole manifold. For complements of knots in the three-sphere, this symmetry was established by Xiu using properties of knot Floer homology  \cite{Xiu}. 
Its existence in general answers another natural question, which has been in the air for some time:

\begin{corollary}\label{crl:mutation}
Heegaard Floer homology is invariant under genus one mutation. In other words, $\HFhat(M_1\cup_hM_2) \simeq \HFhat(M_1\cup_{\overline{h}}M_2)$, where \(\overline{h}\) is the composition of \(h\) with the elliptic involution. 
\end{corollary}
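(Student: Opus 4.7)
The plan is to combine the pairing theorem with the conjugation symmetry; given both, the corollary is essentially immediate. Write $\boldsymbol\gamma_i = \HFhat(M_i)$ for $i=1,2$, and let $\iota$ denote the elliptic involution of $\partial M_2 \setminus z_2$ that fixes the basepoint $z_2$. Up to a harmless reordering (which I address at the end), I may take $\bar h = h \circ \iota$.

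The first step is to apply Theorem~\ref{thm:pairing} to each of the two gluings, reducing the statement to a comparison of Lagrangian Floer homologies in $\partial M_1 \setminus z_1$:
\[
\HFhat(M_1 \cup_h M_2) \cong \mathit{HF}\bigl(\boldsymbol\gamma_1,\, h(\boldsymbol\gamma_2)\bigr), \qquad
\HFhat(M_1 \cup_{\bar h} M_2) \cong \mathit{HF}\bigl(\boldsymbol\gamma_1,\, h(\iota(\boldsymbol\gamma_2))\bigr).
\]
A small verification is needed before invoking the pairing theorem for the mutated gluing: the basepoint condition $\bar h(z_2)=z_1$ must hold. This is immediate, since $\iota$ fixes $z_2$ by the hypothesis in Theorem~\ref{thm:spin-c-sym} and $h(z_2)=z_1$ by assumption.

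The second step is to apply Theorem~\ref{thm:spin-c-sym} to $M_2$, which says that $\boldsymbol\gamma_2$ is invariant under $\iota$ as a collection of decorated immersed curves (i.e.\ up to regular homotopy and isomorphism of local systems). Therefore $h(\iota(\boldsymbol\gamma_2)) = h(\boldsymbol\gamma_2)$ as decorated curves in $\partial M_1 \setminus z_1$, so the two Floer homologies on the right-hand side above agree, and the corollary follows.

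The only genuine obstacle has already been overcome in Theorem~\ref{thm:spin-c-sym}; what remains is bookkeeping to check that basepoints, orientation conventions on $\partial M_i$, and local system data are tracked compatibly through both invocations of the pairing theorem. The same argument handles the alternative convention $\bar h = \iota \circ h$: in that case one applies Theorem~\ref{thm:spin-c-sym} to $M_1$ instead, using that $\mathit{HF}(\iota(\boldsymbol\gamma_1), \iota(h(\boldsymbol\gamma_2))) = \mathit{HF}(\boldsymbol\gamma_1, h(\boldsymbol\gamma_2))$ since $\iota$ is a diffeomorphism of $\partial M_1 \setminus z_1$ preserving $\boldsymbol\gamma_1$ setwise.
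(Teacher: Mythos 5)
Your proof is correct and takes exactly the approach the paper intends (the paper does not write out a separate proof, treating the corollary as an immediate consequence of Theorem~\ref{thm:spin-c-sym} together with the pairing theorem). Your verification of the basepoint condition and the note about the two conventions for $\bar h$ are the right pieces of bookkeeping, and the remark in the paper about how the version of Theorem~\ref{thm:pairing} stated there already incorporates Theorem~\ref{thm:spin-c-sym} confirms your reading.
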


The proof of Theorem \ref{thm:spin-c-sym} is surprisingly subtle, and relies on our structure theorem in an essential way. Work of Lipshitz, Ozsv\'ath, and Thurston identifies the algebraic symmetry associated with $\spinc$ conjugation, which amounts to considering the action of the torus algebra via box tensor product on type D structures \cite[Theorem 3]{LOT2011}. We compare the algebra (as a type DA bimodule) with the bimodule associated with the elliptic involution, and ultimately establish that while these two bimodules are different, the behaviour (of the functors induced on the Fukaya category) is the same on any set of immersed curves that arise as the invariants of three-manifolds with torus boundary. Along the way, we prove the following result, which may be of independent interest.

\begin{proposition}
No component of \(\HFhat(M)\) is a small circle linking the basepoint.
\end{proposition}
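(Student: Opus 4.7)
My plan is to argue by contradiction using the constructive form of the structure theorem \cite[Theorem 5]{HRW}. Suppose $c$ is a component of $\HFhat(M)$ which is a small circle linking the basepoint $z$, possibly carrying a local system of dimension $n \geq 1$. Fix the standard parametrization $(\alpha,\beta)$ of $\partial M$ whose complement in $\partial M$ is a single $2$-cell containing $z$. After a regular homotopy, I arrange that $c$ lies entirely inside this $2$-cell, so that $c \cap (\alpha \cup \beta) = \emptyset$; in particular $c$ contributes no generators to $\CFD(M,\alpha,\beta)$, independently of $n$.

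In the structure theorem, each component of $\HFhat(M)$ is produced from a nonempty closed walk in the directed graph whose vertices are the generators of $\CFD(M,\alpha,\beta)$ and whose labelled edges are the $\rho_I$ operations. Under the geometric interpretation, each visited vertex corresponds to a point of intersection of the component with $\alpha \cup \beta$, and each labelled edge corresponds to a sub-arc of the component connecting two consecutive such intersections. Since the hypothetical $c$ has no intersections with $\alpha \cup \beta$, it cannot arise from any closed walk in this graph, and therefore the canonical output of the structure theorem applied to $\CFD(M,\alpha,\beta)$ contains no small-circle component.

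To close the argument, I invoke that $\HFhat(M)$ is defined up to regular homotopy of curves and isomorphism of local systems. Regular homotopy preserves the free homotopy class of each component, so the property of containing a small circle linking $z$ is a regular-homotopy invariant of the collection of components. Since the canonical representative has no such component, neither does any regularly homotopic representative, contradicting the existence of $c$.

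The main obstacle is making rigorous the intermediate step that every component produced by the structure theorem genuinely corresponds to a nonempty closed walk and not to some degenerate configuration. This requires a careful inspection of the proof in \cite{HRW}, especially in the case of local systems of dimension greater than one, to verify that no limiting process can produce a cycle with zero vertices. A secondary check to reinforce the argument is to perform an arc slide taking $\alpha$ across the $2$-cell containing $z$, producing a new parametrization $(\alpha',\beta)$ relative to which $c$ would contribute two generators; comparing $\CFD(M,\alpha,\beta)$ and $\CFD(M,\alpha',\beta)$ via the change-of-parametrization DA-bimodule then offers an alternative route to the same contradiction.
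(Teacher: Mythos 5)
Your argument has a genuine gap, and it is worth being precise about where it fails. The small circle $\sC$ enclosing $z$ corresponds to a \emph{nonzero, reduced} type D structure over $\Alg$ with four generators (two in each idempotent) and $\delta$-arrows labelled $\rho_1,\rho_2,\rho_3,\rho_{123}$; this is spelled out in the text around Figure~\ref{fig:circ-1}. In particular, $\sC$ is \emph{not} homotopy equivalent to the zero type D structure, and it is extendable in the sense of \cite[Appendix A]{HRW}. Therefore nothing in the structure theorem prevents $\sC$ from appearing as a component of the canonical output: the train track produced by the algorithm would simply be a circle crossing $\alpha$ and $\beta$ in the expected four points. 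Your step ``since the hypothetical $c$ has no intersections with $\alpha\cup\beta$, it cannot arise from any closed walk in this graph'' is a non sequitur: once you have homotoped $c$ off $\alpha\cup\beta$, you have destroyed the train-track interpretation, and the object you are analyzing is no longer a representative of any type D structure. The canonical representative of $\sC$ \emph{does} meet $\alpha\cup\beta$ and \emph{does} arise from a closed walk with four vertices. The dichotomy ``either the canonical form has a circle or no regularly homotopic representative does'' is true, but you have not established the first disjunct; you have only shown that a curve disjoint from $\alpha\cup\beta$ cannot be a train track, which is vacuous. The proposed arc-slide check suffers the same flaw: the bimodule for the arc slide will consistently relate $\CFD(M,\alpha,\beta)$ (with a four-generator $\sC$-summand) to $\CFD(M,\alpha',\beta)$, so no contradiction emerges.

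What actually rules out $\sC$ is not an algebraic consistency argument internal to $\CFD$ at all. The paper's proof relies on analytic input from Lipshitz, Ozsv\'ath, and Thurston's $\HFminus$ bordered theory: every type D structure over $\Alg$ arising from a three-manifold with torus boundary extends to a type D structure over $\Alg^{\mathbf{U}}$ with a nontrivial $\mu_4$ (Theorem~\ref{thm:LOT-} / Proposition~\ref{prop:D_U sum}). This forces the relation $\sum_{i=0}^3 D_i'\circ D_{i+1}'\circ D_{i+2}'\circ D_{i+3}'=0$ on the reduced coefficient maps (Corollary~\ref{cor:LOT-}). For the summand $\sC$ carrying a local system $(V,\varphi)$, that fourfold composite evaluates to $\varphi$ on the generators of $\sC$, so the relation forces $\varphi=0$, contradicting invertibility. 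This is an essentially holomorphic-curve-theoretic obstruction, not a feature of the combinatorial structure theorem; there is no purely algebraic argument of the form you outline, because as a type D structure over $\Alg$ (even including extendability over $\widetilde\Alg$), $\sC$ is perfectly well behaved and \emph{could} occur.
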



\subsection*{Section \ref{sec:knot_floer}:  Knot Floer homology.} 
If  \(K\) is a knot in a closed oriented three-manifold \(Y\), its 
complement is a manifold with torus boundary. Conversely, if \(M\) is a manifold with torus boundary and \(\mu\) is a filling slope on \(\partial M\), there is a knot 
\(K_\mu \subset M(\mu)\), where \(M(\mu)\) is the Dehn filling of slope \(\mu\) and \(K_\mu\) is the core of the Dehn filling. There is a close relationship between 
\(\HFhat(M)\) and the knot Floer homology of \(K_\mu\). In one direction we have the following result:

\begin{theorem} Suppose \(K\) is a knot in \(S^3\) and \(M\) is the complement of \(K\). Then \(\HFhat(M)\) is determined by the knot Floer chain complex \(\mathit{CFK}^-(K)\). 
\end{theorem}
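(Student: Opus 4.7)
The plan is to chain together two known results: an algorithm of Lipshitz--Ozsv\'ath--Thurston that builds the type D structure $\CFD(M)$ of a knot complement from $\mathit{CFK}^-(K)$, and the structure theorem from \cite{HRW} that converts $\CFD(M)$ into the immersed multicurve $\HFhat(M)$. Since each step is a deterministic (functorial) procedure taking its input to a well-defined output, composing them gives a procedure producing $\HFhat(M)$ from $\mathit{CFK}^-(K)$.

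In more detail, first I would fix the standard meridian-longitude parametrization $(\mu,\lambda)$ of $\partial M$ and recall the LOT formula, which takes as input a reduced model of $\mathit{CFK}^-(K)$ (viewed as a $\Z\oplus\Z$-filtered chain complex over $\F[U]$) and outputs an explicit model of the bordered invariant $\CFD(M,\mu,\lambda)$. The algorithm essentially unfolds the staircase and bigon/rectangle structure of $\mathit{CFK}^-$ into the idempotent decomposition and differential of $\CFD$, with the $U$-filtration controlling where the $\rho_{12}$, $\rho_{23}$, $\rho_{123}$ actions appear. The upshot is that $\mathit{CFK}^-(K)$ determines $\CFD(M,\mu,\lambda)$ up to homotopy equivalence of type D structures.

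Second, I would apply the structure theorem \cite[Theorem 5]{HRW}, which asserts that every extendable type D structure over the torus algebra is homotopy equivalent to one arising from a collection of immersed curves with local systems in $\partial M\setminus z$, and that the resulting decorated curves are unique up to regular homotopy and isomorphism of local systems. Feeding the $\CFD(M,\mu,\lambda)$ produced above into this theorem gives the invariant $\HFhat(M)$. By Thouin's implementation \cite{Thouin} this composite procedure is in fact algorithmic.

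The main obstacle, such as it is, is not producing the construction but verifying invariance: one needs to check that the output curves do not depend on auxiliary choices in either step (the choice of reduced model of $\mathit{CFK}^-$, and the choice of homotopy representative of $\CFD$). Both steps are already known to produce outputs that are well-defined up to the appropriate equivalence, so this amounts to citing those uniqueness statements rather than proving anything new. No independent geometric argument about $M$ is required beyond the two black-box inputs.
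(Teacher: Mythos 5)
Your proposal is correct and matches the paper's own reasoning: the theorem is stated in the introduction as a direct consequence of the Lipshitz--Ozsv\'ath--Thurston result that $\CFD(M)$ is determined by $\mathit{CFK}^-(K)$, combined with the structure theorem of \cite{HRW} converting $\CFD$ into $\HFhat(M)$, and Section~\ref{sec:curves-from-CFK} then spells out this same two-step composition in algorithmic form (first for horizontally-and-vertically simplified bases, then via arrow-removal for general bases).
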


This is a consequence of a theorem of Lipshitz, Ozsv{\'a}th and Thurston, which says that \(\CFD(M)\) is determined by \(\mathit{CFK}^-(K)\). Using the arrow calculus of \cite{HRW}, we give an effective algorithm for determining \(\HFhat(M)\) from \(\mathit{CFK}^-(K)\). 

\parpic[r]{
 \begin{minipage}{60mm}
 \centering
 \includegraphics[scale=0.8]{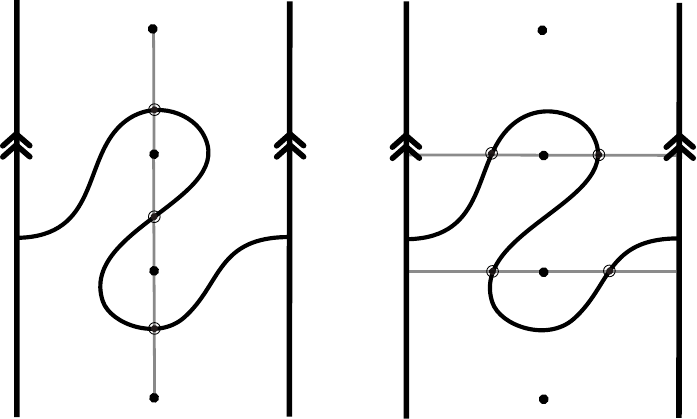}
 \captionof{figure}{Knot Floer homology of the trefoil.}
 \label{fig:HFK(T)}
  \end{minipage}%
  }
Conversely, it follows directly from the definition of \(\HFhat(M)\) that  \( \HFK(K_\mu) = \HFa(\HFhat(M),L_\mu)\), where \(L_\mu\) is the noncompact Lagrangian of slope \(\mu\) passing through the puncture point. 
As usual, there is a refined version of this statement which takes \(\spinc\) structures into account. 
The relevant set of \(\spinc\) structures -- \(\spinc(M, \gamma_\mu)\) -- was defined by Juh{\'a}sz \cite{Juhasz1}. It   is an \(H_1(M)\)-torsor. 
Suppose that \(\spin \in \spinc(M)\) and let \(\pi_\mu\co\spinc(M, \gamma_\mu) \to \spinc(M)\) be the restriction map.
There is a natural bijection between  \(\pi_\mu^{-1}(\spin)\) and the set of lifts of \(L_{\mu}\) to the covering space \(\barT_{M,\spin}\). Denote the lift corresponding to  \(\spinbar \in \pi_\mu^{-1}(\spin)\) by \(L_{\mu,\spinbar}\). Then we have:
\begin{proposition}
	\label{prop:HFK}
\(	\HFK(K_\mu,\spinbar) = \HFa(\HFhat(M,\spin),L_{\mu,\spinbar}).\)
\end{proposition}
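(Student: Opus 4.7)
The plan is to deduce Proposition~\ref{prop:HFK} from the unrefined identity \(\HFK(K_\mu) = \HFa(\HFhat(M), L_\mu)\) mentioned just above the statement, by matching the natural \(\spinc(M, \gamma_\mu)\)-decompositions on both sides through the covering space \(\barT_{M,\spin}\).  First I would recall how the unrefined statement arises from the bordered pairing theorem: \(\HFK(K_\mu)\) is computed as a box tensor product of \(\CFA(M,\alpha,\beta)\) with the type~D invariant of a meridional solid torus filling, and the latter corresponds geometrically to the noncompact Lagrangian \(L_\mu\) through \(z\).  Theorem~\ref{thm:pairing} then recasts the pairing as \(\HFa(\HFhat(M), L_\mu)\) computed in \(\partial M\setminus z\).

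Next I would pass to the cover \(\barT_{M,\spin}\) to refine the geometric side.  The curve \(\HFhat(M,\spin)\) lifts canonically to \(\barT_{M,\spin}\), and the preimage of \(L_\mu\) under \(\barT_{M,\spin}\to\partial M\setminus z\) is by definition the disjoint union of lifts \(\{L_{\mu,\spinbar}\mid \spinbar\in\pi_\mu^{-1}(\spin)\}\).  Since the covering projection is a local diffeomorphism, every transverse intersection of \(\HFhat(M,\spin)\) with \(L_\mu\) downstairs lifts uniquely to an intersection with a unique \(L_{\mu,\spinbar}\), so that
\[
\HFa(\HFhat(M), L_\mu) \;=\; \bigoplus_{\spin \in \spinc(M)} \bigoplus_{\spinbar \in \pi_\mu^{-1}(\spin)} \HFa\bigl(\HFhat(M,\spin), L_{\mu,\spinbar}\bigr).
\]
The outer direct sum realizes the \(\spinc(M)\)-refinement of the pairing theorem (Theorem~\ref{thm:gradings}), so it remains to identify the inner summands with the refinement of \(\HFK(K_\mu)\) over the fiber \(\pi_\mu^{-1}(\spin)\) of \(\spinc(M,\gamma_\mu) \cong \spinc(M(\mu), K_\mu)\).

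For this last step, I would match the natural bijection between \(\pi_\mu^{-1}(\spin)\) and lifts of \(L_\mu\) asserted just above the proposition with the sutured \(\spinc\) grading on generators of \(\HFK\).  Both \(\pi_\mu^{-1}(\spin)\) and the set of lifts of \(L_\mu\) are torsors for the same quotient of \(H_1(\partial M)\) mediating between \(H_1(M)\) and the deck group of \(\barT_{M,\spin}\to\partial M\setminus z\), so the asserted bijection is abstractly clear.  The main obstacle is to verify that this identification coincides with the generator-level refinement: two intersection points which project to the same point of \(\HFhat(M)\cap L_\mu\) but lift to distinct \(L_{\mu,\spinbar_1}\) and \(L_{\mu,\spinbar_2}\) must carry sutured \(\spinc\) structures differing precisely by \(\spinbar_1-\spinbar_2\in H_1(M)\).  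I expect this to follow by computing the \(\spinc\)-difference from a domain in the cover connecting the two lifts and comparing it with Juh\'asz's formula in terms of the relative first Chern class; this comparison is the technical heart of the argument.
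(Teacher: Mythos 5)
Your proposal is correct and follows essentially the same route as the paper: the unrefined identity \(\HFK(K_\mu) = \HFa(\HFhat(M),L_\mu)\) comes from the box-tensor description of knot Floer homology (cf.\ Theorem~\ref{thm:knot-floer} and Remark~\ref{Rem:L_mu}), and the refinement is obtained by lifting to \(\barT_{M,\spin}\) and identifying the \(H_M\)-torsor of lifts of \(L_\mu\) with \(\spinc(M,\gamma_\mu,\spin)\). The ``technical heart'' you flag — that the torsor identification matches the generator-level sutured \(\spinc\) grading — is exactly what the paper delegates to the construction of \(\HFhat(M,\spin)\) in \cite[Section 6.2 and Proposition 47]{HRW}, so your outline matches the paper's proof step for step.
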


As an example, suppose 
	 \(M\) is the  complement of the right-hand trefoil, and let \(m\) and \(\ell\) be its standard meridian and longitude. Then \(\spinc(M ,\gamma_m) = \{\spin_{i} \,|\, i \in \Z\} \). The lifts \(L_{m, \spin_i}\)  are shown on the left-hand side of  Figure~\ref{fig:HFK(T)}; the groups \(\HFa(\HFhat(M), L_{m,\spin_i})\) give the knot Floer homology of the trefoil. For comparison, \(\spinc(M, \gamma_\ell) = \{\mathfrak{t}_{i} \,\ell\, i \in \Z+\frac{1}{2}\} \). The lifts  \(L_{\ell, \mathfrak{t}_i}\)  are shown in the right-hand side of the  figure. It is easy to see that \(\HFa(\HFhat(M), L_{l,\mathfrak{t}_i}) = \F^2 \) if \(i = \pm \frac{1}{2}\), and is \(0\) otherwise, as was first calculated by Eftekhary \cite{Eftekhary2005}.

\subsection*{Section \ref{sec:prop}:  Turaev torsion and Thurston norm.} 
It is well known that  knot Floer homology determines  these invariants, so it must be possible to express them in terms of \(\HFhat(M)\). In fact, the relation is very simple and geometric. In this introduction, we restrict our attention to the case where \(H_1(M)= \Z\), but the general case is treated Section \ref{sec:prop}. 

The Turaev torsion is a function \(\tau\co\spinc(M,\partial M) \to \Z\). When \(H_1(M)=\Z\), \(\spinc(M)\) contains a unique element \(\spin\), and \(\spinc(M,\partial M)\) can be identified with \(\{\spinbar_i \, | \, i \in \Z+\frac{1}{2}\}\) in such a way that 
\[\sum \tau(\spinbar_i)t^i = \frac{\Delta(M)}{t^{-1/2}-t^{1/2}}\]
where \(\Delta(M)\) is the Alexander polynomial of \(M\) and the right-hand side is to be expanded in positive powers of \(t\). We have 

\begin{theorem}\label{thm:torsion} For \(i \in \Z+\frac{1}{2}\), 
\(\tau(\spinbar_i) = \gamma_i \cdot \barcurves{M,\spin}\), where \(\gamma_i\) is a path running from  lift of \(z\) at height \(i\) in \(T_M\) towards \(-\infty\). 
\end{theorem}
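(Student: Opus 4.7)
The plan is to combine Proposition~\ref{prop:HFK} with the classical relation between knot Floer homology and the Alexander polynomial. The Alexander coefficients will appear naturally as signed Lagrangian intersection numbers, and the cumulative sum encoding the torsion will then be identified geometrically with $\gamma_i \cdot \barcurves{M,\spin}$ via a Stokes-type argument.

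Since $H_1(M) = \Z$, there is a unique primitive slope $\mu$ on $\partial M$ for which $M(\mu)$ is a homology sphere; the knot $K_\mu \subset M(\mu)$ is then null-homologous, and the coefficient $a_k$ of $t^k$ in $\Delta(M)$ equals $\chi(\HFK(K_\mu, \spinbar_k))$ (with the standard normalization for knots in homology spheres). Expanding $1/(t^{-1/2} - t^{1/2}) = t^{1/2}(1 + t + t^2 + \cdots)$ in positive powers gives $\tau(\spinbar_i) = \sum_{k < i} a_k$. Applying Proposition~\ref{prop:HFK} with filling slope $\mu$ and invoking the graded pairing theorem (Theorem~\ref{thm:gradings}), which identifies the Euler characteristic of $\HFa(\barcurves{M,\spin}, L_{\mu,\spinbar_k})$ with the signed intersection number $\barcurves{M,\spin} \cdot L_{\mu,\spinbar_k}$, one obtains
\[ \tau(\spinbar_i) = \sum_{k < i} a_k = \sum_{k < i} \barcurves{M,\spin} \cdot L_{\mu,\spinbar_k}. \]

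The final geometric step is to recognize this cumulative sum as $\gamma_i \cdot \barcurves{M,\spin}$. In $\barT_{M,\spin}$ the lifts $L_{\mu,\spinbar_k}$ are parallel meridional Lagrangians stacked at heights $k$, and $\gamma_i$ is the transversal ray from the lift of $z$ at height $i$ running to $-\infty$. The $1$-chain $\gamma_i - \sum_{k<i} L_{\mu,\spinbar_k}$ is the boundary of the half-infinite strip below height $i$ on the appropriate side of $\gamma_i$, up to pieces at infinity. Because $\barcurves{M,\spin}$ consists of closed curves with compact image in $\barT_{M,\spin}$, the contributions at infinity vanish, so the signed intersection numbers of the two $1$-chains with $\barcurves{M,\spin}$ coincide. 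The main obstacle will be careful bookkeeping of sign and orientation conventions across the several identifications (the $\pm t^n$ ambiguity in $\Delta(M)$, the Euler-characteristic sign for $\HFK$, the orientation realizing the relative $\Z/2$ Maslov grading on $\barcurves{M,\spin}$, and the orientation of $\gamma_i$), so that $\tau$ is recovered in its canonical normalization rather than merely up to a global sign.
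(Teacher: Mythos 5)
Your proposal is correct and is essentially the same argument as the paper's proof of Proposition~\ref{prp:torsion}: in both cases the "derivative" of $\tau$ is identified with $\chi(\SFH(M,\gamma_\mu,\spinbar)) = \chi(\HFK(K_\mu,\spinbar))$ via Proposition~\ref{prop:HFK} and the $\Z/2$-graded pairing, this Euler characteristic is realized as the signed intersection number $\barcurves{M,\spin}\cdot L_{\mu,\spinbar}$, and the result follows from the vanishing of both sides far in the $-\infty$ direction. The paper packages your ``Stokes-type'' cumulative sum as the homological identity $[\gamma_{\spinhat}] = -\sum_{j\ge 1}[L_{\mu,\spinbar-j\mu}]$ in a direct limit and then telescopes by induction, but this is the same mechanism you describe with the half-infinite strip.
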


For example, if \(M \) is the complement of the right-hand trefoil, we see from Figure~\ref{fig:HFK(T)} that the \(\gamma_i \cdot \barcurves{M,\spin} = 0\) for \(i=\frac{1}{2}\) and \(i\leq  - \frac{3}{2}\), while  \(\gamma_i \cdot \barcurves{M,\spin} = 1\) for  \(i=- \frac{1}{2}\) and \(i\geq  \frac{3}{2}\). This agrees with the fact that 
\[ \frac{\Delta(M)}{t^{-1/2}-t^{1/2}} = \frac{t^{-1}-1+t}{t^{-1/2}-t^{1/2}} = t^{-1/2} + t^{3/2}+t^{5/2}+\cdots\] 

Similarly,  we can relate \(\HFhat(M)\) to the Thurston norm:

\begin{proposition} Suppose that \(H_1(M) = \Z\), and  
let \(k_+\) be the largest value of \(k\) such that \(z_k\) cannot be connected to \(+\infty\) by a path in \(T_M\) disjoint from \(\barcurves{M,\spin}\). Similarly, let \(k_-\) be the smallest value of \(k\) such that \(z_k\) cannot be connected to \(-\infty\) by a path disjoint from \(\barcurves{M,\spin}\).  
If \(\Sigma\) is a minimal genus surface generating \(H_2(M,\partial M)\), then    $ 2g(\Sigma) - 1= k_+ - k_- $.
\end{proposition}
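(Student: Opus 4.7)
The plan is to bootstrap Ozsv\'ath--Szab\'o's genus detection theorem (extended by Ni to null-homologous knots in arbitrary closed three-manifolds) through Proposition~\ref{prop:HFK}.

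First, I would exploit $H_1(M)=\Z$ to pass to a genuine knot Floer homology problem. Let $\mu$ generate $H_1(M)$; then the Dehn filling $M(\mu)$ is an integer homology sphere and the core $K_\mu$ of the filling is a null-homologous knot. The surface $\Sigma$ generating $H_2(M,\partial M)$ has boundary the rational longitude, which bounds a meridional disk in the filling solid torus, so capping off produces a Seifert surface for $K_\mu$ of the same genus; hence $g(K_\mu)=g(\Sigma)$. Proposition~\ref{prop:HFK} then identifies
\[
\HFK(K_\mu,\spinbar_i)\cong \HFa(\HFhat(M,\spin),L_{\mu,\spinbar_i}),
\]
where $L_{\mu,\spinbar_i}$ is the lift of the meridional Lagrangian through the lift $z_i$ of the basepoint in $\barT_{M,\spin}$. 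By the genus detection theorem, the maximum $i$ with non-trivial $\HFK(K_\mu,\spinbar_i)$ is $g(K_\mu)$; by the symmetry of $\HFK$ (which is itself a manifestation of Theorem~\ref{thm:spin-c-sym}), the minimum such $i$ is $-g(K_\mu)$.

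The core of the argument is then the geometric translation between the extremal supports of $\HFK(K_\mu,\cdot)$ and the quantities $k_\pm$. The Lagrangian $L_{\mu,\spinbar_i}$ passes through $z_i$ and separates into two rays, one going to $+\infty$ and one to $-\infty$ in the unwrapped direction; I would argue that $\HFa(\HFhat(M,\spin),L_{\mu,\spinbar_i})\neq 0$ precisely when at least one of these rays cannot be isotoped off the curves, which occurs exactly when $z_i$ is blocked (in the sense of the statement) from at least one end. Combining with the symmetry above, this identifies $k_+=\max\{i:\HFK(K_\mu,\spinbar_i)\neq 0\}=g(K_\mu)$ while $k_-=\min\{i:\HFK(K_\mu,\spinbar_i)\neq 0\}+1=-g(K_\mu)+1$. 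The $+1$ shift in $k_-$ is what yields $2g-1$ rather than $2g$: it reflects the fact that the fixed point of the elliptic involution enforcing the $\HFK$-symmetry lies between two consecutive lifts $z_{k},z_{k+1}$, and that the two ends $\pm\infty$ are asymmetric in the definitions of $k_+$ and $k_-$. Putting these together yields $k_+-k_-=2g(K_\mu)-1=2g(\Sigma)-1$.

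The hardest part is the last paragraph: characterising non-vanishing of $\HFa(\HFhat(M,\spin),L_{\mu,\spinbar_i})$ directly in terms of the separation of $z_i$ by the curves, and tracking the origin of the $+1$ shift through the interplay of the basepoint location, the elliptic involution, and the cover $\barT_{M,\spin}$. A useful sanity check is the unknot complement, where $\HFhat(M)$ is a single meridional line separating the cover into two half-planes: here $g(\Sigma)=0$ and one reads off $k_+=0$, $k_-=1$, so $k_+-k_-=-1=2g(\Sigma)-1$, as desired.
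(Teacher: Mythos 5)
You have the right starting point (genus detection for knot Floer homology, fed through Proposition~\ref{prop:HFK}), and in broad spirit this is how the paper proceeds, but the concrete geometric step you flag as "the hardest part" is in fact where the argument breaks down, and the paper sidesteps it by making a different choice of filling.

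The first issue is your geometric model of the Lagrangian. $L_{\mu,\spinbar_i}$ is not a doubly-infinite line through $z_i$ that splits into two rays to $\pm\infty$; it is an arc that both begins and ends at lifts of the puncture, so in $\barT_{M,\spin}$ it is a bounded segment joining two consecutive lifts of $z$. There are therefore no rays to $\pm\infty$ to isotope. The more serious issue is the claimed characterisation "$\HFa(\HFhat(M,\spin),L_{\mu,\spinbar_i})\neq 0$ precisely when $z_i$ is blocked from at least one end." Since $b_1(M)=1$, the multicurve $\HFhat(M,\spin)$ always contains a component homologous to $\lambda$, which separates $\barT_{M,\spin}$; consequently every $z_k$ is blocked from at least one end. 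Your characterisation would then force $\HFK(K_\mu,\spinbar_i)\neq 0$ for all $i$, contradicting the finite support of knot Floer homology. (Replacing "at least one" by "both" still does not give the right count: the set of pegs blocked from both ends has $2g$ elements, while $\HFK$ is supported in $2g+1$ gradings.) So the key intermediate claim is false as stated, and the "$+1$ shift" you invoke does not repair it.

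What the paper does instead, and what makes the translation clean, is to work with the \emph{longitudinal} filling $K_l\subset M(l)$ and pair $\barcurves{M,\spin}$ with the \emph{horizontal} Lagrangians $\lambda_k$ (lifts of $L_l$). A horizontal line at height $k$ intersects a pegboard-tight curve essentially if and only if the curve has material both above and below height $k$, which ties directly to the highest and lowest pegs and hence to $k_+$ and $k_-$; the only delicacy is the solid-torus-like case, which the paper handles separately. Your vertical Lagrangian $L_\mu$ meets every component of a tight curve in a complicated way and there is no comparably simple criterion for non-vanishing in terms of blockedness of a single $z_i$. If you wanted to persist with the meridional filling you would need to characterise precisely when the arc from $z_{j}$ to $z_{j+1}$ cannot be isotoped off the pegboard-tight curve, which is a genuinely different (and harder) question than the one you wrote down. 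Switching to the longitudinal filling as in the paper is the more economical route.

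Two smaller slips: the boundary $\lambda$ of $\Sigma$ does not bound a meridional disk of the $\mu$-filling solid torus (the meridional disk bounds $\mu$), though the conclusion $g(K_\mu)=g(\Sigma)$ is still correct because $\Sigma$ is directly a Seifert surface for the core $K_\mu$; and in the sanity check for the unknot, $\HFhat(D^2\times S^1)$ is a circle parallel to the rational \emph{longitude}, not a meridional line.
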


By combining Theorem~\ref{thm:torsion} with the characterization of L-space Dehn fillings given in \cite{HRW}, we give a simple new proof of the first main theorem of \cite{RR}, which characterizes the set of L-space filling slopes of a Floer simple manifold in terms of the Turaev torsion.

\subsection*{Section \ref{sec:seiberg-witten}: Relation to Seiberg-Witten theory}
In the final three sections, we explore some more speculative connections between \(\HFhat(M)\) and other subjects. The first of these is Seiberg-Witten theory. The Seiberg-Witten equations on four-manifolds with \(T^3\) boundary (or more accurately, an end modeled on \(T^3 \times [0, \infty)\) were studied by Morgan, Mrowka, and Szab{\'o} \cite {MMS1997}; very similar statements hold for three-manifolds with torus boundary. We discuss the relation between the set of solutions to the Seiberg-Witten equations on \(M\) and \(\HFhat(M)\), focussing on the case of Seifert-fibred spaces.  Although proving any general relation seems difficult (and the payoff uncertain), these considerations motivated a lot of our initial thinking about \(\HFhat(M)\), and are a useful guide in many contexts. 

\subsection*{Section \ref{sec:Kh}: Relation to Khovanov homology}
A well-know theorem of Ozsv{\'a}th and Szab{\'o} \cite{OSz2005} shows that if \(K\) is a knot in \(S^3\) there is a spectral sequence from the Khovanov homology of \(-K\) to \(\HFhat(\Sigma_K)\), where \(\boldsymbol\Sigma_K\) is the branched double cover of \(K\). Here we explore the analog of this statement for a four-ended tangle \(T\), whose branched double cover \(\boldsymbol \Sigma_T\) is a manifold with torus boundary. We discuss the relation between the underlying categories in which the two invariants live, and describe the form the analog of the Ozsv{\'a}th-Szab{\'o} spectral sequence should have. Finally, we consider some specific examples, including rational tangles, which are relatively easy, and the \((2,-2)\) and \((2,-3)\) pretzel tangles, which are more interesting. 

\subsection*{Section \ref{sec:minus}: Relation to \(\mathit{HF}^-\)} 
The theory of bordered Floer homology for \(\HFminus\) is currently being developed by Lipshitz, Ozsv{\'a}th and Szab{\'o}.  One might hope that this theory can be used to enhance  \(\HFhat(M)\)  to an invariant \(\HFminus(M)\) which carries full information about \(\HFminus\) of Dehn fillings on \(M\). It is  natural to ask if there are conditions under which everything about \(\HFminus(M)\) is actually determined by \(\HFminus(M)\). 
Although it is relatively easy to construct examples where \(\HFhat(M)\) cannot tell us everything, it is equally clear that there are many cases in which it effectively does. In this final section, we consider some examples of both types and speculate briefly about what conditions might be enough to ensure that \(\HFhat(M)\) carries full infomation about \(\HFminus\) of Dehn fillings on \(M\).

\subsection*{Acknowledgements}
The authors would like to thank Cameron Gordon, Peter Kronheimer, Yank\i\ Lekili, Tye Lidman, Robert Lipshitz, Peter Ozsv{\'a}th, Sarah Rasmussen, Ivan Smith, Zoltan Szab{\'o}, and Claudius Zibrowius for helpful discussions (some of them dating back a very long time). Part of this work was carried out while the third author was visiting Montr\'eal as CIRGET research fellow, part was carried out while the second and third authors were participants in the program {\em Homology Theories in Low Dimensions} at the Isaac Newton Institute, and part while the third author was visiting the CRM as a Simons Visiting Professor. The authors would like to thank CIRGET, the CRM, and the Newton Institute for their support.

\section{Immersed curves as invariants of manifolds with torus boundary}\label{sec:loop}

We begin by describing the invariant $\HFhat(M)$ associated with a three-manifold $M$ with torus boundary, its relationship to bordered Floer homology, and our interpretation of Lipshitz, Ozsv\'ath, and Thurston's pairing theorem in terms of Langrangian intersection Floer homology. 

\subsection{Modules over the torus algebra} We give a quick overview of the modules that arise in bordered Floer theory, restricting attention to the case of torus boundary. A less terse overview is given in \cite{HRW}.

\parpic[r]{
 \begin{minipage}{40mm}
 \centering
 \begin{tikzpicture}[scale=0.75,>=stealth', thick] 
\def \radius {1.5cm} \def \outer {2cm}
 \node at ({360/(4) * (1- 1)}:\radius) {$\boldsymbol{\cdot}$};
  \node at ({360/(4) * (2- 1)}:\radius) {$\boldsymbol{\cdot}$};
  \node at ({360/(4) * (3 -1)}:\radius) {$\boldsymbol{\cdot}$};
   \node at ({360/(4) * (4 -1)}:\radius) {$\boldsymbol{\cdot}$};
  \draw[<-,shorten <= 0.125cm, shorten >= 0.155cm, dashed]({360/4 * (1 - 1)}:\radius) arc ({360/4 * (1- 1)}:{360/4 * (2-1)}:\radius);
 \draw[<-,shorten <= 0.125cm, shorten >= 0.125cm]({360/4 * (2 - 1)}:\radius) arc ({360/4 * (2- 1)}:{360/4 * (3-1)}:\radius);
 \draw[<-,shorten <= 0.125cm, shorten >= 0.125cm]({360/4 * (3 - 1)}:\radius) arc ({360/4 * (3- 1)}:{360/4 * (4-1)}:\radius);
 \draw[<-,shorten <= 0.125cm, shorten >= 0.125cm]({360/4 * (4 - 1)}:\radius) arc ({360/4 * (4- 1)}:{360/4 * (5-1)}:\radius);
  \node at ({360/(4) * (1- (1/2))}:\outer) {${\rho_{0}}$};
  \node at ({360/(4) * (2-(1/2))}:\outer) {$\rho_3$};
  \node at ({360/(4) * (3-(1/2))}:\outer) {$\rho_2$};
  \node at ({360/(4) * (4-(1/2))}:\outer) {$\rho_1$};
   \node at ({360/(4) * (1)}:\outer) {$\iota_0$};
\node at ({360/(4) * (2)}:\outer) {$\iota_3$};
\node at ({360/(4) * (3)}:\outer) {$\iota_2$};
\node at ({360/(4) * (4)}:\outer) {$\iota_1$};
\end{tikzpicture}

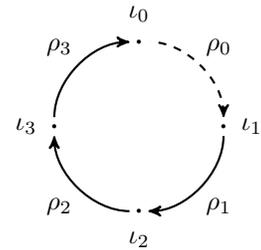
\captionof{figure}{A simple quiver.}\label{fig:path-for-B}
  \end{minipage}%
  }
The torus algebra $\Alg$ is obtained as the quotient of a particularly simple path algebra. Ignoring (for the moment) the dashed edge labelled $\rho_0$, let $\sB$ be the path algebra (over $\F$) of the quiver shown in Figure \ref{fig:path-for-B}. Then $\Alg$ is obtained in two steps: we first quotient by the ideal $\langle \iota_0+\iota_2, \iota_1+\iota_3\rangle$ and then quotient the result by the ideal $\langle \rho_3\rho_2,\rho_2\rho_1\rangle$. It will sometimes be convenient to write $\mu(a,b)=ab$ for the multiplication in $\Alg$, and we will use the shorthand $\rho_I=\rho_{I_1}\rho_{I_2}$ where $I$ is an increasing sequence in $\{1,2,3\}$ and $I=I_1I_2$. Denote by $\sI\subset\Alg$ the subring of idempotents, generated by $\iota_\bu=\iota_1=\iota_3$ and $\iota_\ci=\iota_0=\iota_3$. Note that, as a vector space, $\Alg$ is generated by $\{\iota_\bu,\iota_\ci,\rho_1,\rho_2,\rho_3,\rho_{12},\rho_{23},\rho_{123}\}$.

A slightly larger algebra, which yields $\Alg$ as a quotient, is obtained from the quiver in Figure \ref{fig:path-for-B} (this time including the $\rho_0$ edge) modulo the ideal $\langle \rho_I | \text{the\ string\ } I \text{\ contains\ more\ than\ one\ } 0 \rangle$. Denoting this algebra by $\widetilde{\sB}$, the algebra $\widetilde{\Alg}$ is obtained (as before) in two steps: we first quotient by the ideal $\langle \iota_0+\iota_2, \iota_1+\iota_3\rangle$ and then quotient the result by the ideal $\langle \rho_3\rho_2,\rho_2\rho_1,\rho_1\rho_0, \rho_0\rho_3\rangle$. Note that $\Alg\cong\widetilde{\Alg}/\langle\rho_0\rangle$, and that $\sI$ is the subring of idempotents in $\widetilde{\Alg}$ as well. The element $U=\rho_{1230}+\rho_{2301}+\rho_{3012}+\rho_{0123}$ is central in $\widetilde{\Alg}$.

Bordered Floer homology introduces a particular class of left-modules over $\Alg$ called type D structures. A type D structure over $\Alg$ is a left $\sI$-module $V$ where  $\sI\subset\Alg$ is the idempotent subring (so that the underlying $\F$-vector space satisfies $V\cong V_\bullet\oplus V_\circ$), equipped with an $\sI$-linear map $\delta\co V\to \Alg\otimes V$ such that \[\big((\mu\otimes\id)\circ(\id\otimes\delta) \circ\delta\big)(x)=0\] for all $x\in V$. Notice that this compatibility condition on $\delta$ ensures that $\partial(a\otimes x)=a\cdot\delta(x)$ squares to zero, where $a\cdot(b\otimes x) = \mu(a,b)\otimes x$, so that $\Alg\otimes V$ is a left differential module over $\Alg$. All tensor products are taken over $\sI$.

Given a type D structure $(V,\delta)$, an extension is a pair $(V,\tilde\delta)$ where $\tilde\delta\co V\to \widetilde{\Alg}\otimes V$ satisfying \[\big((\tilde\mu\otimes\id)\circ(\id\otimes\tilde\delta) \circ\tilde\delta\big)(x)=U\otimes x\] for all $x\in V$ and such that $\tilde\delta|_{U=0}=\delta$. Whenever an extension exists, the type D structure $(V,\delta)$ is called extendable. It turns out that extensions, when they exist, are unique up to isomorphism as $\widetilde{\Alg}$-modules \cite{HRW}. This class of objects has geometric significance: If $(M,\alpha,\beta)$ is a bordered three-manifold with torus boundary, so that $\alpha$ and $\beta$ specify a handle decomposition of the punctured torus $\partial M\setminus z$, the bordered invariant $\CFD(M,\alpha,\beta)$ is an extendable type D structure. This is essentially due to Lipshitz, Ozsv\'ath, and Thurston; see \cite[Appendix A]{HRW}. Our structure theorem states that every extendable type D structure over $\Alg$ is equivalent to a collection of immersed curves decorated with local systems \cite[Theorem 5]{HRW}. We illustrate this with a simple example; see Figure \ref{fig:review-example}. 

\parpic[r]{
 \begin{minipage}{50mm}
 \centering
 \includegraphics[scale=.8]{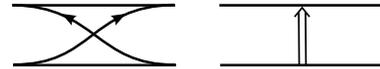}
 \captionof{figure}{Pairs of edges are replaced by crossover arrows.}
 \label{fig:crossover-convention}
  \end{minipage}%
  }
The starting point for our geometric interpretation of type D structures (and their extensions) is the observation that the description of a type D structure in terms of a decorated graph, where the vertex set generates $V$ and the labeled edge set describes the map $\delta$, may equivalently be given in terms of an immersed train track in the torus minus a marked point. Furthermore, extended type D structures admit a convenient shorthand, wherein particular pairs of arrows are replaced by crossover arrows; see Figure \ref{fig:crossover-convention}. The work to be done towards a structure theorem for bordered invariants \cite[Theorem 1]{HRW} is to exhibit an algorithm by which all crossover arrows are either removed or run between parallel strands. And, towards the paring theorem \cite[Theorem 2]{HRW}, one checks that the box tensor product chain complex is left invariant (up to chain homotopy equivalence) when this algorithm is implemented. 

 \begin{figure}[t]
\labellist 
\pinlabel {$\cong$} at 255 331 
\pinlabel {(i)} at 52 130 \pinlabel {(ii)} at 187 130 \pinlabel {(iii)} at 320 130
\pinlabel {(iv)} at 52 -10 \pinlabel {(v)} at 187 -10 \pinlabel {(vi)} at 320 -10
\small
\pinlabel {$z$} at 79 355
\pinlabel {$\rho_0$} at 100 370 \pinlabel {$\rho_1$} at 100 287 \pinlabel {$\rho_2$} at 15 287 \pinlabel {$\rho_3$} at 15 370
  \pinlabel {$3$} at 224 298  \pinlabel {$3$} at 320 298
   \pinlabel {$1$} at 224 363.5  \pinlabel {$1$} at 224 341 \pinlabel {$1$} at 320 363.5
    \pinlabel {$12$} at 167 352  \pinlabel {$12$} at 270 320
         \endlabellist
\includegraphics[scale=.8]{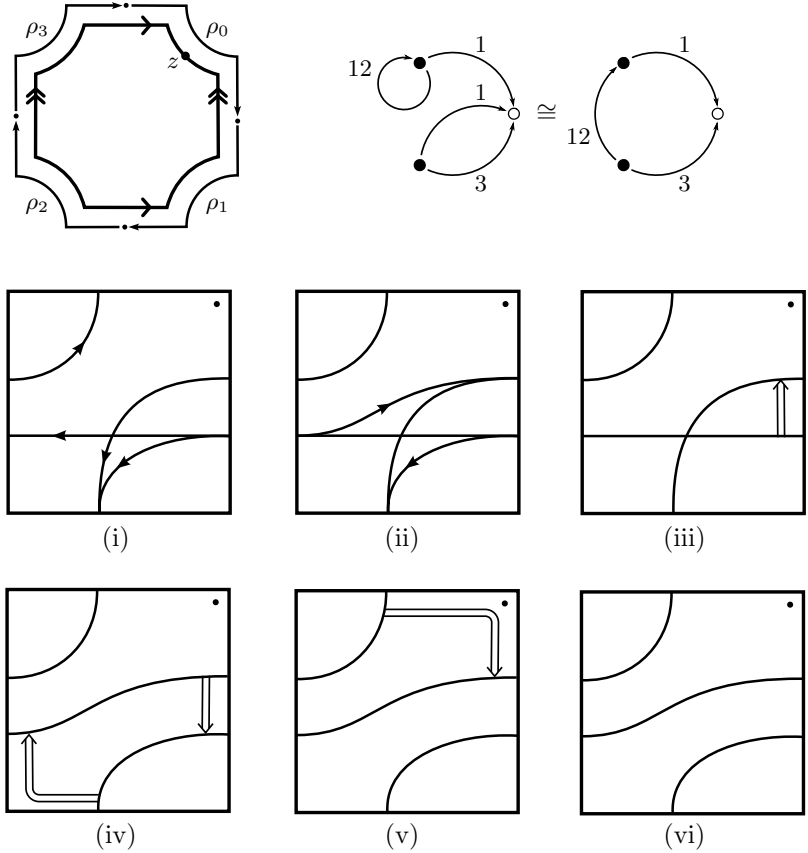}
\vspace*{10pt}
\caption{Upper left: identifying the elements of the algebra $\Alg$ with the torus, punctured at $z$ and cut open along a fixed 1-hadle decomposition. Upper right: two type D structures represented as decorated graphs. These are isomorphic as $\Alg$-modules, as illustrated, in 6 steps. (i) The decorated graph admits an equivalent representation an immersed train track in the marked torus. Note that all tangencies are either vertical or horizontal, and intersection with the vertical gives generators in the $\iota_\bu$ summand while intersection with the horizontal gives  generators in the $\iota_\ci$ summand. (ii) A choice of extension is made, where in our convention unoriented edges should be read as two-way edges. (iii) We introduce the crossover arrow notation as a shorthand for collecting pairs of oriented edges. (iv) Applying \cite[Proposition 24]{HRW} we can pass to another diagram representing the same type D structure but in which all crossover arrows run clockwise. (v) Clockwize-running crossover arrows covering a corner may be removed by a change of basis of the form $x\mapsto x + \rho_I\otimes y$ when the crossover arrow gives $\delta(x)=\rho_I\otimes y$, and crossover arrows can be pushed over a handle by a change of basis of the form $x\mapsto x+y$. (v) In this way, an algorithm can be given that removes all arrows (see \cite[Section 3.7]{HRW}) unless they connect two strands that remain parallel, in which case a local system provids the appropriate book keeping tool. Notice that, the resulting unoriented immersed curve specifies both a type D structure and and extension without ambiguity.}
\label{fig:review-example}
\end{figure}

\parpic[r]{
\labellist 
\small
\pinlabel {$\big(\F^2, \left(\begin{smallmatrix} 1 & 1 \\ 0 & 1\end{smallmatrix}\right)\!\big)$} at 195 60
\endlabellist
 \begin{minipage}{70mm}
 \centering
 \includegraphics[scale=.8]{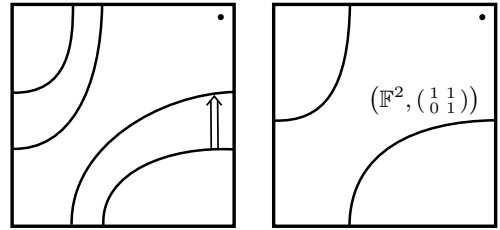}
 \captionof{figure}{Collections of crossover arrows between parallel curves can be represented using local systems.}
 \label{fig:small-local-system}
  \end{minipage}%
  }
The end result of the aforementioned algorithm leads naturally to the appearance of local systems, that is, finite dimensional vector spaces over $\F$ that are equipped with an automorphism. Indeed, since the only remaining crossover arrows run between parallel curves, the dimension of this vector space is given by the number of parallel curve-components while the crossover arrows give a graphical shorthand for an automorphism; see Figure \ref{fig:small-local-system}. As such, the case where there are no crossover arrows remaining corresponds, strictly speaking, to the case where all local system are one-dimensional. We will refer to this one-dimensional local system as the trivial local system, and simply record the immersed curve in this trivial case. Note that the case of trivial local systems corresponds to the loop type case that appears in the literature \cite{HRRW,HW,Zibrowius}. This also provides us with a graphical representation of a local system $(V,\Phi)$ over an immersed curve, namely, one replaces the curve with $\dim(V)$ parallel copies of the curve in question and encodes the endomorphism $\Phi$ using crossover arrows. This can always be done by confining the crossover arrows to a prescribed part of the curve; we will refer to this process as {\em expanding} the local system. 

\subsection{The case of trivial local systems}\label{sub:loop-type} In practice, many examples of type D structures arising as the bordered invariants of three-manifolds with torus boundary carry trivial local systems. Following \cite{HRW}, a manifold $M$ is loop type if $\HFhat(M)$ caries a trivial local system. At present, the authors are not aware of a three-manifold $M$ for which the invariant $\HFhat(M)$ carries a non-trivial local system, though we emphasize that this is most likely tied to a general lack of examples rather than being indicative of a simplification that holds for all bordered invariants. However, there are certain classes of manifolds which are known to be loop type. For example:

\begin{proposition}\label{prp:Floer-simple}
Manifolds with torus boundary that are Floer simple are loop type. 
\end {proposition}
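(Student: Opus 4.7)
The plan is to combine Theorem~\ref{thm:pairing} with the L-space gluing criterion from \cite{HRW} to show that having two distinct L-space Dehn fillings of $M$ forces all local systems on $\HFhat(M)$ to be trivial.

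First, I would recall that $M$ being Floer simple means it admits two or more L-space Dehn fillings; fix two such slopes $\alpha_0 \neq \alpha_1$. By Theorem~\ref{thm:pairing}, $\dim \HFhat(M(\alpha_i)) = \dim \HFa(\HFhat(M), L_{\alpha_i})$, and the L-space condition requires this dimension to equal the minimum $|H_1(M(\alpha_i); \Z)|$. Supposing for contradiction that some component $\gamma$ of $\HFhat(M)$ carries a local system $(V, \Phi)$ with $\dim V = n \geq 2$, I would expand the local system into $n$ parallel copies of $\gamma$ decorated with crossover arrows encoding $\Phi$. Intersecting with $L_{\alpha_i}$ yields $n \cdot |\gamma \cap L_{\alpha_i}|$ generators, while the crossover arrows induce differentials whose cancellation is controlled by $\Phi$ read against the intersection pattern. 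The L-space condition then forces these differentials to realize maximal possible cancellation at each slope $\alpha_i$.

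The heart of the argument, and the main obstacle, is showing that requiring optimal cancellation simultaneously at two distinct slopes forces $\Phi$ to be conjugate to the identity. The relative ordering in which parallel strands of $\gamma$ intersect $L_\alpha$ reshuffles as $\alpha$ varies, so the pairings of crossover arrows that produce differential cancellation change with the slope; only the trivial monodromy survives both constraints, contradicting $n \geq 2$. An alternative and possibly cleaner route invokes the standard form of $\CFD(M)$ for Floer simple manifolds, which is determined combinatorially by the Turaev torsion (see \cite{RR}): this standard form has no crossover arrows requiring non-trivial monodromy between parallel strands, so applying the structure theorem algorithm of \cite[Theorem 5]{HRW} produces immersed curves with only trivial local systems, yielding $\HFhat(M)$ loop type as claimed.
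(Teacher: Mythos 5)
Your alternative route (last three sentences) is essentially the paper's proof: the paper cites \cite{RR} and \cite{HRRW} for the identification of Floer simple manifolds with simple loop type manifolds, whose normal form has no crossover arrows requiring non-trivial monodromy, and hence directly yields curves with trivial local systems. Your phrasing via ``standard form determined by Turaev torsion'' is the same content, so that half of your proposal is on target.

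Your primary attempt, however, rests on a misconception about how local systems interact with the pairing theorem. When you expand a component $\gamma$ carrying $(V,\Phi)$, $\dim V = n \geq 2$, into $n$ parallel strands with crossover arrows and intersect with $L_{\alpha_i}$, the crossover arrows do \emph{not} induce differentials: the crossover arrows are the bookkeeping for the monodromy $\Phi$, and in the hat-version Lagrangian intersection Floer homology the differential counts bigons disjoint from the basepoint. Since $\gamma$ and $L_{\alpha_i}$ are curves of distinct slopes, they can be put in minimal position with no bigons at all, so there is no ``cancellation controlled by $\Phi$'' to optimize. The actual effect of a rank-$n$ local system is simply to multiply the intersection count by $n$: a line $L_{\alpha}$ must cross each of the $n$ parallel strands, so it meets the expanded $\gamma$ at least $n \geq 2$ times. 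Since the L-space characterization from \cite[Section 7]{HRW} requires each component to be crossed minimally exactly once, this immediately kills \emph{all} L-space slopes, not merely two. Consequently your two-slope machinery is unnecessary (a single L-space filling suffices), and the ``main obstacle'' you identify---forcing $\Phi$ to be conjugate to the identity by comparing cancellation patterns at two slopes---is not a gap that can be filled; it is the wrong mechanism. If you want a self-contained direct proof rather than the paper's citation, the correct statement is the one recorded in the proof of Theorem \ref{thm:HFST}: $\sL_M = \emptyset$ whenever $\HFhat(M)$ carries a non-trivial local system, and this follows from the intersection-count observation above rather than from any cancellation argument.
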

\begin{proof}[Outline of proof] Recall that $M$ is Floer simple if it admits more than one L-space filling \cite{RR}, where a closed manifold $Y$ is an L-space whenever it is a rational homology sphere for which $\dim\HFhat(Y)=|H_1(Y;\Z)|$. It is observed in \cite{RR} that the class of Floer simple manifolds coincides with the class of  {\em simple} loop type manifolds introduced in \cite{HW}; see \cite{HRRW} for a concise statement and proof. This latter class yields a description in terms of immersed curves with trivial local system directly. \end{proof}
\begin{remark}In fact, more can be said in the Floer simple case: The number of curve components in $\HFhat(M)$ agrees with the number of spin$^c$ structures on $M$. Thus for each $\spin\in\spinc(M)$, $\HFhat(M,\spin)$ is a single immersed curve with trivial local system. This should be compared, for instance, with the special case where $M$ is the complement of an L-space knot in $S^3$.   \end{remark}

The final statement in this proof outline alludes to an alternate construction of the curves in the loop type case. The manifolds for which $\HFhat(M)$ carries trivial local systems are precisely the loop type manifolds introduced by the first and third authors \cite{HW}; they are characterized by the property that, for some choice of basis, the type D structure $\CFD(M,\alpha,\beta)$ associated with $M$ and some parametrization $(\alpha,\beta)$ is represented by a valence 2 graph. The goal of this subsection is to give a greatly simplified construction of the curve invariant $\HFhat(M)$ given loop type manifold $M$ (and, in particular, given such a preferred basis for $\CFD(M,\alpha,\beta)$). This is an instructive special case to consider as it bypasses the train tracks and arrow sliding algorithm needed for the general case, while still capturing the typical behavior of the curve invariants. Indeed, as mentioned above, no non-loop type examples are currently known, and this case may be sufficient for many applications. We remark, however, that even if a manifold is loop type, computing $\CFD(M,\alpha,\beta)$ may produce a basis which does not satisfy the loop type condition. Finding a basis that does can be a non-trivial task. In this case the arrow sliding procedure from \cite{HRW} can be thought of as a graphical algorithm for finding a loop type basis for $\CFD(M,\alpha,\beta)$.  See, for example, Figures~\ref{fig:connect-sum} and \ref{fig:connect-sum2} in Section \ref{sec:curves-from-CFK}.

To describe this simplified construction, we first consider a collection of enhanced decorated graphs, where the vertex set takes values in $\{\bu^+,\bu^-,\ci^+,\ci^-\}$ and the edge set is directed and each edge is labeled by an element in $\{1,2,3,12,23,123\}$. Theses graphs are required to satisfy two additional conditions: First, ignoring signs for the moment, the edge orientations are compatible with the black and white vertex labeling so that  
\raisebox{-4pt}{$\begin{tikzpicture}[thick, >=stealth',shorten <=0.1cm,shorten >=0.1cm] 
\draw[->] (0,0) -- (1,0);\node [above] at (0.45,-0.07) {$\scriptstyle{1}$};
\node at (0,0) {$\bu$};\node at (1,0) {$\circ$};\end{tikzpicture}$},
\raisebox{-4pt}{$\begin{tikzpicture}[thick, >=stealth',shorten <=0.1cm,shorten >=0.1cm] 
\draw[->] (0,0) -- (1,0);\node [above] at (0.45,-0.07) {$\scriptstyle{2}$};
\node at (0,0) {$\circ$};\node at (1,0) {$\bu$};\end{tikzpicture}$}, 
\raisebox{-4pt}{$\begin{tikzpicture}[thick, >=stealth',shorten <=0.1cm,shorten >=0.1cm] 
\draw[->] (0,0) -- (1,0);\node [above] at (0.45,-0.07) {$\scriptstyle{3}$};
\node at (0,0) {$\bu$};\node at (1,0) {$\circ$};\end{tikzpicture}$}, 
\raisebox{-4pt}{$\begin{tikzpicture}[thick, >=stealth',shorten <=0.1cm,shorten >=0.1cm] 
\draw[->] (0,0) -- (1,0);\node [above] at (0.45,-0.07) {$\scriptstyle{12}$};
\node at (0,0) {$\bu$};\node at (1,0) {$\bu$};\end{tikzpicture}$},
\raisebox{-4pt}{$\begin{tikzpicture}[thick, >=stealth',shorten <=0.1cm,shorten >=0.1cm] 
\draw[->] (0,0) -- (1,0);\node [above] at (0.45,-0.07) {$\scriptstyle{23}$};
\node at (0,0) {$\circ$};\node at (1,0) {$\circ$};\end{tikzpicture}$}, and
\raisebox{-4pt}{$\begin{tikzpicture}[thick, >=stealth',shorten <=0.1cm,shorten >=0.1cm] 
\draw[->] (0,0) -- (1,0);\node [above] at (0.45,-0.07) {$\scriptstyle{123}$};
\node at (0,0) {$\bu$};\node at (1,0) {$\circ$};\end{tikzpicture}$}. Second, the signs on the verticies change only when edges labelled by $2$ or 123 are traversed, so that
\raisebox{-4pt}{$\begin{tikzpicture}[thick, >=stealth',shorten <=0.1cm,shorten >=0.1cm] 
\draw[->] (0,0) -- (1,0);\node [above] at (0.45,-0.07) {$\scriptstyle{2}$};
\node at (-0.125,0.0625) {$\circ^{\pm}$};\node at (1.125,0.0625) {$\bu^{\mp}$};\end{tikzpicture}$} and
\raisebox{-4pt}{$\begin{tikzpicture}[thick, >=stealth',shorten <=0.1cm,shorten >=0.1cm] 
\draw[->] (0,0) -- (1,0);\node [above] at (0.45,-0.07) {$\scriptstyle{123}$};
\node at (-0.125,0.0625) {$\bu^{\pm}$};\node at (1.125,0.0625) {$\circ^{\mp}$};\end{tikzpicture}$}. Any such enhanced decorated graph encodes a reduced $\Z/2\Z$-graded type D structure over $\Alg$. We are only interested in relatively $\Z/2\Z$ graded type D structures, so the sign labels will be well defined only up to changing the sign on every vertex of the graph. This is treated in detail in Section \ref{sec:grad}; see also \cite[Section 6]{HRW}. 

\parpic[r]{
 \begin{minipage}{60mm}{
\begin{tikzpicture}[>=stealth',scale=0.8] 
		\node at (0,0) {$\bullet$}; 
		\node at (0,1) {$\bullet$}; 
		\node at (0,2) {$\bullet$};
\draw[thick, ->,shorten <=0.1cm] (0,0) -- (1,0); \node at (0.5,0.25) {$\scriptstyle{123}$};
\draw[thick, ->, shorten <=0.1cm] (0,1) -- (1,1); \node at (0.5,1.25) {$\scriptstyle{12}$};
\draw[thick, ->, shorten <=0.1cm] (0,2) -- (1,2); \node at (0.5,2.25) {$\scriptstyle{1}$}; \node at (0.5,-0.75){\bf{I}$_\bullet$};
		\node at (2,0) {$\bullet$}; 
		\node at (2,1) {$\bullet$}; 
		\node at (2,2) {$\bullet$};
\draw[thick, <-,shorten <=0.1cm] (2,0) -- (3,0); \node at (2.5,0.25) {$\scriptstyle{12}$};
\draw[thick, <-, shorten <=0.1cm] (2,1) -- (3,1); \node at (2.5,1.25) {$\scriptstyle{2}$};
\draw[thick, ->, shorten <=0.1cm] (2,2) -- (3,2); \node at (2.5,2.25) {$\scriptstyle{3}$}; \node at (2.5,-0.75){\bf{II}$_\bullet$};
		\node at (4,0) {$\circ$}; 
		\node at (4,1) {$\circ$}; 
		\node at (4,2) {$\circ$};
\draw[thick, <-,shorten <=0.1cm] (4,0) -- (5,0); \node at (4.5,0.25) {$\scriptstyle{1}$};
\draw[thick, ->, shorten <=0.1cm] (4,1) -- (5,1); \node at (4.5,1.25) {$\scriptstyle{23}$};
\draw[thick, ->, shorten <=0.1cm] (4,2) -- (5,2); \node at (4.5,2.25) {$\scriptstyle{2}$}; \node at (4.5,-0.75){\bf{I}$_\circ$};
		\node at (6,0) {$\circ$}; 
		\node at (6,1) {$\circ$}; 
		\node at (6,2) {$\circ$};
\draw[thick, <-,shorten <=0.1cm] (6,0) -- (7,0); \node at (6.5,0.25) {$\scriptstyle{123}$};
\draw[thick, <-, shorten <=0.1cm] (6,1) -- (7,1); \node at (6.5,1.25) {$\scriptstyle{23}$};
\draw[thick, <-, shorten <=0.1cm] (6,2) -- (7,2); \node at (6.5,2.25) {$\scriptstyle{3}$}; \node at (6.5,-0.75){\bf{II}$_\circ$};
	\end{tikzpicture}
	\vspace*{-5pt}
	} 
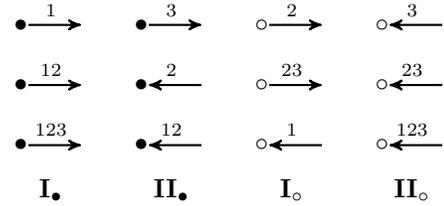
\captionof{figure}{Vertex types for valence 2 decorated graphs.}\label{fig:vertex-types}
  \end{minipage}
  }
The extendability condition on type D structures coming from manifolds with torus boundary places further constraints on the corresponding enhanced decorated graphs. In particular, if we sort incident edges at a vertex into types according to Figure \ref{fig:vertex-types}, at every vertex there is at least one edge of type {\bf I}$_{\bu/\circ}$ and at least one of type {\bf II}$_{\bu/\circ}$. Restricting to the case of valence 2 graphs, we conclude that there is exactly one edge of type {\bf I}$_{\bu/\circ}$ and exactly one of type {\bf II}$_{\bu/\circ}$ at each vertex. It is straightforward to see that any valence two decorated graph satisfying this condition at each vertex represents an extendable type D structure. A calculus for working with this class of valence two graphs was introduced in \cite{HW}.

From a valence 2 enhanced directed graph as described above, it is fairly easy to produce an (oriented) set of immersed curves following the general procedure in \cite{HRW}; the valence 2 condition implies that the initial train track representing the directed graph is in fact an immersed multicurve, so no removal of crossover arrows or other simplification is required. We will now describe an even quicker shortcut for producing a curve from a valence 2 enhanced directed graph of the type described above. The key observation is that the graph is determined by its vertex labels; the arrow labels and directions are redundant. 
More precisely, a component $\lp$ of the graph determines a cyclic list of symbols in $\{\bu^+,\bu^-,\circ^+,\bu^-\}$ coming from the vertex labels, where the order in which the vertices are read is determined by the convention that type {\bf I}$_\bu$ arrows precede $\bu^+$ vertices and follow $\bu^-$ vertices, while {\bf I}$_\circ$ arrows follow $\circ^+$ vertices and precede $\circ^-$ vertices. It is straightforward to check that this convention is consistent---that is, that any vertex of $\lp$ determines the same cyclic ordering on the vertices in $\lp$. Moreover, it is clear that given this convention the graph $\lp$ can be reconstructed from the cyclic list of vertex labels. For example, a $\bu^+$ followed by a $\circ^+$ must be connected by an arrow labelled by either $1$ or $3$, since these are the only arrows that connect $\bu$ to $\circ$ without changing sign; since $\bu^+$ vertices are followed by type {\bf II}$_\bu$ arrows, the arrow must be labelled by $3$. We find it convenient to replace the labels $\bu^\pm$ with $\beta^\pm$ and $\circ^\pm$ with $\alpha^\pm$. We have shown that $\lp$ is equivalent to a cyclic word in the letters $\{\alpha^\pm, \beta^\pm\}$, which we denote $\cu(\lp)$. Note that changing the sign on every vertex (equivalently, shifting the $\Z/2\Z$ grading on the corresponding type D structure by one) has the effect of inverting every letter $\cu(\lp)$ and reversing the cyclic order. Finally, we observe that such a cyclic word gives rise to an oriented immersed curve in the parametrized punctured torus. $\cu(\lp)$ may be viewed as an element of the free group generated by $\alpha$ and $\beta$ mod conjugation; the free group is precisely the fundamental group of the punctured torus, generated by the parametrizing curves $\alpha$ and $\beta$, and taking conjugacy classes amounts to taking non-basepointed loops. Recall that when $\lp$ comes from $\CFD(M,\alpha,\beta)$ for some bordered manifold $M$, $\alpha$ and $\beta$ are parametrizing curves for $\partial M$ and thus $\cu(\lp)$ defines an oriented immersed curve in $\partial M \setminus z$. To summarize, the case where $M$ is the complement of the right-hand trefoil is shown in Figure \ref{fig:trefoil-construction} (note that in this example, $\mu = \alpha$ and $\lambda = \beta$).

\begin{figure}[t]
\labellist 
\pinlabel {$\lambda$} at 272 21 \pinlabel {$\mu$} at 240.5 54
\pinlabel {$-$} at 150 79 
\pinlabel {$-$} at 150 55 
\pinlabel {$+$} at 150 30 
\pinlabel {$+$} at 108 118
\pinlabel {$-$} at 92 118
\pinlabel {$-$} at 78 118
\pinlabel {$+$} at 62 118
\small
\pinlabel {$+$} at 291 103 \pinlabel {$-$} at 274.5 103 \pinlabel {$-$} at 274.5 208
\pinlabel {$-$} at 324 55 \pinlabel {$+$} at 324 151 \pinlabel {$-$} at 324 274
\pinlabel {$+$} at 385 208
\pinlabel {$123$} at 136 270 \pinlabel {$3$} at 141 217
\pinlabel {$1$} at 102 303 \pinlabel {$23$} at 102 188
\pinlabel {$3$} at 50 292 \pinlabel {$1$} at 50 198
\pinlabel {$2$} at 25 250
\pinlabel $-$ at 166 242
\pinlabel $+$ at 139 304
\pinlabel $+$ at 69 321
\pinlabel $+$ at 21 284
\pinlabel $-$ at 20 211
\pinlabel $-$ at 67 173
\pinlabel $-$ at 135 185
\tiny
\pinlabel {$z$} at 130 97
\endlabellist
\includegraphics[scale=.8]{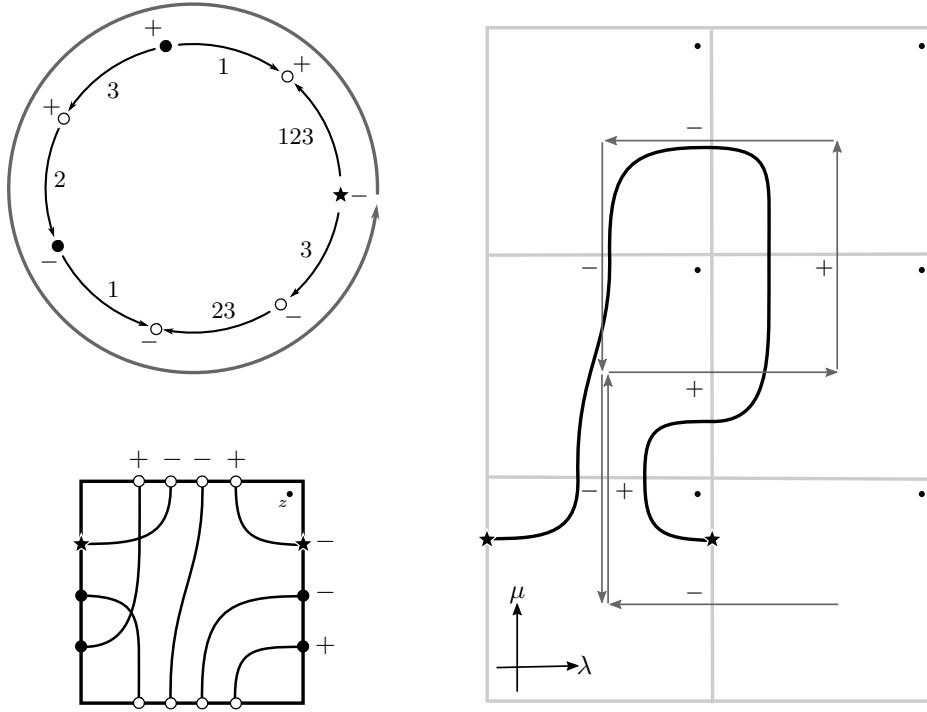}
\caption{Building the curve associated with the right-hand trefoil complement in two different ways: On the one hand, a loop-type type D structure always admits an extension for which the train track formalism immediately yields an immersed curve. On the other, by recording the signs of the intersection of this curve with the bordered arcs, a word in the free-group on two elements is obtained. In this example, the word $\lambda^{-1}\mu\lambda\mu\lambda^{-1}\mu^{-2}$ is shown, for comparison with the associated curve, in the cover $\R^2\setminus\Z^2$ of the marked boundary of the trefoil complement associated with the preferred meridian-longitude basis $\{\mu,\lambda\}$. }
\label{fig:trefoil-construction}
\end{figure}

\begin{proposition}
The two constructions are equivalent: if $M$ is a loop type manifold with type D structure described by an enhanced decorated graph $\lp$ that is valence 2, then $\cu(\lp)$ agrees with $\HFhat(M)$. (The same is true for any mod 2 graded extendable type D structure that can be described with an enhanced decorated valence 2 graph.) 
\end{proposition}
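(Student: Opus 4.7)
The plan is to show that for any valence 2 enhanced decorated graph $\lp$, the immersed multicurve produced by the train track construction of \cite{HRW} and the one encoded by the cyclic word $\cu(\lp)$ coincide component by component as oriented immersed curves in $\partial M \setminus z$, up to isotopy. Because $\lp$ is valence 2, as already observed in the text, the train track associated to $\lp$ is already a smoothly immersed 1-manifold, so no crossover arrows have to be introduced or slid off and no appeal to the structure theorem's algorithm is required. What must be checked is thus a dictionary between a concrete drawing in the punctured torus and the cyclic list of signed letters in $\{\alpha^{\pm},\beta^{\pm}\}$.

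First I would make the train track picture explicit on a single component. Each $\bu$ vertex is placed as a transverse intersection with $\beta$ inside the handle glued along $\beta$, each $\circ$ vertex as a transverse intersection with $\alpha$ inside the handle glued along $\alpha$, and each oriented edge labeled $\rho_I$ is realized by an arc in $\partial M\setminus z$ whose homotopy class rel endpoints is determined by the region(s) adjacent to $z$ recorded by $I$, as in the upper left of Figure~\ref{fig:review-example}. The edge direction combined with the vertex sign then orients the resulting immersed curve; this orientation is precisely the data packaged by the mod 2 graded extendable type D structure.

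Next I would compute the conjugacy class in $\pi_1(\partial M\setminus z) = \langle \alpha,\beta\rangle$ of this oriented train track component by traversing it once and recording each intersection with $\alpha \cup \beta$ with the appropriate sign, so that a positive crossing of $\beta$ contributes a letter $\beta$ and a negative crossing contributes $\beta^{-1}$, and similarly for $\alpha$. The core verification is a case analysis over the six vertex types of Figure~\ref{fig:vertex-types}: for each choice of one type~{\bf I} and one type~{\bf II} incident arrow at a vertex (forced by extendability), one checks that the ordering convention from the definition of $\cu(\lp)$ (type~{\bf I}$_\bu$ arrows precede $\bu^+$ vertices and follow $\bu^-$ vertices, type~{\bf I}$_\circ$ arrows follow $\circ^+$ vertices and precede $\circ^-$ vertices) is exactly the cyclic order in which the oriented curve visits those vertices, and that the $\pm$ decoration is exactly the sign of the crossing. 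Granted this, the signed cyclic word read from the train track is $\cu(\lp)$ letter for letter.

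To conclude, each component of the train track and the immersed curve representing $\cu(\lp)$ share the same cyclic sequence of signed intersections with $\alpha \cup \beta$, and the arcs between consecutive intersections are determined up to isotopy in $\partial M\setminus z$ by the region structure around $z$ dictated by the arrow labels. The two constructions therefore produce the same oriented immersed multicurve up to isotopy, which in particular matches them as regular homotopy classes, so $\cu(\lp)$ agrees with $\HFhat(M)$. The same argument applies verbatim to any mod 2 graded extendable type D structure that can be represented by a valence 2 enhanced decorated graph, since only the graph structure is used. The main obstacle is the case analysis in the third paragraph: one must unpack, for each of the six arrow labels $\rho_1, \rho_2, \rho_3, \rho_{12}, \rho_{23}, \rho_{123}$ and each pair of compatible endpoint signs, the precise routing of the arc around $z$ and the orientation it induces, and then confirm that the extendability constraint forces the cyclic word convention stated above.
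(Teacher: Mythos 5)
Your approach matches the paper's: the paper's proof is the one-line observation that ``it is enough to identify the signs on the vertices with the intersection between the $\alpha$ and $\beta$ curves and the (oriented) immersed curve $\HFhat(M)$,'' together with the example of Figure~\ref{fig:trefoil-construction}, and your proposal simply unpacks exactly that identification (the valence 2 graph already gives an immersed curve, and the case analysis matches signed vertex labels with signed crossings of $\alpha$ and $\beta$). This is correct and is essentially the same argument, spelled out in more detail.
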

\begin{proof}As suggested by the example in Figure \ref{fig:trefoil-construction}, it is enough to identify the signs on the vertices with the intersection between the $\alpha$ and $\beta$ curves and the (oriented) immersed curve $\HFhat(M)$. \end{proof}

Let \(\barT_M\) be the cover of \(T_M = \partial M \setminus z\) associated with the kernel of the composition $\pi_1(\partial M\setminus z)\to\pi_1(\partial M)\to H_1(\partial M) \to H_1(M).$ 
When $H_1(M) \cong \Z$ this covering space can be identified with an infinite cylinder, with the lift of $z$ covered by an integer's worth of points. There is a natural lift of $\HFhat(M)$ to \(\barT_M\), which we denote by  $\HFhat(M,\spin_0)$. (Here \(\spin_0\) is the unique \(\spinc\) structure on \(M\).)  Some simple examples are shown in Figure \ref{fig:first-4-knots}. These examples follow quickly from the knot Floer homology together with the the conversion from this invariant to bordered invariants, given in \cite[Chapter 11]{LOT}. More generally, applying the work of Petkova \cite{Petkova2013}, many more examples are provided by thin knots.

\begin{proposition}If $M_K$ is the complement of a thin knot $K$ in the three-sphere, then $\HFhat(M_K)$ is loop type and is  determined by the Alexander polynomial and signature of $K$. \qed\end{proposition}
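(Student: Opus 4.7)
The plan is to combine two inputs: Petkova's structural theorem for knot Floer complexes of thin knots, and the Lipshitz--Ozsv\'ath--Thurston (LOT) correspondence between $\mathit{CFK}^-(K)$ and $\CFD(M_K,\mu,\lambda)$ recalled above. Petkova shows that for a thin knot $K$, the chain-homotopy type of $\mathit{CFK}^-(K)$ decomposes as a direct sum of a single ``staircase'' summand, whose shape is dictated by the Alexander polynomial coefficients, together with a number of ``box'' (acyclic square) summands, with all of this data pinned down once one records $\Delta_K(t)$ and $\sigma(K)$ (the signature fixes the overall Maslov shift and hence the placement of the staircase). In particular, two thin knots with the same Alexander polynomial and signature have isomorphic $\mathit{CFK}^-$'s.

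Next, I would invoke the algorithm of \cite[Chapter 11]{LOT} that reads $\CFD(M_K,\mu,\lambda)$ directly from $\mathit{CFK}^-(K)$. Since this algorithm is a strict function of the chain-homotopy type of $\mathit{CFK}^-$, the previous step immediately yields that $\CFD(M_K)$ is determined by $\Delta_K(t)$ and $\sigma(K)$ for thin $K$. Combined with the structure theorem of \cite{HRW} recalled in Section~\ref{sec:loop}, this already shows that $\HFhat(M_K)$ is determined by the Alexander polynomial and signature.

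It remains to verify the loop type claim. For this I would examine the output of the LOT recipe piece by piece. The staircase summand contributes a single horizontal/vertical zig-zag chain of generators, which under the LOT conversion becomes a chain of vertices connected by arrows labeled $\rho_1,\rho_3,\rho_{23},\rho_{123}$ in the pattern dictated by the axes of the staircase, capped off at each end by the distinguished generator of $\widehat{\mathit{HFK}}$; by inspection this is a valence 2 enhanced decorated graph of the type catalogued in Figure~\ref{fig:vertex-types}. Each box summand contributes a four-vertex closed cycle with a $\rho_{12}$-arrow, which is again valence 2. Taking the disjoint union of all such summands produces $\CFD(M_K,\mu,\lambda)$ in a basis that satisfies the valence 2 loop type condition, so no crossover arrows or nontrivial local systems appear.

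The main obstacle I expect is the last bookkeeping step: confirming that under the LOT conversion the staircase and box pieces really do assemble into a valence 2 graph, with each vertex having exactly one type $\mathbf{I}$ and one type $\mathbf{II}$ incident edge. The candidate difficulty is at the ``turning corners'' of the staircase, where in principle the algorithm could produce a vertex of higher valence; however, the thinness (single-diagonal support) of $\mathit{CFK}^-$ ensures that at each such corner exactly one horizontal and one vertical differential meet, which translates under LOT to exactly one edge of each required type. Once this is checked, the proposition follows: $\HFhat(M_K)$ is loop type and can be read off, via the cyclic-word construction $\cu(\lp)$, from $\Delta_K(t)$ and $\sigma(K)$.
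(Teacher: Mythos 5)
Your proposal is correct and reconstructs exactly what the paper leaves implicit (the proposition is stated with a \(\qed\) and no written proof, on the strength of the Petkova citation and the LOT algorithm already recalled). The two load-bearing facts you identify are the right ones: Petkova's structure theorem that for a thin knot \(\mathit{CFK}^-\) is (up to filtered homotopy) a staircase plus boxes, pinned down by \(\Delta_K\) and \(\sigma(K)\); and the observation that this presentation is simultaneously horizontally and vertically simplified, so that the LOT conversion produces a valence-two graph and the proposition in Section~\ref{sec:curves-from-CFK} applies directly.

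One small imprecision worth flagging, since it doesn't affect the argument but would matter if you pushed the calculation further: a box summand of \(\mathit{CFK}^-\) has four generators, and under the LOT conversion each horizontal and each vertical arrow of length one contributes an additional \(\iota_1\)-generator, so the resulting closed component of \(\CFD\) is an eight-vertex cycle with arrows labeled by \(\rho_1,\rho_2,\rho_3,\rho_{123}\), not a four-vertex cycle with a \(\rho_{12}\)-arrow. (Geometrically this is the figure-eight curve linking two adjacent lifts of \(z\) described just before the proposition.) The valence-two conclusion you draw is nevertheless correct, and the corner bookkeeping for the staircase is handled exactly as you say: thinness forces one horizontal and one vertical arrow at each interior step, with the unstable chain joining the two ends \(\xi_0\) and \(\eta_0\).
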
 

For complements of thin knots, the immersed curves $\HFhat(M_K)$ are rather simple. One component winds around the lifts of \(z\) in a manner analogous to the curve for the trefoil complement, but with total height \(\sigma(K)\). All the other components are figure eights linking two adjacent lifts of $z$.

\begin{figure}[t]
\includegraphics[scale=.8]{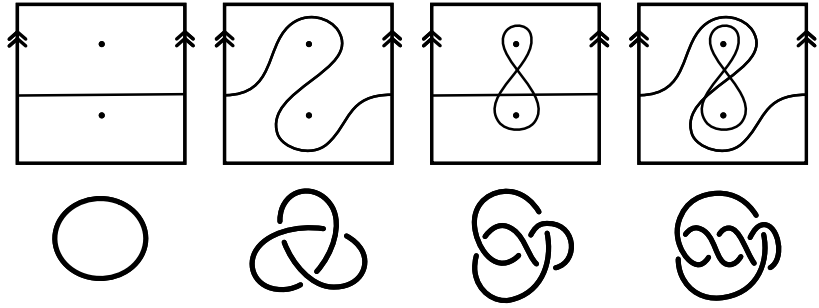}
\caption{Some small examples: $\HFhat(M)$ when $M$ is the complement of the trivial knot, the right-hand trefoil, the figure eight, and the three-twist knot}
\label{fig:first-4-knots}
\end{figure}

\subsection{Loop calculus and graph manifolds}\label{sub:loop-calculus} The remainder of this section is devoted to describing further families of loop type manifolds. Toward this end, we review some notation for loops from \cite{HW}. We start with a valence 2 decorated graph satisfying the vertex condition above. Breaking this graph along $\bullet$-vertices creates segments of certain types, and we record a loop as a cyclic word in letters representing these segments. These come in two families, according to those which are stable and unstable relative to the Dehn twist taking the bordered manifold $(M,\alpha,\beta)$ to $(M,\alpha,\alpha+\beta)$, and are described in Figure \ref{fig:puzzle-stable} and Figure \ref{fig:puzzle-unstable}, respectively. Reading a loop with a fixed orientation, these segments may appear forward or backwards; we use a bar to denote backward oriented segments. For example, the cyclic words $(d_1 d_2 d_3)$ and $(\bar d_3 \bar d_2 \bar d_1)$ represent the same loop, read with different orientations. Either cyclic word suffices to define the loop, but recall that fixing an orientation of the loop is equivalent to fixing the $\Ztwo$ grading. To keep track of this grading information, we will denote $\CFD$ by a collection of these cyclic words representing \emph{oriented} loops. Recall that since the $\Ztwo$ grading is only a relative grading in each spin$^c$ structure, only the relative orientations among loops in the same spin$^c$ structure are well defined. The extendability condition places constraints on how these segments can fit together, which is indicated by the puzzle piece ends in the figures.

\begin{figure}[ht]
\labellist
\pinlabel {\begin{tikzpicture}[>= stealth', scale=0.7]
\node at (0,0) {$\bullet$}; \node at (1.5,0) {$\circ$};  \node at (3,0) {$\circ$};  \node at (4.5,0) {$\bullet$};
\draw[->, thick, shorten >=0.1cm,shorten <=0.1cm]  (0,0) to (1.5,0);
\draw[->, thick, shorten >=0.1cm,shorten <=0.1cm, dashed]  (1.5,0) to (3,0);
\draw[->, thick, shorten >=0.1cm,shorten <=0.1cm] (3,0) to (4.5,0);
\node at (0.75,0.2) {$\scriptstyle{3}$};\node at (2.25,0.2) {$\scriptstyle{23}$};\node at (3.75,0.2) {$\scriptstyle{2}$};
\end{tikzpicture}} at 76.5 32 
\pinlabel {$\underbrace{\phantom{aaaaaa}}_k$} at 76.5 18
\pinlabel {$a_{k}$} at 72 -12
\endlabellist
\includegraphics[scale=0.7]{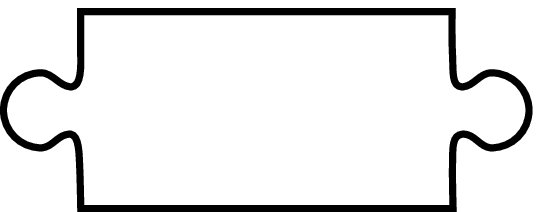}
\qquad\qquad\qquad
\labellist
\pinlabel {\begin{tikzpicture}[>= stealth', scale=0.7]
 \node at (1.5,0) {$\circ$};  \node at (3,0) {$\circ$}; 
 \draw[->, thick, shorten >=0.1cm,shorten <=0.3cm]  (0,0) to (1.5,0);
\draw[->, thick, shorten >=0.1cm,shorten <=0.1cm, dashed]  (1.5,0) to (3,0);
\draw[<-, thick, shorten >=0.3cm,shorten <=0.1cm] (3,0) to (4.5,0);
\node at (0.75,0.2) {$\scriptstyle{123}$};\node at (2.25,0.2) {$\scriptstyle{23}$};\node at (3.75,0.2) {$\scriptstyle{1}$};
\end{tikzpicture}} at 76.5 32  
\pinlabel {$\underbrace{\phantom{aaaaaa}}_k$} at 76.5 18
\pinlabel {$b_{k}$} at 72 -12
\endlabellist
\includegraphics[scale=0.7]{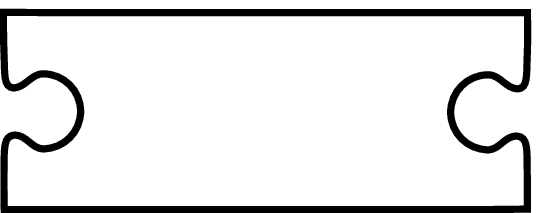}
\vspace*{5pt}
\caption{Stable pieces in standard notation.}\label{fig:puzzle-stable}
\end{figure}

\begin{figure}[ht]
\labellist
\pinlabel {\begin{tikzpicture}[>= stealth', scale=0.7]
 \node at (1.5,0) {$\circ$};  \node at (3,0) {$\circ$}; \node at (4.5,0) {$\bullet$};
\draw[->, thick, shorten >=0.1cm,shorten <=0.3cm]  (0,0) to (1.5,0);
\draw[<-, thick, shorten >=0.1cm,shorten <=0.1cm, dashed]  (1.5,0) to (3,0);
\draw[<-, thick, shorten >=0.1cm,shorten <=0.1cm] (3,0) to (4.5,0);
\node at (0.75,0.25) {$\scriptstyle1$};\node at (2.25,0.25) {$\scriptstyle{23}$};\node at (3.75,0.25) {$\scriptstyle3$};
\end{tikzpicture}} at 79.5 32  
\pinlabel {$\underbrace{\phantom{aaaaaa}}_k$} at 76 18
\pinlabel {$d_{-k}$} at 72 -12
\endlabellist
\includegraphics[scale=0.7]{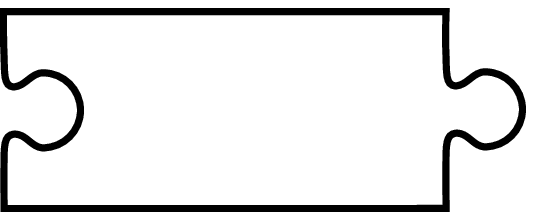}
\qquad
\labellist
\pinlabel {\begin{tikzpicture}[>= stealth', scale=0.7]
 \node at (4.5,0) {$\bullet$};
\draw[->, thick, shorten >=0.1cm,shorten <=0.3cm]  (0,0) to (4.5,0);
\node at (2.25,0.25) {$\scriptstyle{12}$};
\end{tikzpicture}} at 79.5 32  
\pinlabel {$d_{0}$} at 72 -12
\endlabellist
\includegraphics[scale=0.7]{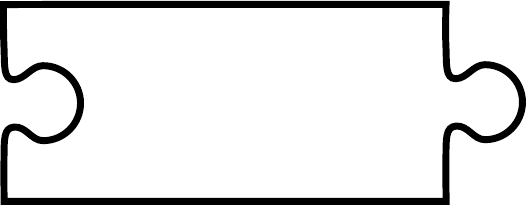}
\qquad
\labellist
\pinlabel {\begin{tikzpicture}[>= stealth', scale=0.7]
\node at (1.5,0) {$\circ$};  \node at (3,0) {$\circ$};  \node at (4.5,0) {$\bullet$};
\draw[->, thick, shorten >=0.1cm,shorten <=0.3cm]  (0,0) to (1.5,0);
\draw[->, thick, shorten >=0.1cm,shorten <=0.1cm, dashed]  (1.5,0) to (3,0);
\draw[->, thick, shorten >=0.1cm,shorten <=0.1cm] (3,0) to (4.5,0);
\node at (0.85,0.25) {$\scriptstyle{123}$};\node at (2.2,0.25) {$\scriptstyle{23}$};\node at (3.7,0.25) {$\scriptstyle2$};
\end{tikzpicture}} at 79.5 32  
\pinlabel {$\underbrace{\phantom{aaaaaa}}_k$} at 76 18
\pinlabel {$d_{k}$} at 72 -12
\endlabellist
\includegraphics[scale=0.7]{figures/in-out} 
\vspace*{5pt}
\caption{Unstable pieces in standard notation: the effect of the Dehn twist taking $\CFD(M,\alpha,\beta)$ to $\CFD(M,\alpha,\alpha+\beta)$ takes $d_k$ to $d_{k+1}$ Note that, in a variant of this notation used in \cite{HW}, we can avoid nonpositive subscripts by introducing two new letters, with $e = d_0$ and $c_k = \bar d_{-k}$.}\label{fig:puzzle-unstable}
\end{figure}

This machinery is particularly well suited for the study of graph manifolds, making it relatively easy to calculate the curve-set $\HFhat(M)$. Following \cite[Section 6]{HW}, a (bordered) graph manifold can be constructed from solid tori using three operations, all of which admit nice descriptions in terms of their action on the puzzle piece components of a loop. The operations $\mathcal{E}$ and $\mathcal{T}$ amount to reparametrizing the boundary; the only topologically significant operation is the \emph{merge} operation $\mathcal{M}$, which takes two manifolds with parametrized torus boundary $M_1$ and $M_2$ and produces a new manifold $\mathcal{M}(M_1, M_2)$ by gluing $M_1$ and $M_2$  to two boundary components of $\mathcal{P} \times S^1$, where $\mathcal{P}$ is $S^2$ with three disks removed (the particular gluing is determined by boundary parametrizations on $\partial M_1$ and $\partial M_2$). The following is a slight reformulation of \cite[Lemma 6.5]{HW}:

\begin{lemma}\label{lem:merge}
Suppose $M_1$ and $M_2$ are loop type manifolds equipped with boundary parametrizations. The manifold $\mathcal{M}(M_1, M_2)$ is loop type if for at least one $i \in \{1,2\}$, $M_i$ is simple loop type and the slope in $\partial M_i$ which glues to the fiber slope of $\mathcal{P} \times S^1$ is in $\mathcal{L}^\circ_{M_i}$. If this holds for both $i$, then $\mathcal{M}(M_1, M_2)$ is in fact simple loop type.
\end{lemma}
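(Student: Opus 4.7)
The plan is to apply the bordered pairing theorem to the merge decomposition and then track the output in the loop calculus reviewed above. Let $X$ denote the type $DAA$ bimodule associated to $\mathcal{P} \times S^1$ equipped with the three boundary parametrizations matched to the merge gluings, so that
\[
  \CFD\bigl(\mathcal{M}(M_1,M_2)\bigr) \;\simeq\; X \boxtimes \CFA(M_1) \boxtimes \CFA(M_2).
\]
Since each $M_i$ is loop type, $\CFA(M_i)$ is represented by a disjoint union of cyclic-word loops in the alphabet of stable pieces $a_k, b_k$ and unstable pieces $d_k$ from Figures~\ref{fig:puzzle-stable} and~\ref{fig:puzzle-unstable}. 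The computation therefore reduces to understanding how $X$ acts on each puzzle piece and how the local outputs concatenate into loops for the merged manifold.

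Next, assume without loss of generality that $M_1$ is simple loop type with glued fiber slope in $\mathcal{L}^\circ_{M_1}$. I would re-parametrize $\partial M_1$ so that the fiber slope becomes $\alpha$. The simple-loop-type hypothesis then forces each loop of $\CFA(M_1)$ to consist of stable pieces together with a single unstable piece, one loop per $\spinc$ structure. Via the L-space filling characterization given in \cite{HRW}, the hypothesis that the fiber slope lies in $\mathcal{L}^\circ_{M_1}$ translates into a restriction on which $d_k$ may occur; in particular it rules out $d_0$ and pins down the sign of the remaining unstable piece. A finite case check then verifies that the local action of $X$ on each admissible puzzle piece of $\CFA(M_1)$, combined with any puzzle piece of $\CFA(M_2)$, produces a box-product summand that is again a concatenation of puzzle pieces in the remaining boundary slot, with no residual crossover arrows between parallel strands. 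Stringing these local outputs together around each loop of $\CFA(M_1)$ exhibits the entire merge as loop type.

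For the second statement, when the hypothesis holds symmetrically for both $M_1$ and $M_2$, the same case analysis yields a single loop of simple form per $\spinc$ structure in the output, so $\mathcal{M}(M_1, M_2)$ is in fact simple loop type. The main obstacle will be the case analysis itself: without the L-space slope condition, unrestricted unstable pieces tensor against $X$ to produce crossover arrows that resist cancellation, so some version of the hypothesis is genuinely necessary. I expect the technical heart of the argument to be the explicit verification that the slope hypothesis rules out every such obstruction, which amounts to reinterpreting the original combinatorial proof of \cite[Lemma 6.5]{HW} through the slope/curve dictionary of \cite{HRW}.
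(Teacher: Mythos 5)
The paper does not actually prove this lemma: it presents it as ``a slight reformulation of \cite[Lemma 6.5]{HW}'' and offers no further argument, leaving the real work to the citation. Your proposal, by contrast, attempts to reconstruct that argument from scratch via the bordered pairing theorem and the loop calculus, which is a legitimate (if more labor-intensive) route.

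The gap is that the technical heart of your argument is asserted rather than carried out. You write that ``a finite case check then verifies that the local action of $X$ on each admissible puzzle piece of $\CFA(M_1)$, combined with any puzzle piece of $\CFA(M_2)$, produces a box-product summand that is again a concatenation of puzzle pieces \ldots\ with no residual crossover arrows,'' but this case check \emph{is} the content of the lemma; everything up to that point is framing. In particular you never exhibit the bimodule $X$ for $\mathcal{P}\times S^1$ in the chosen parametrization, never specify which $d_k$ survive the $\mathcal{L}^\circ_{M_1}$ hypothesis (the claim that it ``rules out $d_0$ and pins down the sign of the remaining unstable piece'' is plausible in spirit but needs to be checked against the explicit merge conventions of \cite{HW}), and never show why the absence of that restriction actually produces un-cancellable crossover arrows. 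The final paragraph of your proposal candidly concedes that the argument ``amounts to reinterpreting the original combinatorial proof of \cite[Lemma 6.5]{HW},'' which is an accurate self-assessment: as written you have an outline of that proof, not a proof. For this paper the efficient move is the one it makes---cite \cite[Lemma 6.5]{HW} and note the reformulation---and if you wish to give an independent proof you must genuinely perform the puzzle-piece computation rather than declare it done.
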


The following is an immediate consequence:

\begin{proposition}\label{prop:loop-type-graph-mfds}
A graph manifold $M$ with torus boundary is loop type if every component of the JSJ decomposition contains at most two boundary components.
\end{proposition}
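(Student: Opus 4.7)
The plan is to proceed by induction on the number $n+1$ of JSJ pieces, using Lemma \ref{lem:merge} as the engine, and to prove the slightly stronger statement that $M$ is simple loop type. First, I would check that the hypothesis on boundary components forces the dual JSJ graph to be a path: the unique piece $v_0$ meeting $\partial M$ has at most one JSJ-torus neighbor, every other piece has at most two, and connectedness rules out cycles, so the dual graph is a path $v_0, v_1, \ldots, v_n$. Each piece $v_i$ with $i < n$ is then Seifert fibered over an annulus with cone points, while $v_n$ is Seifert fibered over a disk with cone points (and when $n = 0$, $v_0 = M$ itself is of this latter type).

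The base case $n = 0$ reduces to showing that a Seifert fibered space over a disk with cone points is simple loop type; this follows because such manifolds are Floer simple (their L-space filling slopes form a nondegenerate interval), and Proposition \ref{prp:Floer-simple} applies. For the inductive step, I would assume that $M' = v_1 \cup \cdots \cup v_n$ is simple loop type with its Seifert fiber slope lying in $\mathcal{L}^\circ_{M'}$, and realize $M = v_0 \cup M'$ as an iterated merge: since $v_0$ is Seifert fibered over an annulus with some $k$ cone points, it can be attached to $M'$ by $k$ successive merges, each gluing a solid torus (framed according to the multiplicity of one cone point of $v_0$) across a $\mathcal{P} \times S^1$ whose fiber slope agrees with the Seifert fiber of $v_0$, followed by the reparametrization operations $\mathcal{E}$ and $\mathcal{T}$ needed to recover the correct boundary parametrization on $\partial M$. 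Applying Lemma \ref{lem:merge} at each merge, the solid torus side is always simple loop type with slope condition satisfied (its only forbidden slope is the meridian, which the cone point framing avoids), so it remains to check the condition on the running intermediate manifold.

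The hardest part of the proof is precisely this slope bookkeeping: one must verify that at every merge step, the fiber slope of the interposed $\mathcal{P} \times S^1$ still lies in the L-space interval of the intermediate manifold being merged, so that Lemma \ref{lem:merge} propagates the simple loop type condition rather than only loop type, allowing the induction to continue once the next piece is attached. This relies on tracking how the L-space interval and the fiber slope transform across a merge, and ultimately on the fact that in a Floer simple Seifert fibered piece the regular fiber is always contained in $\mathcal{L}^\circ$ (equivalently, it is not the rational longitude). Once this slope check is in place, iterating through the JSJ pieces $v_{n-1}, v_{n-2}, \ldots, v_0$ produces $M$ as a simple loop type manifold, and in particular as a loop type manifold, as claimed.
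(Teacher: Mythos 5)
There is a genuine gap: you set out to prove the strengthened claim that $M$ is simple loop type, but that claim is false in general. The paper notes immediately after the proposition that many graph manifolds satisfying the hypothesis fail to have any L-space fillings at all (and exhibits a concrete example with two JSJ pieces, each with at most two boundary components, whose curve $\liftcurves{M}$ is tangent to every slope -- see Figure \ref{fig:loop-type-ZHS}); such an $M$ cannot be Floer simple, hence cannot be simple loop type. Consequently the ``slope bookkeeping'' you flag as the hardest part -- verifying at each merge that the fiber slope of the interposed $\mathcal{P} \times S^1$ lies in $\sL^\circ$ of the running intermediate manifold -- cannot be carried out: the JSJ gluing is not Seifert-fiber-matching, so the slope on $\partial M'$ that gets glued to the fiber of $\mathcal{P}\times S^1$ is essentially arbitrary and has no reason to land in $\sL^\circ_{M'}$. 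The statement ``in a Floer simple Seifert fibered piece the regular fiber is always contained in $\mathcal{L}^\circ$'' concerns the fiber of that piece's \emph{own} fibration, which is not the slope the merge sees.

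The fix is to drop the strengthening and only track the loop type condition through the induction. Lemma \ref{lem:merge} requires only \emph{one} side of the merge to be simple loop type with the fiber-slope condition. In your setup that role is always played by the solid torus $V_j$ (whose $\sL^\circ$ is everything but the meridian, and the cone-point framing glues a non-meridional slope to the $\mathcal{P}\times S^1$ fiber), so each merge outputs a loop type manifold regardless of whether the other side is simple loop type, and the induction closes. This is exactly how the paper argues, except that the paper does a single merge per inductive step, writing $M = \mathcal{M}(M', N')$ where $N'$ is the Seifert fibered piece over a disk (with one boundary component) obtained by excising $\mathcal{P}\times S^1$ from the outermost JSJ piece $N$; $N'$ plays the role of the guaranteed simple loop type side. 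Your iterated decomposition of $v_0$ into $k$ solid-torus merges across $\mathcal{P}\times S^1$'s is a valid alternative that reaches the same conclusion, just less economically. Your structural claim that the dual JSJ graph is a path, and the base case via Floer simplicity of Seifert fibered spaces over the disk and Proposition~\ref{prp:Floer-simple}, are both fine.
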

\begin{proof}
We induct on the number of JSJ components. If there is only one, then $M$ is Seifert fibered with torus boundary and thus simple loop type. Otherwise, let $N$ be the JSJ component containing $\partial M$ and let $M' = M \setminus N$. $N$ is Seifert fibered with two boundary components; it can be obtained by gluing a Seifert fibered manifold $N'$ with one boundary component to $\mathcal{P} \times S^1$, gluing fiber slope to fiber slope. Thus $M$ can be viewed as $\mathcal{M}(M', N')$. $N'$ is simple loop type, the fiber slope is in $\mathcal{L}^\circ_{N'}$, and $M'$ is loop type by induction, so by Lemma \ref{lem:merge} $M$ is also loop type.
\end{proof}

Proposition \ref{prop:loop-type-graph-mfds} provides a large family of loop type manifolds, many of which do not have multiple L-space fillings (that is, are not simple loop type). In fact, as the following example demonstrates, many do not have even a single L-space filling. Let $M$ be the graph manifold with boundary determined by the plumbing tree in Figure \ref{fig:loop-type-ZHS}. $M$ has two JSJ pieces, one with two boundary components (counting $\partial M$) and the other with one boundary component. By Proposition \ref{prop:loop-type-graph-mfds}, $M$ is loop type. Using the algorithm described in \cite{HW}, it is not difficult to compute $\CFD(M, \alpha, \beta)$ where $\alpha$ and $\beta$ are a fiber and a curve in the base surface, respectively, of the $S^1$ corresponding to the boundary vertex; the result is a single loop. Using the loop notation of \cite{HW}, this invariant can be represented as follows:
$$(\bar a_1 d_2 e \bar b_1 c_1 \bar a_1 e d_1 \bar b_1 \bar e \bar a_1 d_1 d_1 \bar b_1 \bar e \bar a_1 d_1 d_1 \bar b_1 \bar e \bar a_1 d_1 e \bar b_1 c_1 \bar a_1 e d_2 \bar b_1 \bar d_1) $$
The corresponding curve $\curves M$ is shown (lifted to the plane) in Figure \ref{fig:loop-type-ZHS}. We see that there are no L-space fillings, since for any slope $\frac{p}{q}$ there is a straight line of slope $\frac{p}{q}$ which is in minimal position with $\curves M$ and intersects $\curves M$ multiple times. (Similar examples of such manifolds are also described in \cite{SR15}). The fact that there is only one loop in $\CFD(M,\alpha,\beta)$ reflects the fact that in this example $M$ is an integer homology solid torus. 
It is not difficult to produce more examples of loop type integer homology tori with no L-space fillings. For example, an integer homology sphere graph manifold with at most two boundary tori on each JSJ component is loop type by Proposition \ref{prop:loop-type-graph-mfds} and if the plumbing tree has the tree in Figure \ref{fig:loop-type-ZHS} as a subtree it follows from \cite[Theorem 14]{HRW} that there are no L-space fillings.
\begin{figure}[t]
\begin{tikzpicture}
\node (b) at (1,0) {$\bullet$};
\node (c) at (1,1) {$\bullet$};
\node (d) at (1,-1) {$\bullet$};
\node (e) at (2,0) {$\bullet$};
\node (f) at (3,0) {$\bullet$};
\node (g) at (3.3,1) {$\bullet$};
\node (h) at (3.3,-1) {$\bullet$};

\node[above right = 1pt] at (b) {$0$};
\node[above left] at (c) {$-2$};
\node[below left] at (d) {$-3$};
\node[above right = 1pt] at (e) {$7$};
\node[right = 2pt] at (f) {$1$};
\node[above right] at (g) {$2$};
\node[below right] at (h) {$3$};

\draw (1,1) -- (1,-1);
\draw (1,0) -- (3,0);
\draw (3.3,1) -- (3,0) -- (3.3,-1);
\draw[dashed] (1,0) -- (0,0);
\end{tikzpicture}\qquad\includegraphics[scale=0.55]{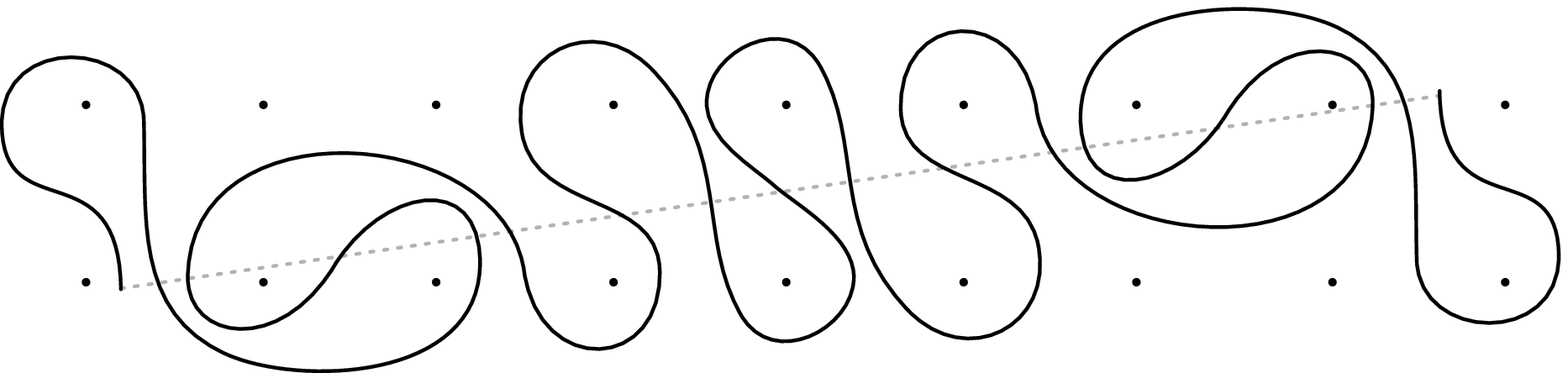}
\caption{The curve $\liftcurves{M}$, where $M$ is the graph manifold with plumbing tree shown. Note that, relative to the chosen basis, the longitude of $M$ (dotted line) has slope $1/7$. }
\label{fig:loop-type-ZHS}
\end{figure}

\subsection{Surface bundles}\label{sub:bundles} The examples discussed above are all rational homology solid tori; for an interesting class of examples with $b_1 > 1$ we consider products of once-punctured surfaces with $S^1$. Let $S_{g,1}$ denote the surface of genus $g$ with one boundary component, and let $M_g$ denote $S_{g,1}\times S^1$. To parametrize the boundary $\partial M_g = T^2$, let $\mu$ be a fiber $\{p\} \times S^1$ for $p\in \partial S_{g,1}$ and let $\lambda$ be $\partial S_{g,1}\times \{q\}$ for $q\in S^1$. We will compute $\CFD(M_g, \mu, \lambda)$.

 In the following example, all loops are in the same spin$^c$ structure, so the relative orientations are meaningful.

\begin{theorem}\label{thm:surface-bundle}
For $g\ge 0$, $M_g$ is a loop-type manifold, with bordered invariants consisting of loop-components of the form $d_0$ and $d_{2i}d_{-2i}$ for $0\le i \le g$. Specifically, when $g=0$, $\CFD(M_g,\mu,\lambda)$ consists of a single loop $(d_0)$ and, when $g>0$, this invariant has $2^g$ components of type $(d_0)$, $\frac{1}{2}\binom{2g}{g}-2^{g-1}$ components of type $(d_0d_0)$, and $\binom{2g}{g+i}$ components of type $(d_{2i}d_{-2i})$ for $1\le i \le g$, with the orientation of each $(d_{2i}d_{-2i})$ component reversed if $i$ is odd. \end{theorem}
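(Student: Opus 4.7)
The plan is to proceed by induction on $g$. For the base case $g = 0$, the manifold $M_0 = D^2 \times S^1$ is a solid torus in which $\lambda = \partial D^2 \times \{q\}$ is the meridian, so its bordered invariant $\CFD(M_0, \mu, \lambda)$ is the single loop $(d_0)$, matching the formula.

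For the inductive step, realize $S_{g+1,1}$ as the surface-level merge of $S_{g,1}$ and $S_{1,1}$ along a pair of pants $P$: glue $P$ along one of its boundary circles to $\partial S_{g,1}$, a second to $\partial S_{1,1}$, and let the third become $\partial S_{g+1,1}$. Crossing with $S^1$ realizes
\[
M_{g+1} = \mathcal{M}(M_g, M_1)
\]
as the merge operation of Section~\ref{sub:loop-calculus}. That $M_{g+1}$ is loop-type then follows from Proposition~\ref{prop:loop-type-graph-mfds}, since its JSJ decomposition consists of a single Seifert piece with one boundary torus. By the pairing theorem, $\CFD(M_{g+1})$ is obtained by box-tensoring the bordered trimodule of $P \times S^1$ with $\CFD(M_g)$ and $\CFD(M_1)$, so the problem reduces to understanding how the merge acts on the loop components.

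The base instance $\CFD(M_1) = (d_0) \sqcup (d_0) \sqcup (\bar d_2 \bar d_{-2})$ (the $g = 1$ case of the theorem) supplies the recursive multiplier. Using the graphical loop calculus of~\cite{HW} or a direct computation with the pair-of-pants bimodule, one determines the output of each elementary merge $(d_{2i} d_{-2i}) *_{\mathcal{P}} (d_{2j} d_{-2j})$ and verifies that it is always a disjoint union of loops in the families $(d_0)$, $(d_0 d_0)$, and $(d_{2k} d_{-2k})$. Summing over all pairs of input loops and applying the inductive hypothesis, the multiplicity of $(d_{2k} d_{-2k})$ in $\CFD(M_{g+1})$ is controlled by the recurrence
\[
\binom{2g+2}{g+1+k} = \binom{2g}{g+k-1} + 2\binom{2g}{g+k} + \binom{2g}{g+k+1},
\]
which is Pascal's identity applied twice. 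Analogous identities handle the $(d_0)$ and $(d_0 d_0)$ multiplicities, with the $2^g$ growth of $(d_0)$ components coming from the fact that each merge with one of the two $(d_0)$ loops of $M_1$ doubles their count. The orientation reversal for odd $i$ is inherited from the orientation of $(\bar d_2 \bar d_{-2})$ in $\CFD(M_1)$, which toggles the relative $\Z/2$ grading by one unit per merge step.

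The principal obstacle is carrying out the elementary merge computations cleanly --- verifying which loops arise from merging each pair of loop types and with what orientations. Once these are in hand, the combinatorial identities are standard and the induction closes. We expect the graphical presentation of loops from~\cite{HW}, applied to the pair-of-pants building block, to make these elementary computations tractable; the sign-tracking step is where the most care will be required.
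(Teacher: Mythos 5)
Your inductive strategy matches the paper's: both approaches add genus by gluing on a pair of pants crossed with $S^1$, and both close the combinatorics via Pascal's identity. The paper's proof makes this concrete by invoking the explicit type DA bimodule $\mathcal{G}$ of \cite[Section 5]{Hanselman2016} satisfying $\CFD(M_{g+1},\mu,\lambda)\cong\mathcal{G}\boxtimes\CFD(M_g,\mu,\lambda)$, and observing that $\mathcal{G}\cong\mathcal{G}_1\oplus\mathcal{G}_2\oplus\mathcal{G}_3$ with the latter two being identity bimodules --- exactly the ``merge with the two $(d_0)$ loops of $M_1$'' contribution in your picture, with $\mathcal{G}_1$ corresponding to the $(\bar d_2\bar d_{-2})$ loop. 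So the two approaches are in substance the same; yours just describes the merge trimodule picture that gives rise to $\mathcal{G}$.

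The step you flag as the ``principal obstacle'' is precisely what carries the proof, and your sketch leaves it open. The paper resolves it by tabulating the operations of $\mathcal{G}_1$ (Table~\ref{tab:genus-adder}) and computing, by hand, that $\mathcal{G}_1(d_0)=(\bar d_2\bar d_{-2})$, $\mathcal{G}_1(d_0d_0)=(\bar d_2\bar d_{-2})(\bar d_2\bar d_{-2})$, and $\mathcal{G}_1(d_{2i}d_{-2i})=(\bar d_{2(i-1)}\bar d_{-2(i-1)})(\bar d_{2(i+1)}\bar d_{-2(i+1)})$ for $i>0$; the orientation reversal is then read off from the $\Z/2$-gradings of the bimodule generators $x_1,x_3$. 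You also understate the delicacy of the bottom end of the recursion: $(d_0)$ and $(d_0d_0)$ are genuinely different loop types (one versus two $\bullet$-generators), they propagate differently under $\mathcal{G}_1$, and only the auxiliary quantity $n_0(g)=2^g+2m(g)$ (where $m(g)$ counts $(d_0d_0)$ loops) satisfies the clean Pascal recursion $n_0(g+1)=2n_0(g)+2n_1(g)$; this needs to be set up and checked separately, as the paper does.

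Finally, be careful about concluding that $M_{g+1}$ is loop type by citing Proposition~\ref{prop:loop-type-graph-mfds}: the inductive proof of that proposition rests on ``Seifert fibered with torus boundary implies simple loop type,'' which fails for $M_g$ with $g\ge 1$, since $b_1(M_g)=2g+1$ and so $M_g$ has no rational homology sphere (hence no L-space) fillings. In the present theorem, loop-typeness is an \emph{output} of the inductive computation --- once the elementary merges are verified to produce valence-two loops --- rather than something to be imported from that proposition.
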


\begin{proof}
The case when $g=0$ is immediate. To establish the result for positive genus we will induct on $g$, making use of the type DA bimodule $\mathcal{G}$ described in \cite[Section 5]{Hanselman2016}. This has the property $\CFD(M_{g+1},\mu,\lambda)\cong \mathcal{G}\boxtimes \CFD(M_g,\mu,\lambda)$, and can be explicitly calculated to show that $\mathcal{G}\cong \mathcal{G}_1\oplus\mathcal{G}_2\oplus\mathcal{G}_3$ where both $\mathcal{G}_2$ and $\mathcal{G}_3$ are the identity bimodule. A 
list of operations describing $\mathcal{G}_1$ is given in Table \ref{tab:genus-adder}.

Write $\mathcal{G}(\lp)$ to denote the result of box-tensoring the corresponding type D structure for $\lp$ with $\mathcal{G}$. We fix the $\Ztwo$ grading so that the identity components preserve orientation; that is, so that $\mathcal{G}_2(\lp) = \lp$ and $\mathcal{G}_3(\lp) = \lp$. Using this choice, the generators $x_1$ and $x_3$ in $\mathcal{G}_1$ have grading 1. Applying the bimodule $\mathcal{G}_1$ to certain loops, we have that $\mathcal{G}_1(d_0)=( \bar d_{2}\bar d_{-2})$, $\mathcal{G}_1(d_0d_0)=(\bar d_{2}\bar d_{-2})(\bar d_{2}\bar d_{-2})$, and $\mathcal{G}_1(d_{2i}d_{-2i})=(\bar d_{2(i-1)}\bar d_{-2(i-1)})(\bar d_{2(i+1)}\bar d_{-2(i+1)})$ for $i>0$.

We compute $\mathcal{G}_1(d_{2i}d_{-2i})$ for $i>1$ explicitly -- leaving the remaining cases to the reader -- as follows:
\[
\begin{tikzpicture}[scale=1,>=stealth', thick] 
\draw [fill=lightgray,lightgray] (-0.25,0.75) rectangle (7.25,1.5);
\draw [fill=lightgray,lightgray] (-0.25,4.75) rectangle (7.25,5.5);
\node at (0,6) {$x_1$};\node at (1,6) {$x_8$};\node at (2,6) {$x_8$};\node at (3,6) {$x_8$};\node at (4,6) {$x_8$};\node at (5,6) {$x_8$};\node at (6,6) {$x_8$};\node at (7,6) {$x_6$};
 \node at (0,5) {$\bu$}; \node at (1,5) {$\ci$}; \node at (2,5) {$\ci$}; \node at (3,5) {$\ci$}; \node at (4,5) {$\ci$}; \node at (5,5) {$\ci$}; \node at (6,5) {$\ci$};\node at (7,5) {$\bu$};
\node at (0,4) {$x_3$};\node at (1,4) {$x_7$};\node at (2,4) {$x_7$};\node at (3,4) {$x_7$};\node at (4,4) {$x_7$};\node at (5,4) {$x_7$};\node at (6,4) {$x_7$};\node at (7,4) {$x_5$};
 \node at (-1,3) {$x_2$}; \node at (0,3) {$x_4$}; \node at (7,3) {$x_2$}; \node at (8,3) {$x_4$};
 \node at (0,2) {$x_6$};\node at (1,2) {$x_8$};\node at (2,2) {$x_8$};\node at (3,2) {$x_8$};\node at (4,2) {$x_8$};\node at (5,2) {$x_8$};\node at (6,2) {$x_8$};\node at (7,2) {$x_1$};
 \node at (0,1) {$\bu$}; \node at (1,1) {$\ci$}; \node at (2,1) {$\ci$}; \node at (3,1) {$\ci$}; \node at (4,1) {$\ci$}; \node at (5,1) {$\ci$}; \node at (6,1) {$\ci$};\node at (7,1) {$\bu$};
 \node at (0,0) {$x_5$};\node at (1,0) {$x_7$};\node at (2,0) {$x_7$};\node at (3,0) {$x_7$};\node at (4,0) {$x_7$};\node at (5,0) {$x_7$};\node at (6,0) {$x_7$};\node at (7,0) {$x_3$};
 \node at (3.5,0) {$\cdots$}; \node at (3.5,1) {$\cdots$}; \node at (3.5,2) {$\cdots$}; \node at (3.5,4) {$\cdots$}; \node at (3.5,5) {$\cdots$}; \node at (3.5,6) {$\cdots$};
  \draw[->,shorten <= 0.125cm, shorten >= 0.125cm] (0,5)--(1,5); \draw[->,shorten <= 0.125cm, shorten >= 0.125cm] (1,5)--(2,5);\draw[->,shorten <= 0.125cm, shorten >= 0.125cm] (2,5)--(3,5);
  \draw[->,shorten <= 0.125cm, shorten >= 0.125cm] (4,5)--(5,5); \draw[->,shorten <= 0.125cm, shorten >= 0.125cm] (5,5)--(6,5);\draw[->,shorten <= 0.125cm, shorten >= 0.125cm] (6,5)--(7,5);
 \draw[->,shorten <= 0.125cm, shorten >= 0.125cm] (0,1)--(1,1); \draw[->,shorten <= 0.125cm, shorten >= 0.125cm] (1,1)--(2,1);\draw[->,shorten <= 0.125cm, shorten >= 0.125cm] (2,1)--(3,1);
  \draw[->,shorten <= 0.125cm, shorten >= 0.125cm] (4,1)--(5,1); \draw[->,shorten <= 0.125cm, shorten >= 0.125cm] (5,1)--(6,1);\draw[<-,shorten <= 0.125cm, shorten >= 0.125cm] (6,1)--(7,1);
  \node at (0.5,5.25) {$\scriptstyle{123}$}; \node at (1.5,5.25) {$\scriptstyle{23}$};\node at (2.5,5.25) {$\scriptstyle{23}$};\node at (4.5,5.25) {$\scriptstyle{23}$};\node at (5.5,5.25) {$\scriptstyle{23}$};\node at (6.5,5.25) {$\scriptstyle{2}$};
 \node at (0.5,1.25) {$\scriptstyle{3}$}; \node at (1.5,1.25) {$\scriptstyle{23}$};\node at (2.5,1.25) {$\scriptstyle{23}$};\node at (4.5,1.25) {$\scriptstyle{23}$};\node at (5.5,1.25) {$\scriptstyle{23}$};\node at (6.5,1.25) {$\scriptstyle{1}$};
  \draw[->,shorten <= 0.25cm, shorten >= 0.25cm] (0,6)--(1,6);  \draw[->,shorten <= 0.25cm, shorten >= 0.25cm] (1,6)--(2,6);  \draw[->,shorten <= 0.25cm, shorten >= 0.25cm] (2,6)--(3,6); \draw[->,shorten <= 0.25cm, shorten >= 0.25cm] (4,6)--(5,6);  \draw[->,shorten <= 0.25cm, shorten >= 0.25cm] (5,6)--(6,6);  \draw[->,shorten <= 0.25cm, shorten >= 0.25cm] (6,6)--(7,6);
\draw[->,shorten <= 0.25cm, shorten >= 0.25cm] (0,4)--(1,4);  \draw[->,shorten <= 0.25cm, shorten >= 0.25cm] (1,4)--(2,4);  \draw[->,shorten <= 0.25cm, shorten >= 0.25cm] (2,4)--(3,4); \draw[->,shorten <= 0.25cm, shorten >= 0.25cm] (4,4)--(5,4);  \draw[->,shorten <= 0.25cm, shorten >= 0.25cm] (5,4)--(6,4);  \draw[->,shorten <= 0.25cm, shorten >= 0.25cm] (6,4)--(7,4);
\draw[->,shorten <= 0.25cm, shorten >= 0.25cm] (0,2)--(1,2);  \draw[->,shorten <= 0.25cm, shorten >= 0.25cm] (1,2)--(2,2);  \draw[->,shorten <= 0.25cm, shorten >= 0.25cm] (2,2)--(3,2); \draw[->,shorten <= 0.25cm, shorten >= 0.25cm] (4,2)--(5,2);  \draw[->,shorten <= 0.25cm, shorten >= 0.25cm] (5,2)--(6,2);  \draw[<-,shorten <= 0.25cm, shorten >= 0.25cm] (6,2)--(7,2);
\draw[->,shorten <= 0.25cm, shorten >= 0.25cm] (0,0)--(1,0);  \draw[->,shorten <= 0.25cm, shorten >= 0.25cm] (1,0)--(2,0);  \draw[->,shorten <= 0.25cm, shorten >= 0.25cm] (2,0)--(3,0); \draw[->,shorten <= 0.25cm, shorten >= 0.25cm] (4,0)--(5,0);  \draw[->,shorten <= 0.25cm, shorten >= 0.25cm] (5,0)--(6,0);  \draw[<-,shorten <= 0.25cm, shorten >= 0.25cm] (6,0)--(7,0);
 \node at (0.5,6.25) {$\scriptstyle{123}$}; \node at (1.5,6.25) {$\scriptstyle{23}$};\node at (2.5,6.25) {$\scriptstyle{23}$};\node at (4.5,6.25) {$\scriptstyle{23}$};\node at (5.5,6.25) {$\scriptstyle{23}$};\node at (6.5,6.25) {$\scriptstyle{23}$};
  \node at (0.5,4.25) {$\scriptstyle{123}$}; \node at (1.5,4.25) {$\scriptstyle{23}$};\node at (2.5,4.25) {$\scriptstyle{23}$};\node at (4.5,4.25) {$\scriptstyle{23}$};\node at (5.5,4.25) {$\scriptstyle{23}$};\node at (6.5,4.25) {$\scriptstyle{\varnothing}$};
    \node at (0.5,2.25) {$\scriptstyle{\varnothing}$}; \node at (1.5,2.25) {$\scriptstyle{23}$};\node at (2.5,2.25) {$\scriptstyle{23}$};\node at (4.5,2.25) {$\scriptstyle{23}$};\node at (5.5,2.25) {$\scriptstyle{23}$};\node at (6.5,2.25) {$\scriptstyle{1}$};
  \node at (0.5,0.25) {$\scriptstyle{23}$}; \node at (1.5,0.25) {$\scriptstyle{23}$};\node at (2.5,0.25) {$\scriptstyle{23}$};\node at (4.5,0.25) {$\scriptstyle{23}$};\node at (5.5,0.25) {$\scriptstyle{23}$};\node at (6.5,0.25) {$\scriptstyle{1}$};
  \draw[->,shorten <= 0.25cm, shorten >= 0.25cm] (0,6).. controls (-1,5) and (-1,4) .. (-1,3);   \draw[->,shorten <= 0.25cm, shorten >= 0.25cm] (-1,3).. controls (-1,2) and (-1,1) .. (0,0); 
 \draw[->,shorten <= 0.25cm, shorten >= 0.25cm] (7,6).. controls (8,5) and (8,4) .. (8,3);   \draw[->,shorten <= 0.25cm, shorten >= 0.25cm] (8,3).. controls (8,2) and (8,1) ..(7,0); 
 \draw[->,shorten <= 0.25cm, shorten >= 0.25cm] (0,2)--(0,3);  \draw[->,shorten <= 0.25cm, shorten >= 0.25cm] (0,3) .. controls (1,3) and (1.5,2.75).. (2,2); 
   \draw[->,shorten <= 0.25cm, shorten >= 0.25cm] (7,3)--(7,4);  \draw[->,shorten <= 0.25cm, shorten >= 0.25cm] (5,4)--(7,3); 
  \draw[->,shorten <= 0.25cm, shorten >= 0.25cm] (4,4).. controls (4,3) and (7,3.5) .. (7,2);  \draw[->,shorten <= 0.25cm, shorten >= 0.25cm] (0,4).. controls (0,3) and (3,3.5) .. (3,2); 
  \node at (-0.75,5.4) {$\scriptstyle{3}$};  \node at (-0.75,0.6) {$\scriptstyle{23}$};
    \node at (7.75,5.4) {$\scriptstyle{23}$};  \node at (7.75,0.6) {$\scriptstyle{2}$};
       \node at (-0.25,2.5) {$\scriptstyle{23}$};  \node at (1.6,2.75) {$\scriptstyle{\varnothing}$};\node at (2.65,3.1) {$\scriptstyle{3}$};
          \node at (7.25,3.5) {$\scriptstyle{23}$};  \node at (6.25,3.6) {$\scriptstyle{\varnothing}$};\node at (5,3) {$\scriptstyle{2}$};
\end{tikzpicture}
\]
The shaded boxes contain $d_{2i}$ (above) and $d_{-2i}$ (below) for book-keeping purposes. Note that the $\bullet$-generators on the left of each shaded box are identified in the loop $(d_{2i}d_{-2i})$, as are the two rightmost $\bullet$-generators. The generators $x_i$ correspond to those in Table \ref{tab:genus-adder}, interpreted as tensored with the generator immediately above or below in the shaded box. Each $\circ$-generator in $(d_{2i}d_{-2i})$ pairs with both $x_7$ and $x_8$ in the tensor product, while each $\bullet$-generator pairs with the six generators $x_1, \ldots, x_6$. The inner loop gives $d_{2(i-1)}d_{2(i-1)}$ (after reducing the loop by cancelling unlabelled edges) and the outer loop gives $d_{2(i+1)}d_{2(i+1)}$. Since the generators $x_1$ and $x_3$ of $\mathcal{G}_1$ have grading 1, the $\bullet$-generators of the resulting two loops have the opposite sign from the $\bullet$-generators of $(d_{2i}d_{-2i})$; equivalently, the new loops have the opposite orientation.

As a result, incorporating the two identity bimodules in $\mathcal{G}$, we conclude that
\begin{align*}
& \mathcal{G}(d_0)=(\bar d_{2}\bar d_{-2})(d_0)(d_0) \\
& \mathcal{G}(d_0d_0)=(\bar d_{2}\bar d_{-2})(\bar d_{2}\bar d_{-2})(d_0d_0)(d_0d_0)\\
&\mathcal{G}(d_{2i}d_{-2i}) =(\bar d_{2(i-1)}\bar d_{-2(i-1)})(\bar d_{2(i+1)}\bar d_{-2(i+1)})(d_{2i}d_{-2i})(d_{2i}d_{-2i})
\end{align*}
where $i>0$. From this it is immediate that the number of $(d_0)$ components in $\CFD(M_g,\mu,\lambda)$ must be $2^g$. Let $m(g)$ denote the number of $(d_0d_0)$ components, $n_0(g)=2^g+2m(g)$, and $n_i(g)$ be the number of $(d_{2i}d_{-2i})$ components when $1\le i\le g$ (where the orientation is reversed if $i$ is odd). By inspection,
\[n_i(g)=n_{i-1}(g)+2n_i(g)+n_{i+1}(g)\] for $i>0$, which is precisely the recursion satisfied for $\binom{2g}{g+i}$. It remains to check that $n_0(g)=\binom{2g}{g}$, but as $m(g)=2m(g-1)+n_1(g-1)$ we have \[n_0(g) = 2^g+2m(g) = 2\cdot 2^{g-1} + 2(2m(g-1)+n_1(g-1)) = 2n_0(g-1) + 2n_1(g-1)\]
as required. 
\end{proof}

\begin{table}[t]
\begin{tabular}{|l|l|l|}
\hline
$\delta_1^2(x_1,\rho_{1})=\rho_{1}\otimes x_8$&$\delta_1^1(x_1)=\rho_3\otimes x_2 \ {}^\spadesuit$ & $\delta_1^4(x_3,\rho_{123},\rho_{23},\rho_{2})= \rho_{12}\otimes x_1$ \\
$\delta_1^2(x_1,\rho_{12})=\rho_{123}\otimes x_6 \ {}^\spadesuit$&$\delta_1^1(x_2)=\rho_{23}\otimes x_5 \ {}^\spadesuit$ & $\delta_1^4(x_3,\rho_{3},\rho_{2},\rho_{12})= \rho_{123}\otimes x_5 \ {}^\clubsuit$ \\
$\delta_1^2(x_1,\rho_{123})=\rho_{123}\otimes x_8$&$\delta_1^1(x_4)=\rho_{2}\otimes x_3 \ {}^\spadesuit$ & $\delta_1^4(x_3,\rho_{3},\rho_{23},\rho_{2})= \rho_{3}\otimes x_6 \ {}^\clubsuit$ \\
$\delta_1^2(x_3,\rho_{1})=\rho_{1}\otimes x_7$&$\delta_1^1(x_6)=\rho_{23}\otimes x_4 \ {}^\spadesuit$ & $\delta_1^4(x_3,\rho_{3},\rho_{23},\rho_{23})= \rho_{3}\otimes x_8$ \\
\cline{2-2}
$\delta_1^2(x_3,\rho_{12})=\rho_{1}\otimes x_5 \ {}^\spadesuit$&$\delta_1^3(x_2,\rho_3,\rho_2)=\rho_{2}\otimes x_1 \ {}^\clubsuit$ & $\delta_1^4(x_5,\rho_{3},\rho_{23},\rho_{2})= \rho_{2}\otimes x_1 \ {}^\clubsuit$ \\ 
$\delta_1^2(x_3,\rho_{123})=\rho_{123}\otimes x_7$&$\delta_1^3(x_3,\rho_{123},\rho_2)=\rho_{1}\otimes x_2$ & $\delta_1^4(x_7,\rho_{23},\rho_{23},\rho_{2})= \rho_{2}\otimes x_1$ \\
\cline{3-3}
$\delta_1^2(x_5,\rho_{3})=\rho_{23}\otimes x_7$&$\delta_1^3(x_3,\rho_3,\rho_2)=\rho_{3}\otimes x_4 \ {}^\clubsuit$ & 
$\delta_1^5(x_3,\rho_3,\rho_2,\rho_1,\rho_2)= \rho_1\otimes x_2\ {}^\clubsuit$
\\
$\delta_1^2(x_6,\rho_{3})= x_8$&$\delta_1^3(x_2,\rho_3,\rho_2)= x_6\ {}^\clubsuit$ &  
$\delta_1^5(x_3,\rho_3,\rho_2,\rho_{123},\rho_2)= \rho_{123}\otimes x_2\ {}^\clubsuit$
\\
\cline{3-3}
$\delta_1^2(x_7,\rho_{2})= x_5$&$\delta_1^3(x_4,\rho_3,\rho_{23})= x_8$&  
$\delta_1^6(x_3,\rho_3,\rho_2,\rho_1,\rho_{23},\rho_2)=\rho_{12}\otimes x_1 \ {}^\clubsuit$
\\
\cline{3-3}
$\delta_1^2(x_7,\rho_{23})=\rho_{23}\otimes x_7$&$\delta_1^3(x_5,\rho_3,\rho_2)= x_2 \ {}^\clubsuit$ &
$\delta_1^7(x_3,\rho_3,\rho_2,\rho_3,\rho_2,\rho_1,\rho_2)=\rho_{123}\otimes x_2 \ {}^\clubsuit$ 
 \\
$\delta_1^2(x_8,\rho_{2})=\rho_{23}\otimes x_6$&$\delta_1^3(x_7,\rho_{23},\rho_2)= x_2$ &\\
$\delta_1^2(x_8,\rho_{23})=\rho_{23}\otimes x_8$&&\\
\hline
\end{tabular}
\vskip0.1in
\caption{The operations in the type DA bimodule $\mathcal{G}_1$. The bimodule has generators \(x_1\ldots x_8\), of which \(x_1\) and \(x_3\) are in the \((\ci,\bu)\) idempotent, \(x_2,x_4,x_5\) and \(x_6\) are in the \((\ci,\ci)\) idempotent, and \(x_7\) and \(x_8\) are in the \((\bu,\ci)\) idempotent. The flags ${}^\spadesuit$ single out those operations relevant to tensoring with loops $(d_0)$ or $(d_0d_0)$ while the flags  ${}^\clubsuit$ highlight those operations that are {\em not} relevant to calculations involving loops of the form $(d_{2i}d_{-2i})$. 
}\label{tab:genus-adder}
\end{table}

The curves associated with the loops $(d_{2i}d_{-2i})$ are relatively simple. For example, Figure \ref{fig:corollary-example} shows a component of $\curves{M_g}$ (for $g \ge 2$) corresponding to the loop $(d_4 d_{-4})$. It is easier to picture the lifted curves $\curves{M_g, \spin_0}$ in the covering space $\barT_M$. (Here \(\spin_0\) is the unique torsion \(\spinc\) structure on \(M\); the invariant for all other \(\spinc\) structures is empty.) $\curves{M_g, \spin}$ consists of curves from a particular family of curves, $\{\tilde\gamma_i\}_{i\in\mathbb{N}}$, which are depicted in Figure \ref{fig:surface-bundle-curves}. Recall that choosing a lift $\curves{M_g,\spin}$ of the curve set $\curves{M_g}$ encodes additional grading information. Computing the spin$^c$ gradings under the tensor product with the bimodule $\mathcal{G}$ reveals that all components of $\curves{M_g,\spin}$ are centered vertically on the same horizontal line, as in the figure. The curve $\tilde\gamma_i$ corresponds to the loop $(d_{2i}d_{-2i})$. Note that the curve $\tilde\gamma_i$ is obtained from $\tilde\gamma_{i-1}$ by sliding the peak up one unit and sliding the valley down one unit. For convenience we will allow the subscript $i$ of $\tilde\gamma_i$ to be negative, with the convention that $\tilde\gamma_i$ and $\tilde\gamma_{-i}$ represent the same curve. On the level of curves, the bimodule $\mathcal{G}_1$ applied to $\tilde\gamma_i$ produces two curves $\tilde\gamma_{i-1}$ and $\tilde\gamma_{i+1}$.

As an immediate consequence of Theorem \ref{thm:surface-bundle}, we can establish Theorem \ref{crl:surface-bundle} concerning the Floer homology of $Y_g$, the closed three-manifold resulting from the product of a closed, orientable surface of genus $g$ with $S^1$. Namely, the total dimension of $\HFhat(Y_g)$ is $2^g + \binom{2g}{g} + 2\sum_{i=1}^g (2i-1)\binom{2g}{g+i}$.

\begin{proof}[Proof of Theorem \ref{crl:surface-bundle}]
This follows from the minimal intersection of $\curves{M_g}$ with a horizontal line, which calculates $\HFhat(Y_g)$ via Theorem \ref{thm:pairing}. The reader can verify that each of component $(d_0)$ or $(d_0d_0)$ contributes two intersection points -- in both cases admissibility forces the intersection. According to Theorem \ref{thm:surface-bundle}, the total resulting contribution for these curves must be $2^g+\binom{2g}{g}$. The contribution of curves $(d_{2i}d_{-2i})$ when $i>0$ is summarized in Figure \ref{fig:corollary-example}: Notice that the number of vertical lines is $2(2i-1)$, hence the total contribution from $(d_{2i}d_{-2i})$ components is $2\sum_{i=1}^g (2i-1)\binom{2g}{g+i}$.
\end{proof}

\begin{figure}[t]
\begin{tikzpicture}[scale=1,>=stealth', thick] 
\def \radius {2cm} \def \outer {2.25cm}
 \node at ({360/(10) * (1- 1)}:\radius) {$\bu$};
  \node at ({360/(10) * (2- 1)}:\radius) {$\ci$};
  \node at ({360/(10) * (3 -1)}:\radius) {$\ci$};
   \node at ({360/(10) * (4 -1)}:\radius) {$\ci$};
    \node at ({360/(10) * (5 -1)}:\radius) {$\ci$};
     \node at ({360/(10) * (6 -1)}:\radius) {$\bu$};
      \node at ({360/(10) * (7 -1)}:\radius) {$\ci$};
        \node at ({360/(10) * (8 -1)}:\radius) {$\ci$};
          \node at ({360/(10) * (9 -1)}:\radius) {$\ci$};
            \node at ({360/(10) * (10 -1)}:\radius) {$\ci$};
  \draw[<-,shorten <= 0.125cm, shorten >= 0.125cm]({360/10 * (1 - 1)}:\radius) arc ({360/10 * (1- 1)}:{360/10 * (2-1)}:\radius);
 \draw[<-,shorten <= 0.125cm, shorten >= 0.125cm]({360/10 * (2 - 1)}:\radius) arc ({360/10 * (2- 1)}:{360/10 * (3-1)}:\radius);
 \draw[<-,shorten <= 0.125cm, shorten >= 0.125cm]({360/10 * (3 - 1)}:\radius) arc ({360/10 * (3- 1)}:{360/10 * (4-1)}:\radius);
 \draw[<-,shorten <= 0.125cm, shorten >= 0.125cm]({360/10 * (4 - 1)}:\radius) arc ({360/10 * (4- 1)}:{360/10 * (5-1)}:\radius);
 \draw[<-,shorten <= 0.125cm, shorten >= 0.125cm]({360/10 * (5 - 1)}:\radius) arc ({360/10 * (5- 1)}:{360/10 * (6-1)}:\radius);
 \draw[->,shorten <= 0.125cm, shorten >= 0.125cm]({360/10 * (6 - 1)}:\radius) arc ({360/10 * (6- 1)}:{360/10* (7-1)}:\radius);
 \draw[->,shorten <= 0.125cm, shorten >= 0.125cm]({360/10 * (7 - 1)}:\radius) arc ({360/10 * (7- 1)}:{360/10 * (8-1)}:\radius);
  \draw[->,shorten <= 0.125cm, shorten >= 0.125cm]({360/10 * (8 - 1)}:\radius) arc ({360/10 * (8- 1)}:{360/10 * (9-1)}:\radius);
   \draw[->,shorten <= 0.125cm, shorten >= 0.125cm]({360/10 * (9 - 1)}:\radius) arc ({360/10 * (9- 1)}:{360/10 * (10-1)}:\radius);
    \draw[<-,shorten <= 0.125cm, shorten >= 0.125cm]({360/10 * (10 - 1)}:\radius) arc ({360/10 * (10- 1)}:{360/10 * (11-1)}:\radius);
  \node at ({360/(10) * (1- (1/2))}:\outer) {$\scriptstyle{2}$};
  \node at ({360/(10) * (2-(1/2))}:\outer) {$\scriptstyle{23}$};
  \node at ({360/(10) * (3-(1/2))}:\outer) {$\scriptstyle{23}$};
  \node at ({360/(10) * (4-(1/2))}:\outer) {$\scriptstyle{23}$};
  \node at ({360/(10) * (5-(1/2))}:2.4) {$\scriptstyle{123}$};
  \node at ({360/(10) * (6-(1/2))}:\outer) {$\scriptstyle{3}$};
  \node at ({360/(10) * (7-(1/2))}:\outer) {$\scriptstyle{23}$};
   \node at ({360/(10) * (8-(1/2))}:\outer) {$\scriptstyle{23}$};
    \node at ({360/(10) * (9-(1/2))}:\outer) {$\scriptstyle{23}$};
     \node at ({360/(10) * (10-(1/2))}:\outer) {$\scriptstyle{1}$};
\end{tikzpicture}\qquad\qquad
\labellist
\endlabellist
\includegraphics[scale=0.8]{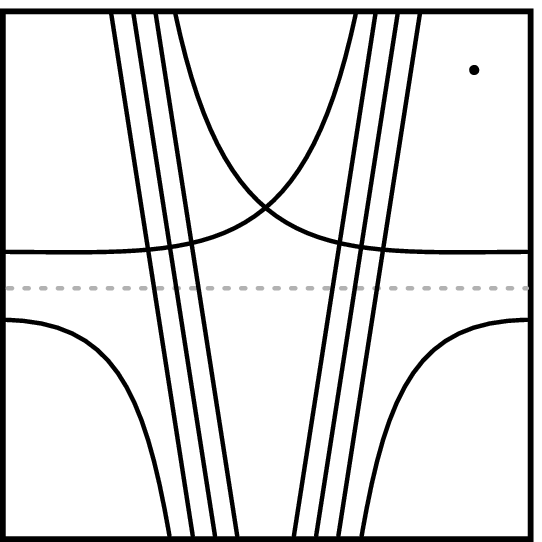}
\caption{The loop $(d_4d_{-4})$ (left) and the corresponding curve in $\Tp$ (right): Notice that the minimal intersection with a horizontal curve is $6=2(4-1)$. More generally, $(d_{2i}d_{-2i})$ gives rise to $2(2i-1)$ points of intersection, as in the proof of Theorem \ref{crl:surface-bundle}. }\label{fig:corollary-example}
\end{figure}

\begin{figure}[t]
\begin{overpic}[scale = 0.8]{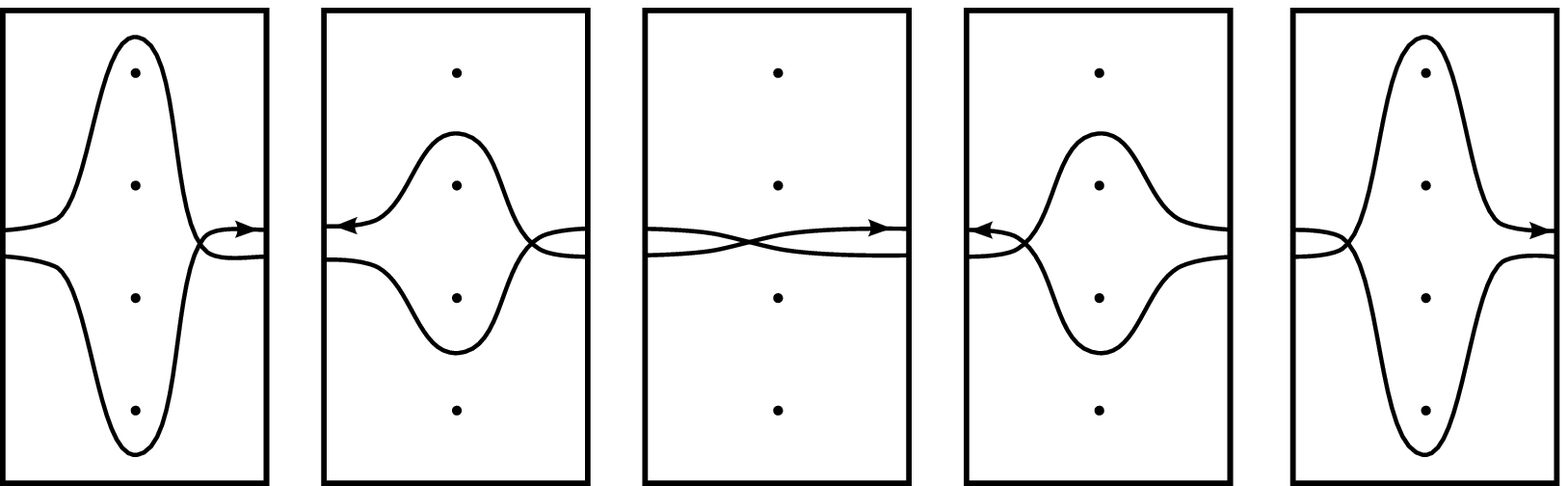}
\put (103,25) {$\cdots$}
\end{overpic}

$\tilde\gamma_{0}$ \hspace{2.1 cm} $\tilde\gamma_{1}$ \hspace{2.1 cm} $\tilde\gamma_{2}$
\caption{Components of $\liftcurves{M_g}$ in $\barT_M$: The curve $\tilde\gamma_i$ corresponds to the loop $(d_{2i}d_{-2i})$.}
\label{fig:surface-bundle-curves}
\end{figure}

\subsection{Heegaard Floer homology solid tori}\label{sub:HFtori} L-spaces represent the class of closed three-manifolds whose Heegaard Floer homology is simplest-possible.
These rational homology spheres admit a characterization in the presence of an essential torus:

\begin{theorem}[{\cite[Theorem 14]{HRW}}]\label{thm:L-space-gluing} Let $Y$ be a closed toroidal three-manifold so that $Y=M_0\cup_h M_1$ where $M_i\ne D^2\times S^1$. Then $Y$ is an L-space if and only if $\sL_0^\circ\cup h(\sL_1^\circ)= \Q P^1$, where \[\sL_i=\{\alpha | \text{the\ Dehn\ filling\ } M_i(\alpha)\ \text{is\ an\ L-space}\}\] and $\sL_i^\circ$ denotes the interior of $\sL_i$ as a subset of $\Q P^1$. \end{theorem}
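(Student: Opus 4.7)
The plan is to translate everything into the immersed curve picture via Theorem~\ref{thm:pairing} and then read off the L-space condition geometrically. Write $\boldsymbol\gamma_0 = \HFhat(M_0)$ and $\boldsymbol\gamma_1' = h(\HFhat(M_1))$, both as immersed multicurves in $\partial M_0 \setminus z_0$. By Theorem~\ref{thm:pairing}, $\dim \HFhat(Y)$ is the minimal geometric intersection number of $\boldsymbol\gamma_0$ and $\boldsymbol\gamma_1'$. Since $Y$ is an L-space iff $\dim \HFhat(Y) = |H_1(Y;\Z)|$, the theorem reduces to showing that this minimal intersection equals $|H_1(Y;\Z)|$ exactly when $\sL_0^\circ \cup h(\sL_1^\circ) = \Q P^1$.

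The technical heart of the argument is a curve-level characterization of L-space slopes, which I would first establish: a slope $\alpha$ lies in $\sL_M^\circ$ if and only if the multicurve $\HFhat(M)$ admits a representative whose intersection with the line $L_\beta$ of slope $\beta$ is minimal for every $\beta$ in an open neighborhood of $\alpha$. Equivalently, no component of $\HFhat(M)$ has an arc that ``turns back'' with respect to direction $\alpha$; every component is monotone in the $\alpha$-direction, up to negligible figure-eight loops around lifts of $z$ which intersect any line of slope $\alpha$ minimally. One derives this by pairing $\HFhat(M)$ with $L_\alpha$ (which computes $\HFhat(M(\alpha))$), noting that $|H_1(M(\alpha);\Z)|$ is achieved stably on an open neighborhood of $\alpha$ precisely when the curves fail to produce removable bigons against lines of all nearby slopes.

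Granted this characterization, the $(\Leftarrow)$ direction is now geometric. Assume $\sL_0^\circ \cup h(\sL_1^\circ) = \Q P^1$ and isotope $\boldsymbol\gamma_0, \boldsymbol\gamma_1'$ to a minimal position. At every point $p$ where the two multicurves meet, consider the tangent slopes $\alpha_0$ of $\boldsymbol\gamma_0$ and $\alpha_1$ of $\boldsymbol\gamma_1'$. By the covering hypothesis, either $\alpha_0 \in \sL_0^\circ$ or $\alpha_1 \in h(\sL_1^\circ)$; monotonicity of the corresponding multicurve in that direction forbids any excess bigon at $p$. Summing over \(\spinc\) components (and using Theorem~\ref{thm:gradings} to match up the decomposition) gives exactly $|H_1(Y;\Z)|$ intersections. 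Conversely, for $(\Rightarrow)$, suppose some $\alpha \in \Q P^1$ lies outside both $\sL_0^\circ$ and $h(\sL_1^\circ)$. By the characterization, both $\boldsymbol\gamma_0$ and $\boldsymbol\gamma_1'$ contain components with arcs that fail to be monotone in the $\alpha$-direction; lifting to $\R^2 \setminus \Z^2$ and aligning two such ``hooks'' forces a pair of intersections that cannot be removed, so $\dim \HFhat(Y) > |H_1(Y;\Z)|$ and $Y$ is not an L-space.

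The main obstacle is the characterization of $\sL_M^\circ$ in the second paragraph. The subtleties are: handling manifolds $M$ with $\sL_M = \emptyset$ or with isolated L-space fillings (where the characterization says there are no directions in which the curves are globally monotone); distinguishing boundary slopes $\alpha \in \partial \sL_M$ from interior slopes (which is exactly what the use of $\sL_i^\circ$ rather than $\sL_i$ in the statement reflects); and checking that hooks on the two sides of $h$ really do pair up to give excess intersection rather than cancelling against each other in some unexpected way.
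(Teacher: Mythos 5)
The overall strategy — characterize $\sL^\circ(M)$ geometrically, then reduce the L-space question to a count of minimal intersections via Theorem~\ref{thm:pairing} — is indeed the paper's approach.  Your key lemma (in the second paragraph) is essentially the statement that $\sL^\circ(M)$ is the complement of the set $S^{\mathrm{sing}}(M)$ of tangent slopes of the pegboard representative of $\HFhat(M)$, which is exactly how the paper presents it (Section~\ref{sec:prop}, last subsection).  Granting that lemma, the covering condition $\sL_0^\circ\cup h(\sL_1^\circ)=\Q P^1$ translates to $S^{\mathrm{sing}}(M_0)\cap h(S^{\mathrm{sing}}(M_1))=\emptyset$, which is the right reformulation.

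However, your argument for the $(\Leftarrow)$ direction contains a concrete error, and it is not a mere slip of language.  At an intersection point $p$ of the two pegboard curves, the tangent slope $\alpha_0$ of $\boldsymbol\gamma_0$ at $p$ is, by definition, an essential tangent slope of $\HFhat(M_0)$, so $\alpha_0\in S^{\mathrm{sing}}(M_0)$, i.e.\ $\alpha_0\notin\sL_0^\circ$; likewise $\alpha_1\notin h(\sL_1^\circ)$.  Thus your claim ``either $\alpha_0\in\sL_0^\circ$ or $\alpha_1\in h(\sL_1^\circ)$'' is false.  The correct inference from the covering hypothesis is just $\alpha_0\neq\alpha_1$, so the crossing at $p$ is transverse.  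But transversality at each crossing does not, by itself, rule out bigons — any pair of multicurves in minimal position already crosses transversely at each point, L-space or not.  The local pointwise check cannot see the obstruction.

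What is missing is the \emph{global} argument.  Once you know $\sL(M)$ is a (possibly degenerate) closed interval of slopes (this uses \cite{RR}, not just the curve picture), the sets $S^{\mathrm{sing}}(M_i)$ are closed intervals whenever $\sL_i^\circ\neq\emptyset$.  Disjointness of the two intervals means there is a linear coordinate on $\R^2\setminus\Z^2$ with respect to which both lifted pegboard curves are graphs, with derivatives confined to disjoint ranges.  The difference of the two graph functions is then strictly monotone, so they meet at most once on each lift — no bigons anywhere, not just locally — and a homological count identifies the number of crossings as $|H_1(Y)|$.  You also need to handle the degenerate cases: $\sL_i^\circ=\emptyset$ (forced to be compensated by the other side being a Heegaard Floer solid torus), nontrivial local systems, and solid-torus-like components.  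Similarly, in the $(\Rightarrow)$ direction (which you correctly flag as delicate): two curves sharing a tangent slope do not automatically produce excess intersection — two parallel lines of the same slope are disjoint — so you must use $M_i\neq D^2\times S^1$ to ensure both curves actually turn around, and that the shared slope lies on the boundary of both slope-intervals in a way that forces the turnarounds to interlock.
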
  
It is not immediately clear what the analogue of {\em simplest possible} might be when $M$ is a manifold with torus boundary. Recall that L-space is short for {\em Heegaard Floer homology lens space} -- these first appear in the work of Ozsv\'ath and Szab\'o on obstructions to lens space surgeries \cite{OSz2005}, since lens spaces as a starting point are precisely those spaces constructed from a pair of solid tori, it is instructive to consider two characterizations of the solid torus. 

First, the solid torus is characterized, among orientable, compact, connected, irreducible three-manifold with torus boundary by the existence of a single essential simple closed curve in the boundary that bounds a properly embedded disk. Said another way, this is the observation that the solid torus is an $S^1$ bundle over the disk, which is a topological characterization of the solid torus that is captured by Heegaard Floer homology. 

\begin{theorem}[Ni \cite{Ni2009}, reformulated]\label{thm:genus-detection}Let $M$ be an orientable, compact, connected, irreducible three-manifold with torus boundary. If $\CFD(M,\lambda,\mu) \cong \CFD(D^2\times S^1,m,l)$ for some $\mu$ dual to $\lambda$, then $M\cong D^2\times S^1$.
\end{theorem}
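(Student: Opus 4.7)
The idea is to translate the algebraic hypothesis into geometric data via the structure theorem, then apply the Thurston norm characterization from Section~\ref{sec:prop} to produce a compressing disk on $\partial M$.

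\emph{Step 1: Curve translation.} By the structure theorem \cite[Theorem~5]{HRW}, the type D structure $\CFD(D^2\times S^1,m,l)$ is realized by a single embedded simple closed curve parallel to the meridian $m$ in $\partial(D^2\times S^1)\setminus z$, decorated with the trivial local system. Transporting through the basis identification $(m,l)\leftrightarrow(\lambda,\mu)$, the hypothesis therefore gives that $\HFhat(M)$ is a single embedded simple closed curve in $\partial M\setminus z$ isotopic to $\lambda$, with trivial local system.

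\emph{Step 2: Homological setup.} Since $\HFhat(D^2\times S^1)$ sits in a single $\spinc$ summand, so does $\HFhat(M)$; matching decompositions forces $|\spinc(M)|=1$, hence $H^2(M;\Z)=0$. Combined with the long exact sequence for $(M,\partial M=T^2)$ this yields $H_1(M;\Z)\cong\Z$, with $\mu$ mapping to a generator and $\lambda$ null-homologous. Consequently $\barT_M$ is an infinite cylinder, and the lift $\HFhat(M,\spin_0)$ is a single horizontal closed curve separating two adjacent lifts $z_k$ and $z_{k+1}$ of the basepoint.

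\emph{Step 3: Thurston norm and a compressing disk.} Applying the Thurston norm formula $2g(\Sigma)-1=k_+-k_-$ from Section~\ref{sec:prop}: the separating horizontal curve forces $k_+-k_-=-1$, so a minimal-genus representative $\Sigma$ of the generator of $H_2(M,\partial M)$ satisfies $g(\Sigma)=0$. Thus $\Sigma$ is a properly embedded disk, and its boundary lies in $\ker(H_1(\partial M)\to H_1(M))=\langle\lambda\rangle$, so $\partial\Sigma$ realizes the slope $\lambda$ on $\partial M$.

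\emph{Step 4: Topological conclusion.} The irreducible manifold $M$ has an essential compressing disk with boundary slope $\lambda$; the loop theorem combined with irreducibility (compressing $T^2$ along the disk produces an $S^2$, which bounds a ball) gives $M\cong D^2\times S^1$.

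\textbf{Main obstacle.} The principal subtlety is Step~3: one must verify that all the pieces align correctly, namely that the basis identification makes $\lambda$ precisely the compressing slope (rather than some other boundary curve), that $|\spinc(M)|=1$ indeed forces $H_1(M)\cong\Z$, and that the lift in $\barT_M$ is a single curve separating the cylinder into two halves. Once these are in place the proof reduces to the Thurston norm formula, which itself is a reformulation of Ni's genus detection theorem in the language of immersed curves. An alternative route uses Proposition~\ref{prop:HFK} to compute $\HFK(K_\mu)\cong\F$ and then invokes Ni's theorem directly to conclude that $K_\mu$ bounds a disk in $M(\mu)$, yielding the same compressing disk in $M$ by an innermost disk argument.
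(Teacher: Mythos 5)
Your proof is correct, but it differs from the paper's argument, and the main route you chose has an awkward circularity for this particular case. The paper's proof is shorter and more direct: from the module equivalence it first deduces $H_1(M)\cong\Z$ (as in your Step 2), observes that $M$ is then the complement of the core $K$ of the $\mu$-filling in an integer homology sphere, and notes that $\CFD(M,\lambda,\mu)$ determines $\HFK(K)$ by the LOT dictionary. Since the hypothesis identifies this $\CFD$ with that of the solid torus, $\HFK(K)=\HFK(U)\cong\F$, so Ni's genus detection gives $g(K)=0$; irreducibility then finishes as in your Step 4. This is essentially the alternative you sketch in your final sentence.

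Your main route, via the Thurston norm formula of Section~\ref{sec:prop}, lands precisely in the exceptional case of that proposition: when every component of $\barcurves{M}$ is solid-torus like (a single horizontal curve), the proof of the Thurston norm proposition does not use the knot Floer norm detection at all -- it instead cites Gillespie's theorem that such an $M$ is boundary compressible, from which $M\cong D^2\times S^1$ follows directly (given irreducibility). So applying that proposition to conclude $g(\Sigma)=0$, and then re-deriving the compressing disk, is circular in the sense that the relevant case of the proposition already packages Gillespie's conclusion, which is exactly the statement being proved. Either invoke Gillespie directly, or (as the paper does and as you suggest at the end) invoke Ni via $\HFK$. A small additional remark on Step 2: the passage from $H^2(M;\Z)=0$ to $H_1(M;\Z)\cong\Z$ is fine, but it needs the observation that the torsion of $H^2(M)$ equals the torsion of $H_1(M)$ (universal coefficients) together with half-lives-half-dies; it is worth making this explicit.
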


Note that the equivalence of differential modules implies that $H_1(M;\Z)\cong\Z$, hence $M$ is the complement of the knot $K$ in an integer homology sphere $Y$ corresponding to $\mu$-Dehn filling on $M$. The knot Floer homology of \(K\) is determined by \(\CFD(M,\lambda, \mu)\), hence  \(\HFK(K) = \HFK(U)\), where \(U\) is the unknot in \(S^3\). It follows from work of Ni \cite[Theorem 3.1]{Ni2009} that \(g(K)=g(U)=0\). Since \(M\) is irreducible, \(M\cong D^2 \times S^1\). 

A second characterization is given by the following:

\begin{theorem}[Johanssen, see Siebenmann \cite{Siebenmann1980}]\label{thm:twist}
Let $M$ be an orientable, compact, connected, irreducible three-manifold with torus boundary. If $M$ admits a homeomorphism that restricts to the boundary torus as a Dehn twist then $M\cong D^2\times S^1$.\qed
\end{theorem}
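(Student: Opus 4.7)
The plan is to proceed by contradiction using the JSJ decomposition of $M$ together with Thurston's geometrization theorem for Haken three-manifolds and Mostow rigidity; the Seifert fibered case will be the main obstacle.

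Suppose $M \not\cong D^2\times S^1$, and let $\phi\colon M\to M$ be the hypothesized self-homeomorphism with boundary restriction a nontrivial Dehn twist $\tau$ along some slope on $\partial M$. First I would reduce to the case where $\phi$ preserves the JSJ decomposition: by Johannson's deformation theorem, after replacing $\phi$ by a suitable power and applying an ambient isotopy, we may assume $\phi$ preserves each JSJ piece setwise. In particular it preserves the piece $N$ containing $\partial M$, and $\phi|_N$ is a self-homeomorphism of $N$ whose restriction to $\partial M\subseteq\partial N$ is $\tau$.

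By geometrization, $N$ is either Seifert fibered or the interior of $N$ admits a complete finite-volume hyperbolic structure. If $N$ is hyperbolic, Mostow rigidity forces the mapping class group of $N$ to be finite, so $\phi^k|_N$ is isotopic to the identity for some $k\geq 1$. But then $\tau^k$ must be isotopic to the identity on $\partial M$, contradicting the fact that it is a nontrivial element of the mapping class group $\mathrm{SL}(2,\mathbb{Z})$ of $T^2$.

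The hard part is the Seifert fibered case. Here I would appeal to Waldhausen's theorem (with extensions due to Scott) stating that every self-homeomorphism of a Seifert fibered manifold with boundary, other than $D^2\times S^1$ and the twisted $I$-bundle over the Klein bottle, is isotopic to a fibration-preserving map. This forces $\tau$ to preserve the fiber slope $\mu_f$ on $\partial M$, so $\tau$ must be a (nonzero) power of the Dehn twist along $\mu_f$. Tracking the effect of such a vertical twist on the Seifert invariants of $N$ rules out every base orbifold except the disk with at most one cone point, in which case $N$ is itself a solid torus; since $N$ then has connected boundary equal to $\partial M$, we conclude $M\cong N\cong D^2\times S^1$, a contradiction. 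The twisted $I$-bundle over the Klein bottle must be excluded by a separate direct computation: its self-homeomorphism group acts on the boundary torus through a finite quotient, so no positive power of a Dehn twist can be realized there either.
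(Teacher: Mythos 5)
The paper itself does not spell out a full proof here: it cites Siebenmann/Johannson and summarizes the key step as follows. Work with the characteristic submanifold. Johannson's finiteness theorem shows that, up to finite index, the mapping class group of a Haken $M$ is generated by Dehn twists along essential tori and essential annuli. Twists along tori restrict trivially to $\partial M$; and when $M$ is boundary-irreducible with a single boundary torus, any essential annulus has both boundary circles on $\partial M$, of the same slope, so the induced twists on $\partial M$ cancel. Hence no power of a nontrivial boundary Dehn twist is realized, unless $M$ is boundary-reducible, in which case $M\cong D^2\times S^1$. Your route via the JSJ decomposition, geometrization, Mostow rigidity, and Waldhausen--Scott isotopy to fiber-preserving maps is a genuinely different and more ``modern'' approach, and the hyperbolic case is handled correctly.

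However, the Seifert fibered case contains a real gap. The claim that ``tracking the effect of such a vertical twist on the Seifert invariants of $N$ rules out every base orbifold except the disk with at most one cone point'' is false when the JSJ piece $N$ containing $\partial M$ has more than one boundary component, and you cannot assume $\partial N = \partial M$. A concrete counterexample: take $N$ to be a cable space, i.e.\ the Seifert fibered space over an annulus with one cone point. This is a perfectly good JSJ piece of, say, a cable-knot exterior. The preimage of a properly embedded arc in the base annulus missing the cone point is an essential vertical annulus $A$ in $N$ joining the two boundary tori, and the Dehn twist along $A$ is a fiber-preserving self-homeomorphism of $N$ whose restriction to $\partial M$ is a nontrivial vertical Dehn twist. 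This is precisely the phenomenon the paper's sketch is built around: a twist along a properly embedded annulus only cancels on $\partial M$ when \emph{both} of its boundary circles lie on $\partial M$; if they are split between $\partial M$ and an internal JSJ torus, the boundary effect does not cancel within the single piece $N$.

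To repair your argument you would have to propagate through the JSJ graph: the twist along $A$ restricts on the internal torus $T'$ to a Dehn twist along the $N$-fiber slope, which by the JSJ minimality condition is not the fiber slope of the adjacent piece $N'$; hence $\phi|_{N'}$ cannot be fiber-preserving if $N'$ is Seifert (and cannot have infinite order if $N'$ is hyperbolic or the twisted $I$-bundle over the Klein bottle), giving a contradiction one piece over. This inductive bookkeeping is the content that is actually missing. Alternatively, one avoids the case analysis entirely by using Johannson's finiteness theorem directly, as in the paper's sketch: since $\partial M$ is a single torus, \emph{every} essential annulus has both boundary circles on $\partial M$, and the cancelling-twist observation closes the argument in one step.
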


The proof of this theorem is summarized nicely in \cite[Remark on 5.1, page 206]{Siebenmann1980}. The key step is the observation that a Dehn twist along a properly embedded annulus in $M$ (with the additional assumption that $M$ is boundary irreducible) would induce a pair of cancelling Dehn twists in the boundary. The fact that the homeomorphism in question reduces to considering such an annulus follows from, and is the central application of, Johanssen's finiteness theorem. A much more general treatment (considering higher genus boundary) may be found in the work of McCullough \cite{McCullough2006}. In particular, the interested reader should compare Theorem \ref{thm:twist} with \cite[Corollary 3]{McCullough2006}.

 In contrast with Theorem \ref{thm:genus-detection}, the topological characterization of the solid torus provided by Theorem \ref{thm:twist} is not faithfully represented in Heegaard Floer theory. Consider the following:
 
\begin{definition}\label{def:hf-torus}
A rational homology solid torus $M$ is a Heegaard Floer homology solid torus if there is a homotopy equivalence of differential modules \[\CFD(M,\lambda,\mu)\cong\CFD(M,\lambda,\mu+\lambda)\] where $\lambda$ is the (rational) longitude of $M$ and $\mu$ is any slope dual to $\lambda$.   
\end{definition}

Notice that this definition may be rephrased by saying that the invariant $\CFD(M,\lambda,\mu)$ is independent (up to homotopy) of the choice of slope dual to the rational longitude when $M$ is a Heegaard Floer homology solid torus; this may be viewed as a type of Heegaard Floer homology Alexander trick, in the sense that  one may begin with an arbitrary manifold with torus boundary, and form a closed three manifold by attaching a Heegaard Floer homology solid torus. While the resulting manifold depends on a pair of gluing parameters, the Heegaard Floer homology is specified once the image of the longitude is known. This is precisely the simplification afforded to Dehn surgery by the Alexander trick. In particular, given a Heegaard Floer homology solid torus one has a means of producing  infinite families of distinct closed three-manifolds with identical Heegaard Floer homology is a straightforward manner (see Corollary \ref{crl:infinite}, below). 

\parpic[r]{
 \begin{minipage}{35mm}
 \centering
 \includegraphics[scale=.8]{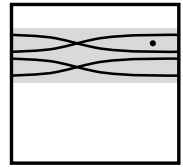}
 \captionof{figure}{Curves for the twisted $I$-bundle of the Klein bottle.}
 \label{fig:twisted-I-with-NBHD}
  \end{minipage}%
  }
A particular example of this phenomenon is provided by the twisted $I$-bundle over the Klein bottle. The bordered invariants of this manifold are computed in \cite[Proposition 6]{BGW2013} and the fact that this manifold is a Heegaard Floer homology solid torus is the content of \cite[Proposition 7]{BGW2013}. Viewed as immersed curves, these results are summarized in Figure \ref{fig:twisted-I-with-NBHD}. Further examples have appeared in \cite{HW,solid-tori}, for instance, there is an infinite family of Seifert fibered examples with base orbifold a disk with two cone points of order $n$ (the base orbifold $D^2(2,2)$ gives rise to the twisted $I$-bundle over the Klein bottle). We return to this in Section \ref{sec:seiberg-witten}. 

\begin{theorem}\label{thm:HFST} If $M$ is an orientable, compact, connected, irreducible three-manifold with torus boundary that is the complement of a knot in an L-space, the following are equivalent.
\begin{itemize}
\item[(i)] $M$ is a Heegaard Floer homology solid torus;
\item[(ii)] $\sL_M$ contains all slopes not equal to the rational longitude;
\item[(iii)] $\HFhat(M)$ carries trivial local systems and can be moved, via regular homotopy, to a small neighbourhood of the rational longitude.
\end{itemize}
\end{theorem}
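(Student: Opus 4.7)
Fix a parametrization so $\lambda$ is the rational longitude and $\mu$ is any dual slope. I will establish the chain $(ii)\Leftrightarrow(iii)\Leftrightarrow(i)$, with the implication $(i)\Rightarrow(iii)$ being the main obstacle.

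\emph{$(ii)\Leftrightarrow(iii)$.} By Theorem~\ref{thm:pairing}, $M(\alpha)$ is an L-space precisely when the Lagrangian of slope $\alpha$ through $z$ meets $\HFhat(M)$ in the minimum number of points allowed by its algebraic intersection with each curve component. If $\HFhat(M)$ has trivial local systems and sits in a small annular neighbourhood of $\lambda$ disjoint from $z$, then for every $\alpha\neq\lambda$ a straight line of slope $\alpha$ can be isotoped to cross this neighbourhood in a short transverse arc, realizing minimal intersection with each component and hence the L-space condition. Conversely, hypothesis $(ii)$ gives infinitely many L-space fillings, so $M$ is Floer simple and loop type by Proposition~\ref{prp:Floer-simple}, which yields trivial local systems; moreover, any component with nontrivial winding in the $\mu$-direction would violate the minimality test for slopes sufficiently close to $\lambda$, so every component of $\HFhat(M)$ must be homotopic into a neighbourhood of $\lambda$.

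\emph{$(iii)\Rightarrow(i)$.} The Dehn twist $\tau_\lambda$ realizes the reparametrization $\mu\mapsto\mu+\lambda$ and preserves regular homotopy classes of immersed curves confined to a small annular neighbourhood of $\lambda$ disjoint from $z$. Hence $\HFhat(M)$ is preserved (together with its local systems), and the structure theorem of \cite{HRW} yields the equivalence $\CFD(M,\lambda,\mu)\simeq\CFD(M,\lambda,\mu+\lambda)$.

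\emph{$(i)\Rightarrow(iii)$.} Since $M$ is a knot complement in an L-space, $\sL_M$ contains some slope $\alpha_0\neq\lambda$ (the rational longitude is excluded because L-spaces are rational homology spheres). Hypothesis $(i)$ says that $\HFhat(M)$ is $\tau_\lambda$-invariant, hence $\sL_M$ is $\tau_\lambda$-invariant; since the $\tau_\lambda$-orbit of $\alpha_0$ is infinite, $M$ admits infinitely many L-space fillings and is therefore Floer simple. By Proposition~\ref{prp:Floer-simple}, $\HFhat(M)$ is loop type with trivial local systems. For the positional claim, each simple curve component $\gamma$ has a well-defined homology class $p\mu+q\lambda$ in $H_1(\partial M)$, and $\tau_\lambda$ shifts this class to $p\mu+(q+p)\lambda$; invariance forces $p=0$, so $\gamma$ is either parallel to $\lambda$ or null-homotopic in $\partial M$. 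A null-homotopic component bounding a disk not containing $z$ is removable by regular homotopy, while one bounding a disk containing $z$ would be a small circle around the basepoint and is excluded by the proposition quoted in the introduction. Any immersed null-homotopic component must likewise be supported in an annular neighbourhood of $\lambda$ in order to be $\tau_\lambda$-invariant. Hence every component of $\HFhat(M)$ can be regularly homotoped into a small annular neighbourhood of $\lambda$, establishing $(iii)$.
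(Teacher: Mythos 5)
Your proof is essentially correct and leans on the same two HRW facts that the paper's (very terse) proof cites: the structure theorem identifies $\CFD(M,\lambda,\mu)\simeq\CFD(M,\lambda,\mu+\lambda)$ with $\tau_\lambda$-invariance of $\HFhat(M)$, and $\sL_M$ consists of those slopes meeting each curve of $\HFhat(M)$ transversally and minimally (exactly once per component). You organize the implications differently — $(ii)\Leftrightarrow(iii)$, $(iii)\Rightarrow(i)$, then $(i)\Rightarrow(iii)$ routed through Floer simplicity — whereas the paper asserts $(i)\Leftrightarrow(iii)$ and $(ii)\Leftrightarrow(iii)$ each as immediate; your route actually makes the role of the ``admits an L-space filling'' hypothesis visible, which the paper glosses over. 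Two small imprecisions: (a) when you say ``invariance forces $p=0$'', note that $\tau_\lambda$ could a priori permute components, so one should pass to a power $\tau_\lambda^n$ that fixes each component (finiteness of the curve set makes this harmless); (b) the sentence ``Any immersed null-homotopic component must likewise be supported in an annular neighbourhood of $\lambda$ in order to be $\tau_\lambda$-invariant'' is not justified and is not obviously true — there are $\tau_\lambda$-fixed conjugacy classes in $\pi_1(T^2\setminus z)$ that are null-homologous in $T^2$ but not supported near $\lambda$. However, this case is vacuous: once you have Floer simplicity, $\HFhat(M,\spin)$ is a single curve of homology class exactly $\lambda$ for each $\spin$ (HRW, and Section~\ref{subsection:torsion} of this paper), so null-homologous components simply do not occur, and the case analysis can be omitted. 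With those cosmetic adjustments the argument is sound.
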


\begin{proof}
The equivalence between (i) and (iii) is an immediate consequence of the equivalence between $\CFD(M,\lambda,\mu)$ and $\HFhat(M)$, and in particular, the identification of the action of the appropriate Dehn twist bimodule with a Dehn twists along $\lambda$; see \cite[Section 5]{HRW}. Similarly, the equivalence between (ii) and (iii) follows from the observation that $\sL_M$ is the collection of all slope intersecting each curve, minimally, exactly one time; see \cite[Section 7]{HRW}. Note that $\sL_M$ is empty whenever $\HFhat(M)$ carries a non-trivial local system.
\end{proof}


This result, in combination with Theorem \ref{thm:L-space-gluing}, shows that a closed manifold obtained by gluing two Heegaard Floer homology solid tori together is an L-space (provided the rational longitudes are not identified in the process).  

Corollary \ref{crl:mutation} establishes the existence of pairs of closed orientable three-manifolds with identical \(\HFhat\), namely, a toroidal $Y$ and its genus 1  mutant $Y^\mu$. The existence of Heegaard Floer homology solid tori gives rise to infinite families of closed orientable three-manifolds enjoying the same property:

\begin{corollary}\label{crl:infinite}
For any closed orientable three-manifold $Y$ containing a Heegaard Floer homology solid torus in its JSJ decomposition, there is an infinite family of manifolds $\{Y_i\}$ for which $\HFhat(Y_i)$ does not depend on $i\in\Z$.
\end{corollary}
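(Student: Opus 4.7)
The plan is to produce $\{Y_i\}$ by varying the gluing of the Heegaard Floer solid torus piece of the JSJ decomposition by powers of a Dehn twist along the rational longitude, and then invoke the pairing theorem. Let $M$ denote the Heegaard Floer solid torus supplied by hypothesis, and write $Y = M \cup_h N$ where $N$ is obtained by cutting $Y$ along the corresponding JSJ torus (with all the remaining JSJ pieces reassembled) and $h\co\partial N\to\partial M$ is the identifying homeomorphism, chosen so that $h$ sends the basepoint of $\partial N$ to $z$. Let $\tau\co\partial M\to\partial M$ denote the basepoint-preserving Dehn twist along the rational longitude $\lambda$ of $M$, and set
\[
Y_i := M \cup_{\tau^i\circ h} N, \qquad i\in\Z.
\]

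The main computation is then immediate from Theorem~\ref{thm:pairing}, which gives
\[
\HFhat(Y_i) \;\cong\; \HFa\bigl(\HFhat(M),\ \tau^i h(\HFhat(N))\bigr)
\]
in $\partial M\setminus z$. Theorem~\ref{thm:HFST} says that, because $M$ is a Heegaard Floer solid torus, the decorated multicurve $\HFhat(M)$ is invariant under $\tau$ up to regular homotopy and isomorphism of local systems. Since Lagrangian intersection Floer homology in a surface is preserved by simultaneously applying a self-diffeomorphism to both Lagrangian sets, applying $\tau^{-i}$ in both slots yields
\[
\HFa\bigl(\HFhat(M),\ \tau^i h(\HFhat(N))\bigr) \;\cong\; \HFa\bigl(\tau^{-i}\HFhat(M),\ h(\HFhat(N))\bigr) \;\cong\; \HFa\bigl(\HFhat(M),\ h(\HFhat(N))\bigr) \;\cong\; \HFhat(Y).
\]
Thus $\HFhat(Y_i)\cong\HFhat(Y)$, independent of $i\in\Z$.

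The main obstacle, and the only subtle point, is checking that $\{Y_i\}$ represents infinitely many homeomorphism types. If $M$ were a topological $D^2\times S^1$ then $\tau$ would extend over $M$ and all $Y_i$ would be homeomorphic to $Y$; but as $M$ is assumed to appear as a nontrivial piece of the JSJ decomposition it is not a topological solid torus, so Theorem~\ref{thm:twist} guarantees that no nonzero power of $\tau$ extends to a self-homeomorphism of $M$. Uniqueness of the JSJ decomposition then forces any homeomorphism $Y_i\to Y_j$ with $i\neq j$ to factor through boundary-respecting self-homeomorphisms of $M$ and $N$ realising the difference in gluings $\tau^{i-j}$; combined with standard finiteness for the boundary-preserving mapping class group of $N$ (coming, for instance, from atoroidal or hyperbolic JSJ pieces adjacent to the cutting torus), this forces all but finitely many of the $Y_i$ to be pairwise non-homeomorphic.
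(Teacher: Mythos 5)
Your argument is correct and follows the same route as the paper: define $Y_i$ by precomposing the gluing with $i$ Dehn twists along the rational longitude of the Heegaard Floer solid torus, and invoke Theorem~\ref{thm:pairing} together with Theorem~\ref{thm:HFST} (or directly Definition~\ref{def:hf-torus}) to see that $\HFhat(Y_i)$ does not depend on $i$. You go beyond the paper's one-line proof by also checking that the $Y_i$ are pairwise non-homeomorphic; this is left implicit in the paper, and your sketch via JSJ uniqueness is reasonable, though note that Theorem~\ref{thm:twist} as stated only excludes a single Dehn twist extending over $M$ --- to rule out every nonzero power $\tau^k$ one should appeal to the strengthening discussed in the surrounding text (the annulus argument, or McCullough's \cite{McCullough2006}), which does cover all $k\neq 0$.
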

\begin{proof}Write $Y=Y_0 = M_0\cup _{h_0} M_1$ where one of the $M_i$ is a Heegaard Floer homology solid torus (not $D^2\times S^1$). Then $Y_i$ is defined by composing $h_0$ with $i$ Dehn twists along the rational longitude of the  Heegaard Floer homology solid torus, and $\HFhat(Y_i) \cong \HFhat(Y)$ by Definition \ref{def:hf-torus} or, explicitly, via Theorem \ref{thm:pairing} and Theorem \ref{thm:HFST}.\end{proof}

\section{Gradings}\label{sec:grad}



In this section we will show that grading information on bordered Floer invariants of a manifold $M$ with torus boundary can be captured with additional decorations on the corresponding collection of immersed curves $\HFhat(M)$ and we prove Theorem~\ref{thm:gradings}, which asserts that gradings on the intersection Floer homology of curves recover the relative grading data on $\HFhat$ of a closed 3-manifold obtained by gluing along a torus.

Reviewing the notation used in \cite{HRW}, recall that $\HFhat(M)$ is a collection of immersed curves in the punctured torus $T_M$, while for any $\spin$ in $\spinc(M)$, $\HFhat(M,\spin)$ is a collection of immersed curves in the covering space $\barT_M = (H_1(M;\R)\setminus H_1(M;\Z))/\langle \lambda \rangle$. Thus with our conventions
$$ \HFhat(M) = \bigoplus_{\spin\in\spinc(M)} p\big(\HFhat(M,\spin)\big) $$
where $p\co \barT_M \to T_M$ is the projection. On each summand, the lift to the covering space carries additional information about the relative spin$^c$ grading. The goal of this section is to further decorate $\HFhat(M,\spin)$ to capture the Maslov grading as well. Once the curve set is given this extra decoration, it turns out that it can be projected to $T_M$ without losing information, and thus the spin$^c$ grading can be recorded without working in the cover $\barT_M$. Though we will not write this, since it is often convenient to work in the cover anyway, we could define $\HFhat_{\!\gr}(M,\spin)$ as the projection of $\HFhat(M,\spin)$, with this extra decoration, to $T_M$; this lives in the torus and represents the fully graded bordered Floer invariants of $M$.

\subsection{Graded Floer homology of curves with local systems}\label{sec:gradings-overview}

Theorem~\ref{thm:gradings} lets us compute the gradings on $\HFhat(M_0\cup_h M_1)$ directly from the curve invariants $\HFhat(M_0)$ and $\HFhat(M_1)$. Before proving the theorem, we define the grading structure on (Floer homology of) immersed curves and demonstrate it with some examples.

\begin{definition}
Given a collection $\boldsymbol{\gamma}$ of $n$ immersed curves in the punctured torus, possibly decorated with local systems, a \emph{set of grading arrows} is a collection of $n-1$ crossover arrows connecting the curves such that the union of the curves and crossover arrows is path-connected, i.e. such that there is a smooth immersed path between points on any two curves.
\end{definition}
For an example of a grading arrow on a collection of two curves, see Figure \ref{fig:fig8-maslov}. The grading arrows will sometimes be referred to as \emph{phantom arrows}, since they are a decoration that encodes grading information but otherwise have no effect (for instance, they are ignored when counting bigons while taking Floer homology of two curve sets). We will see that all the grading information on the bordered invariants for a manifold $M$ can be encoded with a set of grading arrows on $\curves M$. Note that when $\curves M$ contains a single curve, there are no grading arrows; that is, $\curves M$ does not require any further decoration to capture grading information. 

For $i = 0,1$, let $\bg i$ be a collection of immersed curves in the punctured torus decorated with local systems. We will further assume that every component of $\bg i$ is homologous to a multiple of $\lambda_i$, where $\lambda_i$ is a fixed homology class in $H_1(T^2)$ (when $\bg i$ is the invariant $\curves M$ for a manifold $M$ with torus boundary, $\lambda_i$ is the homological longitude of $M$). We defined the intersection Floer homology $HF(\bg 0, \bg 1)$ in \cite[Section 4]{HRW}; recall that, unless a component of $\bg 0$ is parallel to a component of $\bg1$, this is simply the vector space over $\F$ whose dimension is the geometric intersection number of $\bg0$ and $\bg1$. Provided $\bg0$ and $\bg1$ are decorated with a set of grading arrows, this vector space can be endowed with a relative spin$^c$ grading, which gives rise to a direct sum decomposition, and a relative Maslov grading on each spin$^c$ component.

For the spin$^c$ grading, consider two generators $x$ and $y$ of $HF(\bg0, \bg1)$, which are intersection points of $\bg 0$ and $\bg1$. Choose a path (not necessarily smooth) from $x$ to $y$ in $\bg0$ together with its grading arrows, and a path from $y$ to $x$ in $\bg1$ with its grading arrows. These paths are well defined up to adding full curve components of $\bg0$ and $\bg1$, so the union of the two paths gives a well defined element of $H_1(T^2) / \langle [\lambda_0], [\lambda_1] \rangle$; this element is the grading difference for the relative spin$^c$ grading. 

There is another description of the spin$^c$ grading which is often useful involving lifts of the curves to the covering space $\R^2\setminus \Z^2$. The curve set $\bg i$ together with its grading arrows has a well defined lift to the covering space  up to an overall translation, and the lift is invariant under the deck transformation corresponding to a lift of $\lambda_i$. Note that each curve in $\bg i$ has such a lift, and the grading arrows determine the relative position of the lifts of each component. Two intersection points $x$ and $y$ have the same spin$^c$ grading if and only if there are lifts $\boldsymbol{\tilde\gamma_0}$ and $\boldsymbol{\tilde\gamma_1}$ of $\bg 0$ and $\bg 1$ which pass through a lift of $x$ and a lift of $y$.

Given two intersection points $x$ and $y$ in the same spin$^c$ grading, consider lifts $\boldsymbol{\tilde\gamma_0}$ and $\boldsymbol{\tilde\gamma_1}$ of the curves passing through lifts $\tilde x$ of $x$ and $\tilde y$ of $y$, and let $p_i$ be a path from $\tilde x$ to $\tilde y$ in $\bg i$. The concatenation of $p_0$ with $-p_1$ gives a closed piecewise smooth path $p$ in $\R^2\setminus\Z^2$. The Maslov grading difference $m(y) - m(x)$ is defined to be twice the number of lattice points enclosed by $p$ (where each point is counted with multiplicity given by the winding number of $p$) plus $\frac 1 \pi$ times the total rightward rotation along the smooth segments of $p$. We assume that $p_0$ and $p_1$ are orthogonal at $x$ and $y$. For example, if $p$ is the boundary of an immersed bigon from $\tilde x$ to $\tilde y$ with two smooth sides meeting at right angles at $\tilde x$ and $\tilde y$, then the total rightward (clockwise) rotation when traveling along the smooth portions of $p_0- p_1$ is $-\pi$ (a full counterclockwise circle would be $-2 \pi$, but the two right corners of angle $\frac \pi 2$ do not contribute to the total rotation). Thus the grading difference $m(y) - m(x)$ is $-1$ plus two times the number of lattice points enclosed.

\begin{example}\label{example-intro}
Consider the splice of two trefoil complements discussed in the introduction (see Figure \ref{fig:intro-figure}). The two relevant immersed curves intersect five times; by looking at lifts of the two curves to the plane, it is clear that all five intersection points have the same spin$^c$ grading. They are connected by a string of bigons, each covering the puncture once, as in Figure \ref{fig:intro-fig-maslov}. If we label the intersections $x_1$ through $x_5$ from left to right in the figure, there are bigons from $x_2$ to $x_1$ and to $x_3$ and from $x_4$ to $x_3$ and to $x_5$. It follows that the generators $x_1, x_3$, and $x_5$ of $\HFhat(M_0\cup_h M_1)$ all have the same Maslov grading and that the grading of $x_2$ and $x_4$ is lower by one.
\end{example}

\begin{example}\label{example-trefoil}
To compute $\HFhat$ of $+3$-surgery on the right handed trefoil, we intersect the curve $\curves M$, where $M$ is the trefoil complement, with a line of slope 3. Figure \ref{fig:trefoil-3surgery} shows this arrangement both in the torus $\partial M \setminus z$ and in a lift to $\barT_M$. There are three intersection points, indicating that $\HFhat$ has dimension 3. Since three separate lifts of the straight line are needed to realize all three intersection points in the covering space, the three intersection points have different spin$^c$ gradings.
\end{example}

\begin{example}\label{example-fig8}
Let $M$ be the complement of the figure eight knot in $S^3$, and let $Y = M(1)$ be $+1$ surgery on this knot. $\HFhat(Y)$ is computed by intersecting $\curves M$ with a line of slope 1, as shown in Figure \ref{fig:fig8-maslov}. Note that since $\curves M$ contains two curves in the same spin$^c$-structure, grading information is not captured by the curves alone; the left side of Figure \ref{fig:fig8-maslov} shows $\curves M$ decorated with a grading arrow. There is a bigon connecting the intersection points $a$ and $b$ which covers the puncture once (middle part of the Figure); it follows that the Maslov grading of $b$ is one higher than that of $a$. The intersection points $c$ and $a$ can be connected by a more complicated region (right side of the figure), the boundary of which is a closed piecewise smooth path with corners at $a$ and $c$. The net clockwise rotation along the smooth pieces of the path is $\pi$, and a puncture is enclosed with winding number $-1$. It follows from this that the grading of $c$ is one higher than the grading of $a$.
\end{example}

\begin{figure}
\centering
\begin{minipage}{70mm}
 \centering
 \vspace{5 mm}
 \includegraphics[scale=0.55]{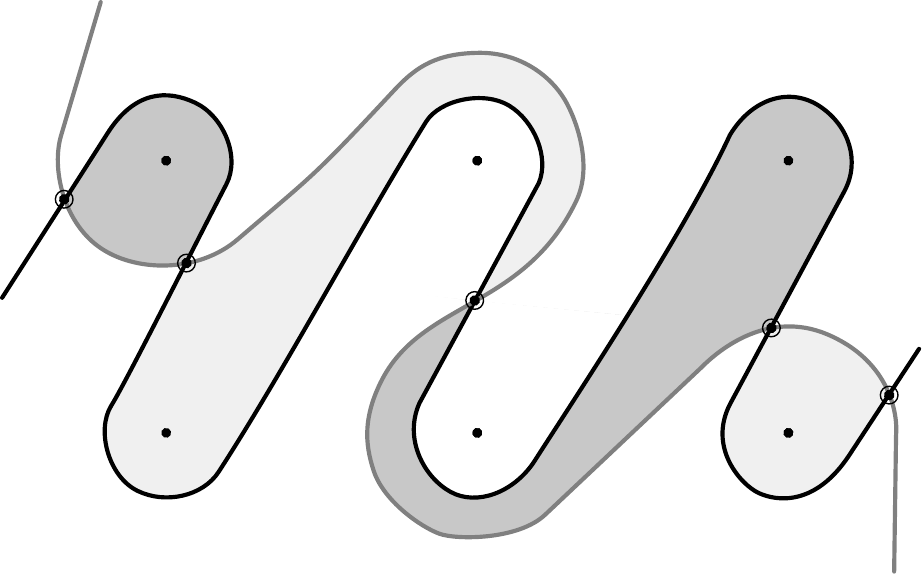}
 \vspace {3 mm}
\captionof{figure}{Lifts of curves $\curves{M_0}$ and $\color{gray} h(\curves{M_1})$ from Example \ref{example-intro} to the plane and bigons determining the Maslov grading.}\label{fig:intro-fig-maslov}
\end{minipage}%
\hspace{10 mm}
\begin{minipage}{70mm}
 \centering
 \includegraphics[scale=0.9]{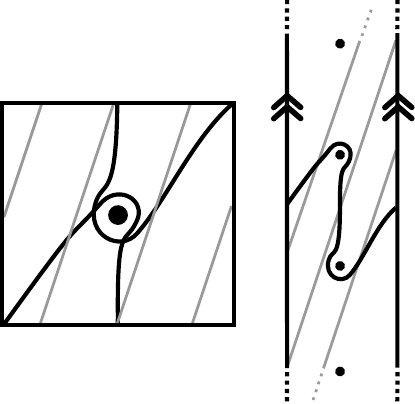}
\captionof{figure}{+3 surgery on the right handed trefoil, as described in Example \ref{example-trefoil}}\label{fig:trefoil-3surgery}
\end{minipage}
\end{figure}

\labellist
\tiny
\pinlabel {$a$} at 220 99
\pinlabel {$b$} at 173 67
\pinlabel {$c$} at 152 47

\pinlabel {$a$} at 348 99
\pinlabel {$b$} at 301 67
\pinlabel {$c$} at 280 47
\endlabellist
\begin{figure}
\includegraphics[scale=.7]{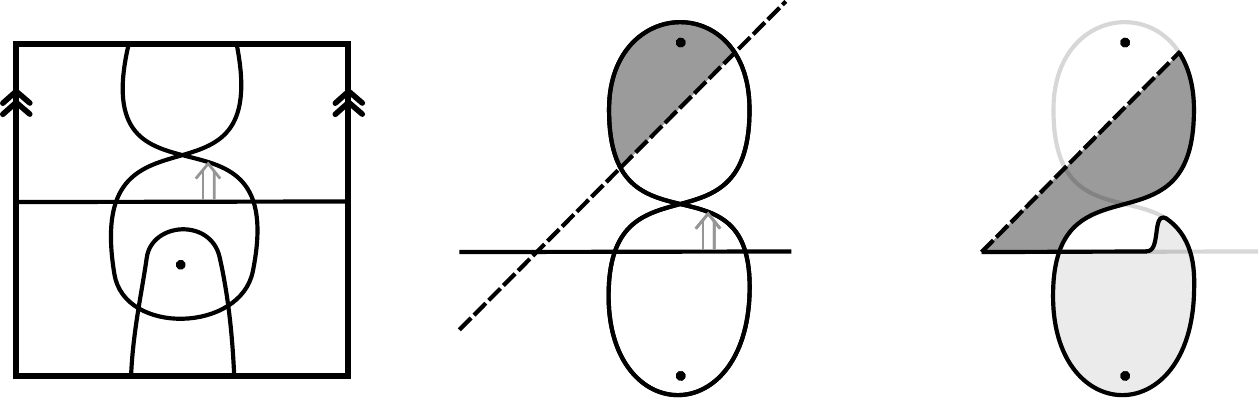}
\caption{Left: The invariant $\curves{M}$ for $M$ the complement of the figure eight knot, decorated by a phantom arrow to encode grading information. Middle: Pairing with $\curves{D^2\times S^1}$ (dotted line), corresponding to 1-surgery on $M$, viewed in the covering space $\tildeT_M$. There are three generators, all in the same spin$^c$ structure. The shaded bigon indicates that $m(b) - m(a) = 1$. Right: The shaded region shows that $m(c) - m(a) = 1$.}
\label{fig:fig8-maslov}
\end{figure}

\subsection{Gradings in bordered Floer homology}

Before proving Theorem \ref{thm:gradings}, we briefly recall the grading structure on bordered Floer homology; since we are only interested in the torus boundary case, we follow the specialization given in \cite[Chapter 11]{LOT}. Recall that  each generator \(x\) of \(\CFD(M,\alpha,\beta)\) has an associated spin$^c$ structure \(\spin(x) \in \spinc(M)\). The elements of \(\spinc(M)\) are homology classes of nonvanishing vector fields on \(M\), and \(\spinc(M)\) has the structure of an affine set modeled on \(H^2(M) \cong H_1(M, \partial M)\). The same decomposition holds for $\CFA(M,\alpha,\beta)$, so that 
\[\CFD(M,\alpha,\beta ) = \bigoplus_{\spin \in \spinc(M)} \CFD(M,\alpha,\beta;\spin)\quad\text{and}\quad\CFA(M,\alpha,\beta ) = \bigoplus_{\spin \in \spinc(M)} \CFA(M,\alpha,\beta;\spin).\]
The gradings in bordered Floer homology take the form of relative gradings on each spin$^c$-structure summand in the above decomposition.

The refined grading $\grCFD$ takes values in a non-commutative group $G$ whose elements are triples $(m; i, j)$, with $m,i,j\in\frac{1}{2}\Z$ and $i+j\in\Z$, and for which $m\in\Z$ if and only if $i, j\in\Z$ and $i+j\in 2\Z$. The half integer $m$ is the Maslov component, and the pair $v = (i,j)$ (regarded as a vector in $\frac{1}{2}\Z \times \frac{1}{2}\Z$) is the spin$^c$-component. The group law is given by
\[(m;v)\cdot(m';v')=(m+m'+\det\left(\begin{smallmatrix} -\, v\, -\\ -\, v'-\end{smallmatrix} \right);v+v').\]
The torus algebra is also graded by elements of $G$; the grading on Reeb elements is given by
$$ \textstyle \grCFD(\rho_1) = (-\frac{1}{2}; \frac{1}{2}, -\frac{1}{2}), \quad
\grCFD(\rho_2) = (-\frac{1}{2}; \frac{1}{2}, \frac{1}{2}), \quad
\grCFD(\rho_3) = (-\frac{1}{2}; -\frac{1}{2}, \frac{1}{2}),$$
along with the fact that $\grCFD(\rho_I\rho_J) = \grCFD(\rho_I)\grCFD(\rho_J)$. In $\CFD$, the grading satisfies ${\grCFD(\partial x) = \lambda^{-1}\grCFD(x)}$, where $\lambda$ is the central element $(1;0,0)$, and $\grCFD(\rho_I \otimes y) = \grCFD(\rho_I)\grCFD(y)$. In $\CFA$, the grading satisfies $\grCFD( m_{k+1}(x, a_1, \ldots, a_k)) = \grCFD(x)\grCFD(a_1)\cdots\grCFD(a_k)\lambda^{k-1}$. It follows that the change in grading associated with coefficient maps in a type D structure or a corresponding type A structure are given by Table \ref{tab:mas-D}.

The gradings are defined only up to an overall shift; that is, up to multiplying the grading of each generator on the right (for $\CFD)$ or on the left (for $\CFA$) by a fixed element $G$. Thus it is convenient to fix a reference generator $\x_0$ and declare it to have grading $(0;0,0)$. Moreover, for each manifold and choice of reference generator $\x_0$  there is a subgroup $P(\x_0)$ of $G$ such that the gradings on $\CFD$ (resp. $\CFA$) are defined only modulo the right (resp. left) action of $P(\x_0)$. $P(\x_0)$ records the gradings of periodic domains connecting $\x_0$ to itself. More precisely, $P(\x_0)$ is the image in $G$ of the set of periodic domains $\pi_2(\x_0,\x_0) \cong H_2(M,\partial M)$ (see \cite[Section 10.2]{LOT}). Note that for torus boundary $\pi_2(\x_0,\x_0) \cong \pi^\partial_2(\x_0,\x_0) \oplus \Z$, where $\pi_2^\partial(\x_0,\x_0) \cong H_2(M)$ is the set of provincial periodic domains. It follows that if $M$ is a rational homology solid torus then $P(\x_0)$ is cyclic, and otherwise it is generated by $p_0$ and $\lambda^n = (n;0,0)$ for some $p_0 \in G$ with non-zero spin$^c$ component and some integer $n$. We remark that $P(\x_0)$ is determined by topological information about $M$; for example, the spin$^c$ component of the generator of $P(\x_0)$ is determined by the homological longitude of $M$.

\begin{table}[t]
	\begin{tabular}{|c|c|c|c|}
		\hline
		\rowcolor{lightgray} Labelled edge   & Grading change   & Labelled edge   & Grading change   \\
		\hline
		\begin{tikzpicture}[thick, >=stealth'] \draw[->] (0,0) -- (1,0);\node [above] at (0.5,0) {$\scriptstyle{1}$};
		\node [left] at (0,0) {$x$}; \node [right] at (1,0) {$y$};\end{tikzpicture}
		& $\gr(x)=(\frac{1}{2};\frac{1}{2},-\frac{1}{2})\cdot\gr(y)$  
		&\begin{tikzpicture}[thick, >=stealth'] \draw[->] (0,0) -- (1,0);\node [above] at (0.5,0) {$\scriptstyle{12}$};
		\node [left] at (0,0) {$x$}; \node [right] at (1,0) {$y$};\end{tikzpicture}
		& $\gr(x)=(\frac{1}{2};1,0)\cdot\gr(y)$ \\
		\begin{tikzpicture}[thick, >=stealth'] \draw[->] (0,0) -- (1,0);\node [above] at (0.5,0) {$\scriptstyle{2}$};
		\node [left] at (0,0) {$x$}; \node [right] at (1,0) {$y$};\end{tikzpicture}& $\gr(x)=(\frac{1}{2};\frac{1}{2},\frac{1}{2})\cdot\gr(y)$
		& \begin{tikzpicture}[thick, >=stealth'] \draw[->] (0,0) -- (1,0);\node [above] at (0.5,0) {$\scriptstyle{23}$};
		\node [left] at (0,0) {$x$}; \node [right] at (1,0) {$y$};\end{tikzpicture}
		&$\gr(x)=(\frac{1}{2};0,1)\cdot\gr(y)$\\
		\begin{tikzpicture}[thick, >=stealth'] \draw[->] (0,0) -- (1,0);\node [above] at (0.5,0) {$\scriptstyle{3}$};
		\node [left] at (0,0) {$x$}; \node [right] at (1,0) {$y$};\end{tikzpicture}
		&  $\gr(x)=(\frac{1}{2};-\frac{1}{2},\frac{1}{2})\cdot\gr(y)$
		& \begin{tikzpicture}[thick, >=stealth'] \draw[->] (0,0) -- (1,0);\node [above] at (0.5,0) {$\scriptstyle{123}$};
		\node [left] at (0,0) {$x$}; \node [right] at (1,0) {$y$};\end{tikzpicture}
		&  $\gr(x)=(\frac{1}{2};\frac{1}{2},\frac{1}{2})\cdot\gr(y)$ \\
		\hline
	\end{tabular}
	\vskip0.1in

	\begin{tabular}{|c|c|c|c|}
		\hline
		\rowcolor{lightgray} Labelled edge   & Grading change   & Labelled edge   & Grading change   \\
		\hline
		\begin{tikzpicture}[thick, >=stealth'] \draw[->] (0,0) -- (1,0);\node [above] at (0.5,0) {$\scriptstyle{(3)}$};
		\node [left] at (0,0) {$x$}; \node [right] at (1,0) {$y$};\end{tikzpicture}
		& $ \gr(x)=\gr(y) \cdot (\frac{1}{2};\frac{1}{2},-\frac{1}{2})$  
		&\begin{tikzpicture}[thick, >=stealth'] \draw[->] (0,0) -- (1,0);\node [above] at (0.5,0) {$\scriptstyle{(3,2)}$};
		\node [left] at (0,0) {$x$}; \node [right] at (1,0) {$y$};\end{tikzpicture}
		& $\gr(x)=\gr(y)\cdot (\frac{1}{2};0,-1)$ \\
		\begin{tikzpicture}[thick, >=stealth'] \draw[->] (0,0) -- (1,0);\node [above] at (0.5,0) {$\scriptstyle{(2)}$};
		\node [left] at (0,0) {$x$}; \node [right] at (1,0) {$y$};\end{tikzpicture}& $\gr(x)=\gr(y)\cdot (\frac{1}{2};-\frac{1}{2},-\frac{1}{2})$
		& \begin{tikzpicture}[thick, >=stealth'] \draw[->] (0,0) -- (1,0);\node [above] at (0.5,0) {$\scriptstyle{(2,1)}$};
		\node [left] at (0,0) {$x$}; \node [right] at (1,0) {$y$};\end{tikzpicture}
		&$\gr(x)=\gr(y)\cdot (\frac{1}{2};-1,0)$\\
		\begin{tikzpicture}[thick, >=stealth'] \draw[->] (0,0) -- (1,0);\node [above] at (0.5,0) {$\scriptstyle{(1)}$};
		\node [left] at (0,0) {$x$}; \node [right] at (1,0) {$y$};\end{tikzpicture}
		&  $\gr(x)=\gr(y)\cdot(\frac{1}{2};-\frac{1}{2},\frac{1}{2})$
		& \begin{tikzpicture}[thick, >=stealth'] \draw[->] (0,0) -- (1,0);\node [above] at (0.5,0) {$\scriptstyle{(3,2,1)}$};
		\node [left] at (0,0) {$x$}; \node [right] at (1,0) {$y$};\end{tikzpicture}
		&  $\gr(x)=\gr(y)\cdot (\frac{1}{2};-\frac{1}{2},-\frac{1}{2})$ \\
		\hline
	\end{tabular}
	\vskip0.1in
	\caption{Shifts in the refined gradings on $\CFD(M,\alpha,\beta)$, top, and $\CFA(M,\alpha,\beta)$, bottom. }\label{tab:mas-D}
\end{table}

For $i = 0, 1$, consider a bordered manifold with torus boundary $(M_i, \alpha_i, \beta_i)$ with spin$^c$ structure $\spin_i$. The gradings $\grCFD$ on $N^A_0 = \CFA(M_0,\alpha_0,\beta_0;\spin_0)$ and $N_1 = \CFD(M_1,\alpha_1,\beta_1;\spin_1)$ give rise to a grading $\grbox$ on $N_0^A \boxtimes N_1$, where $\grbox(y_0\otimes y_1) = \grCFD(y_0) \cdot \grCFD(y_1)$. Fix reference generators $\x_0$ and $\x_1$ of $N_0^A$ and $N_1$, respectively, with $\iota(\x_0) = \iota(\x_1)$ so that $\x_0\otimes \x_1$ is a generator in the box tensor product. The grading $\grbox$ takes values in $P(\x_0) \backslash G / P(\x_1)$ with integer coefficients in the spin$^c$ component.

Suppose that $Y = M_0 \cup_h M_1$. Restriction gives a surjective map 
$$\pi: \spinc(Y) \to \spinc(M_0) \times \spinc(M_1).$$ 
It is not hard to see that $\pi^{-1}(\spin_0 \times \spin_1)$ is a torsor over  $H_Y= H_1(\partial M_0;\Z)/ \langle \lambda_0, h_*(\lambda_1) \rangle)$, where $\lambda_i$ is the homological longitude of $M_i$. The spin$^c$ component of $\grbox(y_0\otimes y_1)$ can be interpreted as an element of  $H_Y$; this, along with $\spin_0$ and $\spin_1$, determine the spin$^c$ grading of $y_0\otimes y_1$. If $y_0 \otimes y_1$ and $z_0\otimes z_1$ have the same spin$^c$ grading, then they have a well defined Maslov grading difference as well, obtained by acting on $\grCFD(y_0)\cdot\grCFD(y_1)$ and $\grCFD(z_0)\cdot\grCFD(z_1)$ by $P(\x_0)$ and $P(\x_1)$ to make the spin$^c$ components equal and then comparing the Maslov components.

The spin$^c$ component of the grading admits another description which we find valuable: Restricting attention to the generators in a particular idempotent \(\iota\), we define a refined spin$^c$ grading \(\spin_\iota(x) \in \spinc(M, \iota)\), which lives in an affine set modeled on \(H^2(M, \partial M) \cong H_1(M)\).  Elements of \(\spinc(M,\iota)\) are homology classes of nonvanishing vector fields with prescribed behavior on \(\partial M\), and \(\spin(x)\) is the image of \(\spin_\iota(x)\) in \(\spinc(M)\).

To compare the refined gradings of two generators, we adopt the following. Given \(\mathfrak{t} \in \spinc(M)\), let 
\[ \spinc(M, \iota, \mathfrak{t}) = \{ \spin \in \spinc(M, \iota) \, | \, \spin = \mathfrak{t} \ \text{in} \ \spinc(M)\}.\]
If \(j_*\co H_1(\partial M) \to H_1(M)\) is the map induced by inclusion, 
\(\spinc(M, \iota, \mathfrak{t})\) is an affine set modeled on \(H_M = \im j_* \cong H_1(\partial M)/\ker j_* \). 
When \(\partial M\) is a torus we let \(\spinc(M,\mathfrak{t}) = \spinc(M, \iota_0, \mathfrak{t}) \cup 
\spinc(M, \iota_1, \mathfrak{t})\) 
and define
 $$\textstyle\frac{1}{2}H_M = \{ x\in H_1(\partial M, \R) \, | \, 2x \in H_1(\partial M, \Z)\}/ \ker j_* $$
Given two generators $x$ and $y$ in $\CFD(M,\alpha, \beta; \spin)$ with idempotents $\iota_x$ and $\iota_y$, respectively, we think of the grading difference $\spin_{\iota_x}(x) - \spin_{\iota_y}(y)$ as an element of $\frac{1}{2}H_M$, which is in $H_M$ if and only if $\iota_x = \iota_y$. Equivalently, we can think of $\spin_\iota$ as a relative grading where $\spin_{\iota_x}(x)$ is an element of $\frac{1}{2}H_M$, defined only up to an overall shift.

\begin{table}[t]
\begin{tabular}{|c|c|c|c|}
		\hline
		\rowcolor{lightgray} Labeled edge  & \(\spin(y)-\spin(x)\)   & Labeled edge & \(\spin(y)-\spin(x)\)   \\
		\hline
		\begin{tikzpicture}[thick, >=stealth'] \draw[->] (0,0) -- (1,0);\node [above] at (0.5,0) {$\scriptstyle{1}$};
		\node [left] at (0,0) {$x$}; \node [right] at (1,0) {$y$};\end{tikzpicture}
		& $-(\alpha+\beta)/2$  
		&\begin{tikzpicture}[thick, >=stealth'] \draw[->] (0,0) -- (1,0);\node [above] at (0.5,0) {$\scriptstyle{12}$};
		\node [left] at (0,0) {$x$}; \node [right] at (1,0) {$y$};\end{tikzpicture}
		& $-\beta$ \\
		\begin{tikzpicture}[thick, >=stealth'] \draw[->] (0,0) -- (1,0);\node [above] at (0.5,0) {$\scriptstyle{2}$};
		\node [left] at (0,0) {$x$}; \node [right] at (1,0) {$y$};\end{tikzpicture}& $(\alpha -\beta)/2$ 
		& \begin{tikzpicture}[thick, >=stealth'] \draw[->] (0,0) -- (1,0);\node [above] at (0.5,0) {$\scriptstyle{23}$};
		\node [left] at (0,0) {$x$}; \node [right] at (1,0) {$y$};\end{tikzpicture}
		& $\alpha$\\
		\begin{tikzpicture}[thick, >=stealth'] \draw[->] (0,0) -- (1,0);\node [above] at (0.5,0) {$\scriptstyle{3}$};
		\node [left] at (0,0) {$x$}; \node [right] at (1,0) {$y$};\end{tikzpicture}
		&  $(\alpha+\beta)/2$ 
		& \begin{tikzpicture}[thick, >=stealth'] \draw[->] (0,0) -- (1,0);\node [above] at (0.5,0) {$\scriptstyle{123}$};
		\node [left] at (0,0) {$x$}; \node [right] at (1,0) {$y$};\end{tikzpicture}
		&  $(\alpha-\beta)/2$ \\
		\hline
	\end{tabular}
	\vskip0.1in
	\caption{Grading shifts, as elements of $\frac{1}{2} H_M$, associated with labelled edges in $\CFD(M,\alpha,\beta)$. The grading shifts are the same for the corresponding edges in $\CFA(M,\alpha,\beta)$, though the identification of between $\frac{1}{2}H_M$ and the notation of \cite{LOT} is different.}\label{tab:spinc}
\end{table}

\begin{lemma}
	\label{Lem:spindiff} We can identify \(\spinc(M,\mathfrak{t} )\) with a subset of \(\frac{1}{2} H_M\) in such a way that arrows  in \(\CFD(M, \alpha, \beta)\) shift the \(\spinc\) grading as shown in Table~\ref{tab:spinc}.
\end{lemma}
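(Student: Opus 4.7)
The plan is to deduce this from the standard definition of refined spin$^c$ structures for generators of $\CFD(M,\alpha,\beta)$ together with the grading shift formula in \cite[Chapter 4]{LOT}. For any generator $x$, one has $\spin_{\iota(x)}(x) \in \spinc(M,\iota(x),\mathfrak{t})$ where $\mathfrak{t}=\spin(x)\in\spinc(M)$; the two affine sets $\spinc(M,\iota_0,\mathfrak{t})$ and $\spinc(M,\iota_1,\mathfrak{t})$ are both torsors over $H_M$. To combine them into a subset of $\tfrac{1}{2}H_M$, the aim is to place these two torsors in two distinct cosets of $H_M$ in $\tfrac{1}{2}H_M$.

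I would begin by fixing a reference generator $x_0\in\iota_0\CFD(M,\alpha,\beta;\mathfrak{t})$ and setting $\spin_{\iota_0}(x_0)=0\in H_M\subset\tfrac{1}{2}H_M$. I would then extend $\spin_\iota$ to every other generator by summing the shifts listed in Table~\ref{tab:spinc} along any sequence of labeled arrows from $x_0$. Path-independence of this procedure follows from the fact that the differential satisfies $\partial^2=0$ and is compatible with $\grCFD$, which forces the total shift around any closed loop of arrows in the decorated graph for $\CFD(M,\alpha,\beta;\mathfrak{t})$ to vanish in $\tfrac{1}{2}H_M$. The point is that any such loop must have total Reeb-label homology class lying in the subgroup generated by periodic domains, and the periodic domain shifts lie in $\ker j_*$.

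The heart of the argument is verifying the table entries. By \cite[Section 4.3]{LOT}, the shift $\spin(y)-\spin(x)$ associated to an arrow from $x$ to $y$ labeled $\rho_I$ is computed by taking the arc $\rho_I$ on $\partial M$, closing it up with half-arcs of $\alpha$ and of $\beta$ (with the specific half-arcs dictated by which endpoints of $\rho_I$ lie on $\alpha$ versus $\beta$), and projecting the resulting class in $\tfrac{1}{2}H_1(\partial M)$ to $\tfrac{1}{2}H_M$. I would compute this directly for each of the elementary arcs $\rho_1,\rho_2,\rho_3$ using the standard parametrized torus in Figure~\ref{fig:path-for-B} and confirm the entries $-(\alpha+\beta)/2$, $(\alpha-\beta)/2$, $(\alpha+\beta)/2$. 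The entries for $\rho_{12}$, $\rho_{23}$, and $\rho_{123}$ then follow from additivity of the shifts along a concatenation of Reeb chords, yielding $-\beta$, $\alpha$, and $(\alpha-\beta)/2$ respectively, matching the table.

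The remaining consistency check is that the shift values for idempotent-changing arrows ($\rho_1,\rho_2,\rho_3,\rho_{123}$) all lie in a single nontrivial coset of $H_M$ in $\tfrac{1}{2}H_M$, while the idempotent-preserving shifts ($\rho_{12},\rho_{23}$) lie in $H_M$ itself. This is immediate from the table: pairwise differences of the half-integer entries equal $\pm\alpha$ or $\pm\beta$, which are in $H_M$. Hence $\spin_{\iota_0}$ and $\spin_{\iota_1}$ do occupy two well-defined $H_M$-cosets of $\tfrac{1}{2}H_M$, giving the asserted embedding of $\spinc(M,\mathfrak{t})$. The main obstacle is pinning down the basic $\rho_i$ entries with correct signs and conventions; once this is matched to the half-arc closure prescription in \cite{LOT}, additivity and the $\partial^2=0$ consistency check finish the proof.
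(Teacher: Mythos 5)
Your argument is essentially correct, though it takes a more explicitly computational path than the paper, which simply observes that the lemma is a rephrasing of \cite[Lemma 3.8]{RR}: one picks an identification $f\colon\spinc(M,\iota_0,\mathfrak{t})\to H_M$ and then sends $\spin\in\spinc(M,\iota_1,\mathfrak{t})$ to $f(i^{-1}(\spin))-(\alpha+\beta)/2$, where $i$ is the canonical map between idempotent sectors constructed in \cite{RR}. In effect, the paper packages the half-integer offset between the two $H_M$-torsors into that single map $i$, while you recover the same offset as the $\rho_1$ entry and then propagate it by additivity. The two routes are logically equivalent; yours is more self-contained in showing how the table entries arise from the half-arc closure prescription, but note that the key step you defer to --- ``I would compute this directly for each of the elementary arcs $\rho_1,\rho_2,\rho_3$ \ldots and confirm the entries'' --- is precisely the content of the lemma cited in the paper, so you have not actually removed the dependence on \cite{LOT}/\cite{RR}, only restated it. One remark worth keeping in mind: your path-independence argument (via periodic domains lying in $\ker j_*$) justifies a well-defined relative grading on generators of a single $\CFD$, but the lemma's statement is about an identification of the abstract set $\spinc(M,\mathfrak{t})$, so you should first define the embedding on each $H_M$-torsor (fixing one base point in each idempotent sector, offset by $(\alpha+\beta)/2$) and only afterwards check that the assigned $\spin_\iota(x)$ on generators satisfies the table; otherwise the logic reads as circular. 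That your offset $(\alpha+\beta)/2$ always gives a nontrivial coset is automatic since $\tfrac{1}{2}H_M/H_M\cong(\Z/2)^2$, independent of $\ker j_*$, which you implicitly use and could make explicit.
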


\begin{proof}
	This is just a rephrasing of \cite[Lemma 3.8]{RR}; compare \cite[Lemma 11.42]{LOT}. Specifically, choose some identification \(f\co\spinc(M, \iota_0,\mathfrak{t}) \to H_M\). For \(\spin \in \spinc(M, \iota_0,\mathfrak{t})\), we identify \( \spin\) with \( f(\spin)\), and for \(\spin \in \spinc(M, \iota_1,\mathfrak{t})\) we identify  \(\spin\) with  \( f(i^{-1}(\spin))-(\alpha + \beta)/2\), where \(i\co\spinc(M, \iota_0,\mathfrak{t}) \to  \spinc(M, \iota_1,\mathfrak{t})\) is the map defined in \cite[Lemma 3.8]{RR}. 
\end{proof}

Given bordered manifolds $M_0$ and $M_1$, consider generators $x_0$ and $y_0$ in $\CFA(M_1,\alpha_0,\beta_0;\spin_0)$ and $x_1$ and $y_1$ in $\CFD(M_1,\alpha_1,\beta_1;\spin_1)$. The generators $x = x_0\otimes x_1$ and $y = y_0\otimes y_1$ in the box tensor product both have spin$^c$ grading in $\pi^{-1}( \spin_0 \times \spin_1 ) \subset \spinc(Y)$, and their difference, as an element of $H_Y$, is given by $\left[\spin(x_1)-\spin(x_0)\right] - r \left[ \spin(y_1) - \spin(y_0)\right]s$, where $r$ denotes the reflection taking $\alpha$ to $-\beta$ and $\beta$ to $-\alpha$.

\begin{remark} \label{rem:compare-grading-notation} We pause to explicitly state the identification between the two conventions above, which is a potential source of confusion. Comparing Tables \ref{tab:mas-D} and \ref{tab:spinc}, note that a change in $\spin_\iota$ of $i\beta + j\alpha \in \frac{1}{2} H_M$ corresponds to a change of $(i,-j)$ in the spin$^c$ component of the grading in $\CFD$, or to a change of $(j,-i)$ to the spin$^c$ component of the grading in $\CFA$. Note also that when representing a module by a train track in the next section, our convention is to draw the train track in the $\beta$-$\alpha$ plane, where $\beta$ is taken to be the positive horizontal direction and $\alpha$ is the positive vertical direction. Thus a generator with grading $\spin_\iota(x) = j\alpha + i\beta$ will occur at coordinates $(i,j)$ in the plane.
\end{remark}

Finally, we note that the full grading can be specialized to give a relative $\Ztwo$ grading, which can be a convenient restriction when the full Maslov grading is not needed (see \cite{Petkova-decategorification}). In $\CFD$, for a generator $x$ with grading $\grCFD(x) \in G$, we define $\grDmodtwo(x)$ to be $f_+(\grCFD(x))$ if $x$ has idempotent $\iota_0$ and $f_-(\grCFD(x))$ if $x$ has idempotent $\iota_1$, where $f_\pm$ is the mod 2 reduction of the map $\tilde{f}_\pm: G\to \Z$ defined by 
$$\tilde f_\pm(m,i,j) = \begin{cases}
m+ \frac{i+j}{2} & i,j \in \Z \,\, \text{with same parity} \\
m\pm \frac{j-i}{2} & i,j \in \Z, \,\, \text{with different parity} \\
m+ \left(i \pm \frac{1}{2}\right) \left(j \pm \frac{1}{2}\right) \pm \frac{1}{2} & i,j \not\in \Z
\end{cases}$$
On connected components of $\CFD$, the following proposition gives rise to a simpler description of $\grDmodtwo$ as a relative grading; we remark that this agrees with the relative $\Ztwo$ grading defined in \cite{Petkova-decategorification}.
\begin{proposition}
Two generators $x$ and $y$ in $\CFD$ have $\grDmodtwo(x) = \grDmodtwo(y)$ if they are connected by an arrow labeled with $\rho_2, \rho_{12}, \rho_{23}$, or $\rho_{123}$ and $\grDmodtwo(x) \neq \grDmodtwo(y)$ if they are connected by an arrow labeled with $\rho_1, \rho_3$, or $\rho_\emptyset = 1$.
\end{proposition}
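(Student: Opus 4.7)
The plan is a direct verification: for each of the seven possible arrow labels ($\rho_1, \rho_2, \rho_3, \rho_{12}, \rho_{23}, \rho_{123}$, and $\rho_\emptyset = 1$), I compute the difference $\tilde{f}_{\epsilon(x)}(\gr(x)) - \tilde{f}_{\epsilon(y)}(\gr(y))$ explicitly in terms of the grading $\gr(y)$ and verify that its parity is the one asserted, independent of the choice of $\gr(y)$. Here $\epsilon(z) = +$ for $z\in\iota_0$ and $\epsilon(z) = -$ for $z\in\iota_1$.

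First, for each arrow label I record the pair $(\epsilon(x),\epsilon(y))$ forced by the factorization $\rho_I = \iota_{s(I)}\rho_I\iota_{t(I)}$; these are $(+,-),(-,+),(+,-),(+,+),(-,-),(+,-)$ for $I=1,2,3,12,23,123$, with $\rho_\emptyset=1$ preserving the idempotent. Next, using Table~\ref{tab:mas-D} and the group law
\[
(m;i,j)\cdot(m';i',j') = \big(m+m'+ij'-i'j;\,i+i',\,j+j'\big),
\]
I expand $\gr(x)=g\cdot\gr(y)$ componentwise, parametrizing $\gr(y)=(m;i,j)$, to obtain an explicit formula for $\gr(x)$.

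The main body of the argument is a case analysis according to the piecewise definition of $\tilde{f}_\pm$. The parities of $(i,j)$ fall into three cases: (a) both in $\Z$ with the same parity, (b) both in $\Z$ with different parities, (c) both in $\Z+\tfrac12$. For each case of $\gr(y)$ I identify the case of $\gr(x)$ (in case~(c) this further splits depending on the parities of $i\pm\tfrac12$ and $j\pm\tfrac12$), apply the relevant formula for $\tilde{f}_\pm$, and simplify. For example, for $\rho_2$ with $\gr(y)$ in case~(a), $\gr(x)$ lies in case~(c) and the difference reduces to $i(j-1)$, which is even by the same-parity hypothesis; while for $\rho_1$ in case~(a), the analogous computation yields $j(i+1)+1$, which is odd. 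The identity arrow is immediate: $g=(1;0,0)$ is central and lies in case~(a), so it preserves the case of $\gr(y)$ and shifts every branch of $\tilde{f}_\pm$ by exactly $1$.

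The only real obstacle is bookkeeping: seven arrow labels times three (or up to six, in case~(c)) parity sub-cases, each reducing to checking that a specific quadratic-in-$(i,j)$ expression has the correct parity under the given hypotheses. This is purely mechanical. A conceptual shortcut is to observe that the pattern predicted by the proposition is equivalent to equipping $\sA$ with the $\Ztwo$-grading $|\rho_I|\equiv 1\pmod 2$ iff $2\in I$ (with $|\rho_\emptyset|=0$): this grading is multiplicative on $\sA$, and the asserted rule then reads ``the shift across $x\xrightarrow{\rho_I}y$ equals $|\rho_I|+1\pmod 2$,'' which is in turn the statement that $\delta$ has odd degree with respect to this $\Ztwo$-grading on $\sA$.
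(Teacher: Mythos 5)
Your proposal is correct and uses the same underlying tool as the paper: a parity case analysis over the piecewise formula for $\tilde f_\pm$, and both your sample computations (the differences $i(j-1)$ for $\rho_2$ and $j(i+1)+1$ for $\rho_1$ in case (a)) and your idempotent table check out. The only organizational difference is that the paper proves the identities just for the generators $\rho_1,\rho_2,\rho_3$ and $\lambda$ and then handles $\rho_{12},\rho_{23},\rho_{123}$ by multiplicativity of $\gr$---precisely the structure implicit in your closing observation that $|\rho_I|$, defined to be $1$ when $2\in I$ and $0$ otherwise, gives a multiplicative $\Z/2\Z$-grading on $\sA$, which would let you cut your seven direct verifications down to the paper's three.
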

\begin{proof}
This follows from the following identities of the functions $f_\pm$:
\begin{align*}
f_\pm( \lambda \cdot (m;i,j) ) &=  f_\pm( (m;i,j) ) + 1\\
f_+( \grCFD(\rho_1) \cdot (m;i,j) ) &=  f_-( (m;i,j) )  \\
f_-( \grCFD(\rho_2) \cdot (m;i,j) ) &=  f_+( (m;i,j) ) + 1 \\
 f_+( \grCFD(\rho_3) \cdot (m;i,j) ) &=  f_-( (m;i,j) ) 
\end{align*}

The first is clear, since multiplying by $\lambda = (1;0,0)$ simply increases the Maslov component by one. We will prove the second identity; the remaining two are similar and left to the reader. Let $g = (m;i,j)$ and
$$g' = gr(\rho_1)\cdot (m;i,j) = \left(-\frac 12; - \frac 12, -\frac 12\right)(m;i,j) = \left( m - \frac 12 + \frac{i+j}{2}; i + \frac 12, j - \frac 12\right).$$
The first case to consider is that $i$ and $j$ are integers of the same parity. In this case
$$f_-(g) = m + \frac{i+j}{2}, \qquad f_+(g') = \left[m - \frac 12 + \frac{i+j}{2}\right] + \left( \left[i+\frac 12\right] + \frac 12\right)\left( \left[j-\frac 12\right] + \frac 12\right) +\frac 12,$$
and the difference $f_+(g')-f_-(g)$ is $(i+1)j$, which is congruent to 0 mod 2 since $i$ and $j$ have the same parity. The second case is that $i$ and $j$ are integers of opposite parity. In this case
$$f_-(g) = m + \frac{i-j}{2}, \qquad f_+(g') = \left[m - \frac 12 + \frac{i+j}{2}\right] + \left( \left[i+\frac 12\right] + \frac 12\right)\left( \left[j-\frac 12\right] + \frac 12\right) +\frac 12,$$
and the difference is $j + (i+1)j = (i+2)j \equiv 0 \pmod 2$. The third case is that $i = i' + \frac 12$, $j = j' + \frac 12$, and $i'$ and $j'$ are integers of the same parity. Note that $g' = (m + \frac{i'+j'}{2}; i'+1, j')$. We have
$$f_-(g) = m + i' j' -\frac 12, \qquad f_+(g') = m + \frac{i'+j'}{2} + \frac{j' - (i'+1)}{2},$$
and the difference is $j' - i'j' = (1-i')j' \equiv 0 \pmod 2$. Finally, if $i = i'+\frac 12$ and $j = j' +\frac 12$ but $i'$ and $j'$ have different parity, we have
$$f_-(g) = m + i' j' -\frac 12, \qquad f_+(g') = m + \frac{i'+j'}{2} + \frac{(i'+1)+j'}{2},$$
and the difference is $1 + i' + j' - i'j' \equiv 0 \pmod 2$.

To see that the proposition follows from the identities above, suppose for example that there is a $\rho_1$ arrow from $x$ to $y$. This implies the idempotents of $x$ and $y$ are $\iota_0$ and $\iota_1$, respectively, and that $\grCFD(x) = \lambda \cdot \grCFD(\rho_1) \cdot \grCFD(y)$. Combining the first two identities gives
$$\grDmodtwo(x) = f_+(\lambda \cdot \grCFD(\rho_1) \cdot \grCFD(y)) = f_+(\grCFD(\rho_1) \cdot \grCFD(y)) + 1 = f_-(\grCFD(y)) + 1= \grDmodtwo(y) + 1.$$
If instead $x$ and $y$ are connected by a $\rho_{123}$ arrow, we would use that
$$\grCFD(x) = \lambda \cdot \grCFD(\rho_{123}) \cdot \grCFD(y) = \lambda \cdot \grCFD(\rho_1) \cdot \grCFD(\rho_2) \cdot \grCFD(\rho_3) \cdot \grCFD(y)$$
and use all four identities. Checking the claim for other arrows is similar.
\end{proof}

The grading on $\CFA$ can be reduced to a mod 2 grading as well in a similar way, using the same functions $f_\pm$ except that each $\pm$ should be replaced with $\mp$ in the case that $i,j \not\in \Z$. Since a generator with grading $(m;i,j)$ in $\CFD$ corresponds to a generator with grading $(m;-j,-i)$ in $\CFA$, the mod 2 gradings on $\CFA$ and $\CFD$ agree when $i,j \in \Z$ and disagree when $i,j \not\in \Z$. In other words, the relative mod two grading $\grAmodtwo$ on $\CFA$ comes from $\grDmodtwo$ by flipping the grading for one of the two idempotents. In particular, generators of $\CFA$ have opposite gradings if they are connected by an arrow labeled with the sequences $(\rho_2)$, $(\rho_3, \rho_2, \rho_1)$, or the empty sequence and the same grading if they are connected by any other arrow. A generator $x_0\otimes x_1$ in a box tensor product $\CFA(M_0,\alpha_0,\beta_0)\boxtimes\CFD(M_1,\alpha_1,\beta_1)$ inherits the grading $\grAmodtwo(x_0) + \grDmodtwo(x_1)$, which recovers the relative $\Ztwo$ grading on $\CFhat(M_0\cup M_1)$.


\subsection{Gradings and train tracks}

\begin{figure}[ht]
\labellist
\tiny
\pinlabel {$z$} at 63 63 
\pinlabel {$z$} at 176 63 
\pinlabel {$z$} at 291 63 
\pinlabel {$z$} at 403 63 
\pinlabel {$z$} at 516 63 
\pinlabel {$z$} at 628 63 
\small
\pinlabel {$1$} at 33 35
\pinlabel {$2$} at 161 35
\pinlabel {$3$} at 266 35
\pinlabel {$12$} at 369 29
\pinlabel {$23$} at 474 35
\pinlabel {$123$} at 593 35
\endlabellist
\includegraphics[scale=0.5]{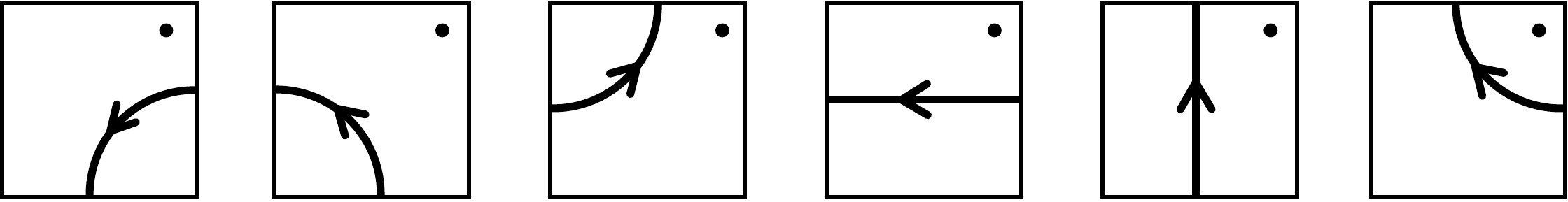}
\caption{Directed edges in $\tracks$ (labeled by $I$) corresponding to coefficient maps $D_I$ in $N$.}\label{fig:algebra-edges}
\end{figure}

For a bordered 3-manifold $(M,\alpha,\beta)$ and a spin$^c$-structure $\spin$, consider the homotopy equivalence class of type D structures $\CFD(M,\alpha,\beta; \spin)$, and let $N$ be a reduced representative. As described in \cite[Section 2.4]{HRW}, $N$ gives rise to an immersed train track $\tracks$ in the parametrized torus $T_M = \partial M\setminus z$, which has a lift $\bar\tracks$ in $\barT_M$. Using a series of steps which correspond to homotopy equivalences of the underlying type D structure, this train track can be reduced to a curve-like train track, that is, a train track which consists of immersed curves along with crossover arrows connecting parallel segments; such a curve-like train track is interpreted as the collection curves with local systems $\curves{M;\spin}$. To prove Theorem \ref{thm:gradings}, we will show more generally that any immersed train track $\tracks$ representing $\CFD(M,\alpha,\beta; \spin)$ encodes the grading information of $\CFD(M,\alpha,\beta; \spin)$, provided $\tracks$ is path connected or decorated with extra phantom edges which make it path connected, and that the pairing of two such train tracks carries gradings which agree with those carried by the box tensor product of bordered invariants. Since this holds in particular when $\tracks$ is a curve-like train track representing (the projection to $T_M$ of) $\curves{M;\spin}$, Theorem \ref{thm:gradings} follows. In this section we will describe how to read the bordered gradings off of a train track $\tracks$ representing $\curves{M;\spin}$, and in the next section we prove the gluing result.

First, we briefly recall the construction of $\tracks$. The $\iota_0$ (resp. $\iota_1$) generators of $N$ correspond to vertices of $\tracks$ which lie on $\alpha$ (resp. $\beta)$. For a coefficient map $D_I$ and generators $x$ and $y$, a $y$ term in $D_I(x)$ corresponds to edge in the complement of $\alpha$ and $\beta$ from the vertex representing $x$ to the vertex representing $y$, according to Figure \ref{fig:algebra-edges}. Note that by construction the edges in $\tracks$ are oriented. However, for train tracks representing a type D structure over the torus algebra these orientations can be dropped, since they are determined by assuming that each edge has the basepoint $z$ on its right in $\partial M \setminus(\alpha\cup\beta)$, where $z$ is taken to be arbitrarily close to the intersection point of $\alpha$ and $\beta$, in the quadrant between the end of $\alpha$ and the end of $\beta$. By convention, we identify $\partial M \setminus(\alpha\cup\beta)$ with the square $[0,1]\times[0,1]$, where the vertical sides are $\alpha$, the horizontal sides are $\beta$, and $z$ lies at the top right corner of the square. It will be convenient to assume that the vertices of $\tracks$ all lie in a small $\epsilon$ neighborhood of the midpoints of $\alpha$ and $\beta$, the points $(0,\frac{1}{2})$ and $(\frac{1}{2}, 0)$. The train track $\tracks$ encodes both the type D structure $N$ and the corresponding type A structure $N^A = \CFAA(\mathbb{I}) \boxtimes N$.

\noindent \emph{Spin$^c$ Grading.}
Since we have constructed the train track $\tracks$ to have vertices at the midpoints of $\alpha$ and $\beta$, a (not necessarily smooth) path in $\tracks$ from one vertex $x$ to another vertex $y$ determines an element of $\frac{1}{2}H_M$, which is in $H_M$ if and only if the vertices are both on $\alpha$ or both on $\beta$. In fact, this element of $\frac{1}{2}H_M$ is precisely the difference in the spin$^c$ grading $\spin_\iota(y)-\spin_\iota(x)$ between the corresponding generators in $N$ or in $N^A$. It is sufficient to check this when the path has length one, which is straightforward upon comparing Table \ref{tab:spinc} and Figure \ref{fig:algebra-edges}.

Note that the spin$^c$ grading difference between two generators is determined by the train track alone only if the corresponding vertices are connected by a path in $\tracks$; if $\tracks$ is not path connected, some extra decoration is required to record the relative grading between components. We will achieve this by adding phantom train track edges connecting separate components, which are not counted as contributing to the differential of the underlying type D structure but which can be traversed in paths used to determine gradings. If $\tracks$ has the form of immersed curves with crossover arrows, as in the case of a curve-like train track representing $\curves{M;\spin}$, we will match this form by adding phantom crossover arrows. The choice of phantom edges is not unique; there are many possible phantom edges that will encode the same grading information. One way to find appropriate phantom edges on a train track representing $\CFD(M,\alpha,\beta;\spin)$ is to start with a representative for which the directed graph, and thus the train track, representing the type D structure is path connected (there is always such a representative---for instance, the representative computed from a nice Heegaard diagram). For this train track, no phantom edges are required. The train track can then be simplified to remove crossover arrows, but any time removing an arrow would disconnect two components the arrow should be remembered as a phantom arrow. Consider for example, the invariant for the complement of the figure eight knot in Figure \ref{fig:fig8-maslov}; the figure shows a phantom arrow connecting the two immersed curves. Adding or removing an arrow of this form is an allowable move on train tracks, corresponding to a change of basis of type D structures. Thus, if the phantom arrow were treated as a real arrow, the resulting (connected) train track still represents $\CFD$, albeit not in simplest form. Simplifying the train track by removing this arrow disconnects the train track, so relative grading information is lost unless we keep track of the phantom arrow.

If we are only interested in the spin$^c$ grading, this information can be recorded in a different way which is perhaps more natural: instead of decorating $\tracks$ with phantom edges, we enhance it by choosing a lift to a certain covering space. The lift is defined only up to an overall translation and a connected component has a unique lift up to translation, so the new information being recorded is the relative position of the lifts of different components; note that the presence of phantom arrows determines such a choice of lift. 
Note that each vertex of $\tracks$, which occurs at the midpoint of $\alpha$ or the midpoint of $\beta$, must lift to an element of $\frac{1}{2}H_M \subset \barT_M$. The relative spin$^c$ grading on $N$ determines a lift $\bar\tracks$ of $\tracks$, up to an overall translation, by requiring that the difference in spin$^c$ grading between any two generators agrees with the difference in position of the lifts of the corresponding vertices. Conversely, the relative grading can be determined by the relative position of the corresponding vertices in $\bar\tracks$. This clearly holds by construction for any vertices connected by a path in $\tracks$, but the choice of lift contains new information when $\tracks$ is not connected. Note that it is sometimes convenient to work in a higher covering space, $\tildeT_M = H_1(M;\R)\setminus H_1(M;\Z) \cong \R^2\setminus \Z^2$. Here $\tracks$ lifts to a train track $\tilde\tracks$ which is invariant under the action of $\lambda$. The position of a vertex of $\tilde\tracks$, up to the action of $\lambda$, determines an element of $\frac{1}{2}H_M$, and this is taken as the  spin$^c$ grading of the corresponding generator of $N$.
 
See for example Figure \ref{fig:maslov-example}, which shows the lift of a portion of a train track $\tracks$. The relative 
position between vertices gives the difference in spin$^c$ grading in the corresponding type D structure. If we set the generator $x_0$ to have grading $\spin(x_0) =0\in\frac{1}{2} H_M$, then the gradings of any other generators can be read from the figure. For example, $a$ has coordinates $(\frac{3}{2},\frac{1}{2})$ relative to $x_0$, so $\spin(a) = (3\beta+\alpha)/2$; similarly, $\spin(b) = \beta + \alpha$. Note that the grading difference $\spin(b)-\spin(a)$ is consistent with there being a $\rho_{2}$ labelled arrow from $a$ to $b$ (see Table \ref{tab:spinc}). In the notation of \cite{LOT}, setting $x_0$ to have grading $(0;0,0)$, the spin$^c$ component of $\gr(a)$ in $N$ is $(\frac{3}{2}, -\frac{1}{2})$ and the spin$^c$ component of $\gr(b)$ in $N$ is $(1,-1)$. In general, the generator corresponding to a vertex at coordinates $(i,j)$ relative to the origin at $x_0$ has spin$^c$ grading $(i,-j)$ (see Remark \ref{rem:compare-grading-notation}). For the spin$^c$ grading of the corresponding type A structure $N^A$, a vertex at position $(i,j)$ has spin$^c$ grading $(j,-i)$.

\noindent \emph{Maslov Grading.} 
Suppose $x$ and $y$ are generators of the type D structure $N$ which are connected by a coefficient map $D_I(x) = y$. Recall that $\gr(y) = (-1;0,0) \cdot \gr(\rho_I)^{-1} \cdot \gr(x)$. It follows that 
$$\Delta m = m(y) - m(x) = -\frac{1}{2} + \det\left(\begin{smallmatrix} -\, ( -v_{\rho_I})\, -\\ -\, v_x-\end{smallmatrix} \right) = -\frac{1}{2} + \det\left(\begin{smallmatrix} -\, (v_y - v_x)\, -\\ -\, v_x-\end{smallmatrix} \right),$$
where $m(x)$ denotes the Maslov component of $\gr(x)$ and $v_x$ denotes the spin$^c$ component as a vector in $\frac{1}{2}\Z^2$. Consistent with the theme of this section, we aim to give geometric meaning to this Maslov grading difference. 

Consider a lift $\tilde\tracks$ of $\tracks$ to the covering space $\tildeT_M$.  The generator $x$ determines a vector $w_x$ in the plane starting at the vertex of $\tilde\tracks$ corresponding to $x_0$ and ending at the vertex corresponding to $x$. Comparing conventions (c.f. Remark \ref{rem:compare-grading-notation}), note that $w_x$ is the reflection of $v_x$ in the vertical direction. Similarly, $y$ determines a vector $w_y$ which is the vertical reflection of $v_y$. Clearly $\det\left(\begin{smallmatrix} -\, (v_y - v_x)\, -\\ -\, v_x-\end{smallmatrix} \right) = -\det\left(\begin{smallmatrix} -\, (w_y - w_x )\, -\\ -\, v_x-\end{smallmatrix} \right)$, which can be interpreted as an area: it is twice the area of the triangle spanned by $w_y$, $w_x$, counted positively if it lies to the left of $w_y-w_x$ and negatively if it lies to the right. Thus this term of $m(y)-m(x)$ will be called the \emph{area contribution} to $\Delta m$. The remaining term of $-\frac{1}{2}$, which we will call the \emph{path contribution} to $\Delta m$, records the fact that the coefficient map connecting the two generators is an arrow from $x$ to $y$. Traveling from $x$ to $y$ along the relevant edge in $\tracks$ follows the edge orientation (here by edge orientation we mean the orientation coming from identifying the edge with a differential in $N$, or equivalently coming from assuming the edge keeps the basepoint $z$ on its right). If the coefficient map connecting $x$ and $y$ instead went from $y$ to $x$, traveling from $x$ to $y$ in $\tracks$ would oppose this orientation, and the path contribution to $m(y) - m(x)$ would be $+\frac{1}{2}$.

Now suppose that $x$ and $y$ are not connected by a coefficient map, but that there is a path $P$ in $\tilde\tracks$ from $x$ to $y$. The difference in Maslov gradings between successive generators passed along $P$ is defined above. Clearly $m(y)-m(x)$ is the sum of these successive $\Delta m$'s. By summing the areas of triangles, we see that the total area contribution measures twice the area enclosed by a piecewise linear deformation of $P$ from $x$ to $y$, deformed  so that successive vertices are connected by straight line segments, and a straight line from $y$ back to $x$ (see Figure \ref{fig:maslov-example}). If $P$ intersects itself, note that the area of each region is counted with multiplicity given by the winding number of the path; see, for example, Figure \ref{fig:maslov-example-crossing}. The total path contribution is $-\frac{1}{2}$ times the number of edges traversed along $P$ following the edge orientation plus $\frac{1}{2}$ times the number of edges traversed opposing the edge orientation.

We have shown that the Maslov grading difference of two generators $x$ and $y$ is determined by $\tracks$ if there is a path connecting $x$ to $y$. If $\tracks$ is decorated with a set of phantom edges which make it path connected, then this fully defines the relative Maslov grading on the corresponding type D structure $N$. The discussion above deals with the Maslov grading on the type D structure $N$, but the Maslov grading on the corresponding type A structure $N^A$ is exactly the same. Fixing a chosen generator $x_0$ with grading $(0;0,0)$ $x$ and $y$ with gradings $(m(x); \vec v_x)$ and $(m(y); \vec v_y)$ determine vectors $\vec w_x$ and $\vec w_y$, which are rotations of $\vec v_x$ and $\vec v_y$ by $\frac{\pi}{2}$. It follows that the area contribution to $\Delta m$ is the area to the left of a piecewise linear deformation of a path from $x$ to $y$, as before, and the path contribution is still $-\frac{1}{2}$ for each edge traversed following the edge orientation and $\frac{1}{2}$ for each edge traversed opposing the edge orientation.

\labellist
\small
\pinlabel {$x_0$} at 270 55 
\pinlabel {$a$} at 412 125
\pinlabel {$b$} at 360 177
\pinlabel {$c$} at 315 320
\pinlabel {$d$} at 244 347	
\pinlabel {$e$} at 100 105
\pinlabel {$f$} at 53 180
\tiny
\pinlabel {$1$} at 155 219
\pinlabel {$2$} at 70 219
\pinlabel {$3$} at 70 303
\small
\pinlabel {$x_0$} at 565 55
\pinlabel {$d$} at 570 347
\endlabellist
\begin{figure}[ht] \includegraphics[scale=0.55]{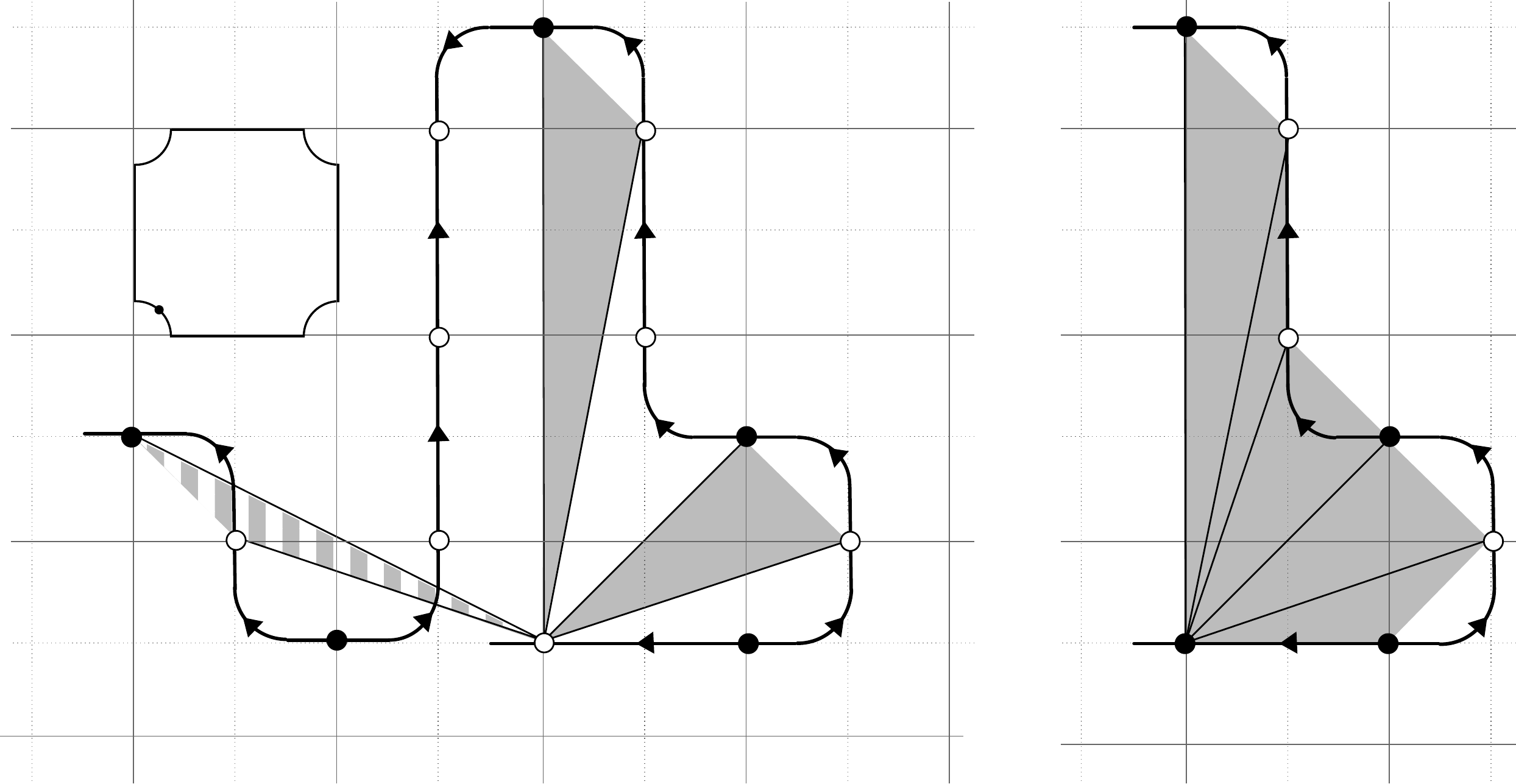}
\caption{Shifting the Maslov grading on a $\tracks$. On the left: each triangle is associated with a $\rho_2$ edge. The Maslov grading shift in each case is $-\frac{1}{2}$ plus twice the area of the triangle, with the striped area counting negatively. On the right: The Maslov grading of $d$, relative to the vertex $x_0$ fixed as the origin, is $\frac{5}{2}$. The area contribution is $\frac{9}{2}$, twice the area of the shaded shaded region, and the path contribution is -2.} \label{fig:maslov-example}
  \end{figure}%

\labellist
\small
\pinlabel {$x_0$} at 44 99
\pinlabel {$y$} at 45 295
\endlabellist
\begin{figure}
\includegraphics[scale=.4]{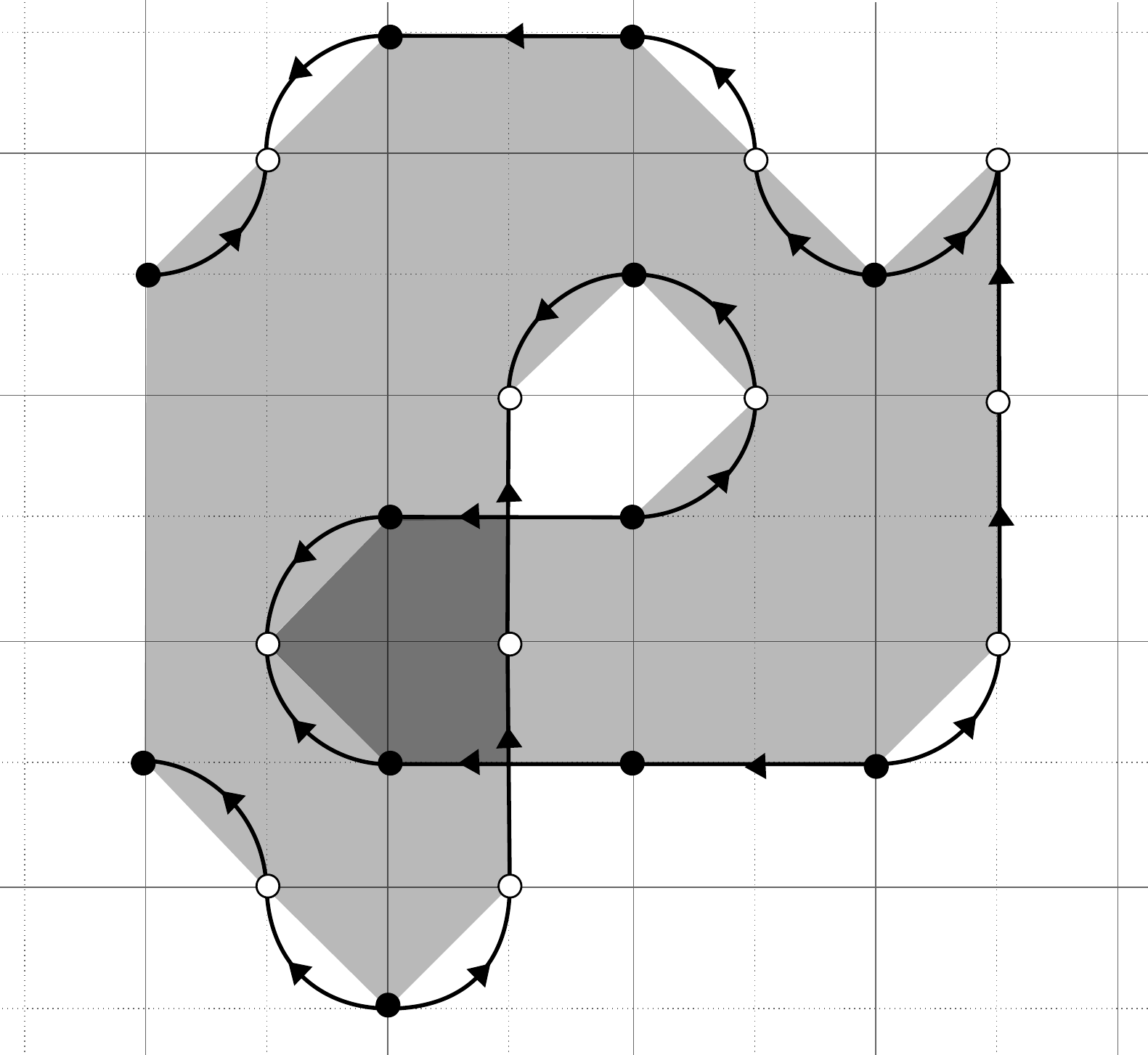}
\caption{To compute the Maslov grading of $y$, the path from $x_0$ to $y$ need not be either smooth or embedded in the plane. In this example, the area contribution to $m(y)$ is 20, twice the area of the shaded region, with the darker region counted twice. The path contribution is $-1$, so $m(y) = 19$.}
\label{fig:maslov-example-crossing}
\end{figure}

\noindent \emph{Mod 2 Grading.} The $\Z/2\Z$ reduction of the full grading on a type D structure $N$ admits a particularly simple geometric interpretation which is worth highlighting. It can be interpreted as an orientation on the train track $\tracks$ representing $N$. By this we mean a choice of orientation on each edge such that any immersed path carried by $\tracks$ either always follows or always opposes the edge orientation. This orientation should not be confused with the orientation of edges coming from viewing them as arrows in the directed graph representing $N$; to avoid this possible confusion, one can also view the $\Z/2\Z$ grading as a choice of sign on each vertex of $\tracks$, which should be viewed as reflecting the sign of the intersection point between $\alpha$ or $\beta$ and $\tracks$. This is equivalent to a choice of orientation on the small segments of $\tracks$ perpendicular to $\alpha$ or $\beta$ at each vertex. A vertex on $\alpha$ is given a positive sign (equivalently, $\tracks$ is oriented leftward near this vertex) if the corresponding $\iota_0$ generator of $N$ has $\Z/2\Z$ grading 0 and the corresponding generator of $N^A$ has $\Z/2\Z$ grading 1. A vertex on $\beta$ is given a positive sign (equivalently, $\tracks$ is oriented upward near this vertex) if the corresponding $\iota_1$ generator of $N$ or of $N^A$ has $\Z/2\Z$ grading 0. It is straightforward to check that these conventions produce a consistent orientation on $\tracks$. For example, if $N$ contains a $\rho_2$ arrow from $x$ to $y$, the corresponding edge $\tracks$ connects the top side of an intersection with $\beta$ to the right side of an intersection with $\alpha$. If $\tracks$ is oriented upward near the former, it must be oriented leftward near the latter, which is consistent with the fact that $x$ and $y$ to have equal gradings in $N$.

Such an orientation on $\tracks$ corresponds to an absolute $\Z/2\Z$ grading on a type D structure $N$. Since we are interested in $N = \CFD(M,\alpha,\beta;\spin)$, which only carries a relative grading, these orientations are well defined only up to flipping all of them. Note that up to an overall flip the orientation is determined on any path connected component of $\tracks$. If $\tracks$ is not connected but is decorated with phantom edges, the relative orientations of different components is determined by requiring that the orientation is consistent when phantom edges. Note that if we are only interested in the mod 2 grading, we could avoid using phantom edges and instead decorate $\tracks$ with a choice of relative orientation on each component. In particular, the $\Z/2\Z$ grading on $\curves{M;\spin}$ is realized as a choice of orientation on the underlying curves, up to reversing the orientation of all curves.

\subsection{Proof of the refined pairing theorem}

Having given a geometric interpretation of the grading structure on type D and type A modules, we return in this section to pairing. We will show that this grading structure defines a spin$^c$ grading on the intersection Floer homology of $\curves{M_0}$ and $h(\curves{M_1})$, and a relative Maslov grading in each spin$^c$-grading. By identifying our gradings with the gradings of \cite{LOT}, we complete the proof of Theorem \ref{thm:gradings}.

Recall that in proving the pairing theorem in \cite[Section 2]{HRW}, we worked with special representatives of the train tracks $\tracks_i$ representing the bordered invariants of $(M_i,\alpha_i,\beta_i)$, for $i = 0,1$. Specifically, $A(\tracks_i)$ is obtained by including $\tracks_i$ into the first (top right) quadrant of the square $[0,1]\times[0,1]$ and extending horizontally and vertically through the second and fourth quadrants. We continue to assume, as in the previous section, that $\tracks_i$ only intersects the parametrizing curves $\alpha_i$ and $\beta_i$ in a small neighborhood of their midpoints. It follows that in $A(\tracks_i)$ all horizontal segments and vertical segments lie arbitrarily close to the lines $y = \frac{3}{4}$ and $x = \frac{3}{4}$. The generators of $\CFA(M_i,\alpha_i,\beta_i)$, previously identified with intersections of $\tracks_i$ with $\alpha_i$ and $\beta_i$, should now be identified with (midpoints of) horizontal and vertical segments of $A(\tracks_i)$. These midpoints occur at approximately the point $(\frac{1}{4},\frac 3 4)$ for $\iota_0$ generators and $(\frac 3 4, \frac 1 4)$ for $\iota_1$ generators.

$D(\tracks_i)$ is obtained by reflecting $A(\tracks_i)$ across the anti-diagonal $y=-x$. For our purposes, $A(\tracks_0)$ and $D(\tracks_1)$ both live in $T_{M_0}$, and the reflection corresponds to the gluing map $h$; we choose the parametrization on $M_1$ such that $h(\alpha_1) = -\beta_0$ and $h(\beta_1) = -\alpha_0$. Note that $\iota_0$ (resp. $\iota_1)$ generators of $\CFD(M_i,\alpha_i,\beta_i)$ correspond to midpoints of vertical segments in the second quadrant (resp. horizontal segments in the fourth quadrant). The Floer homology of $A(\tracks_0)$ and $D(\tracks_1)$ can be identified with the homology of the box tensor product $\CFA(M_0,\alpha_0,\beta_0)\boxtimes\CFD(M_1,\alpha_1,\beta_1)$, and thus with $\HFhat(Y)$ for $Y = M_0\cup_h M_1$; for details, see \cite[Section 2]{HRW}. To complete the pairing theorem, we observed that the train tracks $A(\tracks_0)$ and $D(\tracks_1)$ can be simplified to give $\curves{M_0}$ and $h(\curves{M_1}$ without changing the Floer homology (see \cite[Section 4]{HRW}).

For the grading enhanced pairing theorem, it will be sufficient to consider the graded Floer homology of $A(\tracks_0)$ and $D(\tracks_1)$, since the grading structure is preserved by the simplifications done to obtain $\curves{M_0}$ and $h(\curves{M_1})$ from these. Moreover, since $A(\tracks_0)$ and $D(\tracks_1)$ are equipped with grading structure, we will assume that they are connected in each spin$^c$ structure, possibly after including phantom arrows.

\noindent \emph{Pairing and the spin$^c$ grading.} 
Let $x$ and $y$ be any two intersection points between $A(\tracks_0;\spin_0)$ and $D(\tracks_1;\spin_1)$, corresponding to generators $x_0 \otimes x_1$ and $y_0 \otimes y_1$ in the box tensor product of the corresponding modules. Let $p_0$ be a path from $x$ to $y$ in $A(\tracks_0;\spin_0)$; $p_0$ determines an element $i_0 \beta + j_0 \alpha$ of $\frac{1}{2}H_M$, which is the grading difference from $x_0$ to $y_0$. Recall that in bordered Floer notation, this means that $\grCFD(y_0) - \grCFD(x_0)$ has spin$^c$ component $(j_0, -i_0)$ (see Remark  \ref{rem:compare-grading-notation}). Similarly, let $p_1$ be a path from $x$ to $y$ in $D(\tracks_1;\spin_1)$ which determines an element $i_1 \beta + j_1 \alpha$ of $\frac{1}{2} H_M$. Since $D(\tracks_1;\spin_1)$ is a reflection of $A(\tracks_1;\spin_1)$ across the antidiagonal, the difference in grading between $x_1$ and $y_1$ is $-j_1 \beta + -i_1 \alpha$; in bordered Floer notation, $\grCFD(y_1) - \grCFD(x_1)$ has spin$^c$ component $(-j_1, i_1)$. Note that $\grbox(y_0\otimes y_1) - \grbox(x_0\otimes x_1)$ has spin$^c$ component $(j_0-j_1, i_1-i_0)$; $x$ and $y$ are in the same spin$^c$ structure if and only if this vector is zero, up to the action of the homological longitudes of $M_0$ and $M_1$. In fact, since $A(\tracks_0;\spin_0)$ and $D(\tracks_1;\spin_1)$ are connected and each must contain a closed path that is homotopic to $\lambda_0$ or $h(\lambda_1)$, respectively, we can change the paths $p_0$ and $p_1$ as needed to change $(j_0-j_1, i_1-i_0)$ by multiples of the images of these homological longitudes in $\frac{1}{2} H_M$. Thus, $x$ and $y$ are in the same spin$^c$ structure if and only if there are paths $p_0$ and $p_1$ from $x$ to $y$ such that $p_0$ and $p_1$ determine the same element of $\frac{1}{2} H_1(M_0;\Z)$, or equivalently, the path $p_0 - p_1$ lifts to a closed loop in $\tilde T_{M_0} \cong \R^2\setminus\Z^2$. In other words, $x$ and $y$ are in the same spin$^c$ structure if and only if there are lifts $A(\tilde\tracks_0;\spin_0)$ and $D(\tilde\tracks_1;\spin_1)$ to $\tilde T_{M_0}$ such that both $x$ and $y$ lift to intersection points.

\noindent \emph{Pairing and the Maslov  grading.} We now consider the the Maslov grading under pairing. Let $x$ and $y$ be  two intersections between $A(\tracks_0;\spin_0)$ and $D(\tracks_1;\spin_1)$, corresponding to generators $x_0\otimes x_1$ and $y_0 \otimes y_1$, and suppose that $x$ and $y$ have the same spin$^c$ grading. We can choose paths $p_0$ in $A(\tracks_0;\spin_0)$ and $p_1$ in $D(\tracks_1;\spin_1)$ from $x$ to $y$ and fix lifts $A(\tilde\tracks_0;\spin_0)$ and $D(\tilde\tracks_1;\spin_1)$ to $\tilde T_{M_0} \cong \R^2\setminus\Z^2$ such that $p_0$ and $p_1$ both lift to paths $\tilde p_0$ and $\tilde p_1$ from a lift $\tilde x$ of $x$ to a lift $\tilde y$ of $y$. Let $(i,j)$ be the vector from $\tilde x$ to $\tilde y$.

We may set both $x_0$ and $x_1$ to be reference generators for their respective modules, with grading $\grCFD(x_i) = (0;0,0)$, so that $\tilde x$ is the origin in $\tilde T_{M_0}$. Consider the grading $ \grCFD(y_0)$. The spin$^c$ component is  $(j,-i)$, since $\tilde y$ lies at coordinates $(i,j)$. The Maslov component $m(y_0)$, roughly speaking, measures twice the area to the left of a piecewise linear deformation of $\tilde p_0$ and to the right of the straight path from $\tilde x$ to $\tilde y$, along with a path contribution counting corners traversed with and against their orientation along $\tilde p_0$. Similarly, consider $\grCFD(y_1)$. The spin$^c$ component is $(-j,i)$, while the Maslov component counts twice the area to the right of the piecewise linear deformation of $\tilde p_1$ and to the left of the straight line from $\tilde x$ to $\tilde y$. Note that area to the right of $\tilde p_1$ rather than to the left is counted positively, since $D(\tilde\tracks_1;\spin_1)$ is a reflection of $(\tilde\tracks_1;\spin_1)$. Since the spin$^c$ components of $\grCFD(y_0)$ and $\grCFD(y_1)$ cancel out, we have $\grbox(y_0\otimes y_1) = (m(y_0) + m(y_1); 0, 0)$, while $\grbox(x_0\otimes x_1) = (0;0,0)$. It follows that the Maslov grading difference between $x$ and $y$ is $m(y_0)+m(y_1)$.

The sum of $m(y_0)$ and $m(y_1)$ has both an area contribution and a path contribution. The area contribution, the sum of the area contributions to $m(y_0)$ and $m(y_1)$, is twice the area to the left of a piecewise linear deformation of $\tilde p_0$ and to the right of a piecewise linear deformation of $\tilde p_1$  (see Figure \ref{fig:maslov-pairing-example}). Recall that area to the right of $\tilde p_0$ and to the left of $\tilde p_1$ is counted with negative sign; more generally, the area of any region is counted with multiplicity given by twice the winding number of the path $\tilde p_0 - \tilde p_1$ around the region. The path contribution to $m(y_0)+m(y_1)$ is the sum of the path contribution along the paths $\tilde p_0$ and $\tilde p_1$.

The preceding paragraph gives a geometric definition of the Maslov grading of $y$ relative to that of $x$. However, this description is somewhat impractical. In particular, the areas involved are not preserved by homotopy of the train tracks and a particular piecewise linear version of $\tilde p_0$ and $\tilde p_1$ is required. To fix this, we will perturb our train tracks in a specific way and compute an adjusted area contribution and an adjusted path contribution. Recall that $A(\tracks_0)$ and $D(\tracks_1)$ lie near the lines $x = \frac 1 4, x = \frac 3 4, y = \frac 1 4$, and $\frac 3 4$, and that generators (that is, midpoints of horizontal and vertical segments corresponding to generators) lie near the points $(\frac 1 4, \frac 3 4)$ and $(\frac 3 4, \frac 1 4)$. We will homotope these train tracks to lie in a neighborhood of the lines $x = \frac 1 2$ and $y = \frac 1 2$; thus horizontal segments in $A(\tracks_0)$ shift down by $\frac 1 4$ while vertical segments shift left by $\frac 1 4$. Generators now lie near the point $(\frac 1 2, \frac 1 2)$, and we take the paths $p_0$ and $p_1$ to be piecewise linear connecting successive generators. The adjusted area contribution to $m(y)$ is twice the area bounded by the adjusted paths $\tilde p_0$ and $\tilde p_1$. This area is composed of one by one blocks, each centered on a puncture (see for example the middle of Figure \ref{fig:maslov-pairing-example}); it follows that the adjusted area contribution is simply twice the number of punctures enclosed by the loop $\tilde p_0 - \tilde p_1$ (again, counted with appropriate multiplicity).

The adjusted path contribution is defined so that the adjusted path and area contributions combine to the same value as the original path and area contributions. In the original path contribution, any corner contributed $-\frac 1 2$ if traversed following the corner orientation and $\frac 1 2$ if traversed with opposite orientation. For each corner, the adjusted path contribution is $-\frac 1 2$ minus the area gained near the corner when the curves are shifted. See Figure \ref{fig:figures-maslov-area-shift} for the adjusted corner contributions in $A(\tracks_0)$. Note that the contribution is $-\frac 1 2$ for a left turn, $\frac 1 2$ for a right turn, and $0$ for no turn. By reflecting these figures, it is easy to see the adjusted path contribution is the opposite for the path $\tilde p_1$ in $D(\tracks_1)$; however, if we follow the path $\tilde p_1$ backwards, forming a closed path $\tilde p_0 - \tilde p_1$, then the adjusted path contribution is the same everywhere. Thus the total path contribution to $m(y)$ is $\frac R \pi$, where $R$ is the net clockwise rotation of the path $\tilde p_0 - \tilde p_1$ in radians, ignoring any cusps and the corners at $\tilde x$ and $\tilde y$.

This gives the following intrinsic characterization of the relative Maslov grading on the Floer homology of (connected) immersed train tracks:

\begin{definition}
Let $\tracks_0, \tracks_1$ be immersed train tracks in $T$ which are connected, and let $x$ and $y$ be two intersection points which are in the same spin$^c$ structure in $HF(\tracks_0, \tracks_1)$. For $i = 0,1$, let $p_i$ be a path from $x$ to $y$ in $\tracks_i$ such that $p_0 - p_1$ lifts to a closed piecewise smooth path $\gamma$ in $\R^2 \setminus \Z^2$. The Maslov grading difference $m(y)-m(x)$ is given by twice the number of lattice points enclosed by $\gamma$ (where each point is counted with multiplicity the winding number of $\gamma$) plus $\frac 1 \pi$ times the net total righward rotation along the smooth segments of $\gamma$.
\end{definition}

This is clearly invariant under regular homotopies of $\tracks_0$ and $\tracks_1$, provided we assume that $\tracks_0$ and $\tracks_1$ intersect orthogonally. We take this as the definition of the relative Maslov grading on each spin$^c$ component of $HF(\tracks_0, \tracks_1)$; since we showed that it agrees with the relative Maslov grading on $\HFhat(M_0 \cup_h M_1)$, this completes the proof of the grading enhanced pairing theorem.

\begin{figure}[ht] 
\labellist
\small
\pinlabel {$\tilde x$} at 300 490
\pinlabel {$\tilde x$} at 660 490
\pinlabel {$\tilde x$} at 1010 490

\pinlabel {$\tilde y$} at 30 30
\pinlabel {$\tilde y$} at 390 30
\pinlabel {$\tilde y$} at 740 30

\endlabellist
\includegraphics[scale=0.4]{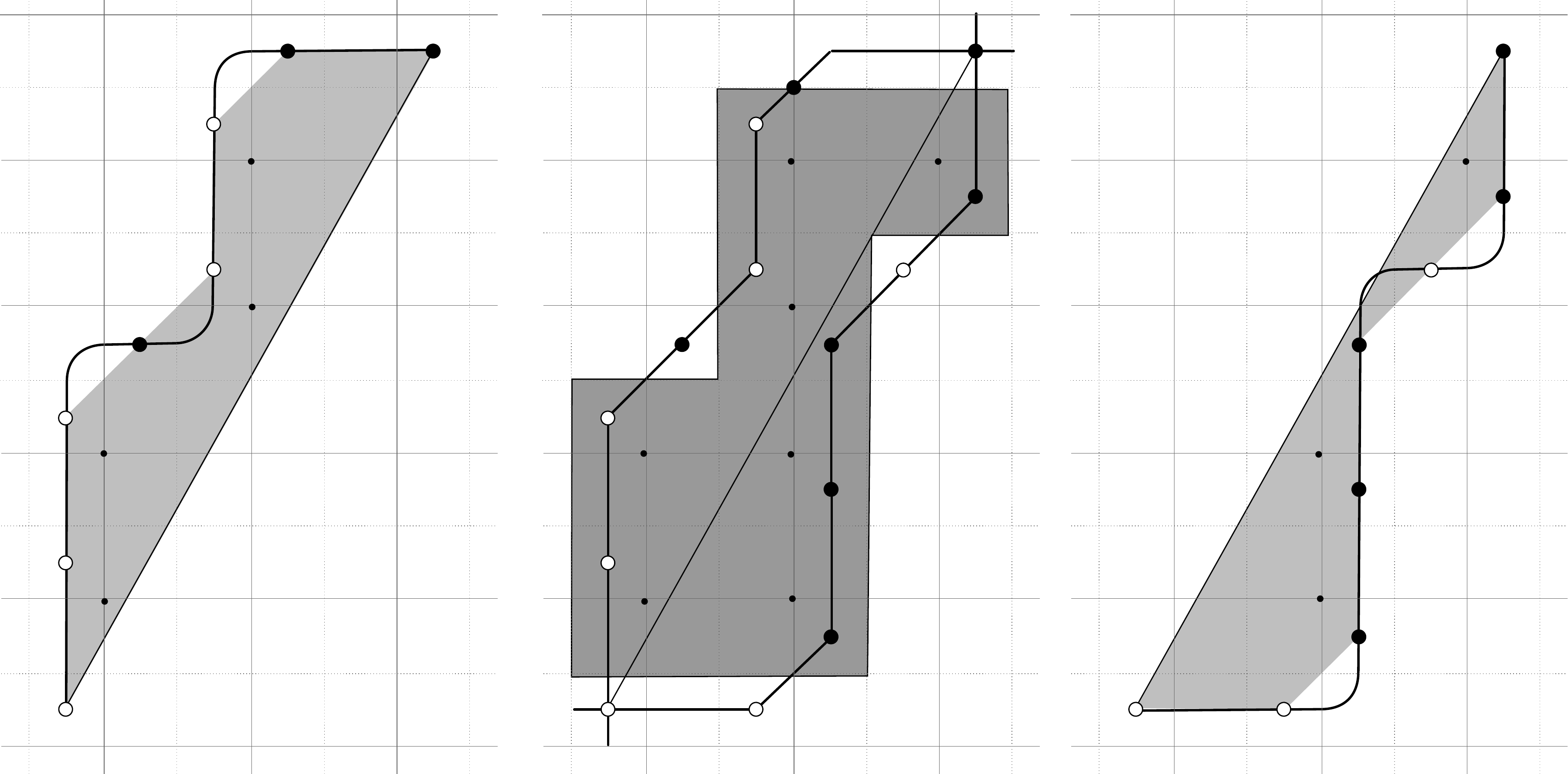}
\vspace*{10pt}
\caption{Computing the Maslov grading difference between two intersection points $\tilde x$ and $\tilde y$. Left: the grading difference in $\CFA$ is twice the area of the shaded region, plus a contribution from each corner of the curve. Right: the grading difference in $\CFD$ is twice the area of the shaded region, plus a contribution for each corner. Middle: The Maslov grading difference is twice the area enclosed by the two piecewise linear curves, plus a contribution from each corner. This is equivalent to twice the adjusted area (shaded) plus an adjusted corner contribution (see Figure \ref{fig:figures-maslov-area-shift}).}
\label{fig:maslov-pairing-example}
\end{figure}

\begin{figure}[ht] 
\labellist
\small
\pinlabel {$-\frac{1}{2}$} at 35 -20
\pinlabel {$-\frac{1}{2}$} at 180 -20
\pinlabel {$-\frac{1}{2}$} at 305 -20
\pinlabel {$0$} at 444 -20
\pinlabel {$0$} at 600 -20
\pinlabel {$\frac{1}{2}$} at 750 -20
\endlabellist
\includegraphics[scale=0.5]{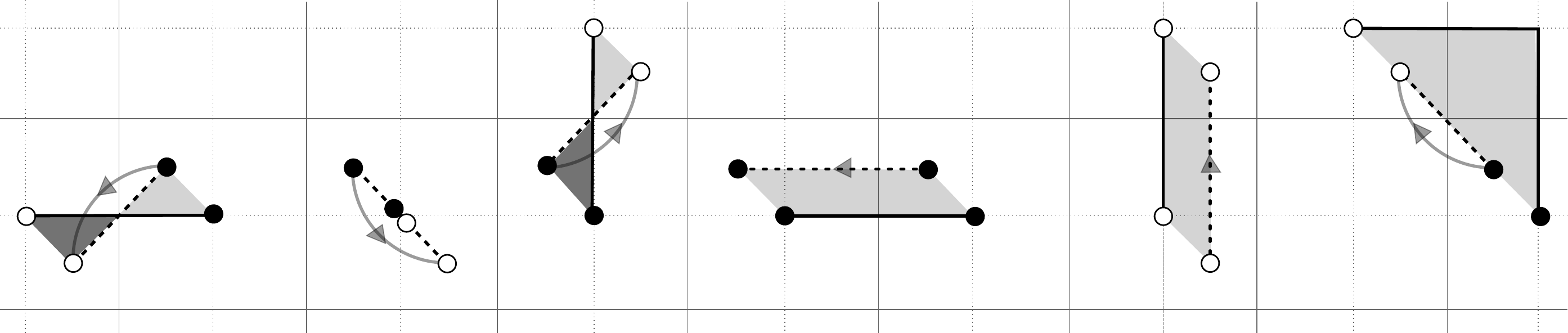}
\vspace*{10pt}
\caption{Adjusting the corners in $A(\tracks_0)$. The gray curve is part of the path $\tilde p_0$ in its usual form, the dotted line is the piecewise linear path connecting generators, before adjustment, and the solid line is the path after adjustment. Area to the left of $\tilde p_0$ lost by the shift is shaded lightly, while area gained by the shift is shaded darkly. The adjusted corner contribution, when following the corner orientation, $-\frac 1 2$ plus twice the net area lost, is given.}\label{fig:figures-maslov-area-shift}
\end{figure}

\noindent \emph{Pairing and the $\Z/2\Z$ grading.} 
Finally we remark that the $\Z/2\Z$ reduction of the Maslov grading on the Floer homology of $A(\tracks_0;\spin_0)$ and $D(\tracks_1;\spin_1)$ admits a nice description that does not require computing the full Maslov grading: it is given by the sign of intersection points, where $(\tracks_0;\spin_0)$ and $(\tracks_1;\spin_1)$ carry orientations encoding their $\Z/2\Z$ gradings following the conventions in the previous section. It is easy to check that at each intersection point, this agrees with the sum of the $\Z/2\Z$ gradings associated to the intersecting horizontal and vertical segments, and thus to the corresponding grading in $\HFhat(Y)$.  For example, an intersection point at $(\frac 1 4, \frac 3 4)$ is the intersection of a horizontal segment in $A(\tracks_0;\spin_0)$, corresponding to an $\iota_0$ generator $x_0$ in $\CFA(M_0,\alpha_0,\beta_0;\spin_0)$, and a vertical segment in $D(\tracks_1;\spin_1)$ corresponding to an $\iota_0$ generator $x_1$ in $\CFD(M_1,\alpha_1,\beta_1;\spin_1)$. If the intersection point has positive sign (corresponding to grading 0), then either the segments are oriented upward/leftward or downward/rightward. In the first case, both $x_0$ and $x_1$ have grading 0, while in the second case $x_0$ and $x_1$ both have grading 1. Similarly, if the intersection has negative sign then $x_0$ and $x_1$ have opposite gradings. It is simple to check the corresponding relationship for intersection points near $(\frac 3 4, \frac 1 4)$.  Note that flipping the orientation on either $\tracks_0$ or $\tracks_1$ changes the sign of every intersection point, so this gives a well defined relative grading on each preimage $\pi^{-1}(\spin_1 \times \spin_2)$.

\section{Symmetries of the invariant}\label{sec:bim}

\subsection{Orientation reversal}
We begin by describing the effect on $\curves M$ of reversing the orientation on $M$. In fact, we will show that as decorated curves in $\partial M$, the invariant does not change under this orientation reversal; however, since representing $\partial M$ on the page depends on the choice of orientation, our figures for $\partial M$ will change by a reflection.

Given a bordered manifold $(M,\alpha,\beta)$, the effect of orientation reversal on the type D structure $\CFD(M,\alpha,\beta)$ is easy to describe: If $\mathcal{H}$ is a bordered Heegaard diagram representing $(M,\alpha,\beta)$, then reversing the orientation on $\mathcal{H}$ gives a bordered Heegaard diagram $-\mathcal{H}$ for $(-M, \beta, \alpha)$. The holomorphic curves counted in the definition of $\CFD$ are unchanged, but their direction is reversed. Since the labeling of Reeb chords along the boundary of the Heegaard diagram depends on its orientation, the labels $\rho_1$ and $\rho_3$ are reversed. Moreover, the labeling and orientation of the two arcs $\alpha_1^a$ and $\alpha_2^a$ on the Heegaard diagram is reversed; this changes the idempotent associated to every generator.
\begin{remark}
We remind the reader of a potential notational confusion: the parametrizing curves $\alpha$ and $\beta$ in our notation for a bordered manifold should not be confused with $\alpha$ and $\beta$ curves in a bordered Heegaard diagram $\mathcal{H}$. Rather, $\alpha$ and $\beta$ correspond to the arcs $\alpha_2^a$ and $-\alpha_1^a$ in $\mathcal{H}$, respectively, so that their roles are reversed under orientation reversal of $\mathcal{H}$.
\end{remark}
In summary, a directed graph representing $\CFD(-M,\beta,\alpha)$ can be obtained from a graph representing $\CFD(M,\alpha,\beta)$ by switching the labeling (between $\bullet$ and $\circ$) on each vertex, reversing the direction of every arrow, and interchanging $\rho_1$'s with $\rho_3$'s and $\rho_{12}$'s with $\rho_{23}$'s. Comparing with the construction of train tracks in the parametrized torus from decorated graphs, it is clear that this corresponds to reflection across the diagonal line in the square, i.e. the curve $\alpha + \beta$.

Recall that drawing curves for $M$ in a standard parametrized torus depends not only on the curves $\curves M\setminus z$ in $\partial M$ but also on the map from $\partial M$ to the standard torus determined by a parametrization of $\partial M$. In this case the reflection in the square exactly corresponds to the (orientation reversing) change of parametrization taking $(\alpha,\beta)$ to $(\beta, \alpha)$. In other words, $\curves M$ and $\curves{-M}$ are the same as decorated immersed curves in $\partial M\setminus z$, though since the orientation of $\partial M$ is reversed we compose any map to the standard torus with an orientation reversing diffeomorphism of the torus. Our convention in this paper is to parametrize the torus such that $\beta$ is the rational longitude of $M$; in order to maintain this convention, we should use the reflection $\alpha \to -\alpha$ for orientation reversal. Thus in our figures, orientation reversal corresponds to reflection in the vertical direction. See, for instance, Figure \ref{fig:orientation-reversal}, where the invariant for the left handed trefoil is obtained from the invariant for the right handed trefoil by reflecting in the vertical direction. Since the figure eight knot is amphichiral, the curve associated with its complement is symmetric with respect to this reflection.

\begin{figure}
\includegraphics[scale=.8]{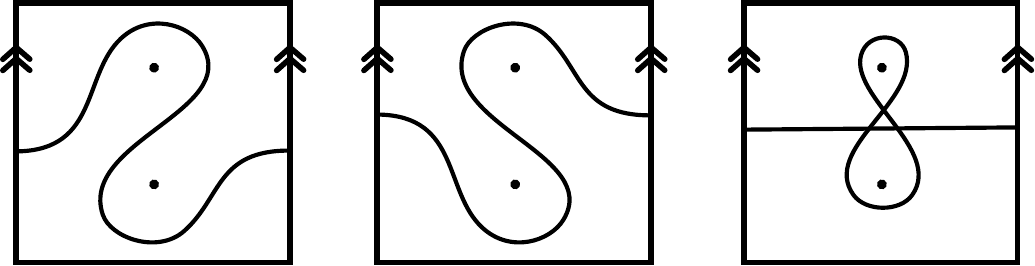}
\caption{The invariant $\HFhat(M)$ for the complements of the right-hand trefoil, left-handed trefoil, and the figure eight. Note that the two trefoil curves are related by vertical reflection and the invariant of the figure eight is symmetric under this reflection.}
\label{fig:orientation-reversal}
\end{figure}

\subsection{Spin$^{\mathbf c}$ conjugation and the elliptic involution} The goal of this section is to prove Theorem \ref{thm:spin-c-sym}, which identifies the action of the elliptic involution on the punctured torus with Spin$^c$ conjugation. As a starting point,  \cite[Theorem 3]{LOT2011} asserts that
\[\CFD(M,\spin) \cong \CFA(M,c(\spin))\]
where the left hand side is regarded as a right type A structure. Recall that a type D structure determines a left-differential module over $\Alg$ via box tensor product with $\Alg$; and that a left $\Aop$ module is the same as a right $\Alg$ module. In particular, Spin$^c$ conjugation results from tensoring with the algebra $\Alg$, and \cite[Theorem 3]{LOT2011}  may be rephrased in terms of type D structures

\[\CFD(M,c(\spin)) \cong \bbA\boxtimes\CFD(M,\spin)\]

where $\bbA$ is the type DA bimodule structure on $\Alg$ (viewed as a bimodule). We aim to prove:

\begin{theorem}\label{A-E-bim} For any manifold with torus boundary $\CFD(M,c(\spin) )\cong \bbE\boxtimes\CFD(M,\spin)$ where $\bbE$ is the type DA structure associated with the elliptic involution on the punctured torus. \end{theorem}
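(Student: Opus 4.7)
The plan is to attack this via the structure theorem of \cite[Theorem 5]{HRW}, which tells us that $\CFD(M)$ (together with the spin$^c$ grading decorations from Section \ref{sec:grad}) is faithfully encoded by a collection of immersed curves with local systems in $\partial M \setminus z$. Since box tensor product with a type DA bimodule descends to a functor on the category of extendable type D structures, both $\bbA$ and $\bbE$ act on the invariants $\HFhat(M)$. It thus suffices to show that the two functors $\bbA \boxtimes (-)$ and $\bbE\boxtimes (-)$ agree on every decorated immersed curve which arises as a component of some $\HFhat(M)$.

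The first concrete step is to obtain explicit DA-bimodule descriptions of $\bbA$ and $\bbE$. For $\bbA$, this is essentially built into \cite[Theorem 3]{LOT2011}: one writes out the bimodule action coming from regarding the torus algebra as a bimodule over itself. For $\bbE$, the elliptic involution is a diffeomorphism of the parametrized punctured torus which negates both $\alpha$ and $\beta$ while fixing $z$, and so $\bbE$ is given by the mapping-class-group bimodule of \cite{LOT} associated to this involution; a convenient representative can be computed using the arrow calculus of \cite[Section 3]{HRW}. A direct comparison will reveal that $\bbA$ and $\bbE$ are \emph{not} isomorphic as DA bimodules, so the theorem cannot be reduced to a bimodule-level identity.

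Nevertheless, one checks case by case how $\bbA$ and $\bbE$ act on each isotopy class of immersed curve in the punctured torus. For curves of nonzero slope (and for figure-eight-like curves built from nontrivial winding), the two functors produce isomorphic extendable type D structures; only on the pathological class of a \emph{small circle linking $z$} do the outputs differ. The hard part of the argument is therefore the key topological proposition that no component of $\HFhat(M)$ is such a small meridional circle around the puncture. To prove this, I would argue that a small linking circle corresponds to a type D structure with a closed loop of length-two operations built from $\rho_{12}$ and $\rho_{23}$ (or their cyclic variants) which carries no $\rho_0$-arrow once extended to $\AlgExt$; the defining identity $(\tilde\mu\otimes\id)\circ(\id\otimes\tilde\delta)\circ\tilde\delta=U\otimes\id$ for an extension then forces a contradiction, because the $U$-action cannot be realized on such a loop while remaining within the extended algebra. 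Equivalently, one can verify that any candidate small-circle summand of $\CFD(M,\alpha,\beta)$ fails Lipshitz--Ozsv\'ath--Thurston's extendability criterion appealed to in \cite[Appendix A]{HRW}.

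Given this proposition, the proof is completed by combining the explicit case check with the structure theorem: every component of $\curves{M}$ lies in the class on which $\bbE \boxtimes(-)$ and $\bbA\boxtimes(-)$ induce naturally isomorphic objects in the category of decorated immersed curves, and hence they agree as extendable type D structures. Tracking spin$^c$ decorations through this identification shows that the spin$^c$ structure gets sent to its conjugate, yielding $\bbE\boxtimes\CFD(M,\spin)\cong\CFD(M,c(\spin))$ as claimed. The main obstacle, as emphasized above, is the small-circle lemma; once it is in hand, the remaining verification is an arrow-calculus computation on the finite list of local models into which an immersed curve can be decomposed.
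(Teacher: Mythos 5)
Your overall plan is the right one and matches the paper in broad strokes: compare the functors $\bbA\boxtimes(-)$ and $\bbE\boxtimes(-)$ on the immersed-curve representatives, observe that they differ only on components that are small circles linking the basepoint (the curve the paper calls $\sC$), and reduce the theorem to showing no such component appears in $\HFhat(M)$. (The paper organizes this slightly differently, passing through the half-identity bimodule $\bbI = \halfid$ so that $\bbE = \bbI\boxtimes\bbA$ and $\bbI\boxtimes\bbI = \mathbb{I}$, and then studying the involution $\FI = \bbI\boxtimes(-)$ on curves; this is a useful repackaging but not an essential difference.)

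The genuine gap is in your proposed proof of the key proposition that $\sC$ does not occur. You claim that a small circle linking $z$ ``fails Lipshitz--Ozsv\'ath--Thurston's extendability criterion'' -- i.e.\ that it cannot be extended to a type D structure over $\AlgExt$ satisfying $(\tilde\mu\otimes\id)\circ(\id\otimes\tilde\delta)\circ\tilde\delta = U\otimes\id$. This is false. The curve $\sC$ \emph{is} extendable over $\AlgExt$; indeed the paper works freely with the extension of $\sC$ as a type D structure (see the discussion around Figure~\ref{fig:circ-1}). Extendability in the sense of \cite[Appendix A]{HRW} is not the obstruction. For $\sC$ carrying the trivial local system, the paper rules it out by a different argument: $\bbA\boxtimes\sC$ produces a \emph{non-compact} curve (Figure~\ref{fig:circ-3}), which cannot be the bordered invariant of a three-manifold. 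But this still leaves $\sC$ decorated with a \emph{non-trivial} local system, and extendability over $\AlgExt$ gives no traction there either.

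What is actually needed is a strictly stronger constraint coming from the (truncated) minus theory: extendability of $\CFD(M,\alpha,\beta)$ to a type D structure over the algebra $\Alg^{\mathbf{U}}$, which has the additional operations $\mu_4(\rho_{i+3},\rho_{i+2},\rho_{i+1},\rho_i)=\mathbf{U}$. From this one derives the relation $\Psi' = \sum_{i=0}^3 D_i'\circ D_{i+1}'\circ D_{i+2}'\circ D_{i+3}' = 0$ on the reduced extended type D structure (Corollary~\ref{cor:LOT-}). If $\sC$ with local system $(V,\varphi)$ were a summand, one computes $\Psi'(x)=\varphi(x)$ on its generators, contradicting the invertibility of $\varphi$. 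Establishing that relation is not a formal consequence of anything proved in \cite{HRW} or \cite{LOT}; the paper sketches a direct proof (Proposition~\ref{prop:D_U sum}) that requires genuinely new analytical input -- a class of decorated sources with interior punctures mapping to the closed Reeb orbit and a degeneration analysis of the resulting index-$2$ moduli spaces, in the spirit of Lipshitz--Ozsv\'ath--Thurston's in-preparation minus theory. That analytical content is the real heart of the theorem, and it is precisely what your sketch omits.
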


A formal consequence of this result is a symmetry under the elliptic involution:

\begin{corollary}The elliptic involution acts trivially on the (unlabelled) curve-set $\HFhat(M)$.\qed \end{corollary}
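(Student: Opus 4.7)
The plan is to deduce the corollary directly from Theorem~\ref{A-E-bim} by passing through the curve-invariant dictionary of \cite{HRW}. Recall that under the structure theorem \cite[Theorem 5]{HRW}, extendable type D structures over $\Alg$ correspond (up to homotopy equivalence, respectively regular homotopy and isomorphism of local systems) to decorated immersed curves in $\partial M \setminus z$. The bimodule $\bbE$ is by construction the type DA bimodule realizing the diffeomorphism action of the elliptic involution on this punctured torus; consequently, if $N$ is a type D structure whose associated curve set is $\boldsymbol\gamma$, then $\bbE \boxtimes N$ is represented by the curve set obtained by applying the elliptic involution to $\boldsymbol\gamma$.

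With this translation in hand, Theorem~\ref{A-E-bim} applied summand by summand gives
\[
\bbE \boxtimes \CFD(M,\spin) \;\cong\; \CFD(M, c(\spin))
\]
for each $\spin \in \spinc(M)$. Since spin$^c$ conjugation $c$ is an involution, hence a bijection, on $\spinc(M)$, summing over $\spin$ yields $\bbE \boxtimes \CFD(M) \cong \CFD(M)$ as a type D structure. Translating back via the structure theorem, the image of $\HFhat(M)$ under the elliptic involution represents the same type D structure as $\HFhat(M)$, so the two decorated curve sets agree up to regular homotopy and isomorphism of local systems.

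Finally, the word \emph{unlabelled} in the statement is essential and accounts for the only subtlety: the isomorphism $\bbE \boxtimes \CFD(M, \spin) \cong \CFD(M, c(\spin))$ sends the $\spin$-summand of $\HFhat(M)$ to the $c(\spin)$-summand, so the elliptic involution may genuinely permute the components of $\HFhat(M) = \bigsqcup_{\spin} \HFhat(M,\spin)$ associated to paired spin$^c$ structures. After forgetting the spin$^c$ labels, however, the underlying collection of decorated curves is preserved, which is exactly the statement of the corollary. Since the step from Theorem~\ref{A-E-bim} to the curve picture is purely formal given the dictionary of \cite{HRW}, there is no real obstacle — all the work has already gone into establishing Theorem~\ref{A-E-bim}.
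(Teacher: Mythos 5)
Your argument is correct and matches what the paper intends — the corollary is stated with a $\qed$ as a "formal consequence" of Theorem~\ref{A-E-bim}, and your reasoning (tensoring with $\bbE$ realizes the elliptic involution on curves, so $\bbE\boxtimes\CFD(M,\spin)\cong\CFD(M,c(\spin))$ says the involution sends $\HFhat(M,\spin)$ to $\HFhat(M,c(\spin))$, and summing over the involution $c$ preserves the total unlabelled curve set) is exactly the formal step the paper leaves to the reader. Your remark about why "unlabelled" matters is the right observation.
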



The proof of Theorem \ref{A-E-bim} involves a third bimodule: the half-identity DA bimodule $\bbI=\halfid$. Direct calculation of $\bbE$ (see \cite{LOT2013}) shows that this bimodule is precisely $\bbA\boxtimes\bbI$. Relating these three bimodules we have the following amalgam of results:

\begin{proposition}\label{prop:all-the-bimodules}
Let $\mathcal{M}$ be the category of type D structures associated with manifolds having torus boundary. The following diagram commutes:
\[
  \begin{tikzpicture}[scale=0.75,>=stealth', thick] 
\node  at (0,0) {$\mathcal{M}$};
\node  at (-2,2) {$\mathcal{M}$};
\node  at (2,2) {$\mathcal{M}$};
\draw[->,shorten <= 0.3cm,shorten >= 0.3cm] (0,0) to[bend right=15] (2,2);
\draw[->,shorten <= 0.3cm,shorten >= 0.3cm] (0,0) to[bend left=15] (-2,2);
\draw[->,shorten <= 0.3cm,shorten >= 0.3cm] (-2,2) to[bend left=20] (2,2);
\draw[<-,shorten <= 0.3cm,shorten >= 0.3cm] (-2,2) to[bend right=20] (2,2);
\node  at (0,2.75) {$\bbI\boxtimes-$};\node  at (0,1.25) {$\bbI\boxtimes-$};
\node  at (-2,0.75) {$\bbE\boxtimes-$};\node  at (2,0.75) {$\bbA\boxtimes-$};
   \end{tikzpicture}
\]
\end{proposition}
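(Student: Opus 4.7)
The plan is to reduce the commutativity of the diagram to two assertions about $\bbI$ considered as a functor on $\mathcal{M}$: first, that $\bbI \boxtimes \bbI \simeq \mathrm{id}$ on $\mathcal{M}$, and second, that $\bbI$ commutes with $\bbA$ on $\mathcal{M}$ up to homotopy, i.e.\ $\bbI \boxtimes \bbA \simeq \bbA \boxtimes \bbI$. Given the bimodule identity $\bbE \simeq \bbA \boxtimes \bbI$ noted just above (from the direct computation of \cite{LOT2013}), these two facts together force
\[
\bbI \boxtimes \bbA \;\simeq\; \bbA \boxtimes \bbI \;\simeq\; \bbE,
\qquad
\bbI \boxtimes \bbE \;\simeq\; \bbI \boxtimes \bbA \boxtimes \bbI \;\simeq\; \bbA \boxtimes \bbI \boxtimes \bbI \;\simeq\; \bbA
\]
as functors on $\mathcal{M}$, so that both triangles in the diagram commute.

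To establish these two facts, I would pass through the structure theorem \cite[Theorem 5]{HRW}, which replaces any object of $\mathcal{M}$ by a collection of decorated immersed curves in the punctured torus. Under this identification, $\bbI = \halfid$ should act geometrically as a $\pi$-rotation about the basepoint, while $\bbA$ corresponds to $\spinc$ conjugation via \cite[Theorem 3]{LOT2011}. The assertion $\bbI \boxtimes \bbI \simeq \mathrm{id}$ then amounts to the claim that the resulting full $2\pi$-rotation about the puncture can be realized by a regular homotopy of the curves that appear in $\mathcal{M}$; this is exactly where the separate proposition stating that no component of $\HFhat(M)$ is a small loop encircling the basepoint plays its essential role, since without that hypothesis a full rotation acts by the nontrivial Dehn twist about a small loop around the puncture.

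The commutation $\bbI \boxtimes \bbA \simeq \bbA \boxtimes \bbI$ on $\mathcal{M}$ can then be checked directly on curves: the algebraic $\spinc$ conjugation and the geometric rotation operate on essentially disjoint features of the decorated curve data, and their effects commute up to the homotopies already used to prove $\bbI \boxtimes \bbI \simeq \mathrm{id}$.

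The main obstacle is verifying $\bbI \boxtimes \bbI \simeq \mathrm{id}$ on $\mathcal{M}$. This is the one place in the argument where the restricted subcategory $\mathcal{M}$ (as opposed to an arbitrary collection of immersed curves) is genuinely needed, and it is the subtle step foreshadowed by the remark preceding the proposition. All the remaining ingredients are either direct bimodule calculations already present in \cite{LOT2013}, or formal consequences of these combined with the structure theorem.
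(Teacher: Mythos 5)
Your reduction correctly identifies the two facts one wants — an involutive property for $\bbI$ and a relation between $\bbE$, $\bbI$, and $\bbA$ — but you misattribute where the difficulty lies. Both $\bbE \simeq \bbI \boxtimes \bbA$ (equivalently $\bbA \boxtimes \bbI$) and $\bbI \boxtimes \bbI \simeq \CFDA(\mathbb{I})$ are honest bimodule-level identities, established respectively by gluing Heegaard diagrams and by a finite computation with the explicit bimodule of Figure~\ref{fig:halfid} (compare \cite[Theorem~4]{LOT-bimodules}). Neither identity requires restricting to the subcategory $\mathcal{M}$, and in particular neither requires the proposition that no component of $\HFhat(M)$ is a small loop around the basepoint. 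Once these two facts are in hand, the diagram commutes by purely formal manipulation: $\bbI \boxtimes \bbA \simeq \bbE$ gives one triangle, and $\bbI \boxtimes \bbE \simeq \bbI \boxtimes \bbI \boxtimes \bbA \simeq \bbA$ gives the other. (Taking $\bbE \simeq \bbA \boxtimes \bbI$ as your starting point also forces you to invoke commutativity of $\bbI$ with $\bbA$, which is a superfluous extra step.)

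The place where the restriction to $\mathcal{M}$ and the no-small-circle lemma are genuinely essential is the subsequent, strictly stronger Theorem~\ref{A-E-bim}, which asserts that $\bbA \boxtimes -$ and $\bbE \boxtimes -$ define the \emph{same} functor on $\mathcal{M}$. Proposition~\ref{prop:all-the-bimodules} alone does not give this, since nothing asserts $\bbI \boxtimes -$ is the identity (only its square is), and $\bbA$ and $\bbE$ are genuinely different bimodules: they disagree already on the curve $\sC$. Your geometric picture of $\bbI$ as a $\pi$-rotation about the basepoint is also incorrect — that is the action of $\bbE = \CFDA(\phi)$ for $\phi$ the elliptic involution, not of $\bbI$. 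The half-identity bimodule is not $\CFDA$ of any mapping class, which is exactly why $\bbI \boxtimes \bbI \simeq \CFDA(\mathbb{I})$ and $\bbI \boxtimes \bbA \simeq \bbE$ together carry nontrivial content and must be checked algebraically rather than read off from a homeomorphism of the torus.
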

\begin{proof}This commutative diagram combines various works of Lipshitz, Ozsv\'ath and Thurston. The operation on the right-hand side of the diagram is studied in \cite{LOT2011} (as described above) while the operation on the left-hand side of the diagram can be calculated explicitly  following the methods developed in \cite{LOT2013} (see Figure \ref{fig:AZ}). The key point is that the Heegaard diagram for \(A\) glued to
the Heegaard diagram for \(I\) gives precisely the Heegaard diagram for \(E\). From this we see directly that $\bbE$ and $\bbI\boxtimes\bbA$ agree. Finally, recall that  the identity bimodule $\CFDA(\mathbb{I})$ fixes type D stuctures \cite{LOT-bimodules}; again consulting  \cite{LOT2013} (or computing directly), $\CFDA(\mathbb{I})$ agrees with $\bbI\boxtimes\bbI$. Hence $\bbI\boxtimes-$ is an involution, completing the commutative diagram. \end{proof}

As a result, the proof of Theorem \ref{A-E-bim} reduces to establishing the behaviour of the half-identity bimodule on curves. Before carrying this out, for the reader's convenience, we review the constructions involved in Proposition \ref{prop:all-the-bimodules}.

\begin{figure}[t]
\labellist 
\tiny
\pinlabel {$\iota_0$} at 22 145
\pinlabel {$\rho_1$} at 22 100
\pinlabel {$\rho_{12}$} at 24 59
\pinlabel {$\rho_{123}$} at 25.5 17
\pinlabel {$\iota_1$} at 49 107
\pinlabel {$\rho_2$} at 49 60
\pinlabel {$\rho_{23}$} at 51 17
\pinlabel {$\rho_{3}$} at 76.5 17.5
\pinlabel {$3$} at -3 45 \pinlabel {$2$} at -3 85 \pinlabel {$1$} at -3 125 
\pinlabel {$1$} at 30 -3 \pinlabel {$2$} at 58 -3 \pinlabel {$3$} at 88 -2 
\pinlabel {$1$} at 244 45 \pinlabel {$2$} at 244 85 \pinlabel {$3$} at 244 125 
\pinlabel {$1$} at 352.5 56 \pinlabel {$2$} at 339 107 \pinlabel {$3$} at 308 148 
\endlabellist
\includegraphics[scale=.8]{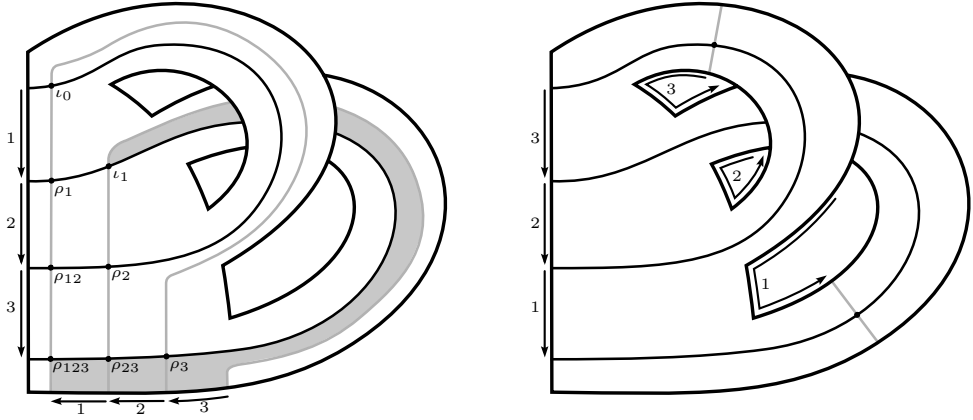}
\caption{Heegaard diagrams for the bimodules of interest, where the $\beta$-arcs are grey. On the left (compare \cite{LOT2011}): the (nice) diagram $\mathsf{AZ}$ computing the type DA structure $\bbA$. Note that the type A side corresponds to the right action of $\Alg$ on $\Alg$ by multiplication, which we have identified with the vertical edge of the diagram; the intersections are labeled by the corresponding elements in $\Alg$. The shaded domain gives rise to the dashed arrow in Figure \ref{fig:master-bimodule}. On the right (compare \cite{LOT2013}): the diagram computing $\bbI=\halfid$, where all domains are polygons. The type D structure corresponds to the vertical edge, so that $\bbE=\bbI\boxtimes\bbA$ is obtained by identifying the $\beta$-arcs. The resulting diagram no longer has a domain  corresponding to the dashed edge in Figure \ref{fig:master-bimodule}.}
\label{fig:AZ}
\end{figure}
\begin{figure}[h!]
\labellist 
\tiny
\pinlabel {$\boldsymbol\iota_0$} at 10 96
\pinlabel {$\boldsymbol\rho_{12}$} at 82 104
\pinlabel {$\boldsymbol\rho_{23}$} at 242 104
\pinlabel {$\boldsymbol\iota_1$} at 317 95
\pinlabel {$\boldsymbol\rho_1$} at 114 156
\pinlabel {$\boldsymbol\rho_{123}$} at 215 156
\pinlabel {$\boldsymbol\rho_2$} at 116 36
\pinlabel {$\boldsymbol\rho_{3}$} at 211 36
\pinlabel {$\rho_1\otimes$} at 112 71 \pinlabel {$\rho_1\otimes$} at 210 120  \pinlabel {$\rho_1\otimes$} at 279 180
\pinlabel {$\rho_2\otimes$} at 210 71 \pinlabel {$\rho_2\otimes$} at 67 45
\pinlabel {$\rho_3\otimes$} at 260 45
\pinlabel {$\rho_{123}\otimes$} at 260 147
\pinlabel {$\cdot\rho_1$} at 67 145
 \pinlabel {$\cdot\rho_2$} at 110 120  \pinlabel {$\cdot\rho_2$} at 280 15
 \pinlabel {$\cdot\rho_3$} at 48 15  \pinlabel {$\cdot\rho_3$} at 160 73  \pinlabel {$\cdot\rho_3$} at 160 118
  \pinlabel {$\cdot\rho_{12}$} at 55 90
  \pinlabel {$\cdot\rho_{23}$} at 160 155  \pinlabel {$\cdot\rho_{23}$} at 270 90
  \pinlabel {$\cdot\rho_{123}$} at 50 185
\endlabellist
\includegraphics[scale=.8]{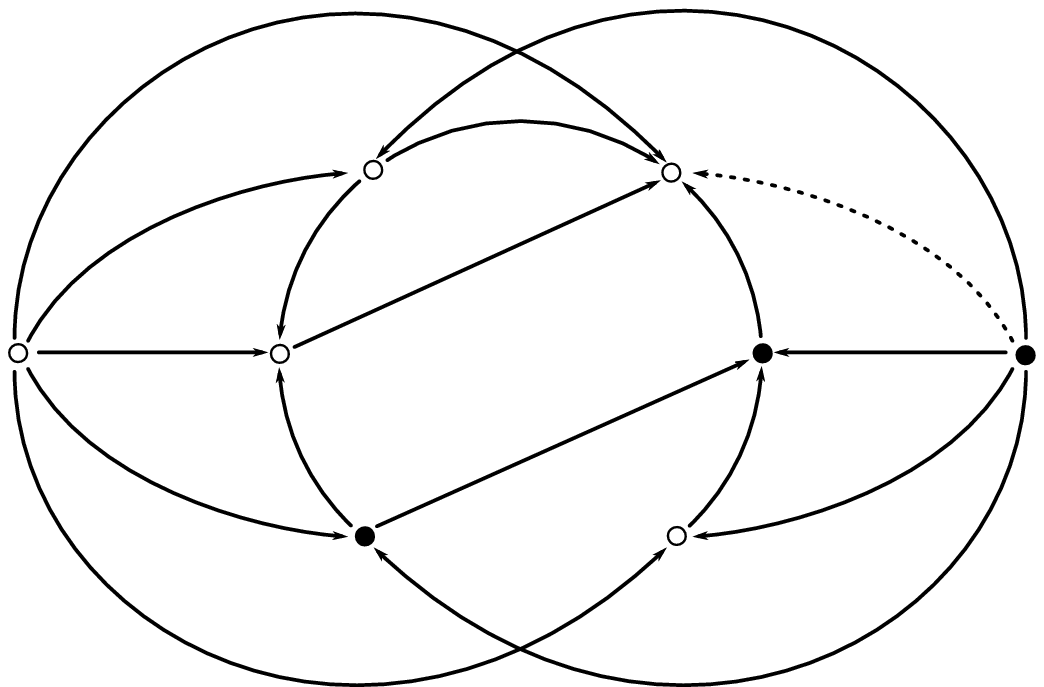}
\caption{The bimodule $\bbA$, where forgetting the dashed edge (equivalently, box-tensoring with $\bbI=\halfid$) recovers the bimodule $\bbE$. (Note that the bimodule shown is $\bbI\boxtimes\bbA$ rather than $\bbA\boxtimes\bbI$; they are equivalent.)  The vertices are in on-to-one correspondence with the elements of $\Alg$ (distinguished using bold face), and our conventions label vertices according to idempotents on the outgoing type $D$ side. The type A multiplications act on the right, so that e.g. $m_2(\boldsymbol\iota_0,\rho_{12})=\boldsymbol\iota_0\cdot\rho_{12} = \boldsymbol\rho_{12}$, while the type D structure is a left action with e.g. $\delta^1(\boldsymbol\iota_0)=\rho_2\otimes\boldsymbol\rho_2$. }
\label{fig:master-bimodule}
\end{figure}

\begin{figure}[h!]
\labellist 
\tiny
\pinlabel $+$ at 109 93
\pinlabel $+$ at 141 93
\pinlabel $+$ at 172 93
\pinlabel {$\rho_{1}$} at 84 80 \pinlabel {$\rho_{1}$} at 105 108
\pinlabel {$\rho_{3}$} at 114 80 \pinlabel {$\rho_{3}$} at 137 108
\pinlabel {$\rho_{123}$} at 146 80 \pinlabel {$\rho_{123}$} at 172 108
\pinlabel {$\rho_{123}$} at 178 80 \pinlabel {$\rho_{3}$} at 198 108 \pinlabel {$\rho_{2}$} at 210 108 \pinlabel {$\rho_{1}$} at 222 108
\pinlabel {$\rho_{2}$} at 130 1 \pinlabel {$\rho_{2}$} at 151 28
\pinlabel {$\rho_{12}$} at -2 42 \pinlabel {$\rho_{12}$} at 21 71
\pinlabel {$\rho_{23}$} at 264 42 \pinlabel {$\rho_{23}$} at 287 71
\endlabellist
 \includegraphics[scale=.8]{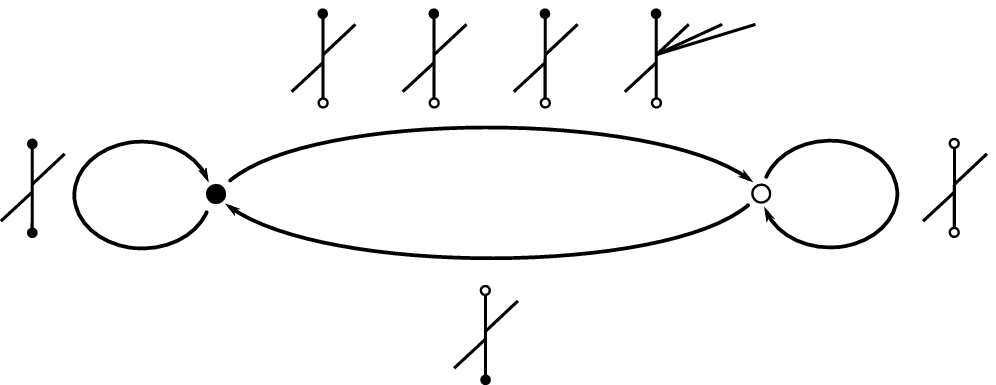}
\caption{The half-identity bimodule $\bbI=\halfid$. One can check directly from this description that $\bbI\boxtimes\bbI$ is the identity bimodule; compare \cite[Theorem 4]{LOT-bimodules}. Indeed, the difference is the presence of the operation $(\bu,\rho_3,\rho_2,\rho_1) \mapsto \rho_{123}\otimes\circ$.}
\label{fig:halfid}
\end{figure}

First, as observed in \cite{LOT2011}, the bimodule structure on $\Alg$ can be computed directly from an explicit Heegaard diagram $\mathsf{AZ}$; see Figure \ref{fig:AZ}. From this it is possible to calculate the type DA structure $\bbA$; see Figure \ref{fig:master-bimodule}. In particular, we have that $\bbA=\CFDA(\mathsf{AZ})$. On the other hand, it follows from \cite{LOT2013} that $\bbE=\halfid\boxtimes\CFDA(\mathsf{AZ})$. See \cite[Section 3]{LOT2013} for the explicit construction of a Heegaard diagram associated with a given mapping class $\phi$, from which $\CFDA(\phi)$ may be computed directly. In the present setting, $\phi$ is the elliptic involution realized as the composition of six Dehn twists; we leave it as an exercise to compute $\bbE=\CFDA(\phi)=\bbI\boxtimes\bbA$ following  \cite{LOT2013}. We have recorded the biomdule $\halfid$ in Figure \ref{fig:halfid}. 

When considering the effect of these three bimodules on the invariants $\HFhat(M)$, we regard $\bbA\boxtimes -$,  $\bbI\boxtimes -$, and $\bbE\boxtimes -$ as endofunctors $\FA$, $\FI$, and $\FE$ on the appropriate Fukaya category. Ultimately, we will show   that $\FA$ and $\FE$ have the same effect on any curve-set $\HFhat(M)$. It is interesting to see how this works on some simple examples: By construction, $\FE$ acts on immersed curves via the elliptic involution. That is, $\FE(\HFhat(M))$ is the result of rotating the square representing the torus by 180 degrees. This can be seen explicitly in examples; see Figure \ref{fig:sym-trefoil}, for instance. Notice that this operation interchanges $1 \leftrightarrow 3$ and $2\leftrightarrow 123$. In fact, since $\bbE$ is obtained by composing Dehn twists, one can apply loop calculus to systematically study the effect of $\bbE\boxtimes - $ on any decorated graph associated with an extendable type D structure. 

\labellist 
\small
\pinlabel {$\FA$} at 128 59
\pinlabel {$\longrightarrow$} at 128 48
\endlabellist
\begin{figure}[h]
\includegraphics[scale=.8]{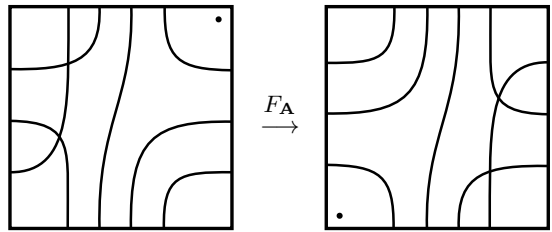}
\caption{The effects of $\FE$ and $\FA$ on the invariant of the right-hand trefoil exterior are identical.}
\label{fig:sym-trefoil}
\end{figure}

One immediately notices that $\FE(\HFhat(M)) = \FA(\HFhat(M))$ for a great majority of curves, owing to the fact that $\bbA$ and $\bbE$ differ by only one operation (the dashed edge of Figure \ref{fig:master-bimodule} labelled by $\rho_{123}\otimes$). In fact, there is a very restricted setting in which the behaviour of these two operations is different. To illustrate the behaviour we need to treat more carefully, consider the example given in Figure \ref{fig:sym-eight}. 

\labellist 
\small
\pinlabel {$\FA$} at 128 59
\pinlabel {$\longrightarrow$} at 128 48
\endlabellist
\begin{figure}[h]
\includegraphics[scale=.8]{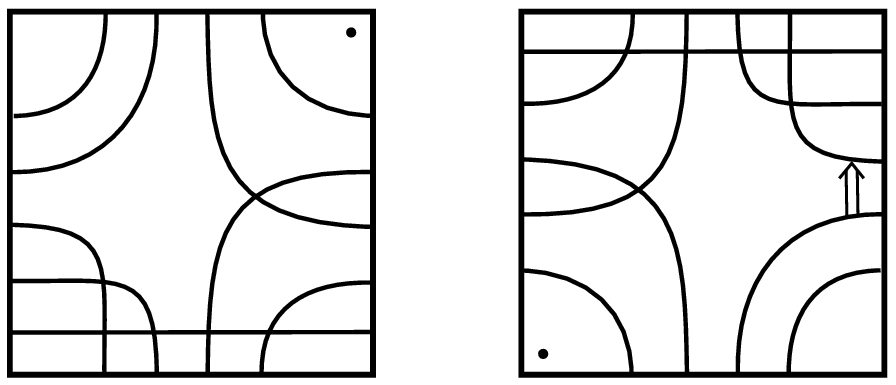}
\caption{When $M$ is the exterior of the figure eight knot, $\FE(\HFhat(M))$ and $\FA(\HFhat(M))$ are isomorphic as $\Alg$-modules after applying a single crossover arrow removal; compare Figure \ref{fig:review-example} for a summary and \cite[Section 3]{HRW} for details.}
\label{fig:sym-eight}
\end{figure}

\parpic[r]{
\labellist 
\tiny
\pinlabel {$3$} at -4 33 \pinlabel {$1$} at 63 33
\pinlabel {$2$} at 30 -2 \pinlabel {$123$} at 30 67
\pinlabel {$z$} at 135 56
\endlabellist
 \begin{minipage}{45mm}
 \centering
 \includegraphics[scale=.8]{}
 \captionof{figure}{Two views of the curve $\sC$: as a decorated graph and as a curve in the torus.}
 \label{fig:circ-1}
  \end{minipage}%
  }
Based on the observations above, one might guess that $\bbA$ and $\bbE$ are in fact equivalent as type DA bimodules. This is not the case. Consider the curve $\sC$ consisting of a single embedded circle enclosing the basepoint, that is, $\sC$ is homotopic to the boundary circle in the torus minus (a neighbourhood of) the marked point $z$. As a decorated graph, $\sC$ this has 4 generators and one of each type of labelled edge. We will confuse the curve $\sC$ with its associated  type D structure (and the extension of this type D structure). 
We can now deduce that $\bbA$ and $\bbE$ are distinct type DA bimodules, since $\bbA\boxtimes\sC$ and $\bbE\boxtimes \sC$ do not agree. Indeed, the former gives rise to a non-compact curve; the calculation is summarized  in Figure \ref{fig:circ-3}. From this we conclude that $\FA$ and $\FE$ are different functors, and that $\sC$ will never arise as a summand in $\HFhat(M)$ for some three-manifold $M$ with torus boundary. We do, however, need to treat the more general case where the curve $\sC$ carries a non-trivial local system. Before doing so, we will prove that this is the only curve and local system that requires closer attention.
\labellist 
\small
\tiny
\pinlabel {$3$} at -4 52 \pinlabel {$1$} at 63 52
\pinlabel {$2$} at 30 17 \pinlabel {$123$} at 30 86
\pinlabel {$123$} at 151 30 
\pinlabel {$1$} at 137 40
\pinlabel {$2$} at 127 30
\pinlabel {$3$} at 118 21
\pinlabel {$1$} at 110 12
\pinlabel {$123$} at 171 92
\pinlabel {$1$} at 197 89
\pinlabel {$3$} at 189 80
\pinlabel {$2$} at 180 71
\pinlabel {$1$} at 170 61
\pinlabel {$1$} at 175.5 34 \pinlabel {$2$} at 186.5 24
\pinlabel {$1$} at 132.5 75 \pinlabel {$2$} at 123 84
\pinlabel {$123$} at 320 92
\pinlabel {$1$} at 345 89
\pinlabel {$3$} at 337 80
\pinlabel {$2$} at 328 71
\pinlabel {$1$} at 318 61
\pinlabel {$3$} at 265 21
\pinlabel {$2$} at 332.5 24
\pinlabel {$3$} at 395 52 \pinlabel {$1$} at 461 52
\pinlabel {$2$} at 428 17 \pinlabel {$123$} at 428 86 \pinlabel {$123$} at 428 56
\endlabellist
\begin{figure}[h]
\includegraphics[scale=.8]{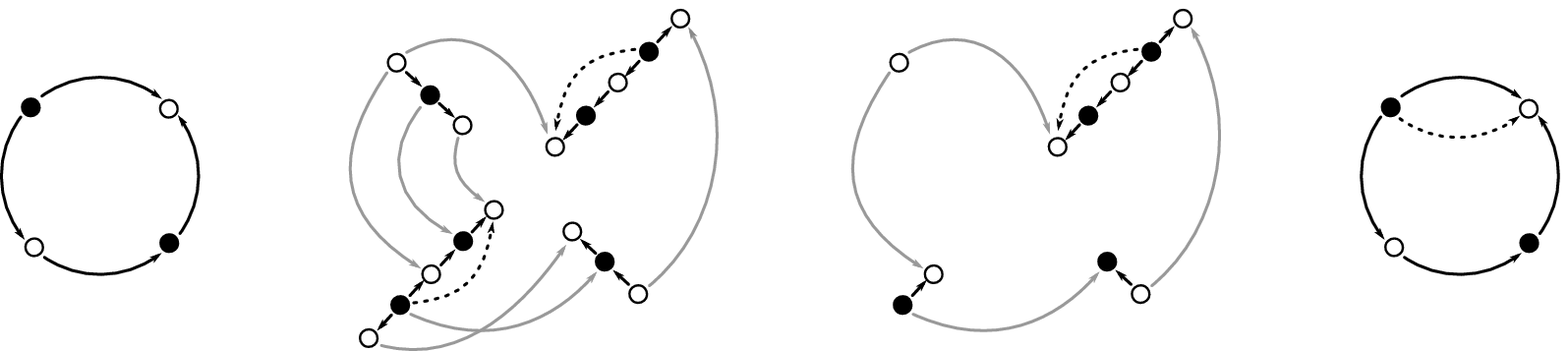}
\caption{Box tensor products of $\bbA$ and $\bbE$ with the type D structure $\sC$: For $\bbA\boxtimes\sC$ the dashed edge is included and for $\bbE\boxtimes\sC$ the dashed edge is omitted. On the left is the decorated graph for $\sC$, and beside it, the result of  $\bbA\boxtimes\sC$ and $\bbE\boxtimes\sC$ are (simultaneously) illustrated; note that unlabbled edges (in gray) should be read as differentials. Edge cancellation results in the decorated graph on the right (an intermediate step is shown), where we recall that $\delta(a)=\rho_1\otimes b + (\rho_{123}+\rho_{123})\otimes c = \rho_1\otimes b$ in $\bbA\boxtimes\sC$ since we are working modulo 2.}
\label{fig:circ-3}
\end{figure}

\begin{proof}[Proof of Theorem \ref{A-E-bim}]
In light of Proposition \ref{prop:all-the-bimodules}, our strategy of proof amounts to exhibiting the behaviour of $\FI$ on a curve-set $\HFhat(M)$. 

\parpic[r]{
\labellist 
\small
\pinlabel {$\FI$} at 93 41
\pinlabel {$\longrightarrow$} at 93 31
\tiny
\endlabellist
 \begin{minipage}{60mm}
 \centering
 \includegraphics[scale=.8]{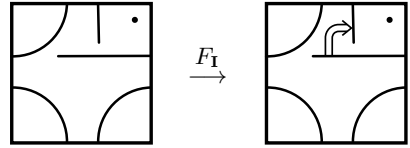}
 \captionof{figure}{The effect on a coil: the added arrow has weight $(1,\wtm)$ for some integer $\wtm$, hence depth 1, and can be removed.}
 \label{fig:coils}
  \end{minipage}%
  }
We fist observe that $F_I$ acts trivially on any curve component not containing a coil, that is, a sequence of the form  \raisebox{-4pt}{$\begin{tikzpicture}[thick, >=stealth',shorten <=0.1cm,shorten >=0.1cm] 
\draw[->] (0,0) -- (1,0);\node [above] at (0.5,-0.07) {$\scriptstyle{3}$};
\draw[->] (1,0) -- (2,0);\node [above] at (1.5,-0.07) {$\scriptstyle{2}$};
\draw[->] (2,0) -- (3,0);\node [above] at (2.5,-0.07) {$\scriptstyle{1}$};
\node at (0,0) {$\bu$};\node at (1,0) {$\circ$};\node at (2,0) {$\bu$};\node at (3,0) {$\circ$};\end{tikzpicture}$} in the associated decorated graph: Dropping the operation 
$(\bu,\rho_3,\rho_2,\rho_1) \mapsto \rho_{123}\otimes\circ$ (see Figure \ref{fig:halfid}) yields the identity bimodule $\CFDA(\mathbb{I})$. For every instance of a coil in a given curve component, $F_I$ adds an additional $\rho_{123}$ edge; this is shown in Figure \ref{fig:coils} using the crossover arrow formalism. Ignoring local systems for the moment, if there is a single isolated coil, that is not contained in a curve $\sC$, then this added crossover arrow clearly can be removed  by a change of basis (compare Figure \ref{fig:review-example} and/or consult \cite[Section 3.7]{HRW}).

With this observation made, we now consider the generic case: fix a component $\gamma$ of $\HFhat(M)$ that is not homotopic to $\sC$, decorated with an arbitrary local system. Expand the local system away from the coil, that is, along any edge that is not contained in a coil. We need to perform the arrow cancelling algorithm on the train-track $\FI(\gamma)$; we assume familiarity with the language in \cite[Section 3.7]{HRW}. The key observation is that each coil adds a new finite-depth crossover arrow, with weight $(\wtp,\wtm)$ such that $\wtp$ is a positive integer; when there is a single isolated coil (as above), and the local system is trivial, it is immediate that this weight is $(1,\wtm)$ and the arrow is removable by a change of basis. More generally, it could be that multiple coils occur in sequence, so that $\gamma$ wraps around the basepoint multiple times. In this case, one obtains a series of crossover arrows, weighted by $(1,\wtm), (5,\wtm), (9,\wtm),\ldots$, and the arrow cancelling algorithm removes these in order of increasing depth (recall that the depth of an arrow is $\min\{|\wtp|,|\wtm|\}$). Note that, during this process, new arrows may be created via composition with arrows from an (expanded) local system. Because the local system is expanded away from the coils, these arrows have the same $\wtp=1$ as the arrow they compose with, so they are removable as well. For reference, we are repeatedly applying \cite[Step 2 in the proof of Proposition 30]{HRW}.

\labellist 
\small
\pinlabel {$(V,\Phi)$} at 76 98
\small
\pinlabel $z$ at 162 165 \pinlabel $z$ at 408 165 
\endlabellist
\begin{figure}[h]
\includegraphics[scale=.8]{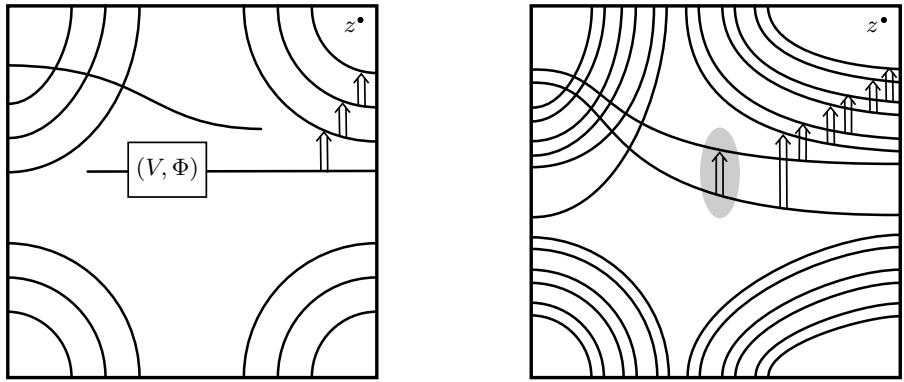}
\caption{Coils in the presence of a local system: On the left is the effect of $\FI$ on a portion of a curve containing a series of coils, together with a local system, schematically. On the right this schematic is explained in an example for a particular 2-dimensional local system -- the shaded arrow comes from the expanded local system. Arrows contributing to the local system have infinite depth, while the new arrows added by $\FI$ have finite depth, increasing left to right from depth 1. These are removed in order, since the associated weights $(\wtp,\wtm)$ have $\wtp>0$.}
\label{fig:coiling}
\end{figure}

In summary, it follows from an application of the algorithm described in \cite[Section 3.7]{HRW} that $\gamma$ and $\FI(\gamma)$ describe type D structures that are isomorphic as $\Alg$-modules, whenever $\gamma$ is not homotopic to $\sC$.  It is illustrative to take note of how this process fails for the case of the curve $\sC$: in this case any new arrows added have infinite depth, and since infinite depth arrows contribute to a local system, it is possible that applying $\FI$ changes the local system in this case.  

To complete the proof of Theorem \ref{A-E-bim}, it remains to show that components of the form $\sC$ (decorated with some non-trivial local system) do not arise as summands of invariants of three-manifolds with torus boundary. To achieve this, we appeal to the extension of (the hat version of) bordered Floer homology to the minus version, due to Lipshitz, Ozsv\'ath, and Thurston \cite{LOT-minus}. We only need a small part of this extended theory, corresponding to the $\mathbf{U}^2=0$ truncation for the Floer homology of a closed three manifold.

To describe this, we require an algebraic digression. Recall that a type D structure over $\Alg$ is equipped with a map $\delta^1\co V\to\Alg\otimes V$, which defines a collection of maps inductively by $\delta^k=\operatorname{id}\otimes\delta^1\circ\delta^{k-1}$ for $k\ge0$, where $\delta^0\co V\to V$ denotes the identity. Then writing $\mu_2$ for the multiplication in $\Alg$, the compatibility condition for the type D structure becomes $\mu_2\otimes\operatorname{id}\circ\delta^2=0$, which we will write as $\mu_2\circ\delta^2=0$, by abuse. Similarly, the compatibility condition for an extended type D structure becomes $\mu_2\circ\delta^2=\mu_0$, where we write $\mu_0=\sum_{|I|=4}\rho_I$ for the central element $U$ in $\widetilde{\Alg}$. As a result, we could instead consider the $A_\infty$ algebra $\Alg^c$ which includes the operation $\mu_0$, so that an extended type D structure is equivalent to a type D structure over $\Alg^c$ with compatibility condition $\sum_{i=0}^2\mu_i\circ\delta^i =0$ (the $\mu_1$ term is identically zero). This is sometimes referred to as a curved type D structure, where $\mu_0$ is the curvature.

The bordered invariants we wish to appeal to are defined over a slightly larger algebra $\Alg^{\mathbf{U}}$, which contains $\mu_0$  and $\mu_2$ as well as a new collection of operations $\mu_4\co\Alg^{\otimes 4} \to \Alg$. These have the property that $\mu_4(\rho_{i+3}, \rho_{i+2},\rho_{i+1},\rho_{i})=\mathbf{U}$, where the subscripts are in $\Z/4\Z$. A type D structure over $
\Alg^{\mathbf{U}}$ consists of a map $\delta^1\co V\to\Alg^{\mathbf{U}}\otimes V$ (as above) with compatibility condition $\sum_{i=0}^4\mu_i\circ\delta^i =0$ (the $\mu_1$ and $\mu_3$ terms are identically zero). Such type D structures are truncations of the full minus bordered invariants which are defined over a larger weighted $A_\infty$ algebra. The result we appeal to from Lipshitz, Ozsv\'ath, and Thurston \cite{LOT-minus} is as follows:

\begin{theorem} 
\label{thm:LOT-}
Every type D structure over $\Alg$ associated with a three-manifold with torus boundary is the restriction of a type D structure over $\Alg^{\mathbf{U}}$. \end{theorem}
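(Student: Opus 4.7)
The plan is to follow the construction of Lipshitz--Ozsv\'ath--Thurston's bordered minus theory \cite{LOT-minus}. Given a bordered Heegaard diagram $\mathcal{H}$ for $(M,\alpha,\beta)$, one defines a candidate extension $\tilde\delta^1 \co V \to \Alg^{\mathbf{U}}\otimes V$ by counting the same moduli spaces used to define $\delta^1$ for $\CFD(M,\alpha,\beta)$, but now weighting each holomorphic curve $\phi$ by $\mathbf{U}^{n_z(\phi)}$, where $n_z(\phi)$ is the multiplicity with which its domain covers the basepoint $z$. By construction, setting $\mathbf{U}=0$ recovers the original $\delta^1$, so it suffices to show that the weighted count satisfies the compatibility relation of a type D structure over $\Alg^{\mathbf{U}}$.

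The content of the theorem lies in verifying the structure equation $\sum_{i=0}^{4}\mu_i\circ\tilde\delta^i=0$. This is carried out by analyzing the codimension-one ends of moduli spaces of index-one holomorphic curves with prescribed $z$-multiplicity. Standard disk breakings give the $\mu_2\circ\tilde\delta^2$ contribution, matching the already-established compatibility of $\CFD$. Breakings in which a disk covering the basepoint once bubbles off contribute the curvature $\mu_0=\sum_{|I|=4}\rho_I$, which is precisely the sum of the four rectangles encircling the puncture on $\partial M$; this already appears in the extension of $\CFD(M,\alpha,\beta)$ to a type D structure over $\widetilde\Alg$. The new ingredient is that degenerations in which the Reeb sequence loops once completely around the boundary of the punctured torus contribute a $\mu_4$ term; the identity $\mu_4(\rho_{i+3},\rho_{i+2},\rho_{i+1},\rho_i)=\mathbf{U}$ geometrically reflects the four possible cyclic starting points for such a loop.

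The main obstacle is analytic: one must compactify and coherently orient these $z$-weighted moduli spaces so that their counts assemble into precisely the operations prescribed by $\Alg^{\mathbf{U}}$, ruling out any additional weighted operations and controlling all possible degenerations involving higher $z$-multiplicity. Truncating at $\mathbf{U}^2=0$ is helpful here, because it allows one to bound the number of times a degenerating configuration may encircle the puncture. Once these issues are in hand, they promote the extendable type D structure $\widetilde{\CFD}(M)$ to a type D structure over the weighted algebra $\Alg^{\mathbf{U}}$, and independence from the Heegaard diagram follows by adapting the standard invariance arguments for bordered Floer homology to accommodate the $z$-filtration.
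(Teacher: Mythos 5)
Two preliminary remarks. First, the paper does not actually prove Theorem~\ref{thm:LOT-}: it cites the statement from Lipshitz--Ozsv\'ath--Thurston's forthcoming work and then proves directly only the weaker Corollary~\ref{cor:LOT-} (the vanishing of $\sum_i D_i'\circ D_{i+1}'\circ D_{i+2}'\circ D_{i+3}'$), with a final Remark sketching how similar arguments would give the full theorem; so your outline is being compared to a sketch, not a complete proof. Second, since the paper works over $\F$, there is no orientation issue to worry about, so your emphasis on coherent orientations is misplaced.

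There is, however, a genuine gap in the mechanism you propose. In a bordered Heegaard diagram the basepoint $z$ lies on $\partial\bar\Sigma$, not in the interior of the surface; there is no region of the diagram ``containing $z$'' on which to read off a multiplicity, so ``weighting each curve by $\mathbf{U}^{n_z(\phi)}$'' does not literally make sense. The correct replacement --- and the one the paper's proof of Proposition~\ref{prop:D_U sum} uses --- is to enlarge the class of decorated sources to allow \emph{interior punctures} asymptotic to the closed Reeb orbit $Z$ corresponding to $\partial\Sigma$. The map $D_\mathbf{U}$ is defined by counting index-one curves from sources with a single such interior puncture (and no east-infinity boundary punctures), and the structure relation is verified by analyzing the ends of the 1-dimensional moduli spaces $\sM^B(x,y;Z)$: breaks at $\pm\infty$ give $D_\emptyset\circ D_\mathbf{U}+D_\mathbf{U}\circ D_\emptyset$, while breaks at east infinity produce, after Lemma~\ref{Lem:comp}, the compositions $\sum_{i=0}^3 D_i\circ D_{i+1}\circ D_{i+2}\circ D_{i+3}$. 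The key subtlety --- which your $n_z$-weighting picture conflates --- is that in the bordered setting a curve can ``go past $z$'' at east infinity in two inequivalent ways: as a boundary Reeb chord containing $\rho_0$ (this underlies the extension to $\widetilde\Alg$ and the curvature $\mu_0$), or as a closed Reeb orbit at an interior puncture (this underlies $D_\mathbf{U}$ and the $\mu_4$ contribution). Distinguishing these, and showing via the moduli-space analysis that they fit together into the $\Alg^{\mathbf{U}}$ relations, is where the actual content of the proof lives; without the interior-puncture framework, the expression $\mathbf{U}^{n_z(\phi)}$ has no meaning and the argument cannot get started.
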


By considering the coefficient of \(\mathbf{U}\) in the expression \(\mu_4 \circ \delta^4 (x)\), we deduce that

\begin{corollary}
\label{cor:LOT-}
We have 
\(\sum _{i=0}^3 D_i '\circ D_{i+1}' \circ D_{i+2}' \circ D_{i+3}' = 0\), where the \(D_i'\) are the 
coefficient maps of the reduced extended type D structure \(\CFD(M,\alpha,\beta)\), {\it i.e.}  \(\delta(x) = \sum \rho_I D_I'(x)\). (All subscripts should be interpreted \(mod \ 4\).)
\end{corollary}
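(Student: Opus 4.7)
The plan is to unpack the $A_\infty$ compatibility relation for the extended structure over $\Alg^{\mathbf{U}}$ produced by Theorem~\ref{thm:LOT-}, apply it to a generator $x$, and then isolate the coefficient of $\mathbf{U}$ on both sides. The content of the corollary is purely bookkeeping, made available by the existence of the extension.

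First, by Theorem~\ref{thm:LOT-}, choose an extension $(V,\delta)$ of $\CFD(M,\alpha,\beta)$ to a type D structure over $\Alg^{\mathbf{U}}$, so that the reduced coefficient maps $D_I'$ take values in $\widetilde{\Alg}$. The defining compatibility condition for a type D structure over $\Alg^{\mathbf{U}}$ is $\sum_{i\ge 0}(\mu_i\otimes \id_V)\circ \delta^{i}=0$, and since $\mu_1=\mu_3=0$ this collapses to
\[(\mu_0\otimes \id_V)+(\mu_2\otimes \id_V)\circ\delta^{2}+(\mu_4\otimes \id_V)\circ\delta^{4}=0.\]
Apply this to $x\in V$ and extract the coefficient of $\mathbf{U}\in\Alg^{\mathbf{U}}$. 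Since $\mu_0$ (the curvature) and $\mu_2$ (multiplication) take values in $\widetilde{\Alg}$, they contribute nothing in the $\mathbf{U}$-direction: any stray $\mathbf{U}$-terms that would propagate into $(\mu_2\otimes \id)\circ\delta^{2}$ either lie outside the reduced form or are killed by $\mathbf{U}^{2}=0$ in the truncated setting. Consequently the $\mathbf{U}$-coefficient of the entire sum equals the $\mathbf{U}$-coefficient of $(\mu_4\otimes\id_V)\circ\delta^{4}(x)$, and this must therefore vanish.

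Next, expand
\[\delta^{4}(x)=\sum_{I_1,I_2,I_3,I_4}\rho_{I_1}\otimes \rho_{I_2}\otimes\rho_{I_3}\otimes\rho_{I_4}\otimes \bigl(D'_{I_4}\circ D'_{I_3}\circ D'_{I_2}\circ D'_{I_1}\bigr)(x).\]
The defining property of $\mu_4$ is that $\mu_4(\rho_{I_1},\rho_{I_2},\rho_{I_3},\rho_{I_4})=\mathbf{U}$ exactly when $(I_1,I_2,I_3,I_4)$ is one of the four cyclic singleton sequences $(i{+}3,i{+}2,i{+}1,i)$ for $i\in \Z/4\Z$, and $\mu_4$ vanishes on all other singleton tuples and on any tuple involving a compound Reeb chord. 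Summing these four contributions isolates
\[\mathbf{U}\cdot \sum_{i=0}^{3}\bigl(D'_i\circ D'_{i+1}\circ D'_{i+2}\circ D'_{i+3}\bigr)(x)=0,\]
which is the claimed identity.

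The only nonroutine point is the bookkeeping in the middle step: verifying that no ``hidden'' $\mathbf{U}$-contributions sneak into $\mu_2\circ\delta^{2}$ and that $\mu_4$ produces $\mathbf{U}$ only on the prescribed cyclic singleton tuples. Both facts are inherent to the structure of $\Alg^{\mathbf{U}}$ developed by Lipshitz, Ozsv\'ath, and Thurston in \cite{LOT-minus}, and once they are in hand the argument reduces to the coefficient extraction above.
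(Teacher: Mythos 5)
Your derivation is essentially the paper's own one-sentence justification for this corollary: immediately after stating Theorem~\ref{thm:LOT-}, the paper says ``by considering the coefficient of $\mathbf{U}$ in the expression $\mu_4\circ\delta^4(x)$, we deduce'' the corollary, which is precisely the coefficient extraction you carry out. However, you have reproduced only the part of the argument the paper explicitly declines to rely on. Because the source for Theorem~\ref{thm:LOT-} is still in preparation, the paper immediately pivots and gives a self-contained \emph{direct} proof: it establishes Proposition~\ref{prop:D_U sum}, which says that on the unreduced complex $\CFD(\Sigma,\mathbf{A},\mathbf{B})$ the map $\Psi=\sum_{i=0}^3 D_i\circ D_{i+1}\circ D_{i+2}\circ D_{i+3}$ is null-homotopic with respect to $D_\emptyset$ (the null-homotopy being $D_{\mathbf{U}}$, defined by counting holomorphic curves with one interior puncture limiting to the closed Reeb orbit), and then argues algebraically that since each $D_i$ is a $D_\emptyset$-chain map, the reduced $D_i'$ is the induced map on homology, so $\Psi'=\Psi_*=0$. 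This analytic route also justifies facts that your bookkeeping step takes as black box, in particular that the only way $\mathbf{U}$ appears in $\mu_4\circ\delta^4$ is through the four cyclic singleton tuples. One small slip in your middle step: the reason $\mu_2\circ\delta^2$ contributes nothing to the bare $\mathbf{U}$-coefficient is not the truncation $\mathbf{U}^2=0$ (that only kills the $\mathbf{U}\cdot\mathbf{U}$ terms); it is that the reduced structure has $D'_\emptyset=0$, so every $\mathbf{U}$-bearing term of $\mu_2\circ\delta^2$ has the form $\mathbf{U}\rho_I$ or $\rho_I\mathbf{U}$ with $I$ nonempty, which is a different basis element from $\mathbf{U}$ itself. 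I'd encourage you to work through the direct proof as well, since it does the real work here; your argument is a correct unpacking of the shortcut, but the shortcut is exactly what the authors chose not to lean on.
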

For ease of notation, we  write \(\Psi' = \sum _{i=0}^3 D_i' \circ D_{i+1}' \circ D_{i+2}' \circ D_{i+3}' \).

Now suppose  that \(\sC\), equipped with some local system \((V,\varphi)\),  appears as a summand of \(\HFhat(M)\). If \(x\) is any generator  of \(\CFD(M,\alpha,\beta)\) corresponding to an intersection of \(\sC\) with \(\alpha\) or \(\beta\), an easy computation shows that \(\Psi(x) = \varphi (x)\). The fact that \(\Psi'(x)=0\) contradicts the invertibility of \(\varphi\), so no such summand can exist. This concludes the proof of Theorem~\ref{A-E-bim}.
\end{proof}

Lipshitz, Ozsv{\'a}th and Thurston's paper \cite{LOT-minus} is still in preparation, and the input we need from it is much weaker than the full strength of their results,  so we outline a direct proof of Corollary~\ref{cor:LOT-}. 
In \cite[Appendix A]{HRW}, we discussed the generalizations to \cite{LOT} needed to show that \(\CFD(M,\alpha,\beta)\)  defines an extendable  type D structure. The main change   was that we needed to consider a larger class of decorated sources for the holomorphic maps used to define \(\CFD\). In particular, we needed to use decorated sources with boundary punctures labeled by Reeb chords including \(\rho_0\). 
To show that  \(\CFD(M,\alpha,\beta)\) extends to a type D structure over \(\Alg^{\mathbf{U}}\), we must consider decorated sources which contain interior punctures in addition to boundary punctures. 

To be more precise, let \((\Sigma, \mathbf{A},\mathbf{B})\) be a bordered Heegaard diagram representing  \((M,\alpha, \beta)\). We equip \(\Sigma \times [0,1]\times \R\) with a suitably generic almost complex structure \(J\). By counting holomorphic maps
\(u\co S^\triangleright \to \Sigma \times [0,1]\times \R\), where 
\(S^\triangleright\) is a decorated source as in \cite [Definition 5.2]{LOT}, we obtain an extended type D structure \(\CFD(\Sigma, \mathbf{A},\mathbf{B})\).  \(\CFD(\Sigma, \mathbf{A},\mathbf{B})\) is unreduced, and is homotopy equivalent to the reduced type D structure \(\CFD(M,\alpha,\beta)\). 

To prove Corollary~\ref{cor:LOT-}, we must use an additional class of decorated sources. 
For our purposes, it is enough to consider decorated sources  with a single interior puncture and no boundary punctures which map to east infinity. We say such a source is of {\it type 1-P}. If \(S^\triangleright\) is a source of type 1-P, we let \(\widetilde{\sM}^B(x,y;S^\triangleright)\) be the space of pseudoholomorphic maps  \(u\co S \to \Sigma \times [0,1] \times \R\)   as in  \cite[Definition 5.3]{LOT} which represent the homology class \(B \in \pi_2(x,y)\), limit to the generators \(x\) (resp. \(y\)) at \(-\infty\) (resp. \(+\infty\)), and have the additional property that near the puncture, the curve limits to a single copy of the closed Reeb orbit \(Z\) corresponding to \(\partial \Sigma\). 
We remark that if \(u\) is such a map, the composition \(\pi_\Sigma \circ u\) necessarily has multiplicity 1 near \(\partial \Sigma\).
We let 
   \(\sM^B(x,y;S^\triangleright)\) be the corresponding reduced moduli space. 
    
     If \(x\) is a generator for \((\Sigma, \mathbf{A}, \mathbf{B})\), we define 
    \[D_{\mathbf{U}}(x) = \sum_y \sum \# \sM^B(x,y;S^\triangleright) y\]
   where the inner sum runs over pairs \((S^\triangleright, B)\) such that \(S\) is a decorated source of type 1-P and \(B \in \pi_2(x,y)\) is such that the \(\mathrm{ind} \ (S^\triangleright, B) = 1\). 
  In addition, let \(D_\emptyset, D_0, D_1, D_2, D_3\) be the usual coefficient maps for the extended type \(D\) structure \(\CFD(\Sigma, \mathbf{A}, \mathbf{B})\), as defined  in  \cite[Sections 11.1 and 11.6]{LOT}. 
    
    By studying the ends of index \(2\) moduli spaces, we will show that the following relation holds:
\begin{proposition}
\label{prop:D_U sum}
$ \displaystyle D_\emptyset \circ D_\mathbf{U}+ D_\mathbf{U} \circ D_\emptyset + \sum_{i=0}^3 D_i \circ D_{i+1} \circ D_{i+2} \circ D_{i+3} = 0. $ 
\end{proposition}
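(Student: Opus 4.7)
The plan is to prove the relation by the standard technique of identifying the boundary components of appropriate one-dimensional reduced moduli spaces. Fix generators \(x,y\) of \(\CFD(\Sigma,\mathbf{A},\mathbf{B})\) and a homology class \(B\in\pi_2(x,y)\), and consider all decorated sources \(S^\triangleright\) of type 1-P for which \(\mathrm{ind}(S^\triangleright,B)=2\). Each reduced moduli space \(\sM^B(x,y;S^\triangleright)\) is then a 1-dimensional manifold; its compactification is a 1-manifold with boundary, and summing over \(x,y,B,S^\triangleright\) the mod-2 count of boundary points must vanish. The three terms in the proposition will be identified with the three types of codimension-one ends appearing in this compactification.

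The ``classical'' ends are the usual two-story holomorphic building degenerations. Because a type 1-P source carries no east infinity punctures, any two-story breaking must distribute zero east punctures between the two levels and place the unique interior puncture on exactly one of them. When the interior puncture sits on the upper level, the end contributes to \(D_\emptyset\circ D_\mathbf{U}\); when it sits on the lower level, the end contributes to \(D_\mathbf{U}\circ D_\emptyset\). Standard bordered transversality and gluing (as in \cite[Chapter 5]{LOT}) show that each such configuration is the boundary of a unique one-parameter family in \(\sM^B(x,y;S^\triangleright)\).

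The remaining ends arise from a non-classical degeneration in which the interior puncture migrates to \(\partial\Sigma\). In this limit the simply covered closed Reeb orbit \(Z=\partial\Sigma\) breaks into a cyclic chain of the four elementary Reeb chords \(\rho_0,\rho_1,\rho_2,\rho_3\), and the original curve converges to a four-story holomorphic building, each story carrying a single Reeb chord \(\rho_{i+k}\) (indices mod \(4\)) at east infinity. The four possible cyclic starting points \(k\in\Z/4\) (one for each \(\alpha\)-arc endpoint on \(\partial\Sigma\) to which the puncture can approach) are precisely the four summands of \(\sum_{i=0}^{3}D_i\circ D_{i+1}\circ D_{i+2}\circ D_{i+3}\). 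Once the classification of ends is established, the vanishing of the total mod-2 boundary count yields the asserted identity.

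The main obstacle is the rigorous analysis of this last, non-classical degeneration. One needs an SFT-style compactness theorem identifying Gromov limits of sequences in \(\sM^B(x,y;S^\triangleright)\) along which the interior puncture exits \(\Sigma\times[0,1]\times\R\) toward \(\partial\Sigma\) with exactly the four-story buildings described above, together with a matching gluing theorem producing a unique (mod 2) one-parameter smoothing of each such building. This is the interior-puncture analog of the boundary degeneration analysis of \cite[Chapter 7]{LOT} and of the extension to \(\rho_0\)-labeled punctures developed in \cite[Appendix A]{HRW}; we expect the same transversality and exponential-decay estimates to carry over, with the role of the boundary Reeb chord \(Z=\rho_{0123}\) now played by the closed orbit \(Z\) itself. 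Granting this compactness/gluing package, the three contributions identified above exhaust the codimension-one boundary of \(\sM^B(x,y;S^\triangleright)\), and Proposition~\ref{prop:D_U sum} follows.
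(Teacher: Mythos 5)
Your overall strategy — count the ends of one-dimensional reduced moduli spaces of type 1-P curves, and identify the three kinds of codimension-one degenerations with the three terms — is the same as the paper's, and your analysis of the $\pm\infty$ ends is correct. However, there is a genuine gap in the east-infinity analysis.

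You claim that as the interior puncture migrates to $\partial\Sigma$, the curve converges directly to a \emph{four-story holomorphic building}, each story carrying one of the elementary Reeb chords $\rho_{i+k}$. This misdescribes the codimension-one degeneration. At a single end of the one-dimensional moduli space, the curve degenerates into a \emph{holomorphic comb}: a main curve in $\Sigma\times[0,1]\times\R$ with a single boundary puncture labeled by the composite (length-four) Reeb chord $\rho_{i,i+1,i+2,i+3}$, together with a split curve at east infinity. The split curve is the disk $T^\diamond$ with one interior puncture (limiting to $Z$) and one boundary puncture (limiting to $-\rho_{i,i+1,i+2,i+3}$) mapping to $\R\times Z\times[0,1]\times\R$, and for each choice of Lagrangian $A_i$ the reduced moduli space $\sM_i(T)$ is a single point. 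This gives the term $\Delta_{i,i+1,i+2,i+3}$, which counts rigid curves with a \emph{single} composite east-infinity puncture, not a composition of four coefficient maps.

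What your argument omits is the further step needed to identify $\Delta_{i,i+1,i+2,i+3}$ with the fourfold composition $D_i\circ D_{i+1}\circ D_{i+2}\circ D_{i+3}$. The paper establishes this with a separate lemma, $\Delta_{IJ}=\Delta_I\circ\Delta_J$, proven by a different one-parameter family: one considers curves with two boundary punctures labeled $-\rho_I$ and $-\rho_J$ and the relative-height evaluation map $f=\mathrm{ev}_1-\mathrm{ev}_2$; as $t\to\infty$ one gets two-story buildings giving $\Delta_I\circ\Delta_J$, while as $t\to0$ the two east-infinity punctures collide and a split curve degenerates off, giving $\Delta_{IJ}$. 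Iterating this lemma three times produces the fourfold composition. Your proposed direct four-story degeneration would be a higher-codimension phenomenon, not a codimension-one end of the index-two moduli space, so it cannot account for the term $\sum_i D_i\circ D_{i+1}\circ D_{i+2}\circ D_{i+3}$ by itself. To fix the proposal, separate the east-infinity degeneration into two stages: first show it contributes $\sum_i\Delta_{i,i+1,i+2,i+3}$, then prove and apply the composition lemma.
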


If we view \(\CFD(\Sigma, \mathbf{A}, \mathbf{B})\) as a chain complex with differential \(D_\emptyset\),  the proposition says that the map \(\Psi = \sum _{i=0}^3 D_i \circ D_{i+1} \circ D_{i+2} \circ D_{i+3} = 0\) is null-homotopic. To pass from the unreduced complex \(\CFD(\Sigma,  \mathbf{A}, \mathbf{B}) \) to \(\CFD(M,\alpha, \beta)\), we cancel components of \(D_{\emptyset}\) until we arrive at an extended type D structure with \(D_\emptyset = 0 \). As we cancel, the \(D_I\)'s are progressively modified as well. Let \(D_I'\) denote the coefficient maps in the resulting type D structure, which is \(\CFD(M,\alpha,\beta)\). 

In the case of \(D_i\), (\(i=0,\ldots, 3\)), this procedure is particularly simple. The type D structure relation for \(\CFD(\Sigma,  \mathbf{A}, \mathbf{B})\)
implies that \(D_\emptyset \circ D_i + D_i \circ D_\emptyset = 0\), so the individual \(D_i\)'s are chain maps. As a group, 
\(\CFD(M,\alpha,\beta) = H(\CFD(\Sigma, \mathbf{A}, \mathbf{B} ),D_\emptyset)\), and it is not hard to see that \(D_i'=(D_i)_*\) is the map induced on homology. (This is analogous to the fact that if \(d=d_0+d_1+\ldots\) is a differential on a filtered chain complex, then \(d_1\) is a chain map with respect to \(d_0\), and the first differential on the resulting spectral sequence is \(d_{1*}\).) We conclude that
 $$ \sum_{i=0}^3 D_i' \circ D_{i+1}' \circ D_{i+2}' \circ D_{i+3}' = \Psi_* = 0$$
 since \(\Psi\) is null-homotopic.  Thus Corollary~\ref{cor:LOT-} will follow from Proposition~\ref{prop:D_U sum}.

\begin{proof}[Proof of Proposition~\ref{prop:D_U sum}] 
Suppose \(B \in \pi_2(x,y)\) has \(\partial ^\partial B = Z\). 
Define \(\sM^B(x,y;Z)\) be the union of the moduli spaces \(\sM^B(x,y;S^\triangleright)\), where the union runs over sources \(S^\triangleright\) of type 1-P with  \(\mathrm{ind} (S^\triangleright, B)=2\).
 We consider the ends of the 1-dimensional space \( \sM^B(x,y;Z)\). These ends  correspond to degenerations of the source \(S^\triangleright\) at \(\pm \infty\) or east infinity.   In the case of a degeneration at \(\pm \infty\), \(S^ \triangleright\)  must decompose  into a provincial source (no punctures labeled by east infinity) and a source of type 1-P. Standard arguments show that the number of ends of \( \sM^B(x,y;S^\triangleright)\) corresponding to degenerations of this type is the coefficient of \(y\) in   \(D_\mathbf{U} \circ D_\emptyset (x) + D_\emptyset \circ D_\mathbf{U}(x)  \).

Now we consider degenerations where the source breaks at east infinity. 
To understand them, we must study holomorphic maps \(v:T^\diamond \to \R \times Z \times [0,1] \times \R\), as in Section 5.3 of \cite{LOT}. Here  we give the target the split complex structure coming from the usual complex structures on \(\R \times Z\) and \([0,1] \times \R\).  The target  contains four Lagrangians \(A_i : = \R \times a_i  \times 1 \times \R\) which \(\partial T^\diamond\) must map to. Since  \(B\) has multiplicity \(1\) near the puncture point in \(\Sigma\), it suffices to consider only those maps such that the composition \(\pi_\Sigma \circ v:T^\diamond \to \R \times Z\) has multiplicity 1 at every point of \(\R \times Z\). Moreover, since \(B\) has an interior puncture point mapping to east  infinity, \(T^\diamond\) must have an interior puncture point which maps to east infinity as well. 

It follows that \(T^\diamond\) is either a sphere with two interior punctures or a disk with one interior puncture and one or more boundary punctures. A domain of the first type is not stable and does not contribute to the boundary of the Gromov compactification. An easy index computation shows that a domain of the second type has index \(1\) if and only if it has a single boundary puncture. We analyze this case further. 

First, \(\partial T^\diamond\) has a single component, which must map to one of the \(A_i\). There are four distinct (but isomorphic) moduli spaces, depending on which \(A_i\) \(\partial T ^\diamond \) maps to. Consider the moduli space \(\widetilde{\sM}_{0}(T)\)  consisting of maps which take \(\partial T^\diamond\)  to \(A_0\). If \(v \in \widetilde{\mathcal{M}}_0(T)\), the boundary puncture limits to the Reeb chord \(-\rho_{0123}\), and  the composition \(\pi_{\mathbb{D}} \circ v\) has as its image a single point in \([0,1] \times \R\), since \(\partial T^\diamond\) maps to \(1 \times \R\). Since the source is a disk, \(\widetilde{\mathcal{M}}_0(T)\) is transversally cut out, as in \cite[Proposition 5.16]{LOT}. By applying the Riemann mapping theorem, we see that the reduced moduli space \({\mathcal{M}_0(T)}\) consists of a single point.  

Next, consider the space \(\mathcal{M}_i^B:=\mathcal{M}^B(x,y; \{-\rho_{i\ldots i+3}\})\), as in Definition 5.68 of \cite{LOT} . This moduli space counts maps from sources with no interior punctures and a single boundary puncture which limits to the Reeb chord \(-\rho_{i\ldots i+3}\) at east infinity. Standard gluing results, as in \cite[Proposition 5.31]{LOT}, show that the ends of \(\sM^B(x,y;Z)\) corresponding to breaks at east infinity are in bijection with pairs \((a,b) \in {\sM}_i^B \times  {\sM}_i(T)\), where \(i=0,\ldots 3\). Since each \(\sM_i(T)\) consists of one point, the set of such ends is in bijection with 
the union of the \(\sM_i^B\) for \(i=0,\ldots, 3\). 

We write \[\Delta_{I}(x) = \sum_y \sum_{B\in \pi_2(x,y)} \#  \sM^B(x,y;\{-\rho_I\}) y.\] The analysis above shows that the \(\mathrm{mod} \ 2\) number of ends of  \(\sM^B(x,y;Z)\)  is the \(y\) component of the expression
\[D_\emptyset \circ D_U(x) + D_U \circ D_\emptyset (x) + \sum_{i=0}^3 \Delta_{i,i+1,i+2,i+3} (x)= 0.\]

Observe that if \(|I|=1\), then \(\Delta_I = D_I\). The statement of the proposition now follows directly from the relation above, and the following:

\begin{lemma}
\label{Lem:comp}
 \(\Delta_{IJ} = \Delta_I \circ \Delta_J\).
\end{lemma}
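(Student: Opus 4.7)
The strategy will be the standard bordered Floer playbook for composition identities: analyze the ends of a one-dimensional moduli space of holomorphic curves with two east-infinity boundary punctures to derive an identity among curve counts.  Concretely, for each ordered pair of generators \((x,z)\) and homology class \(B \in \pi_2(x,z)\) with \(\partial^\partial B = \rho_I + \rho_J\) (and appropriate provincial multiplicities), I would introduce the moduli space \(\sM^B(x,z;\{-\rho_I,-\rho_J\})\) of pseudoholomorphic maps whose source has \emph{no} interior punctures and whose two boundary punctures at east infinity limit to \(-\rho_I\) (higher in the \(\R\)-coordinate) and \(-\rho_J\) (lower). Restricting to index \(2\) and reducing by the \(\R\)-action of the target produces a smooth 1-manifold whose signed boundary count must vanish.

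The main step is the enumeration of ends, following the Gromov compactness machinery of \cite[Chapter 5]{LOT} with the small modifications accommodating Reeb chords containing \(\rho_0\) that are discussed in \cite[Appendix A]{HRW}. The ends fall into three mutually exclusive families:
\begin{enumerate}
\item Breaking at \(\pm\infty\) in the target \(\R\)-factor, contributing compositions of \(D_\emptyset\) with two-puncture moduli spaces of lower index.
\item Collision of the two east-infinity punctures: when the Reeb chords \(\rho_I\) and \(\rho_J\) concatenate to \(\rho_{IJ}\) in \(\widetilde{\Alg}\), the fusion produces a curve with a single puncture labeled \(-\rho_{IJ}\); this end contributes \(\Delta_{IJ}\).
\item Horizontal splitting at east infinity, where the source separates into two pieces each retaining exactly one east-infinity puncture; this contributes \(\Delta_I \circ \Delta_J\).
\end{enumerate}
Setting the \(\bmod\,2\) count of ends to zero yields a relation, and the \(D_\emptyset\)-compositions of type (1) cancel by the already-established type D structure relation for the two-puncture count (proved by the same compactness argument at index \(1\)). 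What remains is \(\Delta_{IJ} + \Delta_I \circ \Delta_J = 0\), which is precisely the identity of the lemma over \(\F\).

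The principal obstacle will be the gluing analysis for the collision degeneration (2). One must show that near such an end the moduli space is diffeomorphic to a half-line parametrized by the relative height of the two punctures, and that as this height tends to zero one obtains precisely a single glued curve whose puncture limits to the composite Reeb chord \(-\rho_{IJ}\). The model is essentially the Riemann mapping theorem calculation performed for \(\widetilde{\sM}_0(T)\) in the proof of Proposition~\ref{prop:D_U sum}, but now executed in the interior of the target rather than in \(\R \times Z \times [0,1] \times \R\); the necessary technical framework is already developed in \cite[Chapter 5]{LOT}.  A secondary bookkeeping point is that the index \(2\) condition on the two-puncture source is exactly what makes the glued single-puncture source (with composite chord \(\rho_{IJ}\)) have index \(1\), so both sides of the claimed identity live at the expected dimension; this is a direct index computation from the formulas in \cite[Section 5.7]{LOT}.
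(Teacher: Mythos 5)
Your approach is a legitimate alternative to the paper's, and your taxonomy of ends is essentially right after one terminological fix: the degeneration you call ``horizontal splitting at east infinity'' in item (3), where the source separates into two pieces each carrying one east puncture, is in fact a vertical two-story breaking at $\pm\infty$; a genuine east-infinity degeneration is the chord collision described in your item (2). The paper takes a slightly different route. It introduces the evaluation map $f = \mathrm{ev}_1 - \mathrm{ev}_2$ on the same $1$-dimensional reduced moduli space $\sM^B(x,y;\{-\rho_I\},\{-\rho_J\})$, recording the relative height of the two punctures, argues that $\#f^{-1}(t)$ is constant mod $2$ over regular values, and reads off $\Delta_I\circ\Delta_J$ from the two-story buildings at $t\to\infty$ and $\Delta_{IJ}$ from the combs (a split curve at east infinity glued to a curve in $\sM^B(x,y;\{-\rho_{IJ}\})$) at $t\to 0$. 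This fiber-counting repackaging keeps the $D_\emptyset$-type ends out of the explicit bookkeeping; your ends-counting version forces you to confront them.

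The genuine gap is exactly in your treatment of those ends. When both east punctures land in one story and the other story is a provincial index-$1$ curve, the contribution has the form $D_\emptyset\circ\Phi + \Phi\circ D_\emptyset$, where $\Phi$ counts index-$1$ curves with two east-infinity punctures. You claim this vanishes ``by the already-established type D structure relation for the two-puncture count,'' but there is no such prior relation, and $\Phi$ has no reason to be a chain map with respect to $D_\emptyset$. As written, your boundary count only gives the homotopy-level identity
\[
\Delta_{IJ} + \Delta_I\circ\Delta_J + D_\emptyset\circ\Phi + \Phi\circ D_\emptyset = 0,
\]
which is weaker than the lemma. To close the argument you would need either to show that such ends are absent for the class of $B$ under consideration (implicitly, the paper's assertion that $\#f^{-1}(t)$ is constant over all regular values is making the corresponding claim), or to argue separately that the commutator term vanishes. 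As it stands, the cancellation is asserted rather than proved.
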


\begin{proof}
Suppose \(\sM^B(x,y; \{-\rho_{IJ}\})\) is 0 dimensional, and consider the 1-dimensional moduli space   \(\sM^B(x,y;\{-\rho_I\},\{-\rho_J\})\). There is a map 
\[f:= \mathrm{ev}_1 -\mathrm{ev}_2: {\sM}^B(x,y;\{-\rho_I\},\{-\rho_J\}) \to \R\] which measures the relative heights of the two punctures. Thus the mod \(2\) number of points in \(f^{-1}(t)\) is constant whenever \(t\) is a regular value for \(f\). 
As \(t \to \infty\), the curves in \(f^{-1}(t)\) degenerate to two-story buildings, and we have \(\# f^{-1}(t) = \sum_z n_{zy,I} n_{xz,J}\), where \(n_{xy,I}\) is the coefficient of \(y\) in \(\Delta_I(x)\). On the other hand, as \(t\to 0\), the curves in \(f^{-1}(t)\) degenerate to holomorphic combs consisting of a curve in  \(\sM^B(x,y;\{-\rho_{IJ}\})\) and a split curve at east infinity. Thus we have 
 \(\# f^{-1}(t) = n_{xy,IJ}\). Equating the two expressions gives the statement of the lemma. 
\end{proof}
This concludes the  proof of Proposition~\ref{prop:D_U sum}\end{proof}

\parpic[r]{
\labellist 
\tiny
\pinlabel {$3$} at 173 165 \pinlabel {$1$} at 232 98
\pinlabel {$2$} at 232 165 \pinlabel {$0$} at 173 98
\pinlabel {$y$} at 57 140
\pinlabel {$x_4$} at 100 140
\pinlabel {$x_3$} at 218 247
\pinlabel {$x_2$} at 315 140
\pinlabel {$x_1$} at 218 18
\pinlabel {$x_0$} at 3 140
\endlabellist
 \begin{minipage}{45mm}
 \centering
 \includegraphics[scale=.3]{}
 \captionof{figure}{Part of a type D Heegaard diagram. The domain \(B\) is shaded, as is the $\beta$-curve.}
 \label{fig:m4}
  \end{minipage}%
  }

We  illustrate the ideas in the proof above with  a  local example. 
Consider a genus one Heegaard diagram which near  \(\partial \Sigma\)  is as shown in Figure~\ref{fig:m4}. It is easy to see that \(D_i(x_{i+1}) = x_{i}\), so \(D_0 \circ D_1 \circ D_2 \circ D_3 (x_4) = x_0.\) On the other hand, we have \(D_\emptyset (x_4) = y\) and \(D_\mathbf{U}(y) = x_0\). To see why  the identity holds in this case, we consider the 1-dimensional moduli space \(\sM^B(x_4,x_0;Z)\), where \(B\) is the domain shaded in the figure. Moving around in this moduli space corresponds to cutting the domain along one of the two arcs emerging from \(x_4\) and passing through the interior of \(B\). Cutting all the way along the \(\beta\) arc gives the degeneration corresponding to \(D_\mathbf{U} \circ D_\emptyset (x_4)=x_0\), while cutting all the way along the \(\alpha\) arc gives the degeneration corresponding to \(\Delta_{0123}(x_4) = x_0\). 

Similarly, to understand what happens in Lemma~\ref{Lem:comp}, let \(B'\) be the domain obtained by cutting all the way along the horizonal \(\alpha\) arc.  (This is the domain defining \(\Delta_{0123}\).) Now cut \(B'\) along the heavy vertical arc.  When the length of the cut is \(0\), we get \(B'\). When the cut reaches the \(\beta\) curve, we get two domains corresponding to the composition \(\Delta_{012}\circ \Delta_3(x_4) = x_0\). 

\begin{remark} Similar arguments can be used to give a full proof of Theorem~\ref{thm:LOT-}. We briefly sketch the new ideas involved, but will not carry out all the details here. Consider, for example, the terms carrying the coefficient \(\rho_1 \mathbf{U}\) in the expression  \(\sum_{i=0}^4 \mu_i \circ \delta^i\). These terms will be of the form 
\[D_{01} \circ D_1 \circ D_2 \circ D_3  +  D_3 \circ D_0 \circ D_1 \circ D_{12}  +  D_1 \circ  D_\mathbf{U}  +  D_\mathbf{U} \circ D_1  +  D_\emptyset \circ 
D_{1\mathbf{U}}  +  D_{1\mathbf{U}} \circ D_\emptyset \]
where \(D_{1\mathbf{U}}\) is  defined by counting index \{1\} holomorphic maps whose source has one interior puncture which maps to \(Z\) and one boundary puncture mapping to \(\rho_1\). (We say such a source is of type \(\rho_1 \mathbf{U}\).) We must show that the expression above vanishes. To do so, we consider the ends of index two moduli spaces of maps from sources of type \(\rho_1 \mathbf{U}\). These ends correspond to breaks either at \(\pm \infty\) (the last four terms in the expression above) or at east infinity (the first two.) In analyzing the breaks at east infinity, the main new analytical problem is to study curves in 
\(\R \times Z \times [0,1]\times \R\) whose limits have multiplicity 2 along \(\rho_1\) and multiplicity \(1\) along the other \(\rho_i\). Index considerations show that these all have the form of a trivial strip along \(\rho_1\) together with a curve of the type studied in the proof of Proposition~\ref{prop:D_U sum}. It follows that the number of curves with breaks at east infinity is equal to the number of index 1 curves with sources with no interior punctures and two boundary punctures labeled by \(-\rho_1\) and \(-\rho_{i,i+1,i+2,i+3}\) respectively. Finally, arguing as in Lemma~\ref{Lem:comp} shows that this number is given by the first two terms in the equation above. 
\end{remark}

\section{Knot Floer homology}\label{sec:knot_floer}
\label{sec:knot-floer}

Knot complements provide a large class of examples for which the immersed curves are relatively easy to compute; this is due to the well understood relationship between between knot Floer homology and $\CFD$ of the complement (see \cite[Chapter 11]{LOT}). The goal of this section is to explore the relationship between the knot Floer homology of a knot $K$ in $S^3$ and the immersed curves $\HFhat(M_K)$ associated with the complement $M_K = S^3 \setminus \nu(K)$. Toward that end, we first briefly review the essentials of knot Floer homology following the notation in \cite[Chapter 11]{LOT}; a more complete introduction can be found in \cite{Manolescu:HFK}. 

\subsection{Background and notation} Given a nullhomologous knot $K$ in a three-manifold $Y$, the knot Floer chain complex $\CFKminus(Y,K)$ is a free, $\Z$-graded chain complex over $\F[U]]$. As a complex it is quasi-isomorphic to $\CFminus(Y)$, but it is also endowed with a filtration, the Alexander filtration,
$$\cdots\subset\mathcal{F}_i\subset\mathcal{F}_{i+1}\subset\cdots\subset \CFKminus(Y,K)$$
with filtration level $A(x) = \min\{i|x\in\mathcal{F}_i\}$. Up to filtered chain homotopy equivalence, $\CFKminus(Y,K)$ in an invariant of the knot $K$ in $Y$. When the ambient manifold $Y$ is $S^3$, we will omit it from the notation.

Recall that $\CFKminus(Y,K)$ is defined using a doubly pointed Heegaard diagram for the pair $(Y,K)$. The differential is defined by counting certain pseudoholomorphic disks, which may be interpreted as domains in the Heegaard surface together with information about how these domains cover the two basepoints $w$ and $z$. If a homotopy class of disks $B$ from $x$ to $y$ covers the basepoints with multiplicities $n_w(B)$ and $n_z(B)$, a differential corresponding to $B$ connects $x$ to $U^{n_w(B)} y$ and lowers the filtration level by $n_z(B)$. By restricting the differential to disks which do not cover one or both basepoints, we can define several important quotient complexes. The associated graded object $\gCFKminus$ is obtained by considering only terms in the differential that do not change the Alexander grading (that is, by restricting to disks which do not cover $z$). For both $\CFKminus$ and $\gCFKminus$, further restricting to disks which do not cover $w$ and generating the complex over $\F$ instead of $\F[U]]$ produces the complexes $\CFKhat$ and $\gCFKhat$; note that this restriction is equivalent to setting $U=0$. Knot Floer homology is the homology of $\gCFKminus(Y,K)$, denoted $\HFKminus(Y,K)$, or of $\gCFKhat(Y,K)$, denoted $\HFKhat(Y,K)$.

We will need to make use of particularly nice bases for $\CFKminus(Y,K)$. Fix a representative for the filtered homotopy type of $\CFKminus(Y,K)$, which as shorthand we will denote by $C$. Note that $C$ has two filtrations, the Alexander filtration and the filtration by negative powers of $U$. We may choose $C$ so that the differential $\partial$ strictly drops one of these filtration levels; we say that such a filtered complex is reduced. The associated graded object is $gC = \oplus_i \mathcal{F}_i / \mathcal{F}_{i-1}$. Given $x$ in $C$, let $[x]\in gC$ denote the projection of $x$ to $\mathcal{F}_{A(x)}/\mathcal{F}_{A(x)-1}$. A filtered basis for $C$ is a basis $\{v_1, \ldots, v_n\}$ such that $\{[v_1],\ldots,[v_n]\}$ is a basis for the associated graded $gC$. We say that a filtered basis is vertically simplified if for each basis vector $v_i$, either $\partial v_i \in U\cdot C$ or $\partial v_i \equiv v_j + x$ for some basis element $v_j$ and $x \in U\cdot C$. In the latter case, we say there is a vertical arrow from $v_i$ to $v_j$. Similarly, we say that a filtered basis is horizontally simplified if for each basis vector $v_i$ with filtration level $A(v_i) = k$, either $A(\partial v_i) < k$ or $A(\partial v_i) = U^m \cdot v_j + x$ for some basis element $v_j$ and integer $m$ with $A(U^m \cdot v_j) = k$ and some $x$ with $A(x) < k$. The filtered complex $\CFKminus(Y,K)$ always admits a vertically simplified basis and a (possibly different) horizontally simplified basis \cite{LOT}.

\subsection{Curves from knot Floer homology}\label{sec:curves-from-CFK}

Given a knot in $S^3$, the curve invariant $\HFhat(M_K)$ of $M_K = S^3 \setminus \nu(K)$ can be readily computed from the knot Floer complex $\CFKminus(K)$. To do this, we pass through the algorithm described by Lipshitz, Ozsv{\'a}th, and Thurston in \cite{LOT} computing $\CFD(M, \mu, \lambda)$ from $\CFKminus(K)$. This algorithm makes use of a horizontally simplified basis and a vertically simplified basis for $\CFKminus(K)$. We will first consider the special case that these two bases coincide; that is, that $\CFKminus(K)$ is equipped with a basis that is both horizontally and vertically simplified. This assumption is analogous to the loop type condition:

\begin{proposition}
If $\CFKminus(K)$ admits a basis which is both horizontally and vertically simplified, then $M_K = S^3 \setminus \nu(K)$ is a loop type manifold.
\end{proposition}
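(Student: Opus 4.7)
The plan is to apply the algorithm of Lipshitz--Ozsv\'ath--Thurston \cite[Chapter 11]{LOT} that constructs $\CFD(M_K,\mu,\lambda)$ from $\CFKminus(K)$, feeding it the common basis as both its vertically and horizontally simplified input. The goal is to show that under this hypothesis the output is already represented by a valence 2 enhanced decorated graph, which, by the discussion of Section~\ref{sub:loop-type}, suffices to conclude that $M_K$ is loop type.

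First, we recall the shape of their construction: it produces an $\iota_0$ generator for each basis element, together with $\iota_1$ generators organized into linear chains of three kinds. Specifically, one obtains a chain of length $\ell$ for each vertical arrow (with labels of the form $\rho_1,\rho_{23},\ldots,\rho_{23},\rho_{123}$), an analogous chain for each horizontal arrow, and a single unstable chain whose endpoints and length are determined by the $\tau$-invariant of $K$.  In general, converting between the two simplified bases requires a nontrivial change of basis matrix, and this matrix introduces extra coefficient maps and crossover arrows into the output, which can obstruct the valence 2 condition.

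Under our hypothesis the change of basis is the identity, so no such extra edges appear. Every edge of the resulting decorated graph belongs to one of the three chain types above, and every $\iota_1$ generator is automatically valence 2. It then remains to verify the valence of each $\iota_0$ vertex. Since the basis is both vertically and horizontally simplified, each basis element $v_i$ appears as an endpoint of at most one vertical chain (as source or target) and at most one horizontal chain. Combined with the endpoint contributions of the unstable chain---whose endpoints are pinned down among the $v_i$ by the fact that $\widehat{HF}(S^3)\cong\F$ selects distinguished representatives of the vertical and horizontal homologies---a case analysis organized by the vertical and horizontal type of $v_i$ (source, target, or neither) will show that every $\iota_0$ vertex has valence exactly 2.

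The hard part will be this final valence bookkeeping: a basis element that is neither source nor target of the vertical simplification must contribute the missing valence through its horizontal incidence or by serving as an endpoint of the unstable chain, and vice versa. The coinciding-basis hypothesis is precisely what forces these combinatorial possibilities to align and precludes isolated $\iota_0$ vertices. The examples worked out in Figures~\ref{fig:connect-sum} and \ref{fig:connect-sum2} illustrate the pattern in concrete cases, and the general verification should proceed by a direct type-by-type check guided by those examples.
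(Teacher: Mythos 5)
Your proposal is correct and takes essentially the same approach as the paper, which also runs the Lipshitz--Ozsv\'ath--Thurston algorithm on the common simplified basis and observes that the resulting decorated graph has valence two. The ``hard part'' you flag is in fact immediate: in a vertically (resp.\ horizontally) simplified basis every generator except the distinguished one $\xi_0$ (resp.\ $\eta_0$) is incident to exactly one vertical (resp.\ horizontal) chain end, and the unstable chain contributes exactly one further chain end at each of $\xi_0$ and $\eta_0$, so every $\iota_0$ vertex has precisely two incident chain ends.
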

\begin{proof}
\sloppy
According to the algorithm mentioned above, the generators of $\iota_0 \CFD(M, \mu, \lambda)$ are in one-to-one correspondence with the generators of $\CFKminus(K)$. In the directed graph representing $\CFD(M,\mu,\lambda)$, these generators are connected by chains of $\iota_1$-vertices referred to as horizontal chains, vertical chains, and unstable chains (these chains correspond to the segments denoted $a_k, b_k, c_k, d_k$, and $e$ in \cite{HW}). Moreover, there are exactly two chains (or two ends of the same chain), at each $\iota_0$-vertex; that is, the graph representing $\CFD(M, \mu, \lambda)$ has valence two. It follows that $M$ is a loop type manifold.
\end{proof}

\begin{remark}
It is not known whether $\CFKminus(K)$ always admits a horizontally and vertically simplified basis for any knot $K$ in $S^3$.
\end{remark}

In the presence of a horizontally and vertically simplified basis the construction of $\CFD$ from $\CFKminus$ is particularly straightforward, and since the resulting $\CFD$ is loop type we can easily extract the collection of immersed curves (with trivial local systems) $\HFhat(M_K)$ from this as described in Section \ref{sub:loop-type}. For convenience, we now describe an algorithm for recovering $\curves{M_K}$ directly from $\CFKminus(K)$ by combining these two steps. More precisely, we describe the lift $\curves{M_K,\spin}$ of $\curves{M_K}$ in the covering space $H_1(\partial M, \R)/\langle \lambda \rangle$ (here $\spin$ is the unique spin$^c$ structure on $M_K$). This space will be realized as the infinite strip $[-1/2, 1/2]\times\R$ with $(-1/2, t)$ and $(1/2,t)$ identified, and $\pi^{-1}(z)$ is the set of points $(0, n+1/2)$ for $n\in\Z$. The horizontal direction corresponds to Seifert longitude $\lambda$ in $\partial M$, and the vertical direction corresponds to the meridian $\mu$.

\begin{proposition}
Given a horizontally and vertically simplified basis for $\CFKminus(K)$, the collection of curves $\curves{M_K, \spin }$ in the infinite strip described above can be obtained from $\CFKminus(K)$ by the following procedure:
\begin{enumerate}
\item For each basis element $x$ of $\CFKminus$, place a short horizontal segment $[-1/4,1/4]\times \{t\}$ at height $t = A(x)$, where $A(x)$ denotes the Alexander grading of $x$.

\item If $\CFKminus(K)$ contains a \emph{vertical arrow} from $x$ to $y$ (that is, if $\partial x = y + Uz$ for some $z \in \CFKminus(K)$), then connect the \emph{left} endpoints of the horizontal segments corresponding to $x$ and $y$ by an arc.

\item If $\CFKminus(K)$ contains a \emph{horizontal arrow} from $x$ to $y$ (that is, if $\partial x = U^{A(y)-A(x)} y + z$ for some $z \in CFK^-(K)$ with $A(z) < A(x)$), then connect the \emph{right} endpoints of the horizontal segments corresponding to $x$ and $y$ by an arc.

\item There is now a unique horizontal segment with an unattached left endpoint, and a unique horizontal segment with an unattached right endpoint; connect these unattached endpoints to $(-1/2,0)$ and $(1/2,0)$, respectively.
\end{enumerate}
\end{proposition}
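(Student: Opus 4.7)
The plan is to derive the construction as the composition of two known recipes: first, the Lipshitz--Ozsv\'ath--Thurston algorithm \cite[Chapter 11]{LOT} which produces $\CFD(M_K,\mu,\lambda)$ from $\CFKminus(K)$ together with a horizontally and vertically simplified basis; and second, the loop-type recipe of Section~\ref{sub:loop-type} (equivalently, the cyclic word construction $\cu(\lp)$) which converts a valence-two decorated graph into an immersed multicurve in $\partial M_K\setminus z$. The previous proposition has already established that the output of the LOT algorithm is valence two in the presence of such a basis, so the loop-type recipe applies directly, and the only work is to match the geometric data of the resulting curves with the prescription in the statement.

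First I would recall the structure of the type~D module produced by the LOT algorithm: each basis vector $x$ of $\CFKminus(K)$ contributes an $\iota_0$ generator; each vertical arrow $\partial x = y+Uz$ contributes a \emph{vertical chain} of $\iota_1$ generators connecting $x$ to $y$ whose length is proportional to $A(x)-A(y)$; each horizontal arrow contributes a \emph{horizontal chain} of analogous length; and a unique pair of basis vectors (the distinguished generators on which the vertically and horizontally simplified bases disagree most strongly, corresponding to the nontrivial homology class of $\widehat{HF}$) is joined by an \emph{unstable chain} encoding the framing $\lambda-\mu$. Feeding this directed graph into the train-track/cyclic-word translation of Section~\ref{sub:loop-type} yields an oriented immersed curve, and the key observation is that the vertical chain beginning at $x$ emerges from $x$'s segment into the half of the parametrized square lying on the $\mu$-negative side, while the horizontal chain emerges on the $\mu$-positive side. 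This is exactly the statement that vertical arrows attach on the left and horizontal arrows on the right.

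Next I would lift to the covering space $\barT_M\cong[-\tfrac12,\tfrac12]\times\R/\!\sim$ and check the height assignments. The spin$^c$ grading data (Section~\ref{sec:grad}) says that the relative position of an $\iota_0$ generator in the cover, measured in the $\mu$-direction, is precisely the Alexander grading of the corresponding basis element, so each horizontal segment lies at vertical coordinate $A(x)$ as asserted. The vertical and horizontal chains, consisting of $|A(x)-A(y)|$ successive $\iota_1$ vertices joined by alternating algebra elements of appropriate weight, produce arcs that rise (or descend) by exactly $A(x)-A(y)$ units while remaining confined to the left or right strip; this matches the ``arcs between left (resp.\ right) endpoints'' in steps (2) and (3). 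Finally, the unstable chain encodes the slope of the Seifert longitude relative to the meridian parametrization, which, after projecting to $\barT_M$, produces a single strand crossing between the two sides of the cylinder at height $0$ and passing through the basepoint column; this is step (4).

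The main obstacle, and the place where I would be most careful, is the last point: correctly identifying which endpoint is the ``left-unattached'' and which is the ``right-unattached'' one, and verifying that the unstable chain indeed joins them through $(\pm\tfrac12,0)$ rather than through some other lift of $z$. This boils down to tracking the $\spinc$-grading conventions and the orientation of $\mu$ inherited from the LOT parametrization; the Alexander filtration was normalized so that the unstable chain sits at height $0$, consistent with the convention that $\tau(K)$ is encoded by a single distinguished arc passing between two adjacent lifts of $z$. Once this normalization is pinned down, the result follows, and admissibility of the loop-type construction guarantees that the resulting curve is reduced, so no further simplification is required.
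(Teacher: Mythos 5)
Your proof plan follows the same route as the paper's argument: both pass through the Lipshitz--Ozsv\'ath--Thurston algorithm to get $\CFD(M_K,\mu,\lambda)$, observe that vertical/horizontal chains translate into arcs on the left/right of the cylinder via the loop-type recipe, and treat the unstable chain as the wraparound strand. The paper is somewhat more explicit about the particular $\rho$-labels ($\rho_1\rho_{23}\cdots\rho_{23}\rho_{123}$ for vertical chains, $\rho_3\rho_{23}\cdots\rho_{23}\rho_2$ for horizontal) which pin down the left/right dichotomy, whereas you appeal to the $\mu$-positive/negative heuristic and the spin$^c$-grading argument for the heights; these are equivalent formulations of the same check.
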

\begin{proof}
By the algorithm in \cite{LOT}, the generators of $\CFKminus$ are in one-to-one correspondence with $\iota_0$ generators of $\CFD(M, \mu, \lambda)$, which correspond to horizontal segments in the construction of $\curves{M_K, \spin}$. A vertical arrow between $x$ and $y$ of length $\ell = A(x) - A(y)$ corresponds to a chain
\[\begin{tikzpicture}[thick, >=stealth',shorten <=0.1cm,shorten >=0.1cm] 
\draw[->] (0,0) -- (1,0);\node [above] at (0.45,0) {$\scriptstyle{1}$};
\draw[<-] (1,0) -- (1.8,0);\node [above] at (1.45,0) {$\scriptstyle{23}$};
\draw[<-] (2.2,0) -- (3,0);\node [above] at (2.65,0) {$\scriptstyle{23}$};
\draw[<-] (3,0) -- (4,0);\node [above] at (3.55,0) {$\scriptstyle{123}$};
\node at (0,0) {$\bu$};\node at (1,0) {$\circ$};\node at (3,0) {$\circ$};\node at (4,0) {$\bu$};\node at (2,0) {$\cdots$};\end{tikzpicture}\]
in $\CFD(M,\mu,\lambda)$ with $\ell$ generators of idempotent $\iota_1$. In $\curves{M_K,\spin}$ this corresponds to a (downward moving) vertical segment of length $\ell$ connecting the left edge of the segment corresponding to $x$ to the left edge of the segment corresponding to $y$. Similarly, a horizontal arrow between $x$ and $y$ of length $\ell = A(y) - A(x)$ corresponds to a chain
\[\begin{tikzpicture}[thick, >=stealth',shorten <=0.1cm,shorten >=0.1cm] 
\draw[->] (0,0) -- (1,0);\node [above] at (0.45,0) {$\scriptstyle{3}$};
\draw[->] (1,0) -- (1.8,0);\node [above] at (1.35,0) {$\scriptstyle{23}$};
\draw[->] (2.2,0) -- (3,0);\node [above] at (2.55,0) {$\scriptstyle{23}$};
\draw[->] (3,0) -- (4,0);\node [above] at (3.45,0) {$\scriptstyle{2}$};
\node at (0,0) {$\bu$};\node at (1,0) {$\circ$};\node at (3,0) {$\circ$};\node at (4,0) {$\bu$};\node at (2,0) {$\cdots$};\end{tikzpicture}\]
in $\CFD(M,\mu,\lambda)$ with $\ell$ generators of idempotent $\iota_1$. In $\curves{M_K,\spin}$ this corresponds to an upward moving vertical segment of length $\ell$ connecting the right edges of the segments corresponding to $x$ and $y$. Finally, the unstable chain in the algorithm in \cite{LOT} corresponds to a path from the unmatched right edge to the unmatched left edge (moving to the right and wrapping around the cylinder $H_1(\partial M,\R)/\langle \lambda \rangle$).
\end{proof}

\begin{example}
\label{Ex:tref-cable}
The knot Floer homology of the   $(2,-1)$-cable of the left-hand trefoil knot  is shown on the right-hand side of Figure \ref{fig:knot-floer-cable-example}. Ignoring the diagonal arrows, which have both \(n_z>0\) and \(n_w>0\), we see that the complex is both horizontally and vertically simplified.  The corresponding curve is shown on the left. Compare with \cite[Figure 11.5]{LOT}, which shows the corresponding (loop-type) \(\CFD\). 
\end{example}

\begin{figure}[ht]
\labellist
\scriptsize
\pinlabel {$x_{\text{-}2}$} at 142 30
\pinlabel {$y_{\text{-}1}$} at 142 83
\pinlabel {$x_{\text{-}1}$} at 142 119
\pinlabel {$x_0$} at 142 185
\pinlabel {$x_1$} at 142 259
\pinlabel {$y_1$} at 142 287
\pinlabel {$x_2$} at 142 353
\endlabellist
\includegraphics[scale=0.4]{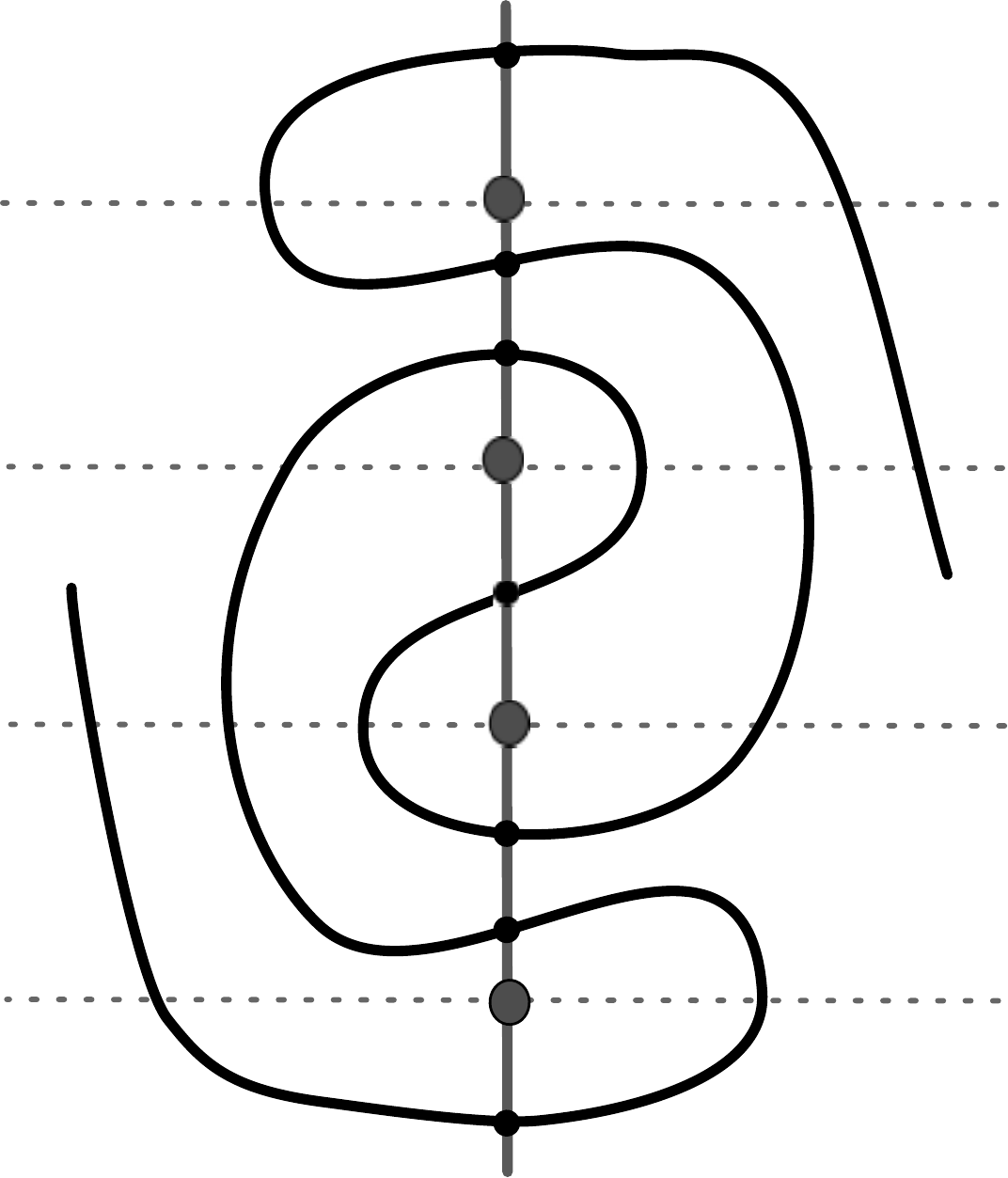}
\qquad\qquad\quad
\begin{tikzpicture}[scale=1,>=stealth', thick] 
     \node at (0,0) {$x_0$};
     \node at (-1.5,0) {${Ux_1}$}; \node at (-3,-1.5) {${U^2y_1}$};
     \node at (-3,0) {${U^2x_2}$};
      \node at (0,-1.5) {${x_{\text{-}{1}}}$}; \node at (-1.5,-3) {${Uy_{\text{-}{1}}}$};
     \node at (0,-3) {${x_{\text{-}{2}}}$};
 \draw[->,shorten <= 0.25cm, shorten >= 0.35cm] (0,0)--(-1.5,0);
  \draw[->,shorten <= 0.25cm, shorten >= 0.35cm] (0,-1.5)--(-3,-1.5) ;
    \draw[->,shorten <= 0.35cm, shorten >= 0.35cm] (0,-3)--(-1.5,-3) ;
  \draw[->,shorten <= 0.25cm, shorten >= 0.25cm] (0,0)--(0,-1.5);
   \draw[->,shorten <= 0.25cm, shorten >= 0.25cm] (-1.5,0)--(-1.5,-3);
      \draw[->,shorten <= 0.25cm, shorten >= 0.20cm] (-3,0)--(-3,-1.5);
       \draw[->,shorten <= 0.25cm, shorten >= 0.25cm] (-1.5,0)--(-3,-1.5);
              \draw[->,shorten <= 0.25cm, shorten >= 0.25cm] (0,-1.5)--(-1.5,-3) ;
\end{tikzpicture}
\caption{The $(2,-1)$-cable of the left-hand trefoil $K$. The right-hand figure shows part of \(CFK^\infty(K)\), drawn with the usual convention that a differential with \(n_z = a\) and \(n_w=b\) is represented by an arrow shifting \(a\) units down and \(b\) units to the left.}\label{fig:knot-floer-cable-example} 
\end{figure}

We remark that several common numerical invariants of a knot $K$ can be easily read off from the curve invariant $\curves{M_K,\spin}$ when it is pulled tight in a peg-board diagram. For example, the genus $g(K)$ is determined by the maximum height of the curve (here we mean a discrete height, rounded to the nearest integer). Equivalently, the genus is half of the number pegs between the minimum height and the maximum height attained by $\curves{M_K,\spin}$. Note that when pulled tight, $\curves{M_K,\spin}$ is supported in a neighborhood of a meridian passing through the peg except for one segment, which wraps around the cylinder once. This non-vertical segment encodes two important concordance invariants extracted from knot Floer homology, the Ozsv{\'a}th-Szab{\'o} invariant $\tau$ and the $\epsilon$ invariant defined by Hom \cite{Hom2014}. Starting somewhere on the non-vertical segment and following the curve rightward, the height at which the curve first hits the vertical line through the pegs (rounded to the nearest integer) is $\tau(K)$; indeed, in the construction of $\curves{M_K,\spin}$ from $\CFKminus(K)$, the horizontal segment attached to the right end of the non-vertical segment corresponds to the distinguished generator of vertical homology, and $\tau(K)$ measures the Alexander grading of this generator. The invariant $\epsilon(K)$ is determined by what $\curves{M_K,\spin}$ does after it first intersects the vertical line; the curve can turn upwards, turn downwards, or continue straight (the third option only being possible if $\tau(K) = 0$), and these correspond to $\epsilon(K)$ being $1, -1$, and $0$, respectively. It follows that the unique non-vertical segment of $\curves{M_K,\spin}$ has slope $2\tau(K) - \epsilon(K)$.

\parpic[r]{
 \begin{minipage}{45mm}
 \centering
 \includegraphics[scale=.45]{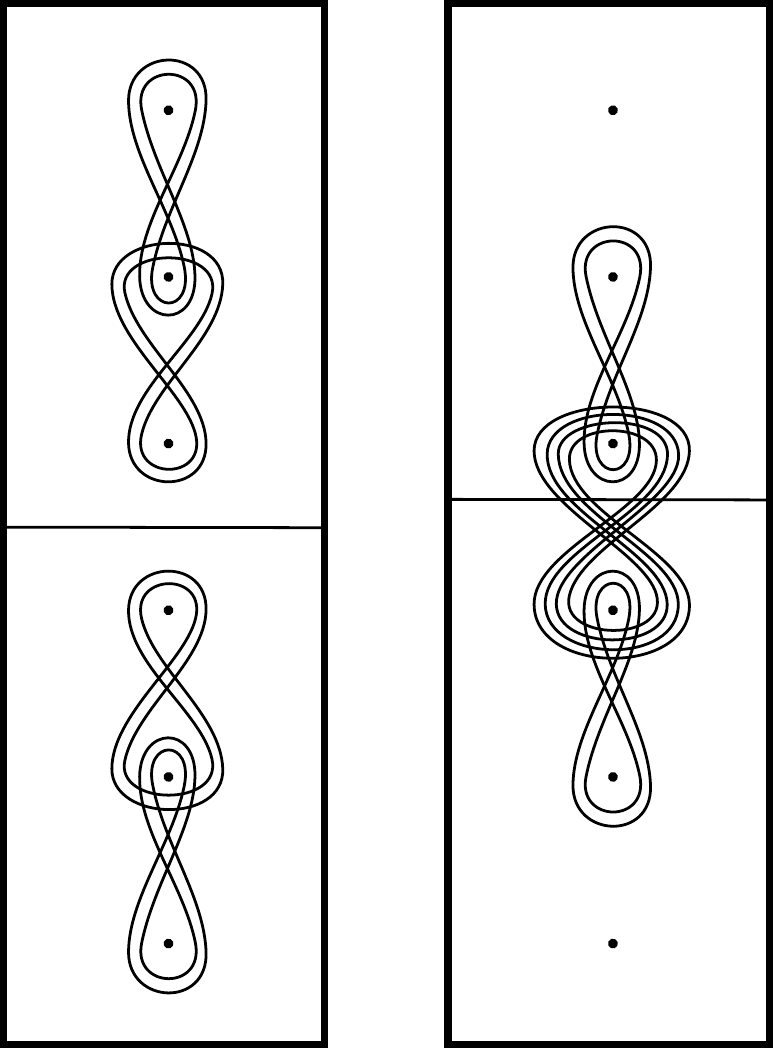}
 \captionof{figure}{}
 \label{fig:ConwayKT}
  \end{minipage}%
  }
\begin{example}
\label{Ex:conway}
The curve sets $\curves{M_K,\spin}$ for the complement of the Conway knot (left) and Kinoshita-Terasaka knot (right) are pictured in Figure \ref{fig:ConwayKT}. (The figure-eights in each diagram come in pairs, with the two curves in a pair differing in Maslov index by \(1\)). This pair of mutant knots are distinguished by their genus, and thus also by knot Floer homology. This is clearly reflected in the curves, as the maximum heights of the curve sets associated with these knots are 3 and 2, respectively. On the other hand, it is striking that the curves are otherwise similar; they differ only by a vertical shift of some of the components. In particular, the projections of these curves to the torus are identical. It is an interesting question whether this is always the case for mutant pairs; that is, does the curve set $\curves{M_K}$ in the torus, absent additional grading information, detect mutation? This is related to the conjecture that knot Floer homology ceases to detect mutation after its bigrading is collapsed to a single delta-grading \cite{BaldwinLevine}. While we proved in Corollary \ref{crl:mutation} that $\HFhat(Y)$ does not detect genus one mutation of closed 3-manifolds, this example demonstrates that mutation of knots is more subtle.

\end{example}

We now turn to the case of an arbitrary basis for $\CFKminus(K)$. The algorithm in \cite{LOT} produces a representative of $\CFD(M_K,\mu,\lambda)$ which in general has vertices of valence greater than two. This corresponds to an immersed train track, which must be reduced to immersed curves by removing crossover arrows as in \cite{HRW}.  By \cite[Proposition 2.5]{HL2014}, there exists a vertically simplified basis $\{\xi_0, \ldots, \xi_{2n}\}$ and a horizontally simplified basis $\{\eta_0, \ldots, \eta_{2n}\}$ with filtered changes of basis
$$ \xi_i = \sum_{j=0}^{2n} a_{i,j} \eta_j \qquad \text{and} \qquad \eta_i = \sum_{j=0}^{2n} b_{i,j} \xi_j,$$
where $a_{i,j}, b_{i,j} \in \F[U]$ such that $a_{i,j} = 0$ if $A(a_{i,j} \eta_j) \neq A(\xi_i)$ and $b_{i,j} = 0$ if $A(b_{i,j} \xi_j) \neq \eta_i$. Following the construction in \cite{LOT}, the type D structure $\CFD(M,\mu,\lambda)$ admits corresponding bases $\{\hat\xi_0, \ldots, \hat\xi_{2n}\}$ and $\{\hat\eta_0, \ldots, \hat\eta_{2n}\}$ related by the changes of basis 
$$ \hat\xi_i = \sum_{j=0}^{2n} \hat a_{i,j} \hat\eta_j \qquad \text{and} \qquad \hat\eta_i = \sum_{j=0}^{2n} \hat b_{i,j} \hat\xi_j,$$
where $\hat a_{i,j} = a_{i,j}|_{U=0}$ and $\hat b_{i,j} = b_{i,j}|_{U=0}$. $\CFD(M, \mu, \lambda)$ has a vertical chain from $\hat\xi_{2i-1}$ to $\hat\xi_{2i}$  and a horizontal chain from $\hat\eta_{2i-1}$ to $\hat\eta_{2i}$ for $1\le i\le n$, as well as an unstable chain connecting $\hat\xi_0$ and $\hat\eta_0$.

For a given basis, the train track $\tracks$ in the cylinder representing $\CFD(M,\mu,\lambda)$ has a horizontal segment for each generator, with height determined by the filtration level of that generator. The left endpoints of these segments are connected by some train track $\tracks_{L}$, while the right endpoints are connected by some train track $\tracks_{R}$. Roughly speaking, vertical chains in $\CFD$ correspond to arcs in $\tracks_{L}$ connecting the left endpoints of the corresponding generators, and horizontal chains correspond to arcs in $\tracks_{R}$ connecting the right endpoints of the corresponding generators. If we use the basis $\{\hat\xi_i\}_{i=0}^{2n}$, then $\tracks_{L}$ is particularly simple; it is just a collection of arcs connecting the segment representing $\hat\xi_{2i-1}$ to the segment representing $\hat\xi_{2i}$ for $1\le i\le n$, and an arc from the segment representing $\hat\xi_0$ to the point $(-\frac{1}{2}, 0)$ (see, for example, Figure \ref{fig:connect-sum}(a)).  The basis $\{\hat\eta_i\}_{i=0}^{2n}$ can be obtained from $\{\hat\xi_i\}_{i=0}^{2n}$ by a sequence of elementary basis changes which replace $\hat\xi_i$ with $\hat\xi_i + \hat\xi_j$ for some $i$ and $j$ with $A(\xi_i) = A(\xi_j)$. The effect of an elementary basis change on the train track can be realized by inserting two crossover arrows from the segment corresponding to \(\hat \xi_i\) to the segment corresponding to \(\hat \xi_j\), one at each end of the segment (see Figure \ref{fig:connect-sum}(b)). The right half of the train track now consists of $\tracks_{R}$ along with some crossover arrows (one from each pair) coming from the basis change between \(\{\hat\xi_i\}\) and \(\{\hat\eta_j\}\). Since $\{\hat\eta\}_{i=0}^{2n}$ is horizontally simplified this can be replaced (up to equivalence of train tracks) by an immersed collection of arcs connecting $\hat\eta_{2i-1}$ to $\hat\eta_{2n}$ and $\hat\eta_0$ to the point $(\frac{1}{2}, 0)$ (Figure \ref{fig:connect-sum}(c)). Since the train track now has the form of immersed curves with crossover arrows, we can remove the arrows as usual to obtain a collection of immersed curves, possibly with local systems.

\begin{remark}
There is a distinguished component of the curve set: the curve which passes through the point $(\pm \frac{1}{2}, 0)$ and wraps around the cylinder. Note that this curve will never carry a nontrivial local system, since only one segment wraps around the cylinder.
\end{remark}

To illustrate this procedure, consider the knot $K = T(2,3)\#T(2,3)$. The complex $\CFKminus(K) \cong \CFKminus(T(2,3)) \otimes \CFKminus(T(2,3))$ is shown in Figure \ref{fig:connect-sum-complex}. Note that, while this complex does admit a horizontally and vertically simplified basis, the basis that arises naturally from the tensor product is neither horizontally nor vertically simplified. It is straightforward to check that 
$$\{\hat\xi_i\} = \{ax, bx, cx, ay, az, by, cy+bz, bz, cz\}$$
is a vertically simplified basis, and that 
$$\{\hat\eta_i\} = \{cz, bz, az, cy, cx, by, ay+bx, bx, ax\}$$
is a horizontally simplified basis. The corresponding train track is constructed in Figure \ref{fig:connect-sum}. We can think of this train track in three thirds, where the left third is a collection of arcs determined by $\{\xi_i\}$, the right third is a collection of arcs determined by $\{\eta_i\}$, and the middle third contains a sequence of arrows determined by the change of basis between $\{\xi_i\}$ and $\{\eta_i\}$.

The method described above requires computing a horizontally and a vertically simplified basis for $\CFKminus(K)$. In fact, in practice it is possible to construct a train track corresponding to $\CFKminus(K)$ in terms of any given basis, without finding either $\{\xi_i\}$ or $\{\eta_i\}$. The steps are as follows:
\begin{itemize}
\item[(1)] For each generator of $\CFKminus(K)$ there is a horizontal segments whose height is given by the filtration level;
\item[(2)] For each vertical arrow in $\CFKminus(K)$ connecting generators $x$ and $y$, we add a downward oriented arc connecting the left ends of the segments corresponding to $x$ and $y$;
\item[(3)] For each horizontal arrow in $\CFKminus(K)$, we add an upward oriented arc connecting the right ends of the corresponding segments; 
\item[(4)] In the resulting train track, we slide segments on each side (taking care to preserve the equivalence class of train track) until the train track has the form of arcs plus crossover arrows---we can now forget the orientation on the arcs; 
\item[(5)] There will be one free left endpoint of a horizontal segment and one free right endpoint---connect these to each other by a path wrapping around the cylinder;
\item[(6)] This train track can be reduced to immersed curves with local systems in the usual way.
\end{itemize}

We will not prove that in step (4) all extra edges can be paired off to form crossover arrows; the fact that this is possible essentially follows from the construction using horizontally and simplified bases above. In small examples, it is easy to do this step geometrically. Returning to the example of $K = T(2,3)\#T(2,3)$, the train track resulting from steps (1)-(3) appears in Figure \ref{fig:connect-sum2}(a), and an isotopy of the train track gives rise to the immersed curve with crossover arrows in Figure \ref{fig:connect-sum2}(b). Connecting the two loose ends by an arc wrapping around the cylinder produces the same train track as the previous method (Figure \ref{fig:connect-sum2}(c)).

Finally, we observe that a further shortcut is possible for a connected sum $K = K_1 \# K_2$, if for $i=1,2$ we have immersed curve sets $\Gamma_i$ corresponding to $\CFKminus(K_i)$. We start by drawing one copy of $\Gamma_1$ for each generator of $\CFKminus(K_2)$, shifted vertically according to the Alexander grading of the generator of $\CFKminus(K_2)$. This accounts for all horizontal and vertical arrows in $\CFKminus(K)$ of the form $a\otimes x \to b\otimes x$. We next add arcs from $a\otimes x$ to $a\otimes y$ for any arc from $x$ to $y$ in $\Gamma_2$ and for any generator $a$ of $\CFKminus(K_1)$. Note that whenever two generators $a$ and $b$ of $\CFKminus(K_1)$ are connected by an arc in $\Gamma_1$ on the same side as the arc from $x$ to $y$ in $\Gamma_2$, the two arcs from $a\otimes x$ to $a\otimes y$ and from $b\otimes x$ to $b\otimes y$ form a crossover arrow from the arc from $a$ to $b$ in the $x$ copy of $\Gamma_1$ to the arc from $a$ to $b$ in the $y$ copy of $\Gamma_1$. In the example above, this brings us straight to the train track in Figure \ref{fig:connect-sum2}(b). Finally, we connect the two remaining loose ends as before. To summarize, we have the following procedure:

\begin{itemize}
\item[(1)] Draw a copy of $\Gamma_1$ for each horizontal segment in $\Gamma_2$, with the appropriate vertical shift;
\item[(2)] For each arc on the right (resp. left) side of $\Gamma_2$ from $x$ to $y$, we connect the loose right (resp. left) ends of the $x$ and $y$ copies of $\Gamma_1$ and for each arc on the right (resp. left) side of $\Gamma_1$ we add a crossover arrow from the $x$ copy of that arc to the $y$ copy of that arc;
\item[(3)] We connect the remaining loose left end to the remaining loose right end by an arc wrapping around the cylinder.
\end{itemize}

\bigbreak
\begin{figure}[ht]
 \begin{tikzpicture}[scale=0.75,>=stealth', thick] 
 
 \node at (-4.4,2) {$\otimes$};
 \node at (-1,2) {$=$};
 \tiny
 \node (a) at (-7,3) {$a$};
 \node (b) at (-5,3) {$b$};
 \node (c) at (-5,1) {$c$};
 \draw[->] (b) to (a);
 \draw[->] (b) to (c);
 
 \node (x) at (-4,3) {$x$};
 \node (y) at (-2,3) {$y$};
 \node (z) at (-2,1) {$z$};
 \draw[->] (y) to (x);
 \draw[->] (y) to (z);
 
 \tiny
 \node(ax) at (.3,3.7) {$ax$};
 \node(ay) at (1.7,3.7) {$ay$};
 \node(az) at (1.7,2.3) {$az$};
 \node(bx) at (2.3,3.7) {$bx$};
 \node(by) at (3.7,3.7) {$by$};
 \node(bz) at (3.7,2.3) {$bz$};
 \node(cx) at (2.3,1.7) {$cx$};
 \node(cy) at (3.7,1.7) {$cy$};
 \node(cz) at (3.7,.3) {$cz$};
 
 \draw[->] (ay) to (ax);
 \draw[->] (ay) to (az);
 \draw[->] (by) to (bx);
 \draw[->] (by) to (bz);
 \draw[->] (cy) to (cx);
 \draw[->] (cy) to (cz);
 
 \draw[->, bend left = 20] (bx) to (ax);
 \draw[->, bend left = 20] (bx) to (cx);
 \draw[->, bend right = 20] (by) to (ay);
 \draw[->, bend left = 20] (by) to (cy);
 \draw[->, bend right = 20] (bz) to (az);
 \draw[->, bend right = 20] (bz) to (cz);

\end{tikzpicture}
\caption{The complex $\CFKminus$ of the connected sum of two trefoils.}\label{fig:connect-sum-complex}
\end{figure}
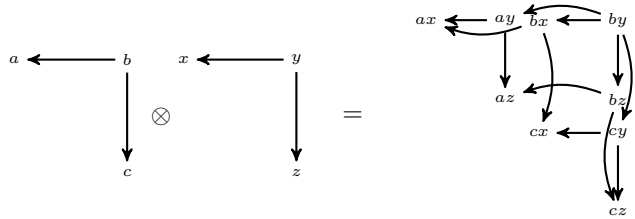

\begin{figure}[ht]
\labellist

\pinlabel {$\tracks_{R}$} at 72 102 
\pinlabel {$\tracks_{R}$} at 216 102 

\tiny
\pinlabel {$\hat\xi_0$} at 47 196 
\pinlabel {$\hat\xi_1$} at 47 164 
\pinlabel {$\hat\xi_3$} at 47 148 
\pinlabel {$\hat\xi_2$} at 47 121 
\pinlabel {$\hat\xi_5$} at 47 108
\pinlabel {$\hat\xi_4$} at 47 95 
\pinlabel {$\hat\xi_6$} at 47 68 
\pinlabel {$\hat\xi_7$} at 47 53 
\pinlabel {$\hat\xi_8$} at 47 20 

\pinlabel {$\hat\eta_8$} at 175 196 
\pinlabel {$\hat\eta_7$} at 175 164 
\pinlabel {$\hat\eta_6$} at 175 148 
\pinlabel {$\hat\eta_4$} at 175 121 
\pinlabel {$\hat\eta_5$} at 175 108 
\pinlabel {$\hat\eta_2$} at 175 95
\pinlabel {$\hat\eta_3$} at 175 68 
\pinlabel {$\hat\eta_1$} at 175 53 
\pinlabel {$\hat\eta_0$} at 175 20 

\pinlabel {$\hat\eta_8$} at 315 196 
\pinlabel {$\hat\eta_7$} at 315 164 
\pinlabel {$\hat\eta_6$} at 315 148 
\pinlabel {$\hat\eta_4$} at 315 121 
\pinlabel {$\hat\eta_5$} at 315 108 
\pinlabel {$\hat\eta_2$} at 315 95
\pinlabel {$\hat\eta_3$} at 315 68 
\pinlabel {$\hat\eta_1$} at 315 53 
\pinlabel {$\hat\eta_0$} at 315 20 

\small
\pinlabel {$(a)$} at 47 -10
\pinlabel {$(b)$} at 175 -10
\pinlabel {$(c)$} at 315 -10
\pinlabel {$(d)$} at 430 -10
\endlabellist
\includegraphics[scale=0.85]{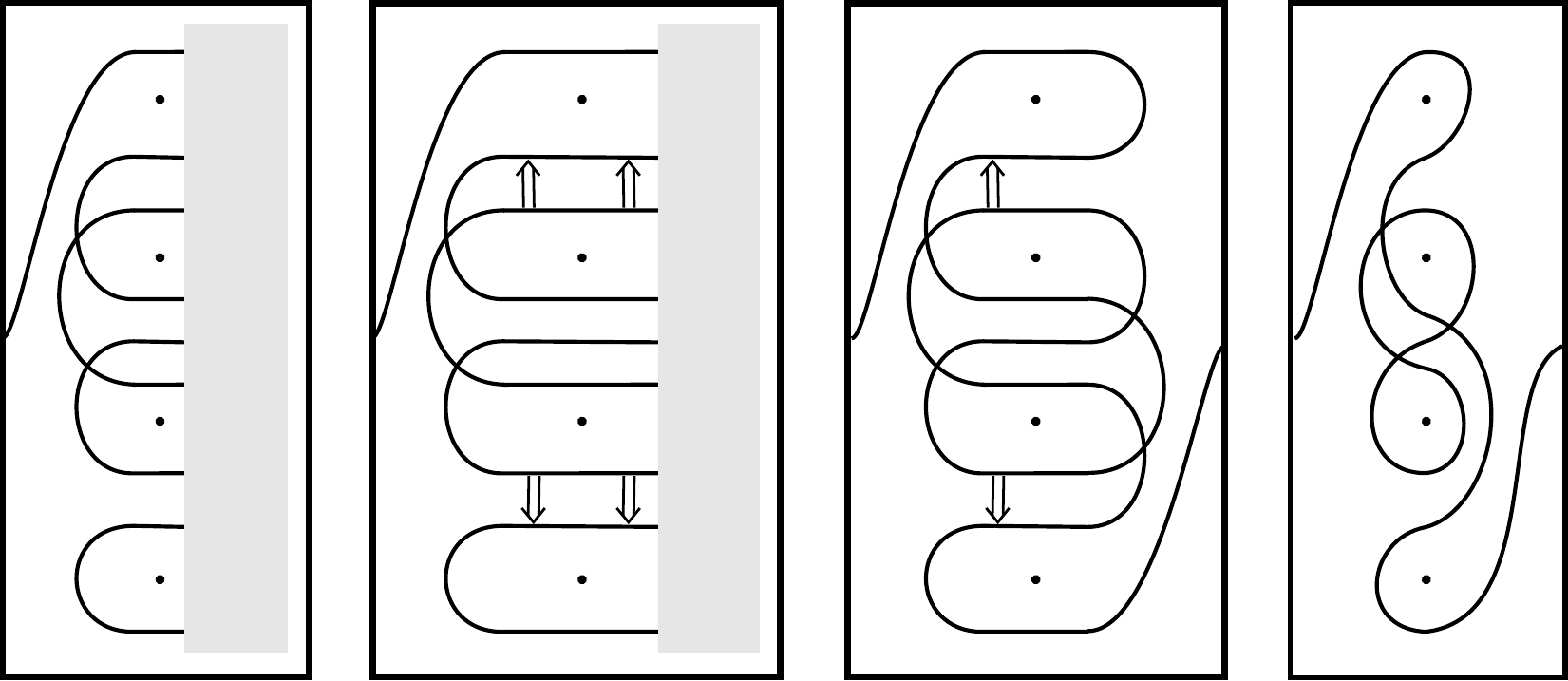}
\vspace{5 mm}
\caption{The train track constructed from $\CFKminus$ for the connected sum of two trefoils, using simplified bases; (a) if we use the vertically simplified basis, the left half of the train track is an immersed collection of arcs, while the right half is some potentially more complicated train track $\tracks_R$; (b) changing to the horizontally simplified basis amounts to inserting crossover arrows between horizontal strands at the same filtration level; (c) the right half can now be replaces with an immersed collection of arcs; (d) the arrows can be removed, resolving one crossing in the process, to obtain a set of immersed curves.}\label{fig:connect-sum}
\end{figure}

\begin{figure}[ht]
\labellist

\tiny
\pinlabel {$ax$} at 47 196 
\pinlabel {$bx$} at 47 164 
\pinlabel {$ay$} at 47 148 
\pinlabel {$cx$} at 47 121 
\pinlabel {$by$} at 47 108
\pinlabel {$az$} at 47 95 
\pinlabel {$cy$} at 47 68 
\pinlabel {$bz$} at 47 53 
\pinlabel {$cz$} at 47 20 

\pinlabel {$ax$} at 160 196 
\pinlabel {$bx$} at 160 164 
\pinlabel {$ay$} at 160 148 
\pinlabel {$cx$} at 160 121 
\pinlabel {$by$} at 160 108
\pinlabel {$az$} at 160 95 
\pinlabel {$cy$} at 160 68 
\pinlabel {$bz$} at 160 53 
\pinlabel {$cz$} at 160 20 

%

\small
\pinlabel {$(a)$} at 47 -10
\pinlabel {$(b)$} at 160 -10
\pinlabel {$(c)$} at 273 -10
\endlabellist
\includegraphics[scale=0.85]{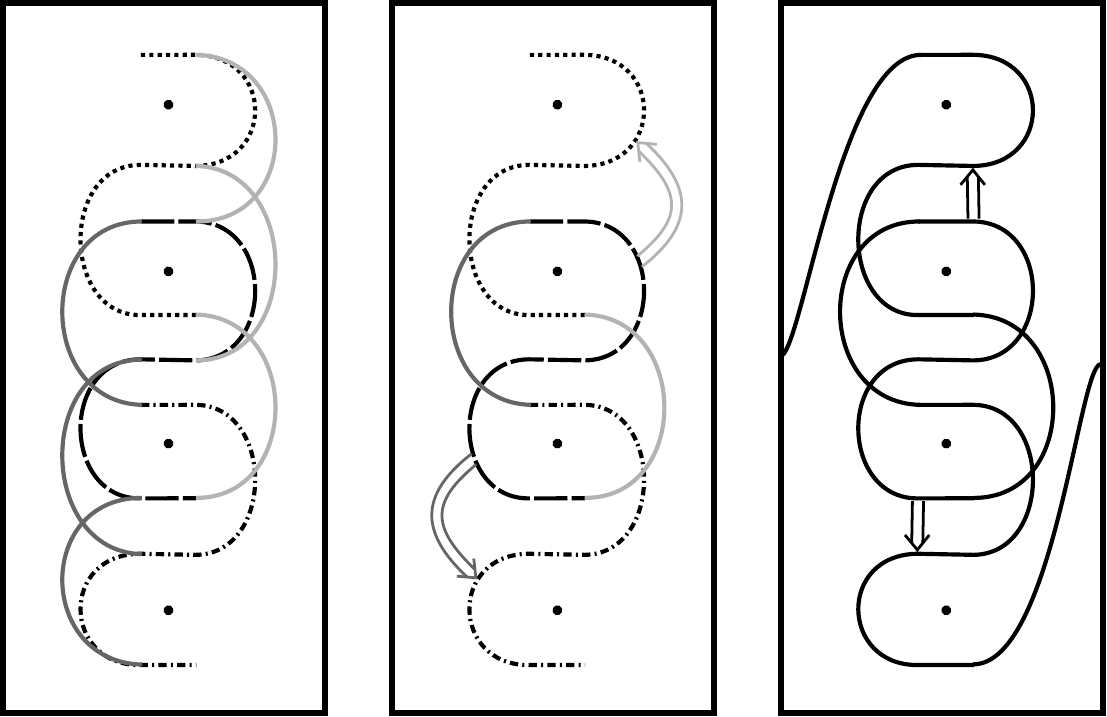}
\vspace{5 mm}
\caption{Simplified construction of the train track for a connected sum $K_1 \# K_2$ from the curves $\Gamma_1$ and $\Gamma_2$ associated with $K_1$ and $K_2$; (a) there is a copy of $\Gamma_1$ for each horizontal segment of $\Gamma_2$ (indicated by different styles of dashed line). For each arc on the right/left of $\Gamma_2$ we add segments connecting the corresponding copies of $\Gamma_1$ (indicated by dark/light gray); (b) the arcs can equivalently be viewed as arcs connecting the loose ends of copies of $\Gamma_1$ and crossover arrows connecting arcs in copies of $\Gamma_1$; (c) connecting the remaining loose ends gives a train track associated to $\CFKminus(K)$.}\label{fig:connect-sum2}
\end{figure}

In the other direction, it is also shown in \cite{LOT} that $\HFKminus$ can be recovered from $\CFD$; we can ask if $\HFKminus$ can be recovered easily from $\curves{M_K}$ without passing through $\CFD$.

\subsection{Knot Floer homology from curves}
\label{subsec:CFK-}
Suppose that \(\mu\) is a Dehn filling slope on \(\partial M\)  and \(Y=M(\mu)\). Let \(K = K_\mu \subset M(\mu)=Y\) be the core of the Dehn filling. 
We describe how to  recover the knot Floer homology of $K$ from \(\curves{M}\). Replace the basepoint $z \in T_M$ with a marked disk $D$ (which $\curves M$ also avoids) containing two basepoints $w$ and $z$; by a slight abuse of notation, let \(\mu\) denote a representative curve of the slope $\mu$ which bisects $D$ and separates $w$ and $z$. This setup is illustrated for the right-handed trefoil, in Figure \ref{fig:knot-floer-trefoil}.

\begin{figure}[ht]
\labellist
\pinlabel {$z$} at 42 51 \pinlabel {$w$} at 72 51
\endlabellist
\includegraphics[scale=0.5]{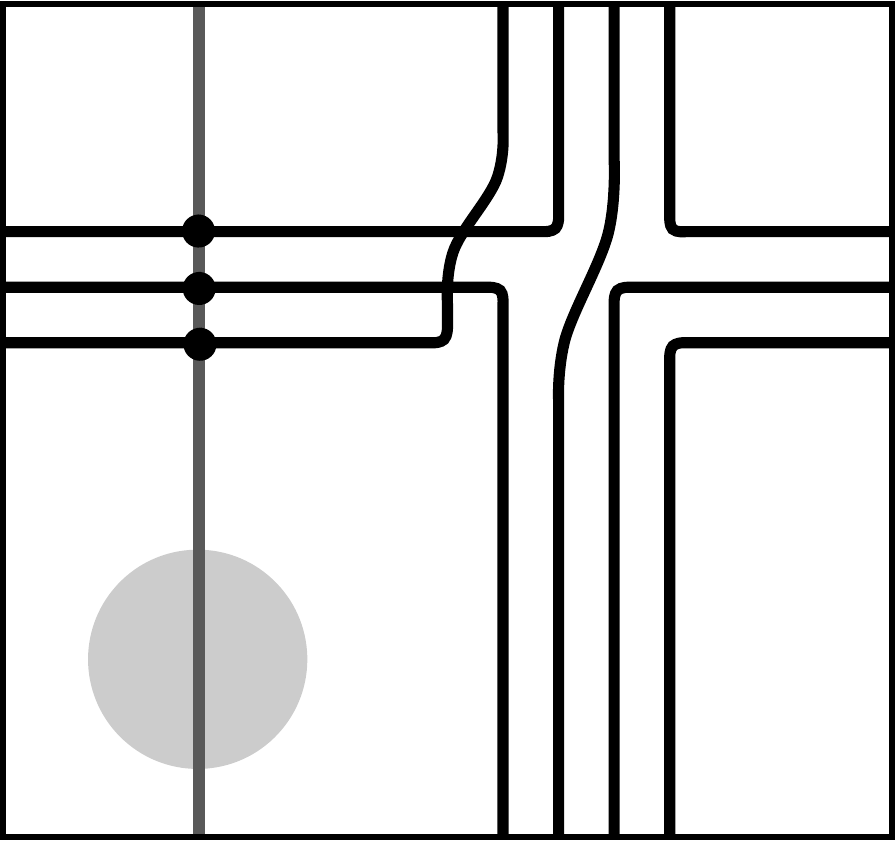}
\caption{The curve associated with the right-hand trefoil, together with the meridian $\mu$ passing through the marked disk and the basepoints $w$ and $z$.}\label{fig:knot-floer-trefoil} 
\end{figure}

Using this data, we can try to define a filtered chain complex $C^-(\curves M,\mu)$, which is a  refinement of the intersection Floer homology $HF(\curves M, \mu)$. $C^-(\curves M,\mu)$ is generated over $\F[U]$ by the intersection points between $\curves M$ and $\mu$. The differential counts immersed bigons, where a bigon covering the basepoint $w$ with multiplicity $i$ contributes with a factor of $U^i$. More precisely, the differential is defined by 
\[d(x)=\sum_{i=0}^\infty\sum_{y} U^i N_i^w(x,y)\cdot y\]
where $N_i^w(x,y)$ is the number of bigons from $x$ to $y$, counted modulo 2, covering the $w$ basepoint $i$ times. 
Given a bigon $B$ connecting $x$ to $y$ (that is $B\in\tilde{\pi}_2(x,y)$, as in \cite[Chapter 11]{LOT}), let $n_w(B)$ and $n_z(B)$ be number of times $B$ covers $w$ and $z$, respectively. Then $z$ induces a filtration $A$ on $C^-(\curves M,\mu)$ where $A(x)-A(y) = n_z(B)-n_w(B)$ and $A(U\cdot x)=A(x)-1$. The associated graded of this object, denoted $gC^-(\curves M,\mu)$, is obtained by disallowing bigons which cross the $z$ basepoint in the differential. There are simpler versions, $\widehat{C}(\curves M,\mu)$ and $g\widehat{C}(\curves M,\mu)$, which are obtained from $C^-(\curves M,\mu)$ and $gC^-(\curves M,\mu)$ by setting $U = 0$. Equivalently, these complexes are generated over $\F$ by the intersection points between $\curves M$ and $\mu$; bigons covering the basepoint $w$ do not appear in the differential.

\noindent{\bf Warning:}  In general, we do not expect that  \(d^2=0\)  on $C^-(\curves M,\mu).$  (For examples and further discussion of this issue, see Section~\ref{sec:minus}.) However, the associated graded versions $gC^-(\curves M,\mu)$ and $g\widehat{C}(\curves M,\mu)$ are well defined and agree with knot Floer homology.

\begin{theorem}\label{thm:knot-floer}
If \(Y\) and \(K\)  are as above, the complex $gC^-(\curves M,\mu)$ (resp. $g\widehat C(\curves M,\mu))$ is filtered chain homotopy equivalent to $\gCFKminus(Y,K)$ (resp. $\gCFKhat(Y,K))$.
\end{theorem}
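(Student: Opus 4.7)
The plan is to reduce the theorem to a basepoint-enhanced version of the pairing theorem (Theorem \ref{thm:pairing}), by building an appropriate doubly pointed Heegaard diagram for $(Y,K)$ and then converting the resulting bordered algebraic pairing into the geometric pairing on immersed curves.

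First, I would construct a 2-pointed closed Heegaard diagram for $(Y,K)$ with a natural bordered structure. Start from a bordered Heegaard diagram $(\Sigma_M,\boldsymbol\alpha_M,\boldsymbol\beta_M,z)$ for the bordered manifold $(M,\alpha,\mu)$, and glue it along $\partial\Sigma_M$ to a genus-one bordered Heegaard diagram for the solid torus $(N,\mu,\lambda_N)$ in which the $\beta$-curve is the meridional compressing disk. The union is a closed Heegaard diagram for $Y=M(\mu)$, and since the core $K$ is carried geometrically by this $\beta$-curve, placing a second basepoint $w$ on the opposite side of the $\beta$-curve from $z$ inside the solid torus region yields a doubly pointed Heegaard diagram representing $(Y,K)$. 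By construction this diagram computes $\gCFKminus(Y,K)$ and $\gCFKhat(Y,K)$, with the $U$-action recorded by $n_w$ and the Alexander filtration by $n_z-n_w$.

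Second, I would invoke the bordered pairing theorem of Lipshitz--Ozsv{\'a}th--Thurston in the 2-pointed setting to get
\[
\gCFKminus(Y,K) \simeq \CFA^{-}(N,\mu,\lambda_N;w,z)\boxtimes\CFD(M,\alpha,\mu),
\]
where the $U$-enhanced type A structure on the solid torus side counts holomorphic disks weighted by $U^{n_w}$ and filtered by $-n_z$. The key geometric observation is that, in the model 2-pointed diagram for $N$, the $\alpha$-arcs together with the boundary parametrization present $\mu$ as a single curve through the marked disk $D$, with $w$ and $z$ on opposite sides; so the extended type A structure for $N$ is essentially the "Lagrangian" $\mu$ as used in the theorem statement. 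Then I would adapt the proof of Theorem \ref{thm:pairing} from \cite{HRW}: the arrow-sliding/simplification algorithm converts the train track for $\CFD(M,\alpha,\mu)$ into the immersed multicurve $\curves M$ without changing the box tensor product up to filtered chain homotopy equivalence, and under this simplification the holomorphic disks in the 2-pointed Heegaard diagram correspond to immersed bigons between $\curves M$ and $\mu$, with multiplicities at $w$ and $z$ preserved.

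The main obstacle is a subtle one and is precisely the reason the theorem is stated only for the associated graded complexes. In the passage from $\CFA\boxtimes\CFD$ on the Heegaard side to bigon counts on the curve side, higher $A_\infty$ operations on $\CFA$ and higher coefficient maps $D_I$ on $\CFD$ can contribute to the $U$-enhanced differential, and the na\"ive bigon count $d$ on $C^-(\curves M,\mu)$ ignores these; this is why we warn that $d^2$ need not vanish on $C^-$. However, on the associated graded level an arrow crossing $z$ cannot algebraically cancel against one crossing $w$, so the spurious $d^2$-contributions lie in strictly lower filtration and disappear; concretely, the $n_z=0$ (respectively the $\{n_z=0,\,U=0\}$) part of the extended pairing is governed entirely by immersed bigons, yielding the filtered chain homotopy equivalences $gC^-(\curves M,\mu)\simeq\gCFKminus(Y,K)$ and $g\widehat C(\curves M,\mu)\simeq\gCFKhat(Y,K)$. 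Routine verification of the filtration shift and $U$-equivariance at the level of generators and bigons, parallel to \cite[Chapter 11]{LOT}, completes the argument.
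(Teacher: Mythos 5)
Your proposal is correct and takes essentially the same approach as the paper: both rest on the Lipshitz--Ozsv\'ath--Thurston identification of $\gCFKminus(Y,K)$ as a box tensor product of a one-generator type~A module for the meridional filling with $\CFD(M,\mu,\ell)$, followed by matching the nonzero box-tensor differentials (weighted by $U^{n_w}$) with immersed bigons between $\curves M$ and $\mu$. The paper streamlines this by citing \cite[Theorem 11.19 and Lemma 11.20]{LOT} directly and working with the curve representative of $\CFD$ from the outset, so no arrow-sliding is needed, but the underlying argument is the one you give.
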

\begin{proof}  
This is a direct consequence of \cite[Theorem 11.19]{LOT}, which expresses \(\HFK(Y,K)\) as a box tensor product $X \boxtimes\CFD(M,\mu,\ell)$, where \(\ell\) is any curve with \(\ell \cdot \mu = 1\). 
By  \cite[Lemma 11.20]{LOT}, the type A module \(X\) has a single generator \(\mathbf{x}_0\) in idempotent \(\iota_0\), and multiplications \(m_{3+i}(\mathbf{x}_0,\rho_3,\rho_{23},\ldots, \rho_{23}, \rho_2) = U^{i+1}\mathbf{x}_0\). 
 On the other hand, intersections between $\mu$ and $\curves M$ correspond precisely to $\iota_0$ generators of $\CFD(M,\mu,\ell)$. Any bigon between intersection points not covering $z$ must cover $w$ with positive multiplicity $i$ and correspond to a sequence
\[\begin{tikzpicture}[thick, >=stealth',shorten <=0.1cm,shorten >=0.1cm] 
\draw[->] (0,0) -- (1,0);\node [above] at (0.45,0) {$\scriptstyle{3}$};
\draw[->] (1,0) -- (1.8,0);\node [above] at (1.35,0) {$\scriptstyle{23}$};
\draw[->] (2.2,0) -- (3,0);\node [above] at (2.55,0) {$\scriptstyle{23}$};
\draw[->] (3,0) -- (4,0);\node [above] at (3.45,0) {$\scriptstyle{2}$};
\node at (0,0) {$\bu$};\node at (1,0) {$\circ$};\node at (3,0) {$\circ$};\node at (4,0) {$\bu$};\node at (2,0) {$\cdots$};\end{tikzpicture}\]
in $\CFD(M,\mu,\ell)$ with $i$ generators of idempotent $\iota_1$. Thus differentials in the box tensor product precisely correspond to such bigons. 
\end{proof}

\begin{remark}
\label{Rem:L_mu}
The groups \(\HFKhat(Y,K)\) are given by \(HF(\HFhat(M),\mu)\) in \(T_M - z - w\). This group can also be computed as \(HF(\HFhat(M), L_\mu)\), where \(L_\mu\) is the {\em noncompact}  Lagrangian in the once punctured torus which has slope \(\mu\) and begins and ends at the puncture. 
\end{remark}

\begin{example}
Let \(M\) be the complement of  the  $(2,-1)$-cable of the left-hand trefoil knot in \(S^3\), as in Example~\ref{Ex:tref-cable}, and let \(\mu\) be the standard meridian. \(\HFhat(M)\)  is illustrated in  in Figure \ref{fig:knot-floer-cable-example}. The reader can easily check that 
\(C^-(\curves M, \mu) \) is isomorphic to \(HFK^-(K)\). In particular,  
\(C^-(\curves M, \mu) \) recovers the ``diagonal'' differentials in \(HFK^-(K)\), even though these differentials played no role in our calculation of \(\HFhat(M)\). Note that we do not expect this agreement of the diagonal arrows to happen in general, but it is common in simple examples.
\end{example}

The reader may object that this example  is  circular, since we constructed $\curves{M}$ using  $HFK^-(K)$. More productively, we can use the knot Floer homology of one Dehn filling to find \(\HFhat(M)\), and then use  \(\HFhat(M)\) to compute the knot Floer homology of the core of a different Dehn filling. 

\begin{example} 
\label{Ex:T(2,5)-1}
Let \(Y= -\Sigma(2,5,11)\) be the manifold obtained by \(-1\) surgery on \(T(2,5)\), and let \(K\) be the core of the surgery. Referring to Figure~\ref{Fig:T(2,5)-1}, we see that 
\( \HFKhat(Y,K) \)  is generated by the intersection points \(x_1\ldots x_9\).  The right-hand side of the figure shows (part of) \(C^-(\HFhat(M),\mu)\), which is easily seen to be a complex in this case. \end{example}

\begin{figure}[ht]
\labellist
\scriptsize
\pinlabel {$x_1$} at 27 222
\pinlabel {$x_2$} at 78 190
\pinlabel {$x_3$} at 87 160
\pinlabel {$x_4$} at 130 155
\pinlabel {$x_5$} at 165 111
\pinlabel {$x_6$} at 225 73
\pinlabel {$x_7$} at 268 68
\pinlabel {$x_8$} at 283 37
\pinlabel {$x_9$} at 340 20

\endlabellist
\includegraphics[scale=0.63]{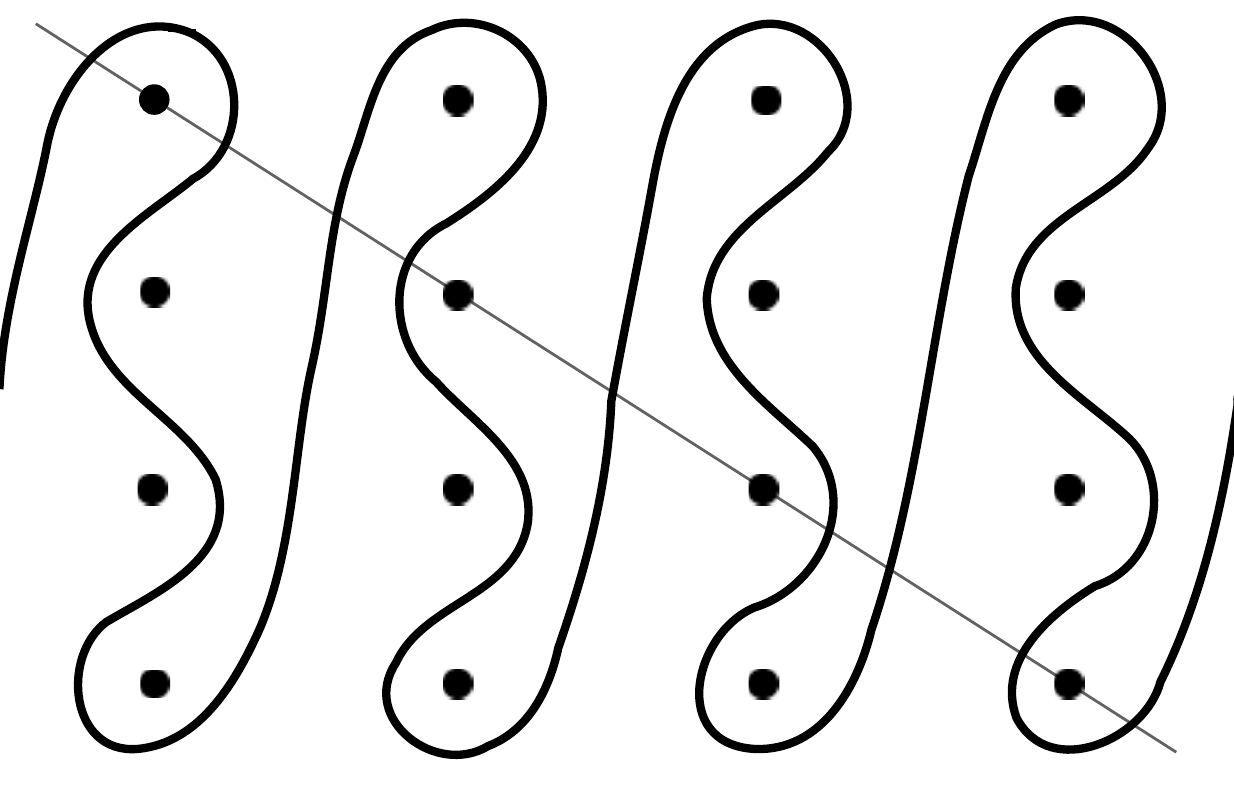}
\qquad\qquad\quad
\begin{tikzpicture}[scale=1,>=stealth', thick] 
\node at (-1.5,0) {$Ux_2$};
\node at (-3,0) {$U^2x_1$};
\node at (-4.5,-3) {$U^3x_3$};
\node at (-3,-1.5) {$U^2x_4$};
\node at (-4.5,-4.5) {$U^3x_5$};
\node at (-1.5,-3) {$Ux_6$};
\node at (-3,-4.5) {$U^2x_7$};
\node at (0,-1.5) {$x_8$};
\node at (0,-3) {$x_9$};
 \draw[->,shorten <= 0.35cm, shorten >= 0.35cm] (-1.5,0)--(-3,0);
 \draw[->,shorten <= 0.35cm, shorten >= 0.20cm] (-1.5,0) to [bend right] (-4.5,-3);
 \draw[->,shorten <= 0.35cm, shorten >= 0.25cm] (-3,-1.5)--(-4.5,-3);
  \draw[->,shorten <= 0.25cm, shorten >= 0.25cm] (-3,-1.5)--(-4.5,-4.5);
   \draw[->,shorten <= 0.35cm, shorten >= 0.35cm] (-1.5,-3)--(-4.5,-4.5);
   \draw[->,shorten <= 0.25cm, shorten >= 0.25cm] (-1.5,-3)--(-3,-4.5);
   \draw[->,shorten <= 0.25cm, shorten >= 0.35cm] (0,-1.5) to [bend left] (-3,-4.5);
    \draw[->,shorten <= 0.25cm, shorten >= 0.25cm] (0,-1.5) to  (0,-3);
 \end{tikzpicture}
\caption{\(-1\) surgery on the complement of \(T(2,5)\). On the left, intersections between between \(\HFhat(M)\) and \(L_\mu\). On the right, \(C^-(\HFhat(M),\mu)\), drawn with the usual convention.}\label{Fig:T(2,5)-1} 
\end{figure}

\subsection{\(\spinc\) structures and the Alexander grading}
The Alexander grading on \(\HFKhat(Y, K)\) is easily computed from 
 \(\HFhat(M,\spin)\). To explain this precisely, we briefly recall some facts about
 spin\(^c\) structures. For more details, we refer the reader to section 6.2 of \cite{HRW}
 and section 3.3 of \cite{RR}. 
 
The set \(\spinc(M)\) is the set of nonvanishing vector fields on \(M\) modulo a certain equivalence relation. It is a torsor over \(H^2(M) \simeq H_1(M, \partial M)\). If \(\mu\) is a simple closed curve on \(\partial M\), we can consider the sutured manifold \((M, \gamma_\mu)\), where the suture \(\gamma_\mu\) consists of two parallel copies of \(\mu\). The set \(\spinc(M, \gamma_\mu)\) consists of nonvanishing vector fields on \(M\) satisfying a certain condition on \(\partial M\), modulo the usual equivalence relation. It is  a torsor over  \(H^2(M,\partial M) \simeq H_1(M)\). There is an obvious restriction map \(\pi:\spinc(M, \gamma_\mu) \to \spinc(M)\). If \(\spin \in \spinc(M)\), we write \(\spinc(M, \gamma_\mu, \spin) := \pi^{-1}(\spin)\). \(\spinc(M, \gamma_\mu, \spin)\) is a \(H_M\)-torsor, where \(H_M\) is the image of \(H_1(\partial M)\) in \(H_1(M)\). 

Recall from  \cite{HRW} that  \(\barT_{M,\spin}\) is the cover of \(T_M\) corresponding to the kernel of the composite homomorphism \(\pi_1(\partial M) \to H_1(\partial M) \to H_1(M)\). The group of deck transformations is \(H_M\). 

Let \(L_\mu\) be noncompact Lagrangian defined in Remark~\ref{Rem:L_mu}.   The set of lifts of \(L_\mu\) to  \(\barT_{M,\spin}\) is an \(H_M\)--torsor. It can be identified with \(\spinc(M, \gamma_\mu)\) as in the proof of Proposition 47 in \cite{HRW}. If 
\(\spin \in \spinc(M)\) and \(\spinbar \in \spinc(M,\partial M, \spin)\), let \(L_{\mu,\spinbar}\) be the corresponding lift of \(L_\mu\). If we parametrize \(\partial M\) by \((\mu, \ell)\) where \(\ell \cdot \mu  = 1\), it follows immediately from the construction of 
\(\HFhat(M,\spin)\) in section 6.2 of \cite{HRW} that 
  \(\HFKhat(Y, K,\spinbar) = HF(\HFhat(M,\spin),L_{\mu,\spinbar}).\)

 To define the {\em Alexander grading} on \(\HFKhat(Y,K)\), we must first fix a class \([\Sigma] \in H_2(M,\partial M)\) such that \(\partial [\Sigma] = \lambda \) (the Seifert longitude).  The fact that \(\partial M\) is a torus implies that \(\spinbar \in \spinc(M,\gamma_\mu)\) has a well-defined first Chern class \(c_1(\spinbar) \in H^2(M, \partial M)\). We define a function
 \(A:\spinc(M, \gamma_\mu) \to \frac 12 \Z\) by \(A(\spinbar) = \frac 12 \langle c_1(\spinbar), [\Sigma] \rangle \).
 If \(x \in \HFKhat(Y,K,\spinbar)\), its Alexander grading is defined to be \(A(x) := A(\spinbar) \). (This definition is most useful when \(b_1(M)=1\), in which case  \([\Sigma]\) is unique up to sign). 
 
 We can view \(\barT_{M,\spin} \) as the quotient of \(H_1(\partial M)\) by \(\langle \lambda  \rangle\). As such, there is a natural height function \(h: \barT_{\mu,\spin} \to \R\) given by \(  h(v) =  v \cdot \lambda\). 
 If \(a \in H_M\), \(c_1(\spinbar+a) = c_1(\spinbar) + 2PD(a)\), so \[A(\spinbar + a) - A(\spinbar) = \langle PD(a), [\Sigma] \rangle = a \cdot [\Sigma]= a \cdot \lambda. \] On the other hand  \(L_{\mu,\spinbar+a}\) is the result of translating \(L_{\mu,\spinbar}\) by \(a\), so the heights of \(L_{\mu,\spinbar + a}\) and \(L_{\mu,\spinbar}\) also differ by \(a \cdot \lambda\). After normalizing the height function on \(\barT_{M,\spin}\) by an overall shift, we see we have proved the following 
\begin{proposition}
\(A(\spinbar) = h(p_{\mu,\spinbar})\), where \(p_{\mu,\spinbar}\) is the midpoint of \(L_{\mu,\spinbar}\).
\end{proposition}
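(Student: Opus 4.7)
The proof reduces to showing that the two functions $A$ and $h \circ p_\mu$ on $\spinc(M,\gamma_\mu,\spin)$ are both $H_M$-equivariant with the same linear shift, since the statement is only asserted up to an overall additive normalization of $h$. Concretely, I will show that for any $a \in H_M$, both $A(\spinbar+a) - A(\spinbar)$ and $h(p_{\mu,\spinbar+a}) - h(p_{\mu,\spinbar})$ equal $a \cdot \lambda$; once established, the function $A - h \circ p_\mu$ is constant on each $H_M$-torsor $\spinc(M,\gamma_\mu,\spin)$, and we absorb that constant into the normalization of $h$.

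First I would handle the $A$ side. For a manifold with torus boundary, the standard identity $c_1(\spinbar + a) = c_1(\spinbar) + 2\,\mathrm{PD}(a)$ gives
\[
A(\spinbar + a) - A(\spinbar) = \tfrac{1}{2}\langle 2\,\mathrm{PD}(a), [\Sigma]\rangle = \langle \mathrm{PD}(a), [\Sigma]\rangle = a \cdot [\Sigma] = a \cdot \lambda,
\]
where the last equality uses $\partial[\Sigma] = \lambda$. For the geometric side, recall that the identification of lifts of $L_\mu$ with $\spinc(M,\gamma_\mu)$ given in \cite[Proposition 47]{HRW} is $H_M$-equivariant: translating the lift by $a \in H_M$ (viewed as a deck transformation of $\barT_{M,\spin}$) corresponds to shifting $\spinbar$ by $a$. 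Since $h$ is defined as the intersection pairing with $\lambda$ and $h$ is linear under translation of $\barT_{M,\spin}$ by elements of $H_1(\partial M)$, we get $h(p_{\mu,\spinbar+a}) - h(p_{\mu,\spinbar}) = a \cdot \lambda$ as well.

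With both shifts matching, the difference $A - h \circ p_\mu$ is a constant $c$ on the $H_M$-torsor $\spinc(M,\gamma_\mu,\spin)$. Redefining $h$ by subtracting $c$ (which corresponds to a global rescaling of the height function, or equivalently choosing a different reference point for $\lambda$ at height $0$) yields the stated equality. The main subtlety is confirming that the identification of lifts of $L_\mu$ with elements of $\spinc(M,\gamma_\mu)$ is genuinely $H_M$-equivariant in the manner asserted; this is precisely the content of the construction in Section 6 of \cite{HRW}, which identifies the deck transformations of $\barT_{M,\spin}$ with the action of $H_M$ on relative $\spinc$ structures, so no new input is required beyond invoking that identification. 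The computation itself is then purely formal.
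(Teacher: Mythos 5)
Your proof is correct and matches the paper's argument essentially verbatim: both sides are shown to shift by $a\cdot\lambda$ under the $H_M$-action (the algebraic side from $c_1(\spinbar+a)=c_1(\spinbar)+2\,\mathrm{PD}(a)$, the geometric side from the equivariance of the identification of lifts of $L_\mu$ with $\spinc(M,\gamma_\mu)$), and the remaining constant is absorbed by normalizing the height function. No discrepancy to report.
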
 
 
 If \(\spin\) is a \(Spin\) structure, the normalized height function can be easily determined from the fact that the conjugation symmetry sends \(A\) to \(-A\). 
 
 \begin{example}
Let \(K \subset S^3\) be  the  $(2,-1)$-cable of the left-hand trefoil knot, as illustrated in  in Figure \ref{fig:knot-floer-cable-example}. The generators in the figure are labeled so that \(A(x_i)=A(y_i)=i\).  
\end{example}

 \begin{example} Let \(M\) be the twisted \(I\)-bundle over the Klein bottle. The homological longitude \(\lambda\) is {\em twice} a primitive curve; choose any \(\mu\) with \( \lambda \cdot \mu= 2\).  The invariant \(\HFhat(M)\) is shown in Figure~\ref{fig:twisted-I-with-NBHD}. \(\spinc(M) = \{ \spin_0, \spin_1\}\); both elements are fixed by the conjugation action. \(\HFKhat(Y,K, \spin_0)\) has two generators, both with \(A\) grading \(0\), while 
 \(\HFKhat(Y,K, \spin_1)\) has one generator with \(A\) grading \(1\) and one with \(A\) grading \(-1\). Notice that the height functions on \(\barT_{M,\spin_0}\) and \(\barT_{M,\spin_1}\) are incompatible: the dots in \(\barT_{M,\spin_0}\) are at odd heights, while the dots in \(\barT_{M,\spin_1}\) are at even heights. There is no reasonable way that we can combine \(\HFhat(M,\spin_0)\) and \(\HFhat(M,\spin_1)\) to get a single collection of curves in  \(\barT_M\). 
 \end{example}

\begin{example} 
Let \(Y\) and \(K\) be as in Example~\ref{Ex:T(2,5)-1}. Referring to Figure~\ref{Fig:T(2,5)-1}, we see that 
\( \HFKhat(Y,K) \)  is generated by the intersection points \(x_1\ldots x_9\).  The Alexander and relative Maslov gradings of the generators  are easily computed and are shown in the table below:
\begin{center}
\begin{tabular} {|r||c|c|c|c|c|c|c|c|c|}
\hline
$ i $& 1 & 2 & 3 & 4 & 5 & 6 & 7 & 8 & 9 \\
\hline $A(x_i)$ & 2 & 1 & 1 & 1 & 0 & -1 & -1 & -1  & -2 \\
\hline $M(x_i)$ & 4 & 3 & 0 & 1 & -2 & -1 & -2 & 1 & 0 \\
\hline
\end{tabular}
\end{center}
\end{example}

\begin{figure}[ht]
\labellist
\scriptsize
\pinlabel {$x_1$} at 90 183
\pinlabel {$x_2$} at 134 155
\pinlabel {$x_3$} at 162 92
\pinlabel {$x_4$} at 195  45
\pinlabel {$x_5$} at 243 15
\pinlabel {$y_1$} at 108 217
\pinlabel {$y_2$} at 160 180
\pinlabel {$y_3$} at 171 140
\pinlabel {$y_4$} at 248 67
\pinlabel {$y_5$} at 264  47
\pinlabel {$z$} at 123 195
\pinlabel {$w$} at 140 206
\pinlabel {$\spin_+$} at 267 0
\pinlabel {$\spin_-$} at 315 0

\endlabellist
\includegraphics[scale=0.63]{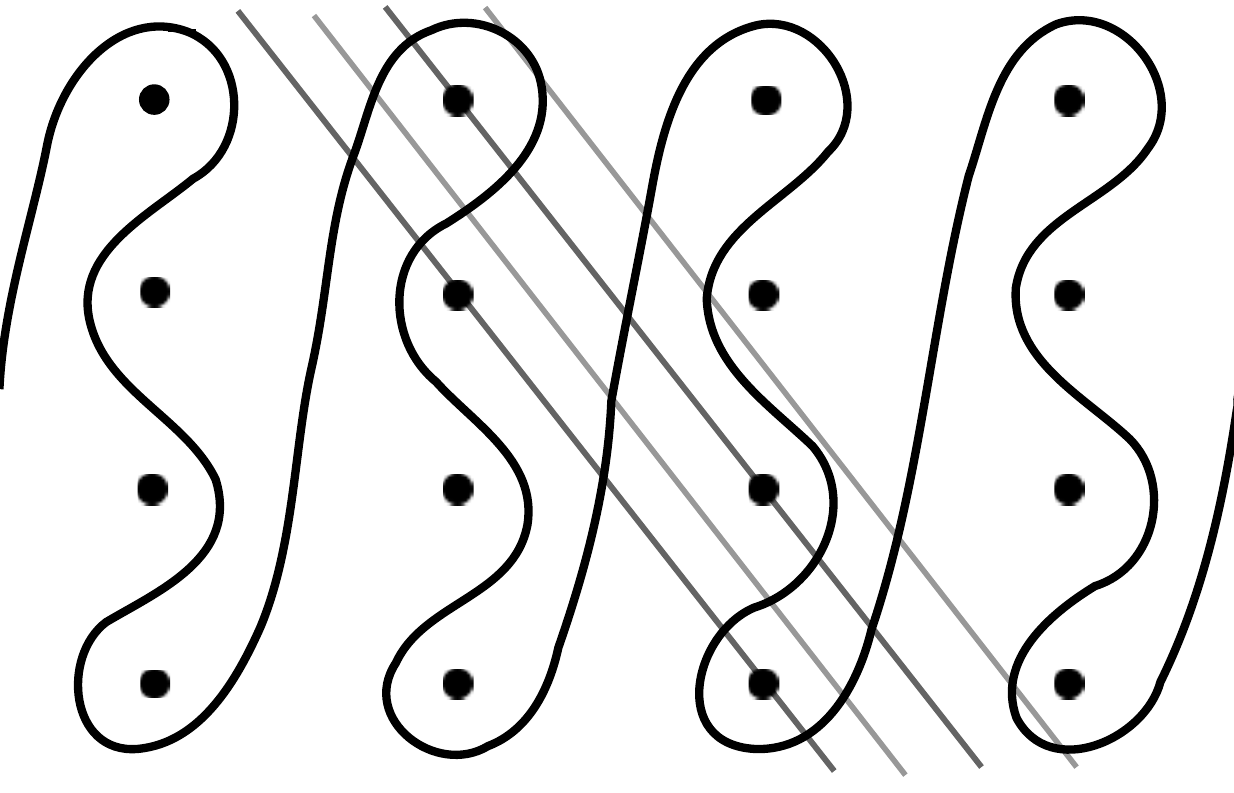}
\qquad\qquad\quad
\begin{tikzpicture}[scale=1,>=stealth', thick] 
\node at (-2,-0.5) {$x_1,x_2$};
\node at (-3.5,-0.5) {$ \frac{3}{2}$};
\node at (-2,-2.0) {$x_3,x_4$};
\node at (-3.5,-2) {$- \frac{1}{2}$};
\node at (-2,-3.5) {$x_5$};
\node at (-3.5,-3.5) {$- \frac{5}{2}$};
\node at (0,0.25) {$y_1$};
\node at (-3.5,0.25) {$ \frac{5}{2}$};
\node at (0,-1.25) {$y_2,y_3$};
\node at (-3.5,-1.25) {$ \frac{1}{2}$};
\node at (0,-2.75) {$y_4,y_5$};
\node at (-3.5,-2.75) {$- \frac{3}{2}$};
 \draw[->,shorten <= 0.25cm, shorten >= 0.25cm] (-1.8,-2)--(-2,-3.5);
  \draw[dashed,->,shorten <= 0.25cm, shorten >= 0.25cm] (-0.2,-1.25)--(0,0.25);
  \end{tikzpicture}
\caption{\(-2\) surgery on the complement of \(T(2,5)\). On the left, intersections between between \(\HFhat(M)\) and \(L_\mu\) (dark grey line segments). The light grey lines are the curves we would pair with to compute \(\HFhat(Y)\).  On the right, \(\HFKhat(Y,K))\), with Alexander grading indicated by height. The solid arrow indicates a differential with \(n_w=0,n_z=1\), while the dashed arrow indicates a differential with \(n_z=0,n_w=1\). }\label{Fig:T(2,5)-2} 
\end{figure}

\begin{example} 
Let \(Y\) be the manifold obtained by \(-2\) surgery on \(T(2,5)\), and let \(K\) be the core of the surgery. The relevant diagram is shown on the left-hand side of Figure~\ref{Fig:T(2,5)-2}; the right-hand side shows the Alexander grading on \(\HFKhat(Y,K)\) together with the differentials with \(n_z=0\) or \(n_w=0\). 
  \(\spinc(M,\partial M)\) divides into two equivalence classes. Generators in the first equivalence class are labeled \(x_i\); those in the second equivalence class are labeled \(y_i\). \(\spinc (Y) \) has two elements, \(\spin_+\) and \(\spin_-\). As the figure shows, if we forget  \(w\), the generators in the first  class give generators of \(\CFhat(Y,\spin_+)\) and the generators in the second equivalence class become generators of \(\CFhat(Y,\spin_-)\). If we forget \(z\), the roles of the two equivalence classes are reversed. 
\end{example}

\section{Turaev torsion and Thurston norm}\label{sec:prop}

In this section, we give some applications of the relation between \(\HFhat(M)\) and knot Floer homology. These include direct characterizations of the Turaev torsion and Thurston norm in terms of 
\(\HFhat(M)\), and a new proof of Theorem 1.6 of \cite{RR}, which characterizes the L-space interval of a Floer simple manifold in terms of its Turaev torsion.

\subsection{The Turaev torsion} \label{subsection:torsion}
\(\HFhat(M,\spin)\) is a compact, oriented multicurve, so it defines a class in \(H_1(\barT_{M, \spin})\). These classes (for all \(\spin \in \spinc(M)\)) determine and are determined by the Turaev torsion of \(M\).

To make this relation precise, we recall some facts about the Turaev torsion. First, we discuss \(\spinc\) structures.
The set  of relative \(\spinc\) structures \(\spinc(M, \partial M)\) is the set of nonvanishing vector fields which  point out of \(\partial M\), modulo the usual equivalence relation. It is a torsor over \(H^2(M,\partial M) \simeq H_1(M)\). Although this definition is  similar to that of \(\spinc(M,\gamma_m)\), the two boundary conditions differ, and there is no canonical way to identify the two sets. 

To pin down the sign of the Turaev torsion, we must orient \(H_*(M;\Q)\).  If \(\partial M = T^2\), then \(H_1(M;\Q) \simeq \Q \oplus H_1(M,\partial M;\Q)\) where \(\Q\) is the image of the map \(H_1(\partial M;\Q) \to H_1(M)\). We choose a generator \(m\) for the image. If \(h_0, h_1\)  are bases of of \(H_0(M;\Q)\), \(H_1(M,\partial M;\Q)\), then \((h_0, m, h_1, h_2, h_3)\) defines an orientation of \(H_*(M;\Q)\), where \(h_2\) and \(h_3\) are dual bases to \(h_1\) and \(h_0\) with respect to the intersection pairing. It is easy to see that the resulting orientation does not depend on the choice of \(h_0\) and \( h_1\). Hence choosing a homology orientation amounts to choosing a generator \(m\) for the image of \(H_1(\partial M;\Q)\). 

Other equivalent ways of fixing a homology orientation are to choose either a {\em homological longitude} \(\lambda \) ({\it i.e.} a generator of \(\ker(H_1(\partial M) \to H_1(M))) \) or the homology class of a Seifert surface ({\it i.e.} a class \([\Sigma] \in H_2(M,\partial M)\) with 
\(\partial [\Sigma] = \lambda.\) These two are related to the first one by the requirement
that \([\Sigma] \cdot m > 0\).

Once we have fixed a homology orientation, the Turaev torsion can be thought of as a function \(\tau_M\co\spinc(M, \partial M) \to \Z\). If \(b_1(M) >1\), \(\tau(\spinhat) = 0 \) for all but finitely many values of \(\spinhat \in \spinc(M,\partial M)\). If \(b_1(M)=1\), \(\tau_M(\spinhat)=1\) if \(\langle c_1(\spinhat), [\Sigma]\rangle\gg 0  \), and \(\tau_M(\spinbar)=0\) if \(\langle c_1(\spinhat), [\Sigma]\rangle \ll 0\), where \([\Sigma]\) is determined by the homology orientation as above. 

From now on, we fix a primitive curve \(\mu \in H_1(\partial M)\) with \( [\Sigma] \cdot \mu > 0\). 
If we fill in the punctures in  \(\barT_{M,\spin}\), the result is homeomorphic to \(S^1 \times \R\), so it has two ends.  We use the homology orientation  to identify these ends as {\em positive} and {\em negative}, according to the convention that for \(p \in \barT_{M,\spin}\)  \(p+n\mu\) converges to the positive end as \(n \to + \infty\). 
Similarly, the homology orientation induces an orientation on the Lagrangians \(L_{\mu, \spin}\), with the positive end of the Lagrangian pointing to the positive end of 
\(\barT_{M,\spin}\). 

A Turaev torsion for balanced sutured manifolds was defined in \cite{FJR}. It may be viewed as a function \(\tau_{(M,\gamma)}:\spinc(M,\gamma) \to \Z\). Note that every balanced sutured manifold carries a canonical homology orientation --- no additional choices need to be made. If \(M\) is a manifold with torus boundary, then with respect to the homology orientation on \(M\) for which \(\mu\) is a positive generator, we have 
 $$\sum_{\spinbar \in \spinc(M,\gamma_\mu)} \tau_{(M,\gamma_\mu)} [\spinbar] \sim (1-[\mu]) \sum_{\spinhat \in \spinc(M,\partial M)} \tau_M(\spinhat)[\spinhat]
 $$
 \cite[Prop 2.1]{RR}.
 Here \(\sim\) indicates equality up to overall multiplication by some element of \(H_1(M)\). The following lemma is an easy consequence of this fact.

\begin{lemma}
\label{Lem:taudiff}
Suppose \(\mu\) is a primitive curve in \(H_1(\partial M)\). A \(\spinc\) structure \(\spinbar \in \spinc(M,\gamma_\mu)\)  determines \(\spinc\) structures 
\(\spinbar_\pm \in \spinc(M,\partial M)\) satisfying the following properties:
\begin{enumerate}
\item \(\spinbar_+ - \spinbar_- = [\mu] \in H_1(\partial M)\)   .
\item If \(x \in H_1(M)\), \((\spinbar + x)_\pm = \spinbar_\pm + x\). 
\item  \(\tau_{(M, \gamma_\mu)}(\spinbar) = \tau_M(\spinbar_+) - \tau_M(\spinbar_-)\).  
\end{enumerate}
\end{lemma}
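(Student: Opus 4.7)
The plan is to construct the maps \(\spinbar \mapsto \spinbar_\pm\) geometrically via modification of vector fields near \(\partial M\), deduce (1) and (2) from the construction, and then obtain (3) by matching coefficients in the sutured-to-relative torsion identity already cited above (from \cite[Prop 2.1]{RR}).

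First I would build \(\spinbar_\pm\) as follows. A class \(\spinbar \in \spinc(M,\gamma_\mu)\) is represented by a nowhere-vanishing vector field \(v\) on \(M\) whose restriction to \(\partial M\) is tangent to \(\partial M\), agrees with a standard model along the suture \(\gamma_\mu\), and points from \(R_-(\gamma_\mu)\) towards \(R_+(\gamma_\mu)\). In a collar \(\partial M \times [0,1]\) there are exactly two homotopy classes of nowhere-vanishing extensions of \(v|_{\partial M}\) that terminate at the outward normal on \(\partial M\): one that rotates through \(R_+\) and one that rotates through \(R_-\). Using these to modify \(v\) near the boundary produces vector fields representing well-defined classes \(\spinbar_+,\spinbar_- \in \spinc(M,\partial M)\). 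Property (2) is immediate from this construction, because the \(H_1(M)\)-action on \(\spinc(M,\gamma_\mu)\) and \(\spinc(M,\partial M)\) is given by (compactly supported) modifications of the vector field in the interior, and these commute with the boundary modification.

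For property (1), compare the two modifications. They differ only in a neighborhood of \(\gamma_\mu\); the relative obstruction class, measured by the relative first Chern class of the difference, is Poincar\'e dual to the core of this neighborhood, which is precisely a push-off of \(\mu\). Hence \(\spinbar_+ - \spinbar_-\) equals the image of \([\mu] \in H_1(\partial M)\) in \(H_1(M) \cong H^2(M,\partial M)\).

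Finally I would deduce (3) from the Friedl-Juh\'asz-Rasmussen identity
\[\sum_{\spinbar \in \spinc(M,\gamma_\mu)} \tau_{(M,\gamma_\mu)}(\spinbar)\,[\spinbar]\ \sim\ (1-[\mu])\sum_{\spinhat \in \spinc(M,\partial M)} \tau_M(\spinhat)\,[\spinhat]\]
quoted in Section~\ref{subsection:torsion}. By (1) and (2), the bijection \(\iota_+\co\spinc(M,\partial M)\to\spinc(M,\gamma_\mu)\) inverse to \(\spinbar \mapsto \spinbar_+\) is an isomorphism of \(H_1(M)\)-torsors; using it to identify both torsors with \(H_1(M)\) (and absorbing the overall \(H_1(M)\)-ambiguity in \(\sim\) into the choice of basepoint), the left side becomes \(\sum_\spinhat \tau_{(M,\gamma_\mu)}(\iota_+(\spinhat))\,[\spinhat]\) while the right side expands, using \(\spinbar_- = \spinbar_+ - [\mu]\), to \(\sum_\spinhat \bigl(\tau_M(\spinhat) - \tau_M(\spinhat - [\mu])\bigr)[\spinhat]\). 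Comparing coefficients of \([\spinhat]\) and substituting \(\spinhat = \spinbar_+\) gives \(\tau_{(M,\gamma_\mu)}(\spinbar) = \tau_M(\spinbar_+) - \tau_M(\spinbar_-)\), as required.

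The main obstacle is bookkeeping the torsor identifications: the identity from \cite{RR} is only defined up to an overall element of \(H_1(M)\), so one must check that the particular identification of \(\spinc(M,\gamma_\mu)\) with \(\spinc(M,\partial M)\) used implicitly in that statement either coincides with \(\iota_+\) or can be matched up to an overall shift that is absorbed simultaneously on both sides, and that the sign conventions (which depend on the homology orientation fixed by \(\mu\)) are compatible with the outward-pointing vector field convention used to define \(\spinc(M,\partial M)\).
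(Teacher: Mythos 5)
The paper's own proof is essentially the algebraic manipulation you carry out in your final paragraph, and nothing more: it takes the Friedl--Juh\'asz--Rasmussen identity from \cite[Prop 2.1]{RR} as given, expands \((1-[\mu])\sum_{\spinhat}\tau_M(\spinhat)[\spinhat]\) to \(\sum_{\spinhat}\bigl(\tau_M(\spinhat)-\tau_M(\spinhat-[\mu])\bigr)[\spinhat]\), and \emph{reads off} the bijection \(\spinbar\mapsto\spinbar_+\) (with \(\spinbar_-:=\spinbar_+-[\mu]\)) as the correspondence that makes the two formal sums match. Properties (1)--(3) are then all structural features of this definition; the \(H_1(M)\)-shift ambiguity inherent in \(\sim\) is exactly the ambiguity in choosing the bijection, so nothing needs to be reconciled.

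Your route is genuinely different: you first construct \(\spinbar_\pm\) \emph{geometrically} by modifying the defining vector field in a collar, deduce (1) from a relative Chern class computation and (2) from equivariance, and only then bring in the FJR identity to check (3). This is a perfectly reasonable alternative and has the advantage of giving \(\spinbar_\pm\) a direct geometric meaning independent of torsion. However, it shifts all of the real work onto the compatibility check you flag at the end: you must show that your geometric bijection agrees (up to an overall \(H_1(M)\)-shift that can be simultaneously absorbed on both sides of the \(\sim\)) with the identification implicitly encoded in \cite[Prop 2.1]{RR}. As written, you state the obstacle but do not resolve it, and the resolution is not automatic: if the two bijections differed by a \emph{nonzero} shift \(x\in H_1(M)\) that could not be absorbed into \(\sim\), property (3) would fail for your geometrically defined \(\spinbar_\pm\). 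So this is the one place where your outline is incomplete. To close it you would need either to trace through the conventions of \cite{FJR,RR} to show that the geometric modification you describe is the one those authors use when comparing sutured and relative torsion, or to note that (3) \emph{by itself} determines the bijection uniquely whenever \(\tau_M\) is not translation-invariant, and then check agreement with the geometric map in one reference case. Alternatively, simply adopting the paper's approach --- defining \(\spinbar_\pm\) directly from the FJR expansion --- dissolves the issue entirely, at the cost of making the geometric description of \(\spinbar_\pm\) an a posteriori observation rather than the starting point.
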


The set \(p^{-1}(z) \subset \barT_{M,\spin}\) is an \(H_M\) torsor.   Properties (1) and (2) mean that we can  identify  \(p^{-1}(z) = \{ z_{\spinhat} \, | \,  \spinhat \in \spinc(M, \partial M, \spin)\}\) in such a way that \(z_{\spinbar_\pm}\) is the positive/negative end of \(L_{\mu,\spin}\).

For each \(\spinhat \in \spinc(M, \partial M)\), let \(\gamma_{\spinhat}:[0,\infty) \to \barT_{M,\spin}\)  be a path from \(z_{\spinhat}\) to the negative end of \(\barT_{M,\spin}\). We will define \(n_{\spinhat}\) to be the signed intersection number of \(\gamma_{\spinhat}\) with \(\HFhat(M,\spin)\). In order to make sense of this definition, we must orient \( \HFhat(M,\spin)\) and make sense of the homology class of 
 \(\gamma_{\spinhat}\). 
 
  Recall that the \(\Z/2\) grading on \(\HFhat(M,\spin)\) gives it a well-defined orientation up to global orientation reversal. To pin down the global orientation, recall that \(\HFKhat(M(\mu),k_\mu,\spinbar) = HF(\HFhat(M,\spin), L_{\mu,\spinbar})\), so \(\tau_{(M,{\gamma_\mu})}(\spinbar) = \pm \HFhat(M,\spin) \cdot L_\mu\). \(L_\mu\) is oriented by our choice of homology orientation, and we orient \(\HFhat(M,\spin)\) so that \(\tau_{(M,\gamma_\mu)}(\spinbar) =  \HFhat(M,\spin) \cdot L_\mu\). 

Homologically, we can express \([\gamma_{\spinhat}]\) as follows. Let \(X_k\) be the manifold with boundary obtained by removing a regular neighborhood of \(p^{-1}(z)\) from \(\barT_{M,\spin}\) and  then removing everything below height \(k\). For \(k<k'\), there is a map \(H_*(X_{k'}) \to H_*(X_k)\) induced by inclusion. Then 
\( [\gamma_{\spinhat}] = - \sum_{j\geq 1} [L_{\mu,\spinbar - j\mu}]\)
  defines an element of the direct limit 
\(\varinjlim H_*(X_k,\partial X_k)\),  where \( \spinbar \in  \spinc(M, \gamma_\mu)\) is defined by the relation \(\spinbar_+ = \spinhat + \mu\). \(\HFhat(M,\spin)\) is a compactly supported oriented multicurve, so it defines a class in \(H_1(\barT_{M,\spin})\), and there is a well-defined intersection number \(n_{\spinhat} =[\gamma_{\spinhat}] \cdot [\HFhat(M,\spin)]\). 

\begin{proposition}\label{prp:torsion}
	\(\tau_M(\spinhat) = n_{\spinhat}\) for all \(\spinhat \in \spinc(M, \partial M)\). 
\end{proposition}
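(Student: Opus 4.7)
The plan is to verify the identity by checking (a) that the differences \(n_{\spinhat+\mu} - n_{\spinhat}\) match the differences \(\tau_M(\spinhat+\mu) - \tau_M(\spinhat)\), and (b) that both sides agree at one end of \(\barT_{M,\spin}\). The main input is the interpretation, already set up in Section \ref{sec:knot_floer} together with Lemma \ref{Lem:taudiff}, of signed intersection numbers with the Lagrangians \(L_{\mu,\spinbar}\) as sutured Turaev torsions.

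First I would compare \(n_{\spinhat-\mu}\) and \(n_{\spinhat}\) directly from the definition. Since
\[
	[\gamma_{\spinhat}]-[\gamma_{\spinhat-\mu}] = -[L_{\mu,\spinbar-\mu}]
\]
(all other terms of the defining sums cancel), taking intersection with the oriented class \([\HFhat(M,\spin)]\) gives
\[
	n_{\spinhat}-n_{\spinhat-\mu} = -\,[L_{\mu,\spinbar-\mu}]\cdot[\HFhat(M,\spin)].
\]
The orientation convention fixed just after Lemma \ref{Lem:taudiff} was chosen precisely so that \(\HFhat(M,\spin)\cdot L_{\mu,\spinbar} = \tau_{(M,\gamma_\mu)}(\spinbar)\). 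With the sign of \([\gamma_\spinhat]\) taken into account and the skew-symmetry of the intersection pairing on \(\barT_{M,\spin}\), the identity becomes
\[
	n_{\spinhat}-n_{\spinhat-\mu} \;=\; \tau_{(M,\gamma_\mu)}(\spinbar-\mu).
\]
Applying part (3) of Lemma \ref{Lem:taudiff} and the relation \(\spinbar_{+}=\spinhat+\mu\), the right-hand side equals \(\tau_M(\spinhat)-\tau_M(\spinhat-\mu)\). Thus
\[
	n_{\spinhat}-n_{\spinhat-\mu} \;=\; \tau_M(\spinhat)-\tau_M(\spinhat-\mu)
\]
for every \(\spinhat\).

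Next I would iterate this recursion downward. Since \(\HFhat(M,\spin)\) is compact in \(\barT_{M,\spin}\), for \(j\) sufficiently large the path \(\gamma_{\spinhat-j\mu}\) is disjoint from \(\HFhat(M,\spin)\), so \(n_{\spinhat-j\mu}=0\). On the torsion side, if \(b_1(M)=1\) then the normalization built into the homology orientation gives \(\tau_M(\spinhat-j\mu)=0\) for \(j\gg 0\), while if \(b_1(M)>1\) then \(\tau_M\) has finite support and the same vanishing holds. Telescoping the finite sum
\[
	n_{\spinhat} \;=\; \sum_{j=0}^{\infty}\bigl(n_{\spinhat-j\mu}-n_{\spinhat-(j+1)\mu}\bigr)
	\;=\; \sum_{j=0}^{\infty}\bigl(\tau_M(\spinhat-j\mu)-\tau_M(\spinhat-(j+1)\mu)\bigr) \;=\; \tau_M(\spinhat),
\]
which is the claim.

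The main obstacle is bookkeeping of signs: several orientation conventions enter (the homology orientation of \(M\), the induced orientations on \(L_{\mu,\spinbar}\) and on \(\HFhat(M,\spin)\), the identification of the positive end of \(\barT_{M,\spin}\), and the sign in \([\gamma_{\spinhat}]=-\sum_{j\geq 1}[L_{\mu,\spinbar-j\mu}]\)). I would verify these signs once on the simplest model, for instance the trefoil complement in Figure \ref{fig:HFK(T)}, where the computation at the end of Section \ref{sec:knot-floer} makes it clear that \(n_{\spinhat_{1/2}}=0\), \(n_{\spinhat_{-1/2}}=1\), matching \(\tau_M\); this pins down the overall sign convention and then the general recursion above delivers the identity.
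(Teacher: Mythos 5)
Your proof is correct and takes essentially the same approach as the paper's: you establish the one-step recursion $n_{\spinhat}-n_{\spinhat-\mu}=\tau_M(\spinhat)-\tau_M(\spinhat-\mu)$ by identifying $\HFhat(M,\spin)\cdot L_{\mu,\spinbar-\mu}$ with $\tau_{(M,\gamma_\mu)}(\spinbar-\mu)$ and invoking Lemma~\ref{Lem:taudiff}(3), and then telescope downward using compact support of $\HFhat(M,\spin)$ and the vanishing of $\tau_M$ far in the negative direction. The only cosmetic differences are that you decrement by $\mu$ where the paper increments, and that you phrase the key identity $\HFhat(M,\spin)\cdot L_{\mu,\spinbar}=\tau_{(M,\gamma_\mu)}(\spinbar)$ as a consequence of the orientation convention rather than unwinding it again through Proposition~\ref{prop:HFK} and the Friedl--Juh\'asz--Rasmussen theorem.
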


\begin{proof}
Given \(\spinhat \in \spinc(M, \partial M)\), define \(\spinbar \in  \spinc(M, \gamma_\mu)\) by the relation \(\spinbar_+ = \spinhat + \mu\) as above. Then 
		  \([\gamma_{\spinhat + \mu}] - [\gamma_{\spinhat}] =- [L_{\mu, \spinbar}]\). It follows that 		 
		 \begin{align*}
	n_{\spinhat + \mu } - n_{\spinhat}  & = [\HFhat(M, \spin)]\cdot [L_{\mu, \spinbar}] 
	 = \chi(\SFH(M, \gamma_{\mu},\spinbar)) \\
	& = \tau_{(M, \gamma_\mu)} (\spinbar) 
	 = \tau_M(\spinhat + \mu) - \tau_M(\spinhat)
	\end{align*}
	where in the first line we have used Proposition~\ref{prop:HFK} and in the second we used  Lemma~\ref{Lem:taudiff}. Hence if the statement of the proposition holds for \(\spinhat\), it holds for \(\spinhat + \mu\) as well. 
	
	Since \(\HFhat(M,\spin)\) is compactly supported, \(n_{\spinhat}=0\) for \(\langle c_1(\spinhat), [\Sigma]\rangle \ll 0\). In addition,  \(\tau(M,\spinhat) = 0\) for \(\langle c_1(\spinhat), [\Sigma]\rangle \ll 0\). 
	Taking \(N\) sufficiently large, we see that the proposition holds for \(\spinhat - N \mu\), and hence for \(\spinhat\). 
\end{proof}

\begin{remark}
The proposition shows that for \(\spinhat \in \spinc(M, \partial M, \spin)\),  \(\tau_{M}(\spinhat)\) is determined by the class of  \([\HFhat(M,\spin)] \in H_1(\barT_{M,\spin} - p^{-1}(z))\). Conversely, it is easy to see that any \(x \in H_1(\barT_{M,\spin} - p^{-1}(z))\) is determined by its intersection numbers with the \(\gamma_{\spinhat}\)'s. Hence the information carried by \(\tau_M\) is precisely the homology classes of \([\HFhat(M,\spin)] \in H_1(\barT_{M,\spin} - p^{-1}(z))\) as \(\spin\) runs over \(\spinc(M)\). 
\end{remark}

\begin{corollary}
Let \(p:\barT_{M,\spin} \to T_M\) be the projection. If \(b_1(M)=1\), \(p_*([\HFhat(M,\spin)]) =  \lambda\), where \(\lambda\) is the homological longitude; otherwise, \(p_*([\HFhat(M,\spin)]) = 0\). 
\end{corollary}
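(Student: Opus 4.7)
The plan is to compute $p_*[\HFhat(M,\spin)]$ by first identifying the homological structure of the covering space $\barT_{M,\spin}$ in general, then extracting the coefficient of the ``cylinder loop'' using the Turaev torsion formula from Proposition~\ref{prp:torsion}. The case distinction in the statement will come not from distinct topologies of $\barT_{M,\spin}$ but from distinct asymptotic behaviors of $\tau_M$.

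First I would observe that by Lefschetz duality (``half-lives-half-dies'') applied to $M$ with $\partial M = T^2$, the kernel of $H_1(\partial M) \to H_1(M)$ always has rank $1$, generated by the primitive homological longitude $\lambda$. Hence the cover $\barT_{M,\spin}$ is in every case an infinite cylinder with countably many punctures $p^{-1}(z) = \{z_{\spinhat}\}$, and $H_1(\barT_{M,\spin})$ is freely generated by the small loops around the punctures together with a single ``cylinder loop'' $\ell$ that is a lift of $\lambda$. Next, since a small loop around a puncture projects to a meridian of $z$ in $T_M$, which represents the commutator $[\mu,\lambda]$ in $\pi_1(T_M) = F_2$ and is therefore zero in $H_1(T_M)$, the only surviving contribution to $p_*$ comes from $\ell$. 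As $p_*(\ell) = \lambda$, we get $p_*[\HFhat(M,\spin)] = b\,\lambda$ for some integer $b$.

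The key calculation is to identify $b$ with an appropriate limit of Turaev torsion. The coefficient $b$ is the algebraic intersection number of $[\HFhat(M,\spin)]$ with any path $\eta$ going from the negative to the positive end of the cylinder and avoiding the punctures. Because $\HFhat(M,\spin)$ is compactly supported, taking $\spinhat$ with $\langle c_1(\spinhat), [\Sigma]\rangle$ sufficiently large positive ensures that every intersection of $\eta$ with $\HFhat(M,\spin)$ lies below the starting puncture $z_{\spinhat}$; hence $\gamma_{\spinhat}$ (which runs from $z_{\spinhat}$ to $-\infty$) has exactly the same algebraic intersection with $[\HFhat(M,\spin)]$ as a full cross-section. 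Thus $b = n_{\spinhat}$ for all sufficiently positive $\spinhat$, and by Proposition~\ref{prp:torsion} this equals $\tau_M(\spinhat)$. Invoking the normalization of the Turaev torsion recalled at the start of Subsection~\ref{subsection:torsion}, this stable value is $1$ when $b_1(M) = 1$ and $0$ when $b_1(M) > 1$ (since then $\tau_M$ has finite support), giving the two cases of the corollary.

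The main subtlety is sign conventions: one needs the orientation of $\HFhat(M,\spin)$ fixed in Subsection~\ref{subsection:torsion} (chosen precisely so that $\tau_{(M,\gamma_\mu)}(\spinbar) = \HFhat(M,\spin)\cdot L_{\mu,\spinbar}$) to match the orientation of the cylinder chosen from the homology orientation, so that $b$ comes out as $+1$ rather than $\pm 1$. This compatibility is already built into the setup preceding Proposition~\ref{prp:torsion}, so no further bookkeeping is required, and the argument above should go through as stated.
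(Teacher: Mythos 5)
Your proof is correct and follows essentially the same strategy as the paper's: reduce to computing the coefficient of $\lambda$ in $[\HFhat(M,\spin)]$ by intersecting with a full cross-section of the cylinder, identify that intersection number with the stable value $n_{\spinhat} = \tau_M(\spinhat)$ for $\langle c_1(\spinhat),[\Sigma]\rangle \gg 0$ via Proposition~\ref{prp:torsion}, and read off $1$ or $0$ from the normalization of $\tau_M$. The only difference is that you spell out why the puncture classes in $H_1(\barT_{M,\spin})$ die under $p_*$, whereas the paper passes over this point (implicitly working with the unpunctured cylinder when it asserts $H_1(\barT_{M,\spin})\simeq\Z$); your added care is harmless and arguably a small improvement.
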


\begin{proof} \(H_1(\barT_{M,\spin}) \simeq \Z\) is generated by \(\lambda\). Let \(\gamma\) be a path from the positive end of \(\barT_{M,\spin}\) to the negative end; then \([\gamma] \cdot [\lambda] = 1\). If 
\(b_1(M)=1\), we know \(n_{\spinbar}= \tau_M(\spinbar) = 1\) for \(\langle c_1(\spinbar), [\Sigma] \rangle \gg0\), and  \(n_{\spinbar}= \tau_M(\spinbar) = 1\) for \(\langle c_1(\spinbar), [\Sigma] \rangle \ll0\). It follows that \(\gamma \cdot [\HFhat(M, \spin)] = 1\), which implies  
\([\HFhat(M,\spin) ]=  \lambda\). A similar argument applies when \(b_1(M)>1\). 
\end{proof}

Throughout, we have set up our orientation conventions so that  \(\lambda \cdot \mu > 0\). We chose this convention, rather than the more usual \(\mu \cdot \lambda >0\), so that \(n\) surgery on a knot in \(S^3\) corresponds to pairing with a line of slope \(n\). (The usual convention for \(K\subset S^3\) is that \(\mu \cdot \lambda >0\) on the boundary of a tubular neighborhood of \(K\), which forces \(\lambda \cdot \mu > 0 \) on the boundary of the complement of \(K\).)

\subsection{The Thurston norm}
\label{subsec:Thurston-norm}
Suppose that \(x \in H_2(M,\partial M)\) satisfies \(\partial x = \lambda\). 
We can use the  relationship between \(\barcurves{M}\) and knot Floer homology to express the Thurston norm of \(x\) in terms of the \(\HFhat(M,\spin)\). 
  If \(\bargamma \subset \overline{T}_{M,\spin}\) is a curve or collection of curves, we define 
 $$k^+(\bargamma) =  \max \{\langle c_1(\spinbar), x\rangle \, | \, z_{\spinbar}  \ \text{is not connected to} \ + \infty \ \text{in the complement of} \ \HFhat(M,\spin)  \}.$$

\begin{proposition}
Suppose that \(x\) is as above and that  \(\Sigma\) is a norm-minimizing surface representing  \(x\). Then
$$ - \chi(\Sigma) =  \max_{\spin \in \spinc(M)} k^+(\HFhat(M,\spin)).$$
\end{proposition}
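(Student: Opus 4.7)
The plan is to relate the curve-theoretic condition defining \(k^+\) to knot Floer homology via Proposition~\ref{prop:HFK}, and then invoke the detection of the Thurston norm by sutured Floer homology.

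First, fix a primitive \(\mu \in H_1(\partial M)\) with \(\mu \cdot \lambda = 1\). The surface \(\Sigma\) with \(\partial \Sigma = \lambda\) extends in the filling solid torus along an annulus, yielding a surface \(\hat\Sigma \subset M(\mu)\) with boundary on the core knot \(K_\mu\) and with \(\chi(\hat\Sigma) = \chi(\Sigma)\). Equivalently, \(\hat\Sigma\) represents the class \(x\) on the balanced sutured manifold \((M,\gamma_\mu)\). By the Thurston norm detection theorem for sutured Floer homology due to Juh\'asz (and its knot Floer precursors of Ozsv\'ath--Szab\'o and Ni), one has
\[
-\chi(\Sigma) \;=\; \max\bigl\{\,\langle c_1(\spinbar),x\rangle : \spinbar \in \spinc(M,\gamma_\mu),\; \SFH(M,\gamma_\mu,\spinbar) \neq 0\,\bigr\}.
\]

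Second, apply Proposition~\ref{prop:HFK} to identify \(\SFH(M,\gamma_\mu,\spinbar) \cong \HFKhat(M(\mu),K_\mu,\spinbar) \cong HF\bigl(\HFhat(M,\spin),\, L_{\mu,\spinbar}\bigr)\), where \(\spin = \pi_\mu(\spinbar)\) and \(L_{\mu,\spinbar}\) is the corresponding lift of the noncompact slope-\(\mu\) Lagrangian to \(\overline T_{M,\spin}\). The right-hand side vanishes if and only if \(L_{\mu,\spinbar}\) can be isotoped, within its class of proper arcs, to be disjoint from \(\HFhat(M,\spin)\). An elementary argument in the punctured cylinder---after pulling the curves tight so that each component is either a compact embedded loop or a properly embedded line---shows that this disjoining is possible precisely when the puncture \(z_{\spinbar}\) can be connected to both ends of \(\overline T_{M,\spin}\) in the complement of \(\HFhat(M,\spin)\). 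Equivalently, \(HF\bigl(\HFhat(M,\spin), L_{\mu,\spinbar}\bigr) \neq 0\) is equivalent to \(z_{\spinbar}\) being separated from at least one of the two ends by \(\HFhat(M,\spin)\).

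Third, match extremal values. Among all nonvanishing \(\spinbar\), the one maximizing \(\langle c_1(\spinbar),x\rangle\) must correspond to a puncture blocked from the \(+\infty\) end: any puncture at greater height, if also blocked, would yield a larger pairing and contradict maximality, so the obstruction to reaching infinity must lie above \(z_{\spinbar}\). This is precisely the condition defining \(k^+(\HFhat(M,\spin))\). Taking the maximum over \(\spin \in \spinc(M)\) then yields the proposition.

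The main obstacle lies in the second step: verifying that, for immersed curves \(\HFhat(M,\spin)\) in the punctured cylinder \(\overline T_{M,\spin}\)---which may contain non-compact components when \(b_1(M)>1\) or carry nontrivial local systems---nonvanishing of Lagrangian intersection Floer homology with \(L_{\mu,\spinbar}\) reduces to the purely combinatorial statement about separation of \(z_{\spinbar}\) from the ends of \(\overline T_{M,\spin}\). Care is also needed to fix sign and height conventions so that the direction ``\(+\infty\)'' in the geometric picture matches the positive extremum of \(\langle c_1(\spinbar),x\rangle\).
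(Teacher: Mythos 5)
Your overall plan is legitimate and is in fact close in spirit to the paper's, which also reduces the statement to the fact that knot Floer homology of the core of a Dehn filling detects the Thurston norm; the paper primarily pairs against the noncompact Lagrangian of longitudinal slope (computing $\HFKhat$ of the core of the filling along $l$) and only uses the meridional pairing $L_\mu$ as a final sanity check, while you work with $L_\mu$ throughout. That choice is perfectly reasonable. However, there is a real gap in your second step.

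The problem is the asserted combinatorial criterion for nonvanishing. You write that $HF\bigl(\HFhat(M,\spin), L_{\mu,\spinbar}\bigr)\neq 0$ if and only if ``$z_{\spinbar}$ is separated from at least one of the two ends.'' But $L_{\mu,\spinbar}$ is not a properly embedded line passing through one puncture and running to $\pm\infty$; since $L_\mu$ begins and ends at the puncture $z$ in $T_M$ and $\mu$ maps to a generator of $H_M$, the lift $L_{\mu,\spinbar}$ is a \emph{compact} arc joining two \emph{adjacent} lifts of $z$, say $z_{\spinbar_-}$ and $z_{\spinbar_+}$. The correct criterion is that $HF(\HFhat(M,\spin),L_{\mu,\spinbar})\neq 0$ exactly when $z_{\spinbar_-}$ and $z_{\spinbar_+}$ lie in \emph{different} components of $\barT_{M,\spin}\setminus\HFhat(M,\spin)$. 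These two conditions are not equivalent. In fact, if you apply your criterion literally, every peg sufficiently high above the pegboard representative is separated from the $-\infty$ end (because $[\HFhat(M,\spin)]=\lambda$ forces the multicurve to wrap around the cylinder), so your claimed nonvanishing set is unbounded above and the maximum in your third step does not exist. With the correct criterion the top nonzero arc is the one joining $z_{k^+}$ (inside the hanging loop) to $z_{k^++1}$ (outside), and the extremal bookkeeping works out; but your proof as written does not establish this. A related slip: you say the components can be pulled tight to compact \emph{embedded} loops or lines, but the invariant is a union of \emph{immersed} curves (e.g.\ the figure-eight components for thin knot complements), and the regions of the complement are correspondingly more delicate.

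There is a second omission: you do not treat the degenerate configurations the paper handles explicitly, namely when $\HFhat(M,\spin)$ has solid-torus-like components (the case $\mathcal{P}(\bargamma)=\emptyset$, where the pegboard representative is a circle threaded between two rows of pegs and $\HFa(\bargamma,\lambda_k)=0$ for all $k$) or a solid-torus-like component sitting at height $>k^++1$. Your pairing against $L_\mu$ actually handles the first case more uniformly than the paper's (a solid-torus-like component does intersect the intervening $L_\mu$), which is a genuine advantage of the meridional approach; but you still need to rule out a stray high solid-torus-like component and to justify the claim that the maximum over $\spin$ of $k^+$ matches the norm --- the paper does this by appealing to the fact that knot Floer homology detects the norm and deriving a contradiction, and you would need an analogous argument. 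Finally, the indexing of $z_{\spinbar}$ for $\spinbar\in\spinc(M,\gamma_\mu)$ is not defined (the punctures are indexed by $\spinc(M,\partial M)$, not $\spinc(M,\gamma_\mu)$), so the statement of your criterion has a notational mismatch that should be resolved via the correspondence $\spinbar\mapsto(\spinbar_+,\spinbar_-)$ used in the paper's Turaev torsion section.
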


\begin{proof}
 If \(\overline{\gamma}\) is a curve in \(\overline{T}_{M,\spin}\), let \(\mathcal{P}(\bargamma)\) be the set of corners of the pegboard representative of \(\bargamma\). 
 We claim that if \(\mathcal{P}(\bargamma) \neq \emptyset\), then  
 $$ \max \{\langle c_1(\spinbar), x  \rangle \, | \, z_{\spinbar} \in \mathcal{P}(\bargamma)\}= k^+(\bargamma).$$
  Indeed, if \(z_{\spinbar}\) is a highest peg in \(\mathcal{P}(\bargamma)\), then it must lie below \(\bargamma\), so \(\spinbar\) is not connected to \(+ \infty\) in the complement of \(\HFhat(M,\spin)\). Conversely, it is clear that every peg above \(z_{\spinbar}\) is connected to \(+\infty\). 
  
  Next, let \(l\) be the rational homological  longitude of \(M\) ({\it i.e.} the primitive class of which \(\lambda\) the Seifert longitude \(\lambda\) is a positive multiple) and let \(\lambda_k \subset \overline{T}_\spin\) be the curve parallel to \(\lambda\) and passing though \(z_{\spinbar}\) with \(\langle c_1(\spinbar), [\Sigma] \rangle=k\). Note that if 
 \(  \bargamma = \barcurves{M, \spin}\), Theorem~\ref{thm:knot-floer} implies that 
 $$\HFKhat(M(l), K_l, \spin_k) = HF(\bargamma, \lambda_k).$$
Here   \(\spin_k \in \spinc(M, \partial M)\) is defined to be the  relative spin\(^c\) structure which restricts to \(\spin\) on \(M\) and satisfies \(\langle c_1(\spin_k), [\Sigma] \rangle= k\). 
 
  If \(\mathcal{P}(\bargamma) \neq \emptyset\), we claim that 
  $$  \max \{k \, | \, \HFa(\bargamma,\lambda_k) \neq 0)\} = k^+(\bargamma).$$
  To see this,  pull \(\bargamma\) tight. If \(\langle c_1(\spinbar), [\Sigma] \rangle>k^+\), then the complex computing \(\HFa(\bargamma,\lambda_k)\) has no generators. Conversely, since \(\bargamma\) hangs on a peg of height \(k^+\),  some arc of \(\bargamma\)  must lie above \(\lambda_{k^+}\), and some arc of it must lie below.  Since \(\bargamma\) is pulled tight, \(\HFa(\bargamma,\lambda_{k^+})\} \neq 0\). 
  
Next  consider the case where \(\mathcal{P}(\bargamma)= \emptyset\). Then \(\bargamma\) is solid torus like. It is represented by a curve parallel to \(\lambda\), and is trapped between two rows of pegs at height \(n\) and \(n+1\). We  have \(\HFa(\bargamma,\lambda_{k})\} = 0\) for all \(k\), but \(k^+(\bargamma) = n\).  

Now we consider \(\barcurves{M}\). 
The case in which every component of \(\barcurves{M}\) is solid torus like has been studied by Gillespie \cite{Gillespie}, who showed that such an \(M\) must be boundary compressible. In this case, it is easy to see that the proposition holds. Thus we may assume that  not every component of \(\barcurves{M}\) is solid torus-like. 
Taking the \(\max\) of the relations above over all components of  \(\barcurves{M} \), we see that 
$$ \max \{\langle c_1(\spinbar), [\Sigma] \rangle \, | \, z_{\spinbar} \in \mathcal{P}(\barcurves{M})\} = 
\max \{k \,| \,  \HFKhat(M_l, K_l, \spin_k) \neq 0\} = - \chi(\Sigma) $$
where the last equality follows from the fact that  the knot  Floer homology determines the Thurston norm.  

It remains to show that \(k^+(\barcurves{M}) = \max \{\langle c_1(\spinbar), [\Sigma] \rangle \, | \, z_{\spinbar} \in \mathcal{P}(\barcurves{M})\}\) . The only way  this can fail to happen is if 
 \(\barcurves{M}\) has a solid torus-like component at height \(n>k^++1\). Suppose that we have such a component. Then it pairs nontrivially with any curve \(\mu\) which satisfies \(\mu \cdot l= 1\), so we have \(\HFKhat(M(\mu),K_\mu, n) \neq 0\). The fact that knot Floer homology determines the Thurston norm  implies that \(-\chi(\Sigma) \geq n-1 >k^+\), which is a contradiction. 
\end{proof}

\subsection{L-space slopes and torsion} 
Recall that \(\sL(M) = \{\mu \, | \, M(\mu) \ \text{is an L-space}\}\) is the set of L-space filling slopes of \(M\). Let \(S^{sing}(M)\) be the set of essential tangent slopes to \(\HFhat(M)\); that is 
 \(S^{sing}(M) = \{\alpha \, | \, \alpha \ \text{is tangent to any representative of} \ \HFhat(M)\}\). 
We showed in \cite{HRW} that \(\sL^\circ(M)\) is the complement of \(S^{sing}(M)\). Manifolds for which \(\sL^\circ(M)\) is nonempty are said to be Floer simple. 

For the rest of this section, we assume that \(M\) is Floer simple. 
As a application of Proposition~\ref{prp:torsion}, we give a short proof of Theorem 1 of \cite{RR}, which characterizes the set \(\sL(M)\) in terms of the Turaev torsion of \(M\). 

Suppose \(\alpha \in \sL^\circ_M\), and identify \(H^1(\partial M; \R)\) with \(\R^2\) by the map 
\(\beta \mapsto (\beta \cdot \alpha, \beta \cdot l)\), where \(l\) is the rational homological longitude.  Suppose further that \(\HFhat(M,\spin)\) has been pulled tight, and let \(\tildecurves{M,\spin}\) be it's preimage  under the covering map \(H_1(\partial M;\R) \setminus H_1(\partial M;\Z)
\to \barT_{M,\spin}\). 

\begin{lemma} 
 With respect to the coordinates above, the pegboard diagram for \(\tildecurves{M, \spin}\) is a graph of the form \(y = f(x)\).  
\end{lemma}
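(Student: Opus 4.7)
The plan is to proceed in three steps: interpret the coordinate change, establish horizontal monotonicity using $\alpha \in \sL_M^\circ$, and pin down single-valuedness from minimal intersection with slope-$\alpha$ lines. First I would unpack the coordinates. Under $\beta \mapsto (\beta \cdot \alpha, \beta \cdot l)$, the slope $\alpha$ maps to $(0, \alpha \cdot l)$ and the rational longitude $l$ maps to $(l \cdot \alpha, 0)$. A Floer simple manifold cannot have its rational longitude as an L-space filling slope (such a filling yields a connected sum of $S^1 \times S^2$'s, which is not an L-space), so $\alpha \neq \pm l$; since $\alpha$ and $l$ are then distinct primitive classes in $H_1(\partial M;\Z)$, their intersection pairing $\alpha \cdot l$ is nonzero. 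Hence $\alpha$ points in the vertical direction and $l$ points in the horizontal direction in these coordinates.

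Next I would use the key hypothesis: since $\alpha \in \sL_M^\circ$ and \cite[Section 7]{HRW} identifies $\sL_M^\circ$ with $\Q P^1 \setminus S^{sing}(M)$, no pegboard representative of $\HFhat(M,\spin)$ has a tangent line of slope $\alpha$, i.e., no tangent is vertical in our coordinates. It follows that each smooth arc of $\tildecurves{M,\spin}$ in $\R^2 \setminus \Z^2$ is strictly monotonic in the horizontal coordinate $x$. At a peg, the pulled-tight curve may have a corner, but because both the incoming and outgoing tangent directions avoid the vertical and the curve is taut around the peg, these directions lie in a common open half-plane transverse to vertical, so horizontal monotonicity is preserved through each corner.

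To upgrade monotonicity to being a single-valued graph $y = f(x)$, I would invoke the characterization of $\sL_M$ for Floer simple manifolds as the set of slopes intersecting each component of $\HFhat(M)$ minimally in exactly one point \cite[Section 7]{HRW}. Since $\HFhat(M,\spin)$ is a single component by the Remark after Proposition~\ref{prp:Floer-simple}, each vertical line in $\R^2 \setminus \Z^2$ (a lift of a slope-$\alpha$ line in $\barT_{M,\spin}$) meets each lift of $\HFhat(M,\spin)$ in precisely one point. Combined with horizontal monotonicity, this yields the desired graph property. The hard part will be verifying monotonicity through corners at the pegs---a local combinatorial statement about taut curves avoiding a prescribed tangent direction, which requires careful case analysis of the turning patterns at each puncture to rule out a curve that reverses horizontal direction after wrapping around a peg.
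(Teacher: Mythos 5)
Your proposal uses the same two ingredients as the paper's proof --- the single-intersection count coming from \(M(\alpha)\) being an L-space, and the avoidance of slope \(\alpha\) coming from \(\alpha \in \sL^\circ_M\) (equivalently \(\alpha \notin S^{sing}(M)\)) --- but you apply them in the reverse order, and that reordering is what creates the ``hard part'' you flag at the end. The paper's argument goes: a generic vertical line \(L_c\) (avoiding pegs) meets the pulled-tight \(\tildecurves{M,\spin}\) exactly once because \(M(\alpha)\) is an L-space; this already forces \(\tildecurves{M,\spin}\) to be a graph except possibly along vertical segments joining lattice points; and a vertical segment would make \(\alpha\) a tangent slope, contradicting \(\alpha \in \sL^\circ_M\). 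Notice that the corner analysis you were dreading collapses to a one-line observation: the only way to fail to be a graph, given the intersection count, is to have a vertical segment, and a vertical segment is itself a vertical tangency.

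Your version, by contrast, tries to establish monotonicity arc-by-arc and then push monotonicity through corners by claiming that ``the incoming and outgoing tangent directions avoid the vertical \ldots\ so these directions lie in a common open half-plane.'' That implication does not hold as stated: two non-vertical directions need not lie in a common half-plane transverse to vertical (e.g.\ incoming \((1,-1)\) and outgoing \((-1,1)\)). The correct justification is that the tangent direction along the \emph{entire} wrapping arc at the peg avoids the vertical, because \(S^{sing}(M)\) records all tangent slopes of the pegboard representative, including those along the small arcs around pegs; the tangent therefore rotates continuously within one half-plane, preserving horizontal monotonicity. If you make this explicit, your corner analysis closes and your argument becomes correct --- but it is doing redundant work, since once you invoke the single-intersection count, the monotonicity of smooth arcs was not needed in the first place. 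The paper's ordering avoids both the redundancy and the gap.
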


\begin{proof}
Consider the vertical line \(L_c\) given by the equation \(x = c\), where \(c\) is chosen so that \(L_c\) does not pass through any pegs. Since both \(L_c\) and  \(\tildecurves{M,\spin}\)  are pulled tight, they are in minimal position. Since \(M(\alpha)\) is an L-space, \(L_c \cap \tildecurves{M,\spin}\) contains a single point.
It follows that \(\tildecurves{M,\spin}\) is a graph, except possibly for some vertical segments joining lattice points. If such a segment exists, then \(\alpha \in S^{sing}(M)\), which contradicts \(\alpha \in \sL^\circ_M\). 
\end{proof}


Hence  \(\tildecurves{M,\spin}\) is an embedded curve which divides  the plane into two connected components. One of these components contains all points \(h \in H_1(M, \R)\) with \( h \cdot l \ll 0 \) and the other contains all points with \( h \cdot l \gg 0 \). We call points in the first component {\em black}, and those in the second component {\em white}. Equivalently, if we identify pegs with relative spin\(^c\) structures,  black pegs have \(n_{\spinhat} = 0\), while white pegs have \(n_{\spinhat} = 1\). 

If \(\mathbf{p}\) and \(\mathbf{q}\) are two distinct pegs, let \([\mathbf{p}-\mathbf{q}] \in \hat\R \) be the slope of the line joining them. We define
$$ X_\spin = \{ [\mathbf{p}- \mathbf{q}] \, | \, \mathbf{p} \ \text{is black,} \  \mathbf{q} \ \text{is white and} \ l \cdot (\mathbf{p} - \mathbf{q})  \geq 0\} $$
to be the set of slopes of lines joining a  black peg to a white peg which is no higher than it is.

\begin{proposition}
	\label{Prop:S=X}
	Suppose \(M\) is Floer simple and not solid-torus-like, and let \(\alpha \in \sL(M)\). Then \(S^{sing}(M,\spin)\) is the smallest interval in \(\hat \R \setminus \{ \alpha\}\) which contains the set \(X_\spin\). 
\end{proposition}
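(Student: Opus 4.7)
The plan is to use the preceding lemma to represent the pegboard form of $\tildecurves{M,\spin}$ as a piecewise linear graph $y=f(x)$ in coordinates where the vertical axis corresponds to $\alpha$ and the horizontal axis to $l$. Since $M$ is not solid-torus-like, $f$ is nonconstant, and since $\tildecurves{M,\spin}$ is the pullback of a curve collection invariant under a horizontal translation (coming from the deck group of $\tilde T_M \to \barT_{M,\spin}$), $f$ is horizontally periodic and its derivative has mean zero. In particular, $f'$ attains a strictly positive maximum $M$ and a strictly negative minimum $m$. The first task is to identify $S^{sing}(M,\spin)$ with the interval $[m,M]\subset\hat\R\setminus\{\alpha\}$, using the fact from \cite{HRW} that $\sL^\circ$ is the complement of $S^{sing}$: smoothing the corners of the tight representative realizes every slope in $[m,M]$ as a tangent, while pairing with a line of slope $\beta\notin[m,M]$ yields a minimal intersection count, placing $\beta$ in $\sL^\circ$. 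The problem therefore reduces to showing $[m,M]$ is the smallest interval in $\hat\R\setminus\{\alpha\}$ containing $X_\spin$.

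For the inclusion $X_\spin \subseteq [m,M]$, let $\mathbf{p}$ be a black peg (so $\mathbf{p}_y<f(\mathbf{p}_x)$) and $\mathbf{q}$ a white peg (so $\mathbf{q}_y>f(\mathbf{q}_x)$) with $l\cdot(\mathbf{p}-\mathbf{q})\geq 0$, equivalently $\mathbf{p}_y\geq\mathbf{q}_y$. The case $\mathbf{p}_x=\mathbf{q}_x$ is ruled out since it would force $\mathbf{q}_y>f(\mathbf{p}_x)>\mathbf{p}_y$. Assuming $\mathbf{p}_x>\mathbf{q}_x$ (the other case is symmetric), the slope $[\mathbf{p}-\mathbf{q}]=(\mathbf{p}_y-\mathbf{q}_y)/(\mathbf{p}_x-\mathbf{q}_x)$ is nonnegative and strictly less than $(f(\mathbf{p}_x)-f(\mathbf{q}_x))/(\mathbf{p}_x-\mathbf{q}_x)$, which by the mean value theorem (applied to the piecewise linear $f$) equals $f'(\xi)\leq M$ for some $\xi$. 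Hence $[\mathbf{p}-\mathbf{q}]<M$, and symmetrically $m<[\mathbf{p}-\mathbf{q}]$ when $\mathbf{p}_x<\mathbf{q}_x$, giving $X_\spin\subseteq[m,M]$.

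For the reverse direction I would realize $m$ and $M$ themselves as elements of $X_\spin$ by tracking the pegs at the endpoints of the extremal segments of the tight graph. The segment of slope $M>0$ runs from a peg $\mathbf{p}_{\mathrm{left}}$ to a peg $\mathbf{p}_{\mathrm{right}}$ with $\mathbf{p}_{\mathrm{right},y}>\mathbf{p}_{\mathrm{left},y}$. Since the incoming slope at $\mathbf{p}_{\mathrm{left}}$ is at most $M$, the bend there is convex-down, so $\mathbf{p}_{\mathrm{left}}$ lies above the curve (white); since the outgoing slope at $\mathbf{p}_{\mathrm{right}}$ is at most $M$, the bend there is convex-up, so $\mathbf{p}_{\mathrm{right}}$ lies below the curve (black). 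Setting $\mathbf{p}=\mathbf{p}_{\mathrm{right}}$ and $\mathbf{q}=\mathbf{p}_{\mathrm{left}}$ gives $M=[\mathbf{p}-\mathbf{q}]\in X_\spin$, and the analogous analysis at the segment of slope $m<0$ (whose upper-left endpoint is black and lower-right endpoint is white) shows $m\in X_\spin$.

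The main technical obstacle will be the first step: pinning down $S^{sing}(M,\spin)$ as exactly $[m,M]$, rather than merely containing or being contained in it. This requires carefully invoking the pegboard criterion for $\sL^\circ$ from \cite{HRW} and checking that essential tangencies of $\HFhat(M,\spin)$ are precisely the derivative values of the tight representative. A secondary concern is that the convex/concave analysis above must be compatible with the orientation convention on $\HFhat(M,\spin)$ fixed in Section~\ref{subsection:torsion} via $\tau_{(M,\gamma_\mu)}(\spinbar)=\HFhat(M,\spin)\cdot L_{\mu,\spinbar}$, in order to yield the claimed identification of black with below-the-graph and white with above-the-graph. Finally, the preceding lemma technically requires $\alpha\in\sL^\circ$, so a small extension argument is needed to cover the boundary case $\alpha\in\sL\setminus\sL^\circ$ allowed by the hypothesis.
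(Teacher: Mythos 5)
Your proposal follows the same two-part structure as the paper's proof: first showing that the slopes in $X_\spin$ all lie in the singular interval, then showing that the endpoints of the singular interval (the extremal slopes $m$ and $M$ of the pegboard representative) are themselves elements of $X_\spin$. The corner analysis in your third step matches the paper's almost verbatim, despite the reversed ``convex-up''/``convex-down'' labels (your stated conclusions---white peg at the left end of the max-slope segment, black peg at the right end---are the correct ones).

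However, there is a genuine internal inconsistency in the way you set up the inclusion $X_\spin \subseteq [m,M]$, and it is worth seeing why the paper's phrasing sidesteps it. You argue that a black peg satisfies $\mathbf{p}_y < f(\mathbf{p}_x)$ strictly and a white peg satisfies $\mathbf{q}_y > f(\mathbf{q}_x)$ strictly, and then push the mean-value argument through to get $[\mathbf{p}-\mathbf{q}] < M$ and, in the other case, $m < [\mathbf{p}-\mathbf{q}]$; combining the cases places all of $X_\spin$ in the \emph{open} interval $(m,M)$. This directly contradicts your third step, where you correctly identify $m$ and $M$ as elements of $X_\spin$. The source of the trouble is that for a peg that is actually a corner of the pegboard representative, the inequality $\mathbf{p}_y \lessgtr f(\mathbf{p}_x)$ becomes an equality in the idealized (point-peg) limit, so the strict bound evaporates exactly for the pair of pegs realizing the extremal slope. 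The paper avoids this by never arguing with the graph $f$ of a specific representative at this step: it instead takes the ray from $\mathbf{p}$ through $\mathbf{q}$, uses the colour assignment to produce two intersection points of that ray with an \emph{arbitrary} representative $\gamma$, and invokes the extended mean value theorem to conclude $[\mathbf{p}-\mathbf{q}]\in S(\gamma)$ for every $\gamma$, hence $[\mathbf{p}-\mathbf{q}]\in S^{sing}(M,\spin)$. This is robust because it never needs to quantify the slack in the peg inequalities. Your approach can be repaired by replacing the strict inequalities with non-strict ones and tracking the pegboard limit, but as written the two halves of your argument are incompatible. Your closing observation about $\alpha\in\sL$ versus $\alpha\in\sL^\circ$ is a fair point about the hypotheses; the pegboard lemma as stated does require $\alpha\in\sL^\circ$, and the paper's proof tacitly uses it.
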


\begin{proof} The set \(S^{sing}(M, \spin)\) is an interval which does not contain \(\alpha\). We first show that \(X_s \subset S^{sing}(M, \spin)\). Suppose that \(\mathbf{p}\) is a black peg, \(\mathbf{q}\) is a white peg, and that \( l \cdot (\mathbf{p} - \mathbf{q})  > 0\). Let \(\gamma\) be a curve representing \(\tildecurves{M, \spin}\), and consider the ray from \(\mathbf{p}\) to \(\mathbf{q}\). Since \(\mathbf{p}\) is black and \(\mathbf{q}\) is white, there must be some point \(x\) on the segment from \(\mathbf{p}\) to \(\mathbf{q}\) which lies on \(\gamma\). The ray  from \(\mathbf{p}\) to \(\mathbf{q}\) points down, so it must eventually reenter the black region. Thus there is some other point \(y\) on the ray past \(\mathbf{q}\) which lies on \(\gamma\). Applying the (extended) mean value theorem to \(x\) and \(y\), we see that \( [\mathbf{p}- \mathbf{q}] \in S(\gamma)\). It follows that \([\mathbf{p}- \mathbf{q}] \in S^{sing}(M, \spin)\). 
	
	To show that \(  S^{sing}(M, \spin)\) is the smallest interval containing \(X_\spin\), it suffices to show that the endpoints of \(S^{sing}(M, \spin)\) lie in \(X_\spin\). If \(M\) is not solid-torus-like, then  \(S^{sing}(M, \spin)\) is a union of intervals whose endpoints are slopes of the pegboard diagram for \(\tildecurves{M, \spin}\). Thus its endpoints are slopes of the pegboard diagram. 
	
	Under our identification of \(H_1(\partial M)\) with \(\R^2\), the slope \(\alpha\) corresponds to a vertical line, which has infinite slope.  
	Thus the endpoints of \(S^{sing}(M,\spin)\) will be the maximum and minimum values of \(f'(x)\).  
	
	At each corner of the graph, either the curve is concave up (\(f''(x)\geq0)\), and the curve lies just below a white peg, or the curve is concave down \((f''(x)\leq0)\) and the curve lies just above a black peg. Clearly the maximal value of the slope \(f'(x)\) is attained on an interval where we transition from having \(f''(x)\geq 0\) to having \(f''(x) \leq 0\). The left endpoint of the corresponding segment lies below a white peg, while the right endpoint is above a black peg. Thus the  slope is an element of \(X_{\spin}\). Similarly, the minimal value of the slope must occur on a segment where the left endpoint lies above a black peg, and the right endpoint lies above a white one. This slope is also in \(X_\spin\).  
\end{proof}

In  \cite{RR}, \(\sL(M)\) was characterized in terms of the set 
$$ D^\tau(M) = \{ \spinbar_0 - \spinbar_1 \, | \, \spinbar_0, \spinbar_1 \in Spin^c(M, \partial M),  \tau(M, \spinbar_i)=i, l \cdot  (\spinbar_0 - \spinbar_1)  \geq 0 \} \subset H_1(M) $$ 
 Let \(j_*:H_1(\partial M) \to H_1(M)\) be the inclusion, and denote by  \([j_*^{-1}(D^\tau(M))] \subset Sl(\partial M)\) the projectivization of the set \(j_*^{-1}(D^\tau(M))\). 

\begin{lemma}
	\label{Lem:X=Dtau}
	$[ \displaystyle j_*^{-1}(D^\tau(M)) ]= \bigcup_{\spin \in Spin^c(M)} X_\spin. $
\end{lemma}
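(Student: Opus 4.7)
The plan is to set up a dictionary between the combinatorial data defining $X_\spin$ (colored pegs in a plane cover) and the algebraic data defining $D^\tau(M)$ (pairs of relative \(\spinc\) structures with prescribed torsion values), then verify that the two descriptions match term by term. Concretely, identify the lattice $H_1(\partial M;\Z) \subset \tildeT_M$ with the set of lifts of pegs under the compositions $\tildeT_M \to \barT_{M,\spin} \to T_M$; each lattice point $\mathbf{p}$ projects to a peg $z_{\spinbar_{\mathbf{p}}}$ in $\barT_{M,\spin}$ for some $\spinbar_{\mathbf{p}} \in \spinc(M,\partial M,\spin)$, and the difference $\mathbf{p}-\mathbf{q}$ of two lattice points maps to $\spinbar_{\mathbf{p}}-\spinbar_{\mathbf{q}}$ under $j_*\co H_1(\partial M) \to H_1(M)$.

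The first key observation is that by Proposition~\ref{prp:torsion} the peg $\mathbf{p}$ is black (respectively white) exactly when $\tau_M(\spinbar_{\mathbf{p}})=0$ (respectively $1$). The second key observation concerns when two lattice points descend to pegs in the same covering space $\barT_{M,\spin}$: this happens precisely when $\spinbar_{\mathbf{p}} - \spinbar_{\mathbf{q}} \in H_M = \im j_*$. Conversely, elements of $\spinc(M,\partial M)$ whose difference lies in $H_M$ automatically have the same restriction to $\spinc(M)$, since the composite $H_1(\partial M) \xrightarrow{j_*} H_1(M) \to H_1(M,\partial M)$ is zero in the long exact sequence of the pair. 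Finally, since $l$ pairs trivially with $\ker j_*$, the intersection $l \cdot v$ for $v \in H_1(\partial M)$ depends only on $j_*(v) \in H_M$, so the sign condition $l \cdot (\mathbf{p}-\mathbf{q}) \geq 0$ in the definition of $X_\spin$ is precisely the condition $l \cdot (\spinbar_{\mathbf{p}} - \spinbar_{\mathbf{q}}) \geq 0$ in the definition of $D^\tau(M)$.

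With these observations, both inclusions are immediate. For $X_\spin \subseteq [j_*^{-1}(D^\tau(M))]$, given $[\mathbf{p}-\mathbf{q}] \in X_\spin$ the vector $v = \mathbf{p}-\mathbf{q} \in H_1(\partial M;\Z)$ satisfies $j_*(v) = \spinbar_{\mathbf{p}} - \spinbar_{\mathbf{q}} \in D^\tau(M)$, so $[v] \in [j_*^{-1}(D^\tau(M))]$. For the reverse inclusion, suppose $v \in j_*^{-1}(D^\tau(M))$, so $j_*(v) = \spinbar_0 - \spinbar_1$ with $\tau(\spinbar_i) = i$ and the sign condition on $l$ holding. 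The second observation above produces a well-defined $\spin \in \spinc(M)$ that is the common restriction of $\spinbar_0$ and $\spinbar_1$. Choose any lift $\mathbf{q} \in \tildeT_M$ of $z_{\spinbar_1}$ and set $\mathbf{p} = \mathbf{q} + v$; since $j_*(v) = \spinbar_0 - \spinbar_1$, the point $\mathbf{p}$ is a lift of $z_{\spinbar_0}$, and by construction $\mathbf{p}$ is black, $\mathbf{q}$ is white, and $[\mathbf{p}-\mathbf{q}] = [v] \in X_\spin$.

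There is no real obstacle to this argument beyond bookkeeping; the main care required is in keeping the different torsors $\spinc(M,\partial M)$, $\spinc(M,\partial M,\spin)$, and the lifts of pegs in the covers $\barT_{M,\spin}$ and $\tildeT_M$ distinct and in verifying that the pairing with $l$ is compatible across the two descriptions. Once this is done, the lemma follows directly from Proposition~\ref{prp:torsion} together with the exactness of the long exact sequence of the pair $(M,\partial M)$.
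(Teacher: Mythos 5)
Your proof is correct and follows the same route as the paper: both set up the dictionary between lattice points in the universal (abelian) cover and relative $\spinc$ structures, translate the black/white coloring to torsion values via Proposition~\ref{prp:torsion}, observe that differences in $\im j_*$ correspond to agreement of the underlying $\spinc(M)$ structure, and note that the $l$-pairing is preserved. The paper states the correspondence in one pass and lets the equality fall out; you make the two inclusions explicit, but the content is the same.
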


\begin{proof} If \(\spinbar_0, \spinbar_1 \in Spin^c(M, \partial M)\), then \(\spinbar_0 - \spinbar_1 \in \im j_*\) if and only if \(\spinbar_0\) and \( \spinbar_1\) induce the same \(Spin^c\) structure \(\spin \in Spin^c(M)\). If this is the case, then \(j_*^{-1} (\spinbar_0 - \spinbar_1)\) is the set of differences of the form \(\mathbf{p}_0- \mathbf{p}_1\), where \(\mathbf{p}_i\) is a lattice point in \(H_1(\partial M, \R)\) whose image in \(\overline{T} =H_1(M, \R)/\ker j_*\) is \( z_{\spinbar_i}\). The condition that \(\tau(M, \spinbar_i) = i\) is equivalent to saying that \(n_{\spinbar_i} = i\) in other words, that \(\mathbf{p}_0\) is black and \(\mathbf{p}_1\) is white. Finally, the condition that \(  l \cdot (\spinbar_0 - \spinbar_1)  \geq 0\) is equivalent to saying that \( l \cdot (\mathbf{p}_0 - \mathbf{p}_1) \geq 0\). 
\end{proof}

Combining Proposition~\ref{Prop:S=X} with Lemma~\ref{Lem:X=Dtau}, we arrive at 
\begin{theorem} (Theorem 1 of \cite{RR})
	Suppose \(M\) is Floer simple and not solid torus like, and that \(\alpha \in \sL^\circ(M)\). Then \(\sL^\circ(M)\) is the largest  interval of \(Sl(\partial M)\) which contains \(\alpha\) and does not contain any element of \([j^{-1}_*(D^\tau(M))]\). 
\end{theorem}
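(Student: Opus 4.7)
The plan is to combine Proposition~\ref{Prop:S=X}, Lemma~\ref{Lem:X=Dtau}, and the characterization from \cite[Theorem 14]{HRW} (quoted as Theorem~\ref{thm:L-space-gluing} here, with the underlying identity $\sL^\circ(M) = Sl(\partial M) \setminus S^{sing}(M)$). Recall that $S^{sing}(M) = \bigcup_{\spin \in \spinc(M)} S^{sing}(M,\spin)$, since a slope is an essential tangent slope to $\HFhat(M)$ if and only if it is an essential tangent slope to $\HFhat(M,\spin)$ for some $\spin$.

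The key step is then assembly. By Proposition~\ref{Prop:S=X}, each $S^{sing}(M,\spin)$ is the smallest closed interval in $Sl(\partial M) \setminus \{\alpha\}$ containing $X_\spin$; in particular, $\alpha \notin S^{sing}(M,\spin)$ for every $\spin$, so each of these intervals sits inside a common arc complementary to $\alpha$. Taking the union over $\spin$ yields
\[ S^{sing}(M) = \bigcup_{\spin} S^{sing}(M,\spin), \]
which is the smallest closed subset of $Sl(\partial M)\setminus\{\alpha\}$ containing $\bigcup_\spin X_\spin$ and closed under ``filling in'' arcs whose endpoints both lie in a single $X_\spin$. By Lemma~\ref{Lem:X=Dtau}, $\bigcup_\spin X_\spin = [j^{-1}_*(D^\tau(M))]$. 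Passing to complements, $\sL^\circ(M)$ is the arc component of $Sl(\partial M) \setminus [j^{-1}_*(D^\tau(M))]$ containing $\alpha$, which is exactly the largest interval containing $\alpha$ and disjoint from $[j^{-1}_*(D^\tau(M))]$.

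The main obstacle is verifying that taking the union of the individual $S^{sing}(M,\spin)$'s does not ``add'' any new slopes beyond the convex hull of $\bigcup_\spin X_\spin$ in $Sl(\partial M)\setminus\{\alpha\}$. In other words, one must check that the endpoints of $\sL^\circ(M)$ are actually realized by elements of $[j^{-1}_*(D^\tau(M))]$, not merely approached. This follows from the argument in Proposition~\ref{Prop:S=X}: at each endpoint of $S^{sing}(M,\spin)$, the extremal slope of $\tildecurves{M,\spin}$ is realized between a concave-up arc (tangent from below at a white peg) and a concave-down arc (tangent from above at a black peg), so the endpoint is the slope of a line joining a specific black-white pair in $X_\spin$. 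Since there are finitely many components and each contributes a closed interval with endpoints in $X_\spin$, the union has endpoints in $\bigcup_\spin X_\spin$, as required. The solid-torus-like hypothesis is invoked precisely to rule out components $\bargamma$ with $\mathcal{P}(\bargamma) = \emptyset$, for which the above geometric argument would fail.
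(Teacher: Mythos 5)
Your proposal is correct and takes essentially the same approach as the paper: the paper's entire proof is the one sentence ``Combining Proposition~\ref{Prop:S=X} with Lemma~\ref{Lem:X=Dtau}, we arrive at,'' and you have simply spelled out the set-theoretic assembly that this sentence leaves to the reader. Your key observations --- that $S^{sing}(M)=\bigcup_{\spin}S^{sing}(M,\spin)$, that the endpoints of each $S^{sing}(M,\spin)$ lie in $X_\spin$ (so they ``block'' any arc from $\alpha$ into $S^{sing}(M,\spin)$), and that the non-solid-torus-like hypothesis is what guarantees $\mathcal{P}(\bargamma)\neq\emptyset$ so the pegboard argument in Proposition~\ref{Prop:S=X} applies --- are exactly the content the paper's terse proof relies on implicitly.
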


\section{Seiberg Witten theory}\label{sec:seiberg-witten}

It is interesting to compare the invariant \(\barcurves{M}\) with the moduli space of finite energy solutions to the Seiberg-Witten equations on \(M\). In this section, we briefly sketch the way this analogy should work,  relying  mainly on the work of Morgan, Mrowka, and Szab{\'o} \cite{MMS1997} and Mrowka, Ozsv{\'a}th and Yu \cite{MOY1997}. 

\subsection{The Seiberg-Witten equations} If \(M\) is a manifold with torus boundary, we let \(M'= M \cup_{\partial M} \partial M \times [0,\infty)\). We fix a Riemannian metric \(g\) on \(M'\) which has the form \(g_{E} + dt^2\) on \(\partial M \times [0, \infty)\), where \(g_E\) is a flat metric on \(\partial M \cong T^2\). 

Next, we  choose \(\spin \in \spinc(M')\), and let \(E_\spin\) be  the principal \(\spinc(3)\) bundle over \(M'\) associated to \(\spin\). A connection \(A\)  on \(E_\spin\) induces a connection \(\widehat{A}\) on the determinant line bundle 
\(\det(\spin)\), as well as on the associated \(SO(3)\) bundle, which is the frame bundle of \(M'\). We consider the  space \(\mathcal{A}\) of connections on \(E_\spin\) which induce a fixed connection \(A_{\mathfrak{so}_3}\) on the frame bundle. (Usually \(A_{\mathfrak{so}_3}\) will be the Levi-Civita connection induced by \(g\).)   \(\mathcal{A}\) is an affine space modeled on \(\Omega^1(M'; i\R)\). Finally, we let \(W\) be the spinor bundle associated to \(E_\spin\). 

The Seiberg-Witten equations on \(M'\) are equations for a pair \((A, \psi) \in \mathcal{C} = \mathcal{A} \times \Gamma(W)\). They have the form
\begin{align*}
& \slashed{\partial} _A \psi  = 0 \\
& F_{\widehat{A}} = q(\psi)
\end{align*}
where \(q(\psi)\) is a certain quadratic function of the spinor. The gauge group \(\mathcal{G} = \operatorname{Map}(M', S^1)\) acts on  \(\mathcal{C}\) by \(\gamma(A, \psi) = (A-\gamma^{-1}d\gamma, \gamma \cdot \psi)\); the equations are invariant under this action.

\subsubsection*{The limit map}
The {energy} of a Seiberg-Witten solution \((A, \psi)\) on \(M'\)  is given by 
$$E(A, \psi) = \frac{1}{4} \int_{M'} \left(\Vert F_{\widehat{A}}\Vert^2 +4 \Vert \nabla_A \phi \Vert^2 + \Vert \phi \Vert^4 + s \Vert \phi \Vert^2  \right)$$
where \(s\) is the scalar curvature of \(M'\). 
We let \(\mathcal{M}(M, \spin)\) denote the quotient of the set of finite energy Seiberg-Witten solutions on \(M'\)  by the action of \(\mathcal{G}\). 

Let \(\mathcal{M}(\partial M, \spin|_{\partial M})\) be the set of translation invariant solutions to the Seiberg-Witten equations on \(\partial M \times \R\) modulo the action of the group \(\operatorname{Map}(\partial M, S^1)\) of translation invariant gauge transformations on \(M \times \R\). A  Seiberg-Witten solution on \(M'\) can be put in temporal gauge on the cylindrical end. Once this is done, the finite energy condition ensures that \((A, \psi)|_{\partial M \times [T, \infty)}\) limits to an element of  \(\mathcal{M}(\partial M, \spin|_{\partial M})\) as \(T \to \infty\). We thus obtain a map 
$$ j\co \mathcal{M}(M, \spin) \to \mathcal{M}(\partial M, \spin|_{\partial M})$$ 
which may be refined as follows. Let \(\overline{\mathcal{G}}_{\partial M}\subset \operatorname{Map}(\partial M, S^1)\)  be the subgroup of maps which extend to \(M\), and let \(\overline{\mathcal{M}}(\partial M, \spin|_{\partial M})\) be the quotient of the set of translation invariant solutions by \(\overline{\mathcal{G}}_{\partial M}\). Then there is a covering map \(\overline{\mathcal{M}}(\partial M, \spin|_{\partial M})
\to \mathcal{M}(\partial M, \spin|_{\partial M}) \) and a well-defined map
$$ \overline{j}\co  \mathcal{M}(M, \spin) \to \overline{\mathcal{M}}(\partial M, \spin|_{\partial M})$$
which is a lift of \(j\) to  \(\overline{\mathcal{M}}(\partial M, \spin|_{\partial M}).\)

\subsection{Structure of  \({\mathcal{M}(\partial M, \spin|_{\partial M})}\)}
\label{sub:structure} So far, everything we have said applies to an arbitrary manifold with a cylindrical end. We now use the fact that \(\partial M \cong T^2\). 
Since the Riemannian metric \(g_E\) on \(\partial M\) has non-negative scalar curvature (in fact, it is flat),  all Seiberg-Witten solutions on \(\partial M \times \R\) are {\em reducible}; that is they have \(\psi \equiv 0 \). It follows that 
  \(\mathcal{M}(\partial M, \spin|_{\partial M}) = \emptyset\)  unless \(c_1(\spin) = 0 \). Let \(\spin_0\) be the unique spin\(^c\) structure on \(M\) with  \(c_1(\spin_0)=0\).

  Choose a connection \(A_0\) on \(E_\spin(\partial M \times \R)\) such that \(F_{\widehat{A_0}} = 0\). Then \(F_{\widehat{A_0+a}} = 2da\), so \((A_0+a, 0)\) is a reducible solution to the Seiberg-Witten equations if and only \(a\) is closed. Denote the identity component of  \(\mathcal{G}_{\partial M}\) by  \(\widetilde{\mathcal{G}}_{\partial M}  = \{ e^{if} \, | \, f\co \partial M \times \R \to \R\}\). We have \(e^{if} \cdot (A_0 +a,0) = (A_o+a - i df,0)\), so  the quotient of the  space of Seiberg-Witten solutions on \(\partial M \times \R\) by \( \widetilde{\mathcal{G}}_{\partial M}\) is naturally identified with \(H^1(\partial M, \R)\). The quotient \(\mathcal{G}_{\partial M}/\widetilde{\mathcal{G}}_{\partial M} = [\partial M, S^1]= H^1(\partial M, \Z)\) acts on this space in the obvious way, so    \(\mathcal{M}(\partial M, \spin_0) =
 H^1(\partial M, \R)/H^1(\partial M, \Z)\). By Poincar\'e duality, this space can be identified with the torus \(H_1(\partial M, \R)/H_1(\partial M, \Z)\).
  
 The quotient  \(\overline{\mathcal{G}}_{\partial M}/\widetilde{\mathcal{G}}_{\partial M}\) consists of those elements of \(H^1(\partial M,\Z)\) which pull back from \(H^1(M, \Z)\).  Thus  \(\overline{\mathcal{M}}(\partial M, \spin_0) =
 H^1(\partial M, \R)/j^*(H^1(M, \Z))\). By Poincar\'e duality, this can be identified with \( H_1(\partial M, \R)/\ker j_*\). 

An important feature of \(\mathcal{M}(\partial M, \spin_0)\) is that it contains a unique point \(z=(A_0,0)\) for which \(\ker \slashed {\partial} _{A_0}\) is nontrivial. To understand this fact, we recall the structure of the Dirac operator on a Riemann surface \(\Sigma\) equipped with a \(\spinc\) structure \(\spin\). The spinor bundle \(W\) on \(\Sigma\) splits as \(W^+ \otimes W^-\) where \(W^\pm\) are complex line bundles. A connection \(A\) on \(E_\spin\) induces connections \(A^\pm\) on \(W^\pm\). Since \(\Sigma\) is a Riemann surface, the curvature 
\(F_{A^\pm}\) is automatically of type \((1,1)\), so the connections \(A^{\pm}\) induce holomorphic structures on \(W^{\pm}\). 
As holomorphic line bundles,  \(W^- = W^+ \otimes K_\Sigma^{-1}\), where \(K_\Sigma\) is the canonical bundle, and the Dirac operator \( \slashed{\partial} _A\co \Gamma(W^+) \to \Gamma(W^-)\) is given by \(\sqrt{2} \overline{\partial}_A\).  Finally, we have \(\det(\spin) = W^+ \otimes W^- =  (W^+)^2 \otimes K_\Sigma^{-1}\). 

When \(\Sigma = \partial M\) is a torus and \(\spin = \spin_0\), \(K_\Sigma\) is the trivial bundle and \(c_1(W^+) = 0\). The moduli space \(\mathcal{M}(\partial M, \spin_0)\) can be identified with \(\text{Pic}^0(\Sigma)\) via the map which assigns to \((A,0)\) the line bundle \(W^+\) with the holomorphic structure induced by \(A\). 
Then the  Dirac operator \( \slashed{\partial} _A\) has nontrivial kernel precisely when \(W^+\) has a holomorphic section. There is a unique element of \(\text{Pic}^0(\Sigma)\) with a holomorphic section; namely, the trivial bundle. Let \(A_0\) be the corresponding flat connection.

\subsubsection*{Conjugation symmetry} A \(\spinc\) structure \(\spin\) on \(M\) has a {conjugate} \(\spinc\) structure \(c(\spin)\) whose transition functions are conjugate to the transition functions for \(\spin\). We have \(W_{c(\spin)}^\pm =(W_\spin^\mp)^*\).  A connection \(A\) on \(\spin\) induces a connection \(\overline{A}\) on \(c(\spin)\), and a spinor \(\psi\) for \(\spin\) induces a spinor \(\overline{\psi}\) for \(c(\spin)\). The map
\(c\) defined by \( (A,\psi) \mapsto (\overline{A}, \overline{\psi})\) identifies \(\mathcal{M}(M, \spin)\) with 
 \(\mathcal{M}(M, c(\spin))\). 
 
On \(\partial M\),  \( c(\spin)_0 = \spin_0\), so \(c\) induces an involution of \(\mathcal{M}(\partial M, \spin_0)\). Under the identification  \(\mathcal{M}(\partial M, \spin_0) =  \text{Pic}^0(\partial M)\), we have \(c(L) = L^*\). The four fixed points of \(c\) correspond to the four spin structures on \(\partial M\). The special point \(z\) is one of these points; to specify which one, we recall the following description of spin structures on \(S^1\). Let \(V\) be a nonvanishing section of \(TS^1\). The preimage of \(V\) in the spin bundle is a double cover of \(V\); if it is a trivial double cover, we say that the spin structure is the trivial spin structure on \(S^1\), and if it is nontrivial, we say that the spin structure is nontrivial. If we write \(\partial M = S^1 \times S^1\), then  the spin structure corresponding to \(z\) is the product of the trivial spin structure with itself.

\subsubsection*{Reducible solutions}
 If \(\partial M = T^2\), then \(j^*\co H^2(M) \to H^2(\partial M)\) is the trivial map, so any \(Spin^c\) structure \(\spin\)  on \(M\) restricts to \(\spin_0\). 
Elements of \(\mathcal{M}(M, \mu, \spin) \) may be divided into {\it
  reducibles} (solutions with \(\psi \equiv 0\)) and {\it
  irreducibles} (all the rest). 
We let  \(\mathcal{M}^{red}(M, \spin)\) be the space of reducible
solutions, and similarly for \(\mathcal{M}^{irred}\).  
Arguing as we did for \(T^2\), it is easy to see that 
\(\mathcal{M}^{red}(M, \spin) =  H^1(M;\R)/H^1(M;\Z)\) if  \(c_1(\spin)\)
is torsion, and is empty if it is not. 

To describe the image of \(\mathcal{M}^{red}(M,\spin)\) under \(j\), we fix a basis \((m,l)\) for \(H_1(\partial M)\), where \(l\) is a rational homological longitude and \(m \cdot l = 1\). We identify \(\mathcal{M}(\partial M, \spin_0)\) with  \(S^1 \times S^1\) by the map which sends \((A,0)\) to \(({\hol}_m A^+, {\hol}_l A^+)\). Suppose that the order of \(l\) in \(H_1(M)\) is \(n\). Then if \(S\) is a surface in \(M\) which bounds \(n l\), 
$$ 2n \, {\hol}_l A^+ = n \,{\hol}_l \widehat{A} = \int_S F_{\widehat{A}} = 0 \in \R/(2\pi \Z)$$
so \(j(\mathcal{M}^{red}(M,\spin))\) lies on a line of the form \(\text{hol}_l A^+ = k\pi/n\) for \(k \in \Z/(2p)\). 

To pin down the value of \(k\), we fix a spin structure \(\mathfrak{t}\) on \(M\), and let \(\mathfrak{s}\) be the associated \(\spinc\) structure, so that \(c_1(\mathfrak{s})=0\). The restriction of \(\mathfrak{t}\) to 
\(\partial M\) determines a 2-torsion point \(p_{\mathfrak{t}} \in \text{Pic}^0(\partial M)\), and 
\(j(\mathcal{M}^{red}(M, \mathfrak{s}))\) is the horizontal line in \(\text{Pic}^0(\partial M)\) passing through \(p_{\mathfrak{t}}\). More generally, for \(x \in H_1(M)\), let \(l\cdot x = S \cdot x/p\), which is a well-defined element of \(\R/\Z\). Then it is not hard to show that  
\(j(\mathcal{M}^{red}(M, \mathfrak{s}+x))\) is the horizontal line given by the equation 
\(\hol_l A^+ = \hol_l p_{\mathfrak{t}} + 2 \pi l \cdot x\).

\subsection{Floer solid tori, revisited}
The solid torus \(M=S^1 \times D^2\) admits a metric of positive scalar curvature, so the Seiberg-Witten equations have only reducible solutions.  Thus \(\mathcal{M}(M) = \mathcal{M}^{red}(M) \simeq S^1 \). 
 Its image under \(j\) passes through the two points on  \( \mathcal{M}(\partial M, \spin_0)\) corresponding to spin structures on \(\partial M\) which extend over \(S^1 \times D^2\). The spin structure on \(S^1\) which extends to \(D^2\) is the one corresponding to the non-trivial double cover of \(S^1\), so \(j(\mathcal{M}(S^1\times D^2))\) is disjoint from \(z\).  \(\overline{j}(\mathcal{M}(S^1\times D^2)) = S^1 \times 0 \subset S^1 \times \R\).  It lies midway between two preimages of \(z\), and coincides with \(\barcurves{S^1\times D^2}\) (up to homotopy). 
 
   If \(M\) is the twisted \(I\)--bundle over the Klein bottle, \(M\) admits a metric of nonnegative scalar curvature, so we again have \(\mathcal{M}(M) = \mathcal{M}^{red}(M).\)  \(H^2(M) \simeq \Z \oplus \Z/2\), so there are two torsion \(\spinc\) structures \(\spin, \spin'\) on \(M\), both of which are induced by spin structures. Their images under \(j\) are two parallel horizontal lines, each passing through 2 fixed points of \(c\). 
  
  The kernel of the map \(H_1(\partial M) \to H_1(M)\) is a subgroup of the form \(2\Z \oplus 0 \subset \Z \oplus \Z\), so the cover \(\barT \simeq S^1 \times \R\), where each circle of the form \(S^1 \times n\) contains {\em two} preimages of \(z\).  \(\overline{j}(\mathcal{M}(M, \spin))\)  has the same form as \(\barcurves{M,\spin}\), as shown in Figure \ref{Fig:SW_first}.  \(\overline{j}(\mathcal{M}(M, \spin'))\) is  more interesting; it passes directly through two lifts of \(z\). To understand what is going on, note that 
  since the metric on \(M\) is flat rather than positively curved,
    reducible solutions need not be transversely cut out. Indeed, 
  as we shall see below, the two reducible solutions passing through lifts of \(z\) are not transversely cut out. When we perturb to achieve transversality, we expect that the resulting curve will resemble \(\barcurves{M,\spin'}\) as shown in Figure \ref{Fig:SW_first}.

\begin{figure}[t]
\includegraphics[scale=0.8]{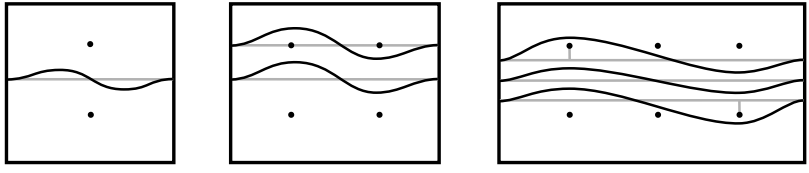}
\caption{Moduli spaces of solutions for the solid torus, the twisted $I$-bundle over the Klein bottle, and the Seifert fibered Floer solid torus with base orbifold $D^2(3,3)$. In each case, the cover $\barT$ is illustrated so that the left and right sides of each regtangle are identified (as in the examples of Figure \ref{fig:first-4-knots}). Here, and below, the moduli spaces of solutions is indicated in grey while a representative of the invariant $\liftcurves{M}$ is illustrated in black.}\label{Fig:SW_first}
\end{figure}

\subsection{Structure of  \({\mathcal{M}(M,  \spin)}\)}
The structure of \(\mathcal{M}(M, \spin) \) was described by Morgan, Mrowka, and Szab{\'o} \cite{MMS1997}. (In fact, \cite{MMS1997} studies solutions to the four-dimensional solutions to the Seiberg-Witten equations on a manifold with an end of the form \(T^3 \times [0, \infty)\), but their results can be made  to apply in the 3-dimensional case by considering solutions on \(M' \times S^1\).) It  follows from their work that 
\(\mathcal{M}(M, \spin) \) is compact. Moreover, they showed that the irreducible part of the moduli space has the following local structure.

The fact that \(\ker \partial_{A_0}\) is nontrivial implies that the moduli space 
 \( \mathcal{M}(\partial M, \spin_0)\) is not transversely cut out at the point \(z= (A_0,0)\). This has important consequences for the structure of \(\mathcal{M}(M,\spin)\). If \(j(A,\psi) = z' \neq z\), then it can be shown that the solution \((A, \psi)\) decays exponentially to \(z\) as we go down the cylindrical end. In turn, this can be used to prove that for generic \(\mu\), the moduli space \(\mathcal{M}(M, \mu, \spin) \)
 is a 1-dimensional manifold near \((A, \psi)\).
 
 In contrast, if \(j(A,\psi) = z\), the solution decays more slowly as we go along the tube. A more delicate analysis using the center manifold technique shows  that  for generic \(\mu\),  \(\mathcal{M}(M, \mu, \spin) \) is locally homeomorphic to \([0,1)\), where the point corresponding to \(0\) maps to \(z\) under \(j\). 
 
In summary,  \(\mathcal{M}(M, \mu, \spin) \) can be written as the union of \(\mathcal{M}^{red}\) and \(\mathcal{M}^{irred}\). The unperturbed moduli space \(M^{red}\) is homeomorphic to the torus \(H^1(M;\R)/H^1(M;\Z)\), while 
\(M^{irred}\) has the structure of a (possibly noncompact) \(1\)-manifold with boundary. Boundary points of \(\mathcal{M}^{irred}\) map to \(z\) under \(j\), while noncompact ends of \(\mathcal{M}^{irred}\) limit to \(\mathcal{M}^{red}\). 

\subsection{Seifert fibred spaces}
\label{subsec:SFSW}  The  Seiberg-Witten equations on closed Seifert fibred spaces were studied by Mrowka, Ozsv{\'a}th and Yu \cite{MOY1997}. Their results can be extended to Seifert fibred spaces with boundary with little change. We  sketch this process here. First, we equip \(TM'\) with a connection \(A_{\mathfrak{so}_3}\)  compatible with the \(S^1\) action on \(M'\), but which is not induced by a metric. Then it can be shown (as in \cite[Theorem 4]{MOY1997}) that any irreducible finite energy solution to the Seiberg-Witten equations on \(M'\) is invariant under the \(S^1\) action on \(M'\) induced by the Seifert fibration. 

Next, \(S^1\)--invariant solutions to the Seiberg-Witten equations are shown to correspond to finite energy solutions of the vortex equations on the base orbifold \(\Sigma\) of \(M\). These equations have the following form \cite[equations (25)-(27)]{MOY1997}
\begin{align*}
2 F_A - F_K & = i (|\alpha|^2 - |\beta|^2) \omega_\Sigma \\
\overline{\partial}_A \alpha = 0 \quad & \text{and} \quad \overline{\partial}_A^* \beta = 0 \\
\alpha = 0 \quad & \text {or} \quad \beta = 0 
\end{align*}
Here \((\alpha, \beta)\) is a section of the spinor bundle \(W\) for \(\Sigma\). As in Section \ref{sub:structure},  \(W\) can be decomposed as 
\(W = W^+ \oplus W^-\), where \(W^- = W^+ \otimes K_\Sigma^{-1}\), where \(K_\Sigma\) is the canonical bundle of \(\Sigma\), endowed with the metric connection. \(A\) is a connection on \(W^+\); it induces a Hermitian metric on \(W^+\), which we use to define both \(|\alpha|^2\) and \(\overline{\partial}_A\). Finally, \(\omega_\Sigma\) is the area form on \(\Sigma\). 

Let \(\mathcal{M}^{irred}_\alpha(M)\) be the moduli space of solutions to these equations for which \(\alpha\neq  0\), and similarly for \(\beta\). Conjugation symmetry exchanges  \(\mathcal{M}^{irred}_\alpha(M)\) and 
 \(\mathcal{M}^{irred}_\beta(M)\), so it is enough to understand  \(\mathcal{M}^{irred}_\alpha(M)\).

As we go down the tubular end, solutions to the vortex equations limit to flat \(S^1\) connections on the boundary \(S^1\). The space of such solutions modulo gauge is naturally identified with \(S^1\). If we only quotient by those gauge transformations which extend over \(\Sigma\), the resulting moduli space can be identified with \(\R\). As in the 3-dimensional case, we have a map \(j:\mathcal{M}^{irred}_\alpha(M) \to \R\) given by \(j(A, \alpha) = h_A := \frac{i}{2\pi} \int_\Sigma F_A\). 

Let \(\mathcal{D}(\Sigma)\) denote the set of effective orbifold divisors on \(\Sigma\).
In analogy with the results of \cite{MOY1997}, the moduli space \(j^{-1}(h) \cap \mathcal{M}^{irred}_\alpha(M)\) can be identified with the set
$$ \mathcal{D}_h= \{ D \in \mathcal{D}(\Sigma) \, | \, |D|\leq h   \}$$
when \(h <-\chi(\Sigma)/2\), and is empty for \(h \geq - \chi(\Sigma)/2\). The correspondence between the two is established as follows. 
Suppose \(j(A, \alpha) = h\). Then \(\alpha\) is a holomorphic section of \(W^+\) (with holomorphic structure induced by \(A\)), so  it determines a effective divisor \(D \in \mathcal{D}\). We have \(h_A \geq |D|.\) By integrating the first vortex equation, we see that $ 2h_{A}+\chi(\Sigma) < 0. $ It follows that \(|D|\leq h_A \leq - \chi(\Sigma)/2\), so the condition above is certainly necessary. The converse follows from the fact that it is possible to solve the Kazdan-Warner equation on open surfaces, as established in \cite{HT1992}. 

When \(\chi(\Sigma)< -2\), the divisor \(D\) can vary freely in \(\Sigma\), and the spaces \(\mathcal{D}_h\) will be noncompact manifolds of positive dimension. In contrast, if \(\chi(\Sigma)> -2\), \(D\) must be supported at the orbiford points of \(\Sigma\), and the moduli spaces \(\mathcal{D}_h\) will be discrete. 
If \(M\) is Seifert-fibred over \(D^2\) with two or three exceptional fibres, the latter condition holds, so  \(\mathcal{M}^{irred}_\alpha(M)\) will consist of one arc \(X_D\) for each effective orbifold divisor \(D\) with \(|D| < -\chi(\Sigma)/2\).   Each arc starts at a point of \(\mathcal{M}^{red}\) (where \(h_A = \chi(\Sigma)/2\)). Its other endpoint (where \(h_A = |D|\)) maps to \(z\) under \(j\).  The moduli space \(\mathcal{M}^{irred}_\beta(M)\) is isomorphic to \(\mathcal{M}^{irred}_\alpha(M)\); the two are exchanged by the conjugation symmetry.

To determine the image of \(\mathcal{M}^{irred}_{\alpha}(M)\) under \(\overline{j}\) recall that the vertical coordinate of \(\overline{j}(A, \psi)\) is given by \(h_{\widehat{A}} = \frac{i}{4\pi}\int_{S} F_{\widehat{A}}\), where \(\widehat{A}\) is the induced connection on \(\det \spin = W^+ \otimes W^-\), and \(S\) is a surface generating \(H_2(M, \partial M)\). Since solutions to the Seiberg-Witten equations are invariant under the \(S^1\) action, 
\(F_{\widehat{A}}\) pulls back from \(\Sigma\). If
  the projection \(\pi\co S \to \Sigma\) has degree \(d\), then
$$ h_{\widehat{A}} = \frac{i}{4\pi} \int_\Sigma F_{\widehat A} = d(h_A+ \chi(\Sigma)/2)).$$
The value of 
\(h_{\widehat{A}}\)  on \(X_D\) will vary between \(0\) and \(d(|D| + \chi(\Sigma)/2) = d|D| + \chi(S)/2\). Finally, let \(f \in H_1(\partial M)\) be the fibre slope. Since \(\widehat{A}\) pulls back from \(\Sigma\), it will have trivial holonomy along \(f\). It follows that \(\overline{j}(X_D)\) lies on a line parallel to \(f\).

Seifert fibered spaces provide a family of examples on which to compare the curves arising from this point of view with those defined in terms of bordered Floer homology. 
\begin{figure}[t]
\includegraphics[scale=0.8]{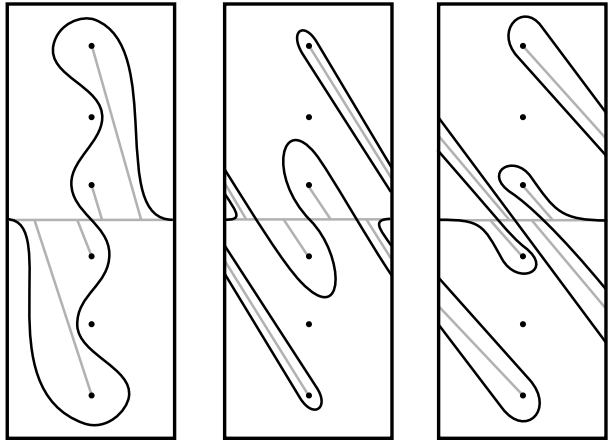}
\caption{Further examples: moduli spaces of solutions for the three Seifert fibered spaces with base orbifold $D^2(2,7)$, illustrated together with $\barcurves{M}$ in each case. In terms of the basis shown the fiber slope in each example, from left to right, is $-14$, $-\frac{14}{5}$ and $-\frac{14}{3}$. }\label{Fig:SW_2_7}
\end{figure}

 \parpic[r]{
 \begin{minipage}{55mm}
 \centering
 \includegraphics[scale=0.8]{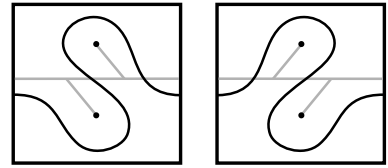}
\captionof{figure}{Curves for the left- and right-hand trefoil.}\label{Fig:trefoils-SW}
  \end{minipage}%
  }
The complement of the right-handed trefoil fibres over \(D^2\) with exceptional fibres of multiplicities \(2\) and \(3\), so \(\chi(\Sigma) = -1 + (1/2) + (1/3) = -1/6\). The only effective divisor with  \(|D|<1/12\) is the trivial divisor, so \(\mathcal{M}^{irred}_\alpha\) consists of a single arc, on which \(h_{\widehat{A}} \in [-1/2,0]\). To determine its image under \(\overline{j}\), note that \(f=l - 6m\), where \(m\) and \(\) are the standard meridian and longitude of the trefoil in \(S^3\). Thus the arc maps to a line with slope \(6\). The full moduli space is shown in Figure~\ref{Fig:trefoils-SW}.

To pass from \(\mathcal{M}(M)\) to \(\HFhat(M)\) in this example, we employ the following heuristic: first, we consider the moduli space \(\widehat{\mathcal{M}}(M)\) obtained by dividing out by the group of maps \(f\co S^1 \to M\) which satisfy \(f(p) = 1\) for some fixed point \(p \in M\). \(\widehat{\mathcal{M}}(M)\) will contain one point for each reducible point of \(\widehat{\mathcal{M}}(M)\), and an entire circle of points for each irreducible point. After an appropriate perturbation, this should reduce to a 1-dimensional space which contains roughly two points for each irreducible point of \(\widehat{\mathcal{M}}(M)\) . We expect that this moduli space should take the form of the curve shown in the figure, which is isotopic to \(\liftcurves{M}\). 

Now suppose that instead of \(T(2,3)\), we consider \(T(2,7)\). The complement fibres over \(D^2\) with exceptional fibres of multiplicities \(2\) and \(7\), so \(\chi(\Sigma) = -5/14\). Now there are two effective divisors with  \(|D|<5/28\); namely the trivial divisor and the divisor containing a single copy of the orbifold point of multiplicity \(7\).  \(\mathcal{M}^{irred}_\alpha\) consists of  two arcs,  on which the maximum values of \(\text{hol}_{\ell} A\) are \(14\cdot(5/28)=5/2\) and \(14 \cdot (5/28-1/7) = 1/2\). Both arcs map to lines of slope \(14\).

If instead of the complement of \(T(2,7)\), we considered another Seifert fibred space over \(D^2\) with exceptional fibres of multiplicities \(1/2\) and \(1/7\), the general form of the Seiberg-Witten moduli space would be similar, but the slope of the relevant arcs with respect to a standard basis \((m, l)\) for \(H_1(\partial M)\) would differ, as illustrated in Figure~\ref{Fig:SW_2_7}. (Note that the spaces in the figure are oriented so the fibre slopes are negative; {\it e.g.} the figure shows the moduli space for the complement of the left-hand \((2,7)\) torus knot.)
 
 \parpic[r]{
 \begin{minipage}{26mm}
 \centering
 \includegraphics[scale=0.8]{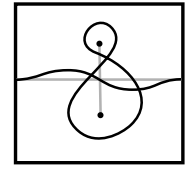}
\captionof{figure}{Curves for the figure eight.}\label{Fig:fig8-SW}
  \end{minipage}%
  }
As a final, non-Seifert fibered, example, let $M$ be the complement of the figure-8 knot in \(S^3\). Here, we cannot determine the Seiberg-Witten moduli space explicitly, but we know from the Alexander polynomial that the signed number of ends of irreducible arcs at \(z_{\pm 1/2}\) (the preimages of \(z\) closest to the reducible line) should be \(\pm 1\), respectively. We expect that with respect to an appropriate metric/deformation,  \(\mathcal{M}(M)\) should consist of a single arc of irreducibles joining \(z_{1/2}\) to \(z_{-1/2}\), together with the usual circle of reducibles. After passing to the unreduced moduli space and perturbing, the arc should become the figure-8 component of \(\HFhat(M)\). 

\section{Khovanov homology and the two-fold branched cover}\label{sec:Kh} 

If \(L\) is a link in \(S^3\), let \(\boldsymbol{\Sigma}_L\) be its two-fold branched cover. 
There is an established and, by now, well-explored relationship between the Khovanov homology of \(L\) and \(\HFhat(\boldsymbol{\Sigma}_L)\)  \cite{OSz2005}. This comes in the form of a spectral sequence, and has been recast in more algebraic terms (and calculated) using the machinery of bordered Floer homology in the work of Lipshitz, Ozsv\'ath, and Thurston \cite{LOT2014,LOT2016}. In the case of a 4-ended tangle $T$, whose two-fold branched cover $\boldsymbol{\Sigma}_T$ is a manifold with torus boundary, it is natural to ask about how the Khovanov homology of $T$ (in the sense of Bar-Natan \cite{Bar-Natan2005}, say) is captured by the invariant $\HFhat(\boldsymbol{\Sigma}_T)$. We collect the pieces and set this up.  

\subsection{Khovanov homology of tangles}

Let \(T \subset B^3 \) be a four-ended tangle. A parametrization \((\mathbf{a}, \mathbf{b})\) of \(T\) is a choice of arcs \(a_1,b_1,a_2,b_2 \subset S^2\) such that
\begin{itemize}
\item the endpoints of each arc lie on \(\partial T\);
\item each endpoint of \(T\) lies on one \(a_i\) and one \(b_i\); and
\item the union of all four arcs is an embedded circle. 
\end{itemize}

\labellist \small
\pinlabel {$a_1$} at 40 -7
\pinlabel {$a_2$} at 40 80
\pinlabel {$b_1$} at 83 36
\pinlabel {$b_2$} at -7 36
\endlabellist
\parpic[r]{
 \begin{minipage}{37mm}
 \centering
 \includegraphics[scale=0.7]{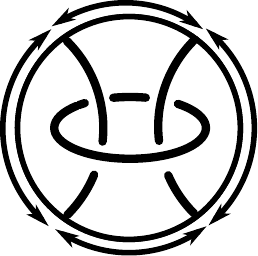}
 \captionof{figure}{A parametrized tangle}\label{fig:param}
  \end{minipage}%
  }
A planar diagram \(D\) comes with a preferred parametrization, as shown in Figure \ref{fig:param}. 
 Following Bar-Natan \cite{Bar-Natan2005}, we can view the Khovanov homology of the parametrized tangle \((T, \mathbf{a}, \mathbf{b})\) as a chain complex over a category generated by two objects  \(B_0\) and \(B_1\),
corresponding to crossingless planar diagrams with four ends. 
Morphisms between these objects are given by cobordisms, modulo certain relations.  Since we are using \(\F\) coefficients, we can work with the smaller  (undotted) version of the category  used by Bar-Natan: $\operatorname{End}(B_0\oplus B_1)$ is 6 dimensional, with 
\begin{align*}
\operatorname{Hom}(B_0,B_0) & = \langle\mathbf{1_0},\mathbf{t_0}\rangle & \qquad & 
\operatorname{Hom}(B_0,B_1) = \langle\mathbf{s_0}\rangle \\
\operatorname{Hom}(B_1,B_0) &= \langle\mathbf{s_1}\rangle & \qquad & 
\operatorname{Hom}(B_1,B_1)  = \langle\mathbf{1_1},\mathbf{t_1}\rangle
\end{align*}
where $\mathbf{s_i}$ denotes a {\bf s}addle cobordism and $\mathbf{t_i}$ denotes a trivial cobordism with a {\bf t}ube joining the two sheets. Note that the latter can be replaced by a sum of two dotted trivial cobordisms applying the neck-cutting relation. Any complex in Bar-Natan's category can be reduced to a {\em minimal} complex expressed in terms of $B_0$ and $B_1$ such that no component of the differential is an identity map; we will denote this minimal complex by $\KH(T,\mathbf{a}, \mathbf{b})$.

\labellist \small
\pinlabel {$0$} at 45 34
\pinlabel {$1$} at 115 34
\endlabellist
\parpic[r]{
 \begin{minipage}{50mm}
 \centering
 \includegraphics[scale=0.8]{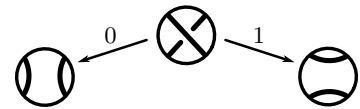}
\captionof{figure}{Resolution conventions.}\label{fig:res-KH}
  \end{minipage}%
  }
Our convention for crossing resolutions are shown in Figure \ref{fig:res-KH}. For consistency with Heegaard Floer homology \cite{OSz2005}, we have chosen the opposite of the standard convention.  As a result, the complex we work with is the dual of the complex considered in \cite{Bar-Natan2005}, or equivalently, is the complex associated with the mirror tangle. For simplicity we will ignore quantum gradings and work with unoriented tangles. (In particular, our complexes have only a relative homological grading; to fix an absolute homological grading, we would need to pick an orientation.) For example, the (parametrized) tangle $\raisebox{-3pt}{\includegraphics[scale=0.4]{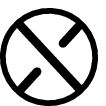}}$ is assigned the complex 
\labellist \tiny\pinlabel {$\mathbf{s}$} at 40 20 \endlabellist
$\raisebox{-3pt}{\includegraphics[scale=0.4]{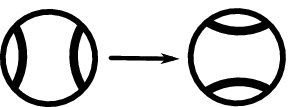}}$. 

\subsection{Filtered type D structures.} In order to calculate the Heegaard Floer homology of two-fold branched covers, Lipshitz, Ozsv\'ath, and Thurston take as input a branch set in bridge position, thought of as a knot diagram with a height function. Having made a choice of diagram for which each crossing is at a distinct height, they implement a divide-and-conquer strategy by assigning bordered objects to each crossing (as well as to the bridge caps on the top/maxima and bottom/minima). These pieces are box-tensored together to ultimately produce a filtered chain complex. Roughly speaking, the box-tensor complex inherits a filtration by enhancing the bordered objects being tensored with filtrations. 

If \((T,\mathbf{a},\mathbf{b})\) is a parametrized  4-ended tangle, its double branched cover \(\boldsymbol{\Sigma}_T\) is a manifold with torus boundary. The parametrization \((\mathbf{a},\mathbf{b})\) determines a parametrization \((\alpha, \beta)\) of \(\partial \boldsymbol{\Sigma}_T\); where \(\alpha\) is the double branched cover of either one of the \(a_i\)'s, and similarly for \(\beta\). 
 We would like to endow  $\CFD(\boldsymbol{\Sigma}_T,\alpha, \beta)$ with a filtration. Referring to \cite[Definition 2.2]{LOT2014} for the details, we are interested in extendable type D structures over the torus algebra  $\Alg$ with the properties that (1) the underlying vector space $V$ is equipped with an integer grading and (2) the differential $\partial\co\Alg\otimes V\to \Alg\otimes V$ (equivalently, the map $\delta$) does not decrease this grading. Very mild changes to  Lipshitz, Ozsv\'ath, and Thurston's construction  show that such an object  exists.  (We just compute the bordered invariants for appropriate partial closures rather than the full closures used in \cite{LOT2014}.) 
 
\labellist \small
\pinlabel {$12$} at  87 60
\pinlabel {$23$} at 87 17
\endlabellist
\parpic[r]{
 \begin{minipage}{40mm}
 \centering
 \includegraphics[scale=0.8]{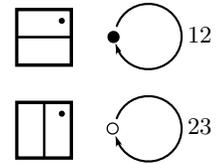}
\captionof{figure}{Basic curves and associated decorated graphs.}\label{fig:self-loops}
  \end{minipage}%
  }
The result of this construction is a filtered type D structure. Its associated graded will be a direct sum of copies of two basic objects, namely 

$\qquad\qquad\qquad\qquad\hor:= \CFD(\boldsymbol{\Sigma}_{B_1},\alpha,\beta)$

$\qquad\qquad\qquad\qquad\ver := \CFD(\boldsymbol{\Sigma}_{B_0},\alpha,\beta)$

 Geometrically, \(\hor\) and \(\ver\) are  closed circles parallel to the curves \(\alpha, \beta\) used to parametrize the torus. In terms of type D structures, \($\hor$\) is a black idempotent joined to itself by a single \(\rho_{12}\) arrow, while \(\ver\) is a white idempotent joined to itself by a single \(\rho_{23}\) arrow. In terms of decorated graphs, this dictionary is illustrated in Figure \ref{fig:self-loops}.

\labellist \small
\pinlabel {$=$} at  75 68 \pinlabel {$=$} at 122 68
\pinlabel {$=$} at 38 18 \pinlabel {$=$} at 155 18
\endlabellist
\parpic[r]{
 \begin{minipage}{60mm}
 \centering
 \includegraphics[scale=0.8]{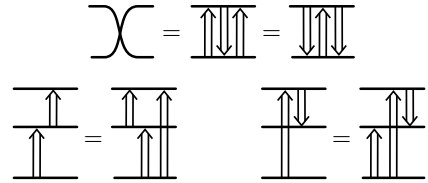}
\captionof{figure}{A quick review of some arrow-calculus; compare \cite[Figure 27]{HRW}.}\label{fig:cheat-sheet}
  \end{minipage}%
  }A theorem of  Lekili and Perutz \cite{LP2011} shows that the pair $\left\{\hor,\ver\right\}$ generates the compact part of the  Fukaya category of the punctured torus. Using  arrow calculus, it is straightforward to express any given curve-set  $\HFhat(M)$  as a twisted complex built up out of elements of this basis. For instance, here is an algorithm that one can implement in three steps. First, replace any nontrivial local systems by bundles of parallel curves joined by an appropriate collection of crossover arrows representing the monodromy. Next, pick outer-most curves covering opposite corners, and by adding a pair of clockwise arrows (covering the other two corners), add a new crossing between two curves. Since there are only finitely many pairs of such curves, one can repeat this process and in finitely many steps the new configuration will be (1) a collection of horizontal curves (possibly with a permutation between strands), (2) a collection of vertical strands (possibly with a permutation between strands), and (3) some crossover arrows between vertical and horizontal strands. Note that the latter may be interpreted as morphisms between the former. Next, the permutations (if present) can be simplified by replacing each crossing with a triples of arrows, and the result simplified if desired; see Figure \ref{fig:cheat-sheet} for a quick review of some non-trivial moves from the arrow-calculus introduced in \cite[Section 3.6]{HRW}. A simple example illustrating this process is given in Figure \ref{fig:express-basis-example}.  Notice that even in the case of an embedded curve, the choices involved can give rise to seemingly different outcomes, in the sense that the corresponding labeled graphs are not isomorphic, even though the corresponding complexes over \(\sA\) are isomorphic.
 
\begin{figure}[t]
\includegraphics[scale=0.8]{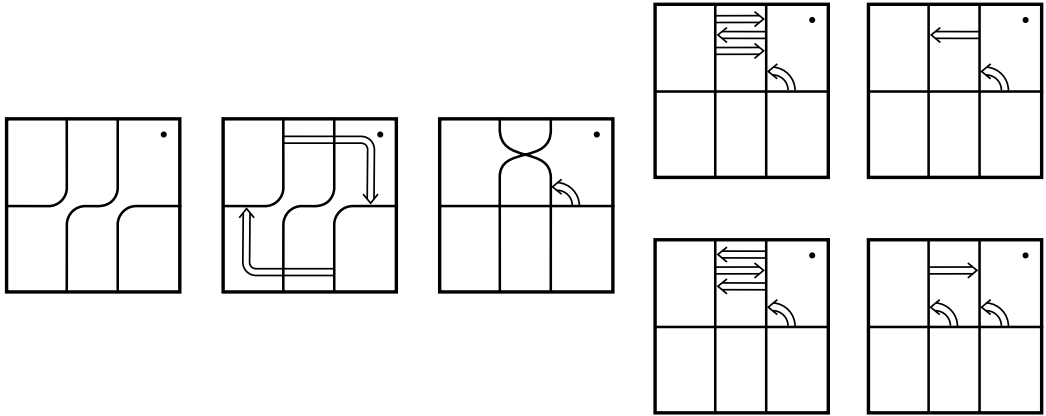}
\caption{An example illustrating the two main steps for expressing an immersed curve (far left) in terms of the basis (far right). Notice that choices are involved: in this simple example the second step removing the crossing can be achieved in two ways and we have shown both.}\label{fig:express-basis-example} 
\end{figure}

The category generated by \(\hor\) and \(\ver\) closely resembles  Bar-Natan's category. Indeed, we have

\begin{align*}
\operatorname{Hom}\Big(\ver,\ver\Big) & = \langle \mathbf{1},\rho_{23}\rangle & \qquad & 
\operatorname{Hom}\Big(\ver,\hor\Big) = \langle\rho_2 \rangle  \\
 \operatorname{Hom}\Big(\hor,\ver\Big) &= \langle \rho_1+\rho_3 \rangle & \qquad & 
 \operatorname{Hom}\Big(\hor,\hor\Big)  = \langle\mathbf{1},\rho_{12}\rangle
\end{align*}
The dictionary relating the two categories is shown in Figure~\ref{fig:BN-KH}. 
 Interestingly, the crossover arrow formalism turns out to be perfectly adapted to represent these maps. The mapping cone of each of the four non-identity maps above can be represented by a single crossover arrow, as shown. 
\labellist 
\pinlabel {$+$} at 333 112
\pinlabel {$+$} at 333 49
\endlabellist
\begin{figure}[ht]
\includegraphics[scale=0.8]{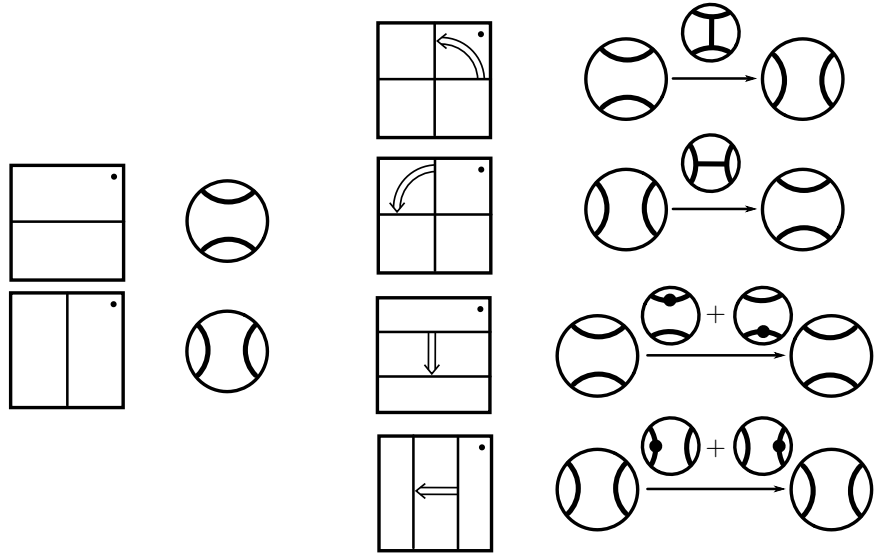}
\caption{Identifying objects (left) and morphisms (right) with Bar-Natan's category \cite{Bar-Natan2005}.} \label{fig:BN-KH}
\end{figure}  
  
\subsection{The associated graded} 
To sum up, the filtered type D structure associated to a four-ended tangle can be described graphically as follows. First, we have a collection of horizontal and vertical circles, together with an integer grading (the filtration grading) on each circle. This set of circles should be in bijection with the generators of the minimal chain complex \(\KH(T,\mathbf{a}, \mathbf{b})\), and the filtration grading should match the homological grading there. 
Second, there is  a collection of morphisms between the generators, represented by crossover arrows. These arrows should increase the filtration grading, and the set of crossover arrows which increase the filtration grading by one should be in bijection with the set of nonzero components of the differential in \(\KH(T,\mathbf{a}, \mathbf{b})\). Finally, the object of the Fukaya category represented by this type D structure should be \(\CFD(\boldsymbol{\Sigma}_T,\alpha, \beta)\). 

To make this process more precise, consider a filtered type D structure $N$ that is reduced and expressed in terms of the objects $\hor$ and $\ver$. We write $N_{\ge k}$ for the sub-complex of filtration grading at least $k$. We assume for simplicity (and, for consistency with all of the examples considered here) that there are no interesting local systems internal to the associated graded terms $N_k=N_{\ge k}/N_{> k}$. These restrictions give rise to a decomposition $\delta =\sum_{i=0}^n\alpha_i$ of the map $\delta\co N\to\Alg\otimes N$ associated with the type D structure $N$ where the $\alpha_i$ raise filtration grading by $i$. In particular, the $\alpha_i$ vanish for all sufficiently large $i$ (in practice, this will be determined by the number of crossings in a given tangle), and the $\alpha_0$ and $\alpha_1$ have {\em a priori} restrictions placed on them: For every generator $x\in N_k$, $\alpha_0(x)$ is $(\rho_{12}+\rho_{23})\otimes x$ (only one term in this sum will be non-zero). And, since $N$ is reduced, the terms arising in $\alpha_1$ can only be of the four types of morphisms shown in Figure \ref{fig:BN-KH}. 

The associated graded type D structure we would like to consider has map given by $\alpha_0+\alpha_1$, however this need not square to zero (in the appropriate sense) in general. Recall that compatibility requires that $\mu\otimes\id\circ\id\otimes\delta\circ\delta$ vanishes, so asking that $\big(\bigoplus_{k\in\Z} N_k, \alpha_0+\alpha_1\big)$ is a type D structure amounts to showing that $f = (\mu\otimes\id)\circ(\id\otimes(\alpha_0+\alpha_1))\circ(\alpha_0+\alpha_1)$ vanishes. By abuse of notation, write 
$f=\alpha_0\alpha_0+\alpha_0\alpha_1+\alpha_1\alpha_0+\alpha_1\alpha_1$
(so that the composite $(\mu\otimes\id)\circ(\id\otimes\delta)\circ\delta=\sum \alpha_i\alpha_j $). Now observe that, when $f(x)$ is non-zero there must be a non-zero $(\alpha_1\alpha_1)(x)$; because $\sum \alpha_i\alpha_j$ vanishes, this non-zero $(\alpha_1\alpha_1)(x)$ (formerly) cancelled with some $(\alpha_0\alpha_2)(x)$  or $(\alpha_2\alpha_0)(x)$. (Here we have made a crucial appeal to the fact that $N$ is reduced.) These are terms of the form $\rho_{12}\rho_{3}\otimes y$ or $\rho_{1}\rho_{23}\otimes y$, respectively, so that $(\alpha_1\alpha_1)(x)$ contains only summands of the form $\rho_{123}\otimes y$.  As a result, the associated graded may be viewed as a type D structure over the quotient $\Alg/(\rho_{123}=0)$, which experts will recognize as being closely related to Khovanov's algebra associated with a 2-tangle \cite{Khovanov2002}. 

\labellist \small
\pinlabel {$\mathbf{s}$} at 113 100
\pinlabel {$\mathbf{s}$} at 302.5 99.5
\endlabellist
\begin{figure}[ht]
\includegraphics[scale=0.8]{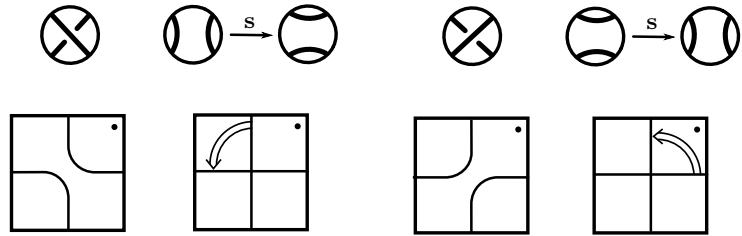}
\caption{The two simple complexes corresponding to each single-crossing tangle (above) and the corresponding curve, written in terms of basis elements in each case (below). In general, the labels $\mathbf{s}$ and $\mathbf{t}$ are determined by their source and target, so we will drop them in subsequent examples.} \label{fig:two-complexes-KH}
\end{figure}
As a simple example, consider the complexes associated to the one-crossing diagrams of Figure~\ref{fig:two-complexes-KH}. Here the entire complex is determined by the requirement that its associated graded is the Khovanov homology. An easy application of the graphical calculus shows these objects can be represented by simple closed curves of slope \(\pm 1\), as expected.

\labellist \small
\pinlabel {${12}$} at 145 74
\pinlabel {${12}$} at 231 74
\pinlabel {${23}$} at 316 73
\pinlabel {${12}$} at 189 33
\pinlabel {${1}$} at 288 35 \pinlabel {${3}$} at 252 11
\pinlabel {${1}$} at 170 3
\endlabellist
\begin{figure}[ht]
\includegraphics[scale=0.6]{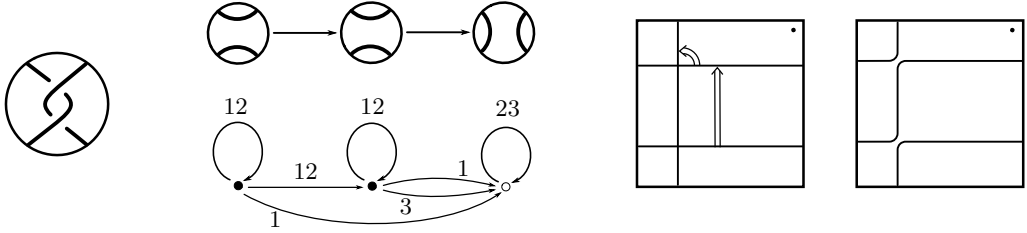}
\caption{Several views of the complexes associated to the tangle on the left. The upper figure shows the Bar-Natan complex, while the latter shows the corresponding type D structure. To the right are the graphical representation of this type D structure and its simplification. } \label{fig:not-formal}
\end{figure}

\parpic[r]{
 \begin{minipage}{38mm}
 \centering
 \includegraphics[scale=0.6]{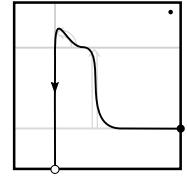}
\captionof{figure}{The train-track component representing the null-homotopy; compare Figure~\ref{fig:not-formal}.}\label{fig:nullhom}
 \end{minipage}%
 }An important distinction between Bar-Natan's category and the Fukaya category is that the Fukaya category is not formal \cite{LP2011}. This can be seen even in simple examples. 
 For example, let \(T\) be the two-crossing tangle shown in Figure~\ref{fig:not-formal}. The complex \(\KH(T,\mathbf{a}, \mathbf{b})\) is shown in the figure. If we naively try to translate this complex to a complex in the Fukaya category, the result will not have \(\partial^2=0\). This is because the composition of \(\rho_{12}\) and \(\rho_{1}+\rho_3\) is null-homotopic as an element of \(\Hom \left( \hor,\ver \right)\), but not identically \(0\). To form the true complex, we must add in the null-homotopy in the form of a component of the differential which shifts the filtration grading by 2 (that is, we add $\alpha_2=\rho_1$ in the notation introduced above). Again, the crossover arrow formalism is well-adapted to representing this phenomenon: the relevant complex is still represented by a diagram with just two crossover arrows, as shown in Figure~\ref{fig:not-formal}. (Interpreted as train tracks, this composition of arrows is shown in Figure \ref{fig:nullhom}.)

\subsection{Examples} 
We end by describing graphical complexes corresponding to some more interesting tangles. In each case, given $(T,\mathbf{a}, \mathbf{b})$, we find a train-track representative for $\HFhat(\boldsymbol{\Sigma}_T)$ with a filtration so that associated graded agrees with $\KH(T,\mathbf{a}, \mathbf{b})$. 

\subsubsection*{Solid tori and rational tangles} Rational tangles are precisely those with two-fold branched cover homeomorphic to the solid torus; see \cite{Lickorish1981, Montesinos1976}, for example. Moreover, there is a natural one-to-one correspondence between isotopy classes of essential simple closed curves and rational tangles (up to boundary-fixing isotopy).  The Khovanov homology of a rational tangle is described by Thompson \cite{Thompson2017}, showing that the continued fraction description of the rational tangle governs the combinatorics of the Bar Natan complex in a controlled way. Similarly, these combinatorics can be used to produce  filtered type D structures that are isomorphic (as unfiltered type D structures) to the relevant simple closed curve. An example is shown in Figure~\ref{fig:rational-KH}. 
\begin{figure}[ht]
\includegraphics[scale=0.8]{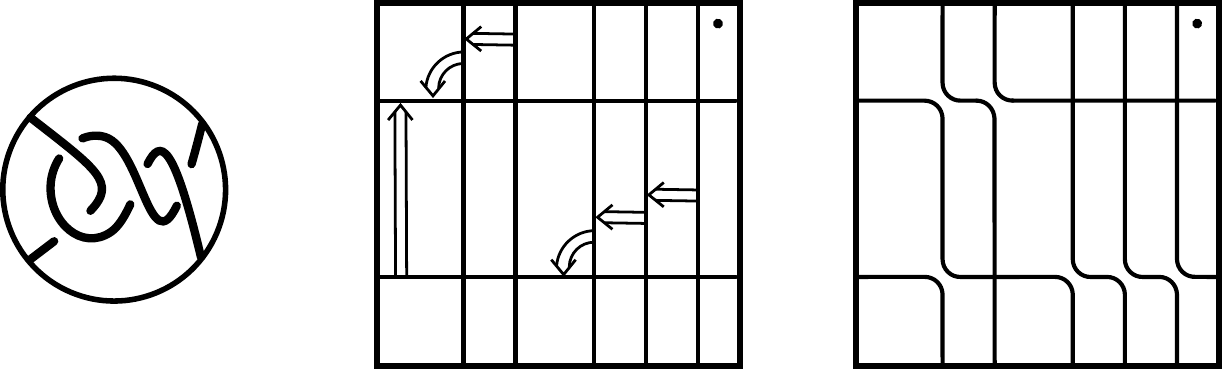}
\caption{The filtered and non-filtered type D structures associated with the two-fold branched cover of the 5/2 rational tangle. For the corresponding Bar-Natan complex, see \cite[Figure 4.2.4]{Thompson2017}, bearing in mind that our complex is the dual of the one shown there.}\label{fig:rational-KH} 
\end{figure}

Note that the (relative) filtration levels and the Bar-Natan complex are completely determined by the graphical complex: for the later, we simply ignore any tracks that correspond to traversing two (or more) crossover arrows. Conversely, Thompson shows that Bar-Natan complex of a rational tangle is a zig-zag ({\it i.e.} the underlying graph of the complex is linear). In this case, we can unambiguously reconstruct the graphical complex from the Bar-Natan complex. It may appear, {\it e.g.} that we have to choose the relative height of the two rightmost arrows in the figure, but the two positions are equivalent, since we can slide the rightmost arrow all the way around the torus.

\parpic[r]{
 \begin{minipage}{70mm}
 \centering
 \includegraphics[scale=0.8]{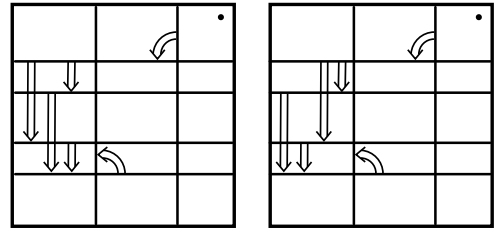}
\captionof{figure}{Distinct filtered type D structures with identical associated graded objects.}\label{fig:cautionary-remark}
  \end{minipage}%
  } {\em The twisted $I$-bundle over the Klein bottle.} 
The situation is more subtle for non-rational tangles. A simple example is given by the $(-2,2)$-pretzel tangle; 
see Figure \ref{fig:twisted-I-KH}. Note that, owing to the presence of a solid torus-like component in the invariant, the algorithm described above will not give rise to an appropriate filtered type D structure. However, the Bar Natan complex for this tangle suggests a candidate. In fact, unlike in the case of rational tangles, there are two distinct candidates, as shown in Figure \ref{fig:cautionary-remark}. By appealing to the arrow calculus moves reviewed in Figure \ref{fig:cheat-sheet}, the reader can verify that these are distinct. 
Moreover, only one of these two candidates has an unfiltered invariant equal to \(\HFhat(\Sigma_T)\). \labellist 
\pinlabel {$\lambda$} at 35 93
\pinlabel {$\varphi$} at 76 135
\small
\pinlabel {$0$} at 90 64
\pinlabel {$0$} at 90 40
\endlabellist
\begin{figure}[ht]
\includegraphics[scale=0.8]{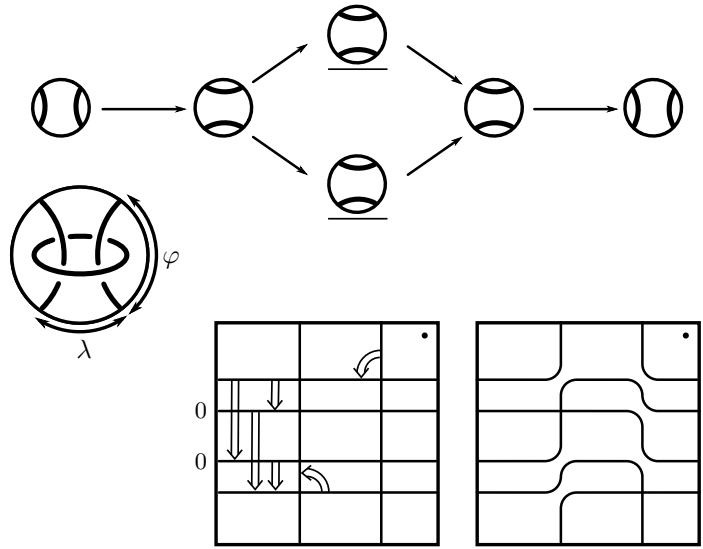}
\caption{The twisted $I$-bundle over the Klein bottle is the two-fold branched cover of the $(-2,2)$-pretzel tangle shown \cite{Montesinos1976}. The arcs in the boundary of the tangle are labelled according to the slopes they are covered by, namely, $\lambda$ is the rational longitude and $\varphi$ is the fiber slope relative to the $D^2(2,2)$ Seifert fibration in the cover. The Bar-Natan complex $\KH(T,\lambda,\varphi)$ is shown above and the filtered type D structure is shown below. After using the graphical calculus to simplify this type D structure, we arrive the collection of curves on the right. This is the invariant of the twisted I-bundle over the Klein bottle (compare with Figure~\ref{fig:twisted-I-with-NBHD}).}\label{fig:twisted-I-KH}
\end{figure}

\subsubsection*{The trefoil} As a final example, we revisit the right-hand trefoil exterior, which arises as the two-fold branched cover of the pretzel tangle shown in Figure \ref{fig:trefoil-KH}. Note that the Seifert structure on the knot exterior is reflected in the tangle \cite{Montesinos1976} (see also \cite{Watson2012}). In this case we see that different filtered type D structures, corresponding to chain homotopic Bar-Natan complexes, arise naturally. Again, this highlights the utility of the normal form provided by the structure theorem and, at the same time, the flexibility of the train track formalism for expressing filtered objects. 

\labellist 
\pinlabel {$\varphi$} at 95 155
\pinlabel {$\mu$} at 145 200
\small
\pinlabel {$0$} at 140 11
\endlabellist
\begin{figure}[ht]
\includegraphics[scale=0.8]{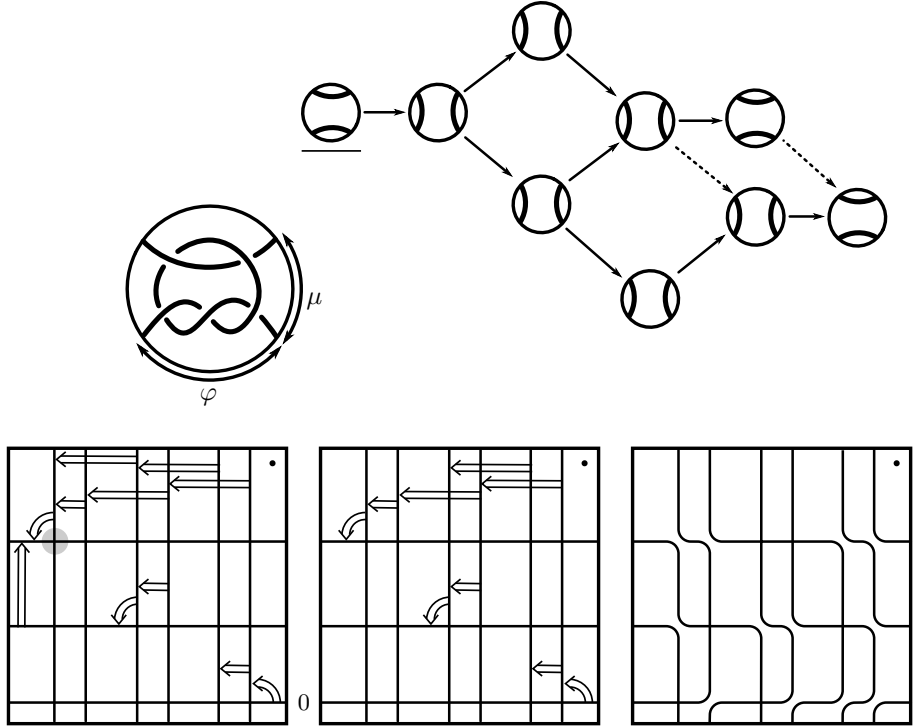}
\caption{The right-hand trefoil exterior $M$: in this example we have given two filtered type D structures -- to see they are isomorphic the shaded intersection needs to be resolved, the extra arrow removed, and finally the crossing replaced. The homotopy equivalent associated graded Bar-Natan complexes are shown above (either include or exclude the dashed differentials). Note that the trefoil exterior admits a Seifert fibration (as can be seen from the quotient tangle) and the fiber slope $\varphi$ agrees with $\lambda+6\mu$, where $(\mu,\lambda)$ are the preferred meridian-longitude pair. This agrees with the basis expressing the immersed curve shown at lower-right.}\label{fig:trefoil-KH}
\end{figure}

\section{Speculation on minus type invariants}\label{sec:minus}

We conclude with some speculations on how  \(HF^\pm(M_1\cup_h M_2)\)  is related to \(\HFhat(M_1)\) and \(\HFhat(M_2)\). 
In Section~\ref{subsec:CFK-}, the definition of the map \(d\) on  $C^-(\curves M,\mu)$ involved counting bigons in the whole torus, in particular those covering the basepoint. In a very similar way, we can use \(\HFhat(M_1)\) and \(\HFhat(M_2)\) to define complexes which are formally analogous to \(CF^{\pm}(M_1 \cup_h M_2)\). 

We will use coefficients in the ring of power series $\sT_- \cong\F[U]]$ or in $\sT_+\cong\F[U,U^{-1}]]/U\cdot\F[U]]$. Let $\gamma_0$ and $\gamma_1$ be two collections of  curves in $T = T^2 \setminus z$, and let $\CFminus(\gamma_0,\gamma_1)$ and $\CFplus(\gamma_0,\gamma_1)$ be generated over $\sT_-$ and $\sT_+$, respectively, by intersection points of $\gamma_0$ and $\gamma_1$. For points $x,y\in \gamma_0\cap\gamma_1$, let $N_i(x,y)$ be the mod 2 number of Whitney disks in $T^2$  connecting $x$ to $y$ and covering the basepoint $z$ with (positive) multiplicity $i$. (As usual, the oriented boundary of the disk should go from \(x\) to \(y\) along \(\gamma_0\).) We define
\[d(x)=\sum_{i=0}^\infty\sum_{y\in \gamma_0\cap\gamma_1} U^i N_i(x,y)\cdot y.\]
\begin{remark}
The characterization of the Maslov gradings in Section~\ref{sec:grad} implies that there is a well defined homological grading on \(CF^\pm(\gamma_0,\gamma_1)\) which is compatible with the homological grading on \(HF(\gamma_0,\gamma_1)\). 
\end{remark}

\noindent{\bf Warning:} For arbitrary curves $\gamma_0$ and $\gamma_1$, $d^2$ may not be zero. Problems arise when either curve has a cusp (i.e. a segment which bounds a disk in $T^2$); however curves may have two cancelling cusps, as in the curve for the figure eight knot complement, and $d^2 = 0$ for curves of this form. More formally, we can assign to each double point \(p\) of \(\gamma\) a quantity $$\mathfrak{o}_p = \sum_{\phi \in \pi_2(p,\gamma)} \# \mathcal{M} (\phi) U^{n_z(\phi)}$$ where the sum runs over homotopy classes of maps \(\phi:D^2 \to T^2\) such that \(\phi(1) = p\) and \(\phi(\partial D^2 )\subset \gamma\). 
We say \(\gamma\) is {\em good} if \(\mathfrak{o}_p = 0 \) for all double points of \(\gamma\). 

  If \(\gamma_0\) and \(\gamma_1\) are good, standard theory of Floer homology for immersed Lagrangians shows that 
 $d^2=0$ on $\CFminus(\gamma_0,\gamma_1)$ and $\CFplus(\gamma_0, \gamma_1)$. In this case, we define $\HFminus(\gamma_0, \gamma_1)$ and $\HFplus(\gamma_0, \gamma_1)$ to be the corresponding homologies.
Note that $\HFa(\gamma_0,\gamma_1)$ may be recovered from $HF^\pm(\gamma_0,\gamma_1)$ by constructing the mapping cone of the map induced by the $U$-action. 

Below, we compute $\HFplus(\gamma_0,\gamma_1)$ for a few examples. We will focus mainly on Dehn fillings of manifolds already studied earlier. That is, we consider $Y = M_0 \cup_h M_1$ with $M_0$ a loop type manifold and $M_1$ a solid torus. We set $\gamma_0 = \curves{M_0}$ and $\gamma_1 = h(\curves{M_1})$.

\subsection{Surgeries on knots} As a first calculation, let $K$ be the figure eight knot and consider the family of integer homology spheres obtained by $S^3_{1/n}(K)$ where $K$ is a non-negative integer; for negative integers, recall that $S^3_{-r}(K)\cong -S^3_r(K)$ since $K$ is amphicheiral. We compute $\HFplus(\gamma_0, \gamma_1)$, where $\gamma_0$ is the pair of immersed curves associated with the figure eight knot exterior and $\gamma_1$ is a simple closed curve of slope $\frac{1}{n}$.

\parpic[r]{
 \begin{minipage}{40mm}
 \centering
 \includegraphics[scale=0.4]{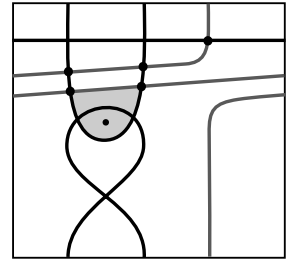}
\captionof{figure}{The intersection of curves associated with $\frac{1}{2}$-surgery on $K$.}\label{fig:fig-8-rev}
  \end{minipage}%
  }
The case $n=2$ is illustrated in Figure \ref{fig:fig-8-rev}. Notice that there are 5 intersection points, each generating a copy of $\sT_+$. However, in this case we have a non-trivial differential owing to the existence of two bigons covering the basepoint. As a result, $\HFplus(\gamma_0,\gamma_1)\cong \sT_+ \oplus \F^2$. More generally, one computes that the homology  for general $n$ is given by  $\sT_+ \oplus \F^n$, in agreement with $\HFplus(S^3_{1/n}(K))$. Notice that if the figure eight curve is a component of $\curves{M}$, then there is always a $\F^n$ summand in $\HFplus(\curves{M},\gamma_1)$ where $\gamma_1$ is a line of slope $\frac{1}{n}$ corresponding to $\frac{1}{n}$-surgery.

We can treat surgery on the right-hand trefoil knot $T_{2,3}$ in a similar manner, though here it is simpler to calculate by considering the lift of \(\HFhat(M)\) to the plane. In this case we calculate the $+$-version of the curves invariant to get $\sT_+$, as expected, for $+1$-surgery. In general, by inspecting the diagram in Figure \ref{fig:surgery-on-T23-T34} (which illustrates the case $n=2$) it is easy to see that  $\HFplus(\gamma_0,\gamma_1)\cong \sT_+ \oplus \F^{n-1}$.

\begin{figure}[ht]
\labellist
\endlabellist
\includegraphics[scale=0.5]{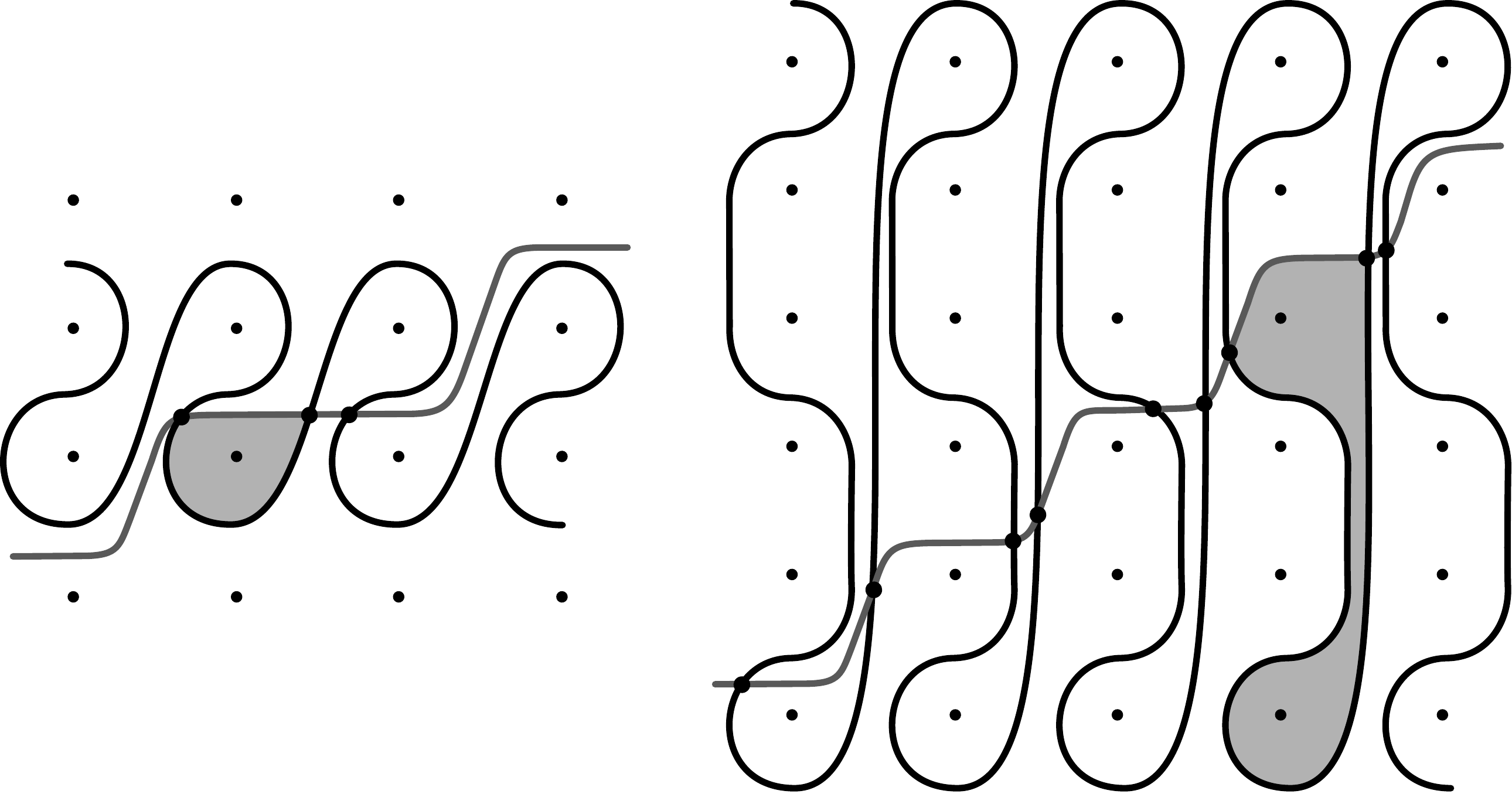}
\caption{Computing $+\frac{1}{2}$-surgery on the trefoil (left) and $+1$-surgery on the $(3,4)$-torus knot (right). }\label{fig:surgery-on-T23-T34}
\end{figure}

As a slightly more complicated example, we take \(M\) to be the complement of \(T(3,4)\); the complex \(\CFplus(\gamma_0, \gamma_1)\) is shown in Figure \ref{fig:T_34-complex}. 
The reader can easily check that \(\HFplus(\gamma_0,\gamma_1) = \sT_+ \oplus \F^4\); with a little more effort one can also check that the relative Maslov gradings are correct. (Compare {\it e.g.} with the results of Borodzik and N{\'e}methi \cite{BN2013}.)

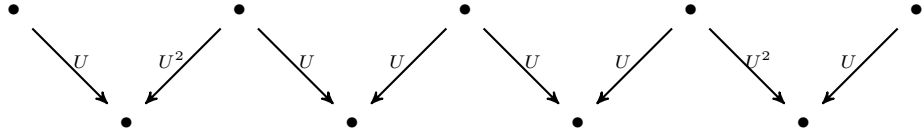
\begin{figure}[ht]
\begin{tikzpicture}[scale=1.5,>=stealth', thick] 
\node at (0,0) {$\bu$};
\node at (1,-1) {$\bu$};
\node at (2,0) {$\bu$};
\node at (3,-1) {$\bu$};
\node at (4,0) {$\bu$};
\node at (5,-1) {$\bu$};
\node at (6,0) {$\bu$};
\node at (7,-1) {$\bu$};
\node at (8,0) {$\bu$};
\draw[->,shorten <= 0.35cm, shorten >= 0.35cm] (0,0)--(1,-1)node [pos=0.6, above] {$\scriptstyle{U}$};
\draw[->,shorten <= 0.35cm, shorten >= 0.35cm] (2,0)--(1,-1)node [pos=0.6, above] {$\scriptstyle{U^2}$};
\draw[->,shorten <= 0.35cm, shorten >= 0.35cm] (2,0)--(3,-1)node [pos=0.6, above] {$\scriptstyle{U}$};
\draw[->,shorten <= 0.35cm, shorten >= 0.35cm] (4,0)--(3,-1)node [pos=0.6, above] {$\scriptstyle{U}$};
\draw[->,shorten <= 0.35cm, shorten >= 0.35cm] (4,0)--(5,-1)node [pos=0.6, above] {$\scriptstyle{U}$};
\draw[->,shorten <= 0.35cm, shorten >= 0.35cm] (6,0)--(5,-1)node [pos=0.6, above] {$\scriptstyle{U}$};
\draw[->,shorten <= 0.35cm, shorten >= 0.35cm] (6,0)--(7,-1)node [pos=0.6, above] {$\scriptstyle{U^2}$};
\draw[->,shorten <= 0.35cm, shorten >= 0.35cm] (8,0)--(7,-1)node [pos=0.6, above] {$\scriptstyle{U}$};
\end{tikzpicture}
\caption{\(\CFplus(\gamma_0,\gamma_1)\) for \(+1\) surgery on \(T(3,4)\)}\label{fig:T_34-complex}
\end{figure}

Next, we consider large integer surgeries on an arbitrary nullhomologous knot $K$ in an integer homology sphere $Y$. Let $M = Y\subset\nu(K)$; as before $\gamma_0 = \curves{M}$ and $\gamma_1$ is a simple closed curve of slope $n > 0$. Note that $\HFa(\gamma_0, \gamma_1)$ has $n$ spin$^c$ structures, which we index by integers $s$ with $|s|\le \frac{n}{2}$.

In analogy to the large integer surgery formula for Heegaard Floer homology, we relate $\mathit{HF}^\pm(\gamma_0, \gamma_1)$ to the complex $C^\infty(\curves M,\mu)$ (defined just as $C^-(\curves M,\mu)$ but with $\F[U,U^{-1}]]$ coefficients). For $s$ in $\Z$, let $A_s^-(\curves M,\mu)$ denote the subcomplex of $C^-(\curves M,\mu)$ obtained by restricting to Alexander grading less than or equal to $s$ and let $A_s^+(\curves M,\mu)$ be the corresponding quotient complex in $C^\infty(\curves M,\mu)$. We prove:

\begin{proposition}
Assume $n \ge 2g(K)+1$. For $|s|<n/2$, $\mathit{HF}^\pm(\gamma_0,\gamma_1; s) \cong H_*(A_s^\pm(\curves M,\mu))$.\end{proposition}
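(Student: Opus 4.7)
The plan is to work in the universal cover $\tildeT_M \cong \R^2\setminus\Z^2$ and mimic the large surgery formula of Ozsv{\'a}th and Szab{\'o}. Since $H_1(M) = \Z$, the invariant $\curves{M}$ lifts to a collection of curves in $\tildeT_M$ invariant under translation by $\lambda$, and the preimages of $\gamma_1$ form a family of parallel lines of slope $n$. After quotienting by translation by $\lambda$, exactly $n$ of these correspond to the $n$ spin$^c$ structures of $Y_n(K)$; I will denote the lift for spin$^c$ structure $s$ by $\tilde\gamma_{1,s}$. Then $\CFplus(\gamma_0,\gamma_1;s)$ is generated over $\sT_+$ by $\tilde\gamma_{1,s} \cap \curves{M}$, and its differential counts immersed bigons in $\tildeT_M$ weighted by $U^{n_z}$, so the proof reduces to a purely geometric comparison in the cover.

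First I would pull $\curves{M}$ tight in pegboard form, so that outside the horizontal strip $|y| \le g(K)$ it coincides with vertical rays through a single column of lattice points. Using $n \ge 2g(K)+1$ and $|s| < n/2$, I would next isotope $\tilde\gamma_{1,s}$ to a staircase $L_s$ consisting of a vertical ray descending to $-\infty$ just to the left of the basepoint column, a horizontal crossing at height $s$ in the strip where $\curves{M}$ has nontrivial structure, and a vertical ray ascending to $+\infty$ just to the right; the slope hypothesis ensures that this isotopy can be carried out without crossing any other lift of $\gamma_1$ or any lattice point, hence without changing the Floer complex. Under the translation between $\curves{M}$ and $\CFKminus(K)$ from Section~\ref{sec:curves-from-CFK}, the two vertical rays each contribute a full copy of $\CFKhat(K)$, while the horizontal segment at height $s$ cuts off the contributions by Alexander grading; matching this picture with the description of $A_s^+$ identifies generators.

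The main obstacle will be matching differentials together with their $U$-weights. Bigons between $\curves{M}$ and the vertical portions of $L_s$ recover, by Theorem~\ref{thm:knot-floer}, the horizontal and vertical arrows of $CFK^\infty(K)$ weighted appropriately, while bigons crossing the horizontal step of $L_s$ implement the identification between the two vertical copies that defines $A_s^+$. The hypothesis $n \ge 2g(K)+1$ plays two further roles here: it ensures that every bigon predicted by the knot Floer differential fits into a single fundamental domain in $\tildeT_M$, so no terms are lost to winding, and it prevents bigons from wrapping across the staircase, which would otherwise produce spurious terms shifting generators between spin$^c$ summands. Granting the resulting chain-level identification $\CFplus(\gamma_0,\gamma_1;s) \cong A_s^+(\curves M,\mu)$, the proposition follows by passing to homology, and the $\HFminus$ version is completely analogous.
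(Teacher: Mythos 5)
Your high-level plan---lift to the cover and compare the complex for each $s$ with $A_s^\pm(\curves M,\mu)$---is the same as the paper's, but the geometric model you use is different and, as written, there is a real gap in the step where you identify the complexes. The paper's proof works with a representative of $\widetilde\gamma_0$ that is \emph{horizontal} in a neighborhood of the $y$-axis, and takes each $\widetilde\gamma_{1,s}$ to be the slope-$n$ line through $(0,s)$, homotoped to stay close to the $y$-axis within the strip $|y|\le g$. The point is that the intersection points are then \emph{literally the same} for every $s$, in bijection with the $\F[U]]$-generators of $C^-(\curves M,\mu)$; what changes as $s$ decreases is only which side of $\widetilde\gamma_{1,s}$ each lifted basepoint lies on, which multiplies generators by powers of $U$ without changing the bigons. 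The resulting basis $\{U^{\max(0,A(x_i)-s)}x_i\}$ with unchanged differential is exactly an $\F[U]]$-basis for the subcomplex $A_s^-\subset C^-$, and the base case $s\ge g$ is $C^-(\curves M,\mu)$ on the nose. That chain-level identification \emph{is} the proof.

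Your staircase $L_s$ is a much larger perturbation---the two vertical rays live on opposite sides of the basepoint column---so its intersections with $\curves M$ need no longer match the generators of $C^-(\curves M,\mu)$ one-for-one, and you never actually establish the chain-level identification you assert. Two concrete problems: (1) the statement that ``the two vertical rays each contribute a full copy of $\CFKhat(K)$'' is not what the geometry gives---the descending left ray only reaches heights $<s$ and the ascending right ray only heights $>s$, so together (not each) they see roughly one copy of the generators, and you do not account for any intersections the horizontal step at height $s$ may introduce; (2) ``the identification between the two vertical copies that defines $A_s^+$'' does not match the paper's definition, where $A_s^-$ is simply the Alexander-filtration subcomplex $C^-\{A\le s\}$ and $A_s^+$ the corresponding quotient of $C^\infty$; the near-vertical model makes this match manifest with no gluing of copies to unravel. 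Finally, your explanation of the hypotheses is off in one place: $n\ge 2g+1$ and $|s|<n/2$ serve to keep the slope-$n$ line within a single column over the strip $|y|\le g$, but the claim that the hypothesis ``prevents bigons from\ldots shifting generators between $\spinc$ summands'' is a non-issue, since bigons between two fixed intersection points of $\widetilde\gamma_0$ and $\widetilde\gamma_{1,s}$ stay in a single $\spinc$ summand by construction.
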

Note that in the case that $C^-(\curves M,\mu)$ agrees with $\CFKminus(Y, K)$, this implies that $\HFminus(\gamma_0, \gamma_1) \cong \HFminus(Y_n(K))$.

\parpic[r]{
\labellist
\scriptsize
\pinlabel {$2$} at 49 -5
\pinlabel {$1$} at 61 -5
\pinlabel {$0$} at 72 -5
\pinlabel {${\text{-}1}$} at 81 -5
\pinlabel {${\text{-}2}$} at 93 -5
\endlabellist
 \begin{minipage}{60mm}
 \centering
 \includegraphics[scale=0.75]{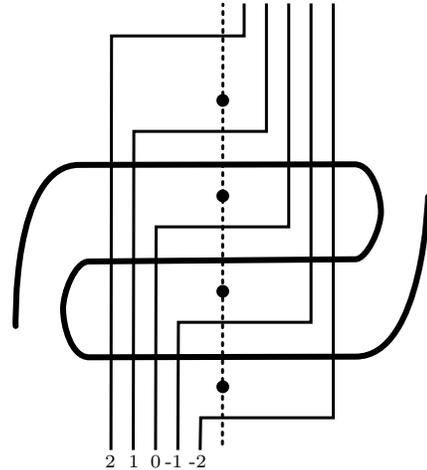}
\captionof{figure}{The intersection of curves, labelled by filtration level, associated with $5$-surgery on the right handed trefoil.}\label{fig:large-n-surgery}
  \end{minipage}%
  }
{\em Proof.}
We prove the statement for the minus invariants, the proof for plus is similar. It is convenient to work with the lifts of $\gamma_0$ and $\gamma_1$ in $\R^2$, where the marked point lifts to points of the form $(a, b+\frac{1}{2})$ for $a,b\in\Z$. We work in a strip centered on the $y$-axis, which is a lift of the meridian $\mu$. Choose a homotopy representative for $\widetilde\gamma_0$ which lies between $y=-g(K)$ and $y=g(K)$ and which meets a neighborhood of the $y$-axis in horizontal segments. $\HFminus(\gamma_0, \gamma_1)$ has $n$ spin$^c$ structures, so we choose $n$ lifts of the line of slope $n$, crossing the $y$-axis at height $s$ with $|s|\le \frac{n}{2}$ and homotoped to lie in a neighborhood of the $y$-axis between $y=-g$ and $y=g$. See Figure \ref{fig:large-n-surgery} for the case of $+5$-surgery on the right-hand trefoil.

Consider the intersection homology with $\F[U]]$ coefficients of $\gamma_0$ with the $y$-axis, with basepoints $z$ and $w$ to the left and right, respectively, of each marked point; this is simply $C^-(\curves M,\mu)$. Note that each line of slope $n$ above is a slight perturbation of the vertical line in the relevant region; in particular, it has exactly the same intersection with $\gamma_0$. Clearly if $s \ge g$, the chain complex $\CFminus(\gamma_0, \gamma_1; s)$ is precisely the complex $C^-(M,\mu,w,z)$ where the marked point corresponds to $w$ and we ignore $z$ (that is, we forget the filtration). 

For the line corresponding to $s = g-1$, note that the intersections with $\gamma_0$ are unchanged, but one marked point has moved from the right of the left of the line. This has the effect that, for a generator $x$ with Alexander grading $g$, any bigon starting at $x$ covers an extra marked point, and any bigon connecting to $x$ covers one less marked point. To see the effect on $\HFminus$, we can replace the generator $x$ with $x' = Ux$ and keep the differential the same; indeed, if $d(x) = y$ then $d(x') = Uy$, and if $d(y)=Ux$ then $d(y)=x'$. More generally, considering the line corresponding to some integer $s$, we see that $\CFminus(\gamma_0,\gamma_1;s)$ is the same as $\CFminus(\gamma_0,\gamma_1;g)$ except that bigons from $x$ to $y$ cover one more marked point for each integer $A(y) < m \le A(x)$ with $m > s$ and one fewer marked point for each integer $A(x) < m \le A(y)$ with $m > s$. It follows that if $\{x_1, \ldots, x_k\}$ is a basis for $\CFminus(\gamma_0,\gamma_1;g) \cong C^-(\curves M,\mu)$ over $\F[U]]$, then $\{U^{\ell_1} x_1, \ldots, U^{\ell_k} x_k\}$ is a basis for $\CFminus(\gamma_0,\gamma_1;s)$ over $\F[U]]$, where $\ell_i = \text{max}(0, A(x_i)-s)$. But it is easy to see that this is a basis for $A^-(\curves M,\mu)$ as well with the same differential.\qed


\subsection{Surface bundles, revisited} Let $M_g$ be the product of a genus $g$ surface with a single connected boundary component with $S^1$. Let $\boldsymbol\gamma_g=\curves{M_g}$ be the associated invariant calculated in Section \ref{sub:bundles}. We will compute $\HFplus(\boldsymbol\gamma_g,L_0)$, where $L_0=h(\curves{D^2\times S^1})$ and $h\co \partial(D^2\times S^1)\to \partial M_g$ realises the filling giving rise to the product $Y_g=\Sigma_g\times S^1$. As in Section \ref{sub:bundles}, this is the result of intersection with a horizontal line; compare Figure \ref{fig:corollary-example}. 

\parpic[r]{
 \begin{minipage}{40mm}
 \centering
 \includegraphics[scale=0.4]{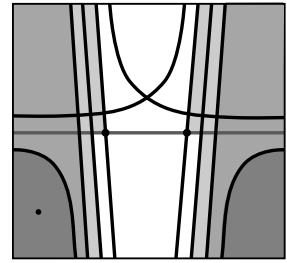}
\captionof{figure}{A bigon covering the basepoint in $(d_4d_{-4})$.}\label{fig:bundle-rev}
  \end{minipage}%
  }
The group $\HFplus(\boldsymbol\gamma_g,L_0)$ will have a contribution from each connected component of $\boldsymbol\gamma_g$. It is easy to calculate that the contribution of a $(d_0)$ or a $(d_0d_0)$ component is a summand of the form $\sT_+\oplus\sT_+$. The same is true of the contribution of a $(d_2d_{-2})$ component; in all three cases one observes that the map $\sT_+\to\sT_+$ is $2U=0$. More generally, the contribution of any component of the form $(d_{2k}d_{-2k})$, for $k>0$,  can be seen from Figure \ref{fig:bundle-rev}. Each component of this form gives rise to a summand in the chain complex isomorphic (as a group) to $\sT^{2(2k-1)}_+$ with differential described by $\bigoplus_{i=1}^{2k-1}(\sT_+\stackrel{D_i}{\to}\sT_+)$ for $D_i=U^i+U^{2k-i}$. On homology, for each $i< k$, this gives rise to summands isomorphic as modules to $\sT_i\cong H_*(\sT_+\stackrel{U^i}{\to}\sT_+)$. When $i=k$ we again get $\sT_+\oplus\sT_+$. For ease of comparison with the calculation of $\HFplus(Y_g)$ \cite{JM2008,OSz2004-knot} we decompose according to spin$^c$ structures. For the torsion spin$^c$ structure $\spin_0$ we have that \[\HFplus(\boldsymbol\gamma_g,L_0;\spin_0) \cong \HFa(\boldsymbol\gamma_g,L_0;\spin_0) \otimes \sT_+ \otimes \F\cong \HFhat(Y_g) \otimes \sT_+ \otimes \F\] in agreement with Jabuka and Mark \cite[Theorem 4.10]{JM2008} since $\dim\HFa(\boldsymbol\gamma_g,L_0;\spin_0)$ is given by twice the number of curve components of $\boldsymbol\gamma_g$, that is, $2^{g} + \sum_{i=0}^{2g}\binom{2g}{i} = 2^{g} + 2^{2g}$.

For non-torsion spin$^c$ structures $\spin_i$ we get non-trivial contributions for $0<i<k$ from each $(d_{2k}d_{-2k})$ when $k>0$. Namely, each $(d_{2k}d_{-2k})$ curve component gives rise to a $\sT_{k-i}$ summand in spin$^c$ structure $\spin_i$. Thus
\[\HFplus(\boldsymbol\gamma_g,L_0;\spin_i) \cong \bigoplus_{j=i+1}^g \sT_{j-i}^{\binom{2g}{g+j}}\] for each $0<i<g+1$. This gives  $\HFplus(\boldsymbol\gamma_g,L_0;\spin_i)\cong \HFplus(Y_g;\spin_i)$ comparing with Ozsv\'ath and Szab\'o \cite{OSz2004-knot}. 


\subsection{Splicing trefoils, revisited} As a final example, let \(Y\) be  the result of splicing two right-hand trefoil complements; here by splice we mean the gluing which identifies the meridian of one knot complement with the Seifert longitude of the other. Figure \ref{fig:splice-of-trefoils} shows the intersection of immersed curves in the plane associated with this splice and the resulting chain group $\CFplus(\boldsymbol\gamma_0,\boldsymbol\gamma_1)$. There are 7 generators, corresponding to the 7 intersection points between the two immersed curves and it is straightforward to check that there are 8 bigons contributing to the differential. The resulting homology is $\HFplus(\boldsymbol\gamma_0,\boldsymbol\gamma_1)\cong\sT_+\oplus\F^3$.
Using the technique of \cite{RR}, we can express \(Y\) as Dehn surgery on a knot in the connected sum of two copies of the Poincar{\'e} sphere. By applying the mapping cone formula, one can check that the expression above agrees with  $\HFplus(Y)$. 

\begin{figure}[ht]
\includegraphics[scale=0.8]{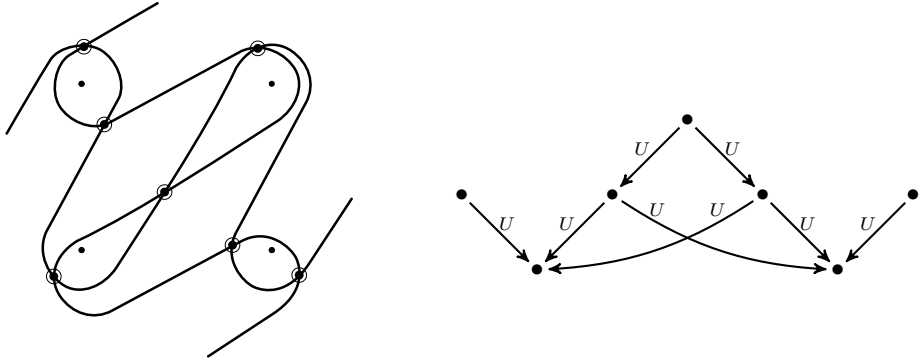} \hspace{1 cm}
\raisebox{1cm}{\begin{tikzpicture}[scale=1,>=stealth', thick] 
     \node at (0,3) {$\bu$};
     \node at (-3,2) {$\bu$}; \node at (-1,2) {$\bu$}; \node at (1,2) {$\bu$}; \node at (3,2) {$\bu$};
    \node at (-2,1) {$\bu$}; \node at (2,1) {$\bu$};
\draw[->,shorten <= 0.15cm, shorten >= 0.15cm] (0,3)--(-1,2) node [pos=0.6, above] {$\scriptstyle{U}$};
\draw[->,shorten <= 0.15cm, shorten >= 0.15cm] (0,3)--(1,2) node [pos=0.6, above] {$\scriptstyle{U}$};
\draw[->,shorten <= 0.15cm, shorten >= 0.15cm] (-3,2)--(-2,1) node [pos=0.6, above] {$\scriptstyle{U}$};
\draw[->,shorten <= 0.15cm, shorten >= 0.15cm] (-1,2)--(-2,1) node [pos=0.6, above] {$\scriptstyle{U}$};
\draw[->,shorten <= 0.15cm, shorten >= 0.15cm] (-1,2) to[bend right=15] (2,1); \node at (-0.4,1.8) {$\scriptstyle{U}$};
\draw[->,shorten <= 0.15cm, shorten >= 0.15cm] (1,2) to[bend left=15] (-2,1); \node at (0.4,1.8) {$\scriptstyle{U}$};
\draw[->,shorten <= 0.15cm, shorten >= 0.15cm] (1,2)--(2,1) node [pos=0.6, above] {$\scriptstyle{U}$};
\draw[->,shorten <= 0.15cm, shorten >= 0.15cm] (3,2)--(2,1) node [pos=0.6, above] {$\scriptstyle{U}$};
\end{tikzpicture}}
\caption{Left: the curves $\curves{M_0}$ and $h(\curves{M_1})$, where $M_i$ is the right handed trefoil complement and $h$ is the splice identifying meridian to longitude. Right: the resulting chain group $\CFplus(\boldsymbol\gamma_0,\boldsymbol\gamma_1)$. }\label{fig:splice-of-trefoils}
\end{figure}

\subsection{A cautionary example} 
Next, we consider an example which we learned from Robert Lipshitz, where \(\HFplus(\HFhat(M_0),h(\HFhat(M_1))\) need not agree with \(\HFplus(M_0 \cup_h M_1)\). Let \(M_0=S^1\times D^2\), and let \(M_1 = S^1 \times D^2 \# Z\), where \(Z\) is any rational homology sphere. Then \(\CFD(M_1) = \CFD(S^1\times D^2) \otimes \HFhat(Z)\), so \(\HFhat(M_1)\) is  a disjoint union of parallel copies of \(\HFhat(S^1\times D^2)\), one for each generator of \(\HFhat(Z)\). Choose \(h\) so that \(M_0 \cup_h M_1= S^3 \# Z = Z\). Since \(\HFplus(Z)\) is not determined by \(\HFhat(Z)\),  \(\HFplus(\HFhat(M_0),h(\HFhat(M_1))\) need not be equal to  \(\HFplus(M_0 \cup_h M_1)\). Note that in this instance the invariant of \(\HFhat(M_1)\) consists of several parallel copies of the same curve. We expect that in a version of the theory which enabled us to calculate  \(\HFplus(M_0 \cup_h M_1)\), these curves would have to form a local system with nontrivial monodromy.

We end by posing the following question: 

\begin{question}
What conditions on  $M_0$ and $M_1$ guarantee that $$\HFplus(M_0\cup_h M_1) \simeq \HFplus(\curves{M_0}, h(\curves{M_1})?$$ \end{question}

At a minimum, we would conjecture that this is the case when \(M_0\) and \(M_1\) are Floer simple. But it is conceivable that it would be enough to require that \(\HFhat(M_0)\) and \(\HFhat(M_1)\) are both good (in the sense introduced at the beginning of this section) and contain no local system of multiplicity greater than one.

\bibliographystyle{plain}
\bibliography{references/bibliography}

\end{document}